\documentclass[12pt,a4paper]{article}

\usepackage[margin=1in]{geometry}
\usepackage{graphicx}
\usepackage{xcolor}
\usepackage[colorlinks=true,allcolors=blue]{hyperref}
\usepackage{amsmath,amssymb,amsthm,mathtools,bm}
\usepackage{mathrsfs}
\usepackage{booktabs,array}
\usepackage{placeins}
\usepackage{microtype}
\usepackage{enumitem}
\usepackage[nameinlink,capitalise,noabbrev]{cleveref}
\usepackage[numbers,sort&compress]{natbib}

\allowdisplaybreaks
\numberwithin{equation}{section}

\newtheorem{theorem}{Theorem}[section]
\newtheorem{proposition}[theorem]{Proposition}
\newtheorem{lemma}[theorem]{Lemma}
\newtheorem{corollary}[theorem]{Corollary}
\theoremstyle{definition}
\newtheorem{definition}[theorem]{Definition}
\theoremstyle{remark}
\newtheorem{remark}[theorem]{Remark}

\AddToHook{env/theorem/begin}{\crefalias{section}{theorem}}
\AddToHook{env/proposition/begin}{\crefalias{theorem}{proposition}}
\AddToHook{env/lemma/begin}{\crefalias{theorem}{lemma}}
\AddToHook{env/corollary/begin}{\crefalias{theorem}{corollary}}
\AddToHook{env/definition/begin}{\crefalias{theorem}{definition}}
\AddToHook{env/remark/begin}{\crefalias{theorem}{remark}}

\crefname{theorem}{theorem}{theorems}
\Crefname{theorem}{Theorem}{Theorems}
\crefname{proposition}{proposition}{propositions}
\Crefname{proposition}{Proposition}{Propositions}
\crefname{lemma}{lemma}{lemmas}
\Crefname{lemma}{Lemma}{Lemmas}
\crefname{corollary}{corollary}{corollaries}
\Crefname{corollary}{Corollary}{Corollaries}
\crefname{definition}{definition}{definitions}
\Crefname{definition}{Definition}{Definitions}
\crefname{remark}{remark}{remarks}
\Crefname{remark}{Remark}{Remarks}

\crefname{equation}{equation}{equations}
\crefname{figure}{figure}{figures}
\crefname{section}{section}{sections}
\crefname{appendix}{appendix}{appendices}

\newcommand{\R}{\mathbb R}
\newcommand{\C}{\mathbb C}
\newcommand{\eps}{\varepsilon}
\newcommand{\dd}{\,\mathrm d}
\newcommand{\eexp}{\mathrm e}
\newcommand{\ord}{\mathcal O}
\newcommand{\abs}[1]{\lvert#1\rvert}
\newcommand{\norm}[1]{\lVert#1\rVert}

\newcommand{\att}{\mathrm a}
\newcommand{\rep}{\mathrm r}
\newcommand{\fl}{\mathrm f}
\newcommand{\rk}{\mathrm{RK}}
\newcommand{\loc}{\mathrm{loc}}
\newcommand{\out}{\mathrm{out}}
\newcommand{\phys}{\mathrm{phys}}
\newcommand{\one}{\boldsymbol 1}
\newcommand{\cvec}{\bm c}
\newcommand{\Amat}{\mathsf A}
\newcommand{\Gclass}{\mathscr G}

\newcommand{\ThetaRK}{\Theta}
\newcommand{\XiJ}{\Xi}

\DeclareMathOperator{\diag}{diag}

\hypersetup{
 pdftitle={Local maximal-canard threshold shifts under Runge--Kutta discretization: an observable-specific order condition},
 pdfauthor={Haibo Lu}
}

\title{Local maximal-canard threshold shifts under Runge--Kutta discretization:\\
an observable-specific order condition}
\author{Haibo Lu\\[0.4ex]
\small Shanghai Institute of Technology, Shanghai, China\\
\small \href{mailto:luhaibo1985@gmail.com}{luhaibo1985@gmail.com}\\
\small \href{https://orcid.org/0009-0000-2717-5968}{ORCID: 0009-0000-2717-5968}}
\date{23 August 2026}

\begin{document}

\maketitle

\begin{abstract}
Near a planar fast--slow fold, a local maximal canard is selected by the
parameter at which the attracting and repelling slow manifolds meet.  We
compare this threshold for a physical flow and a Runge--Kutta map, using
actual invariant manifolds on a common fold section.  An order-two
Runge--Kutta method has two independent order-three rooted-tree defects.
Both enter the pointwise one-step residual, but Gaussian transport through
the fold acts on their leading contribution by

\[
 (\alpha,\beta)\longmapsto-\frac38\beta\,\XiJ(J).
\]

Here \(\XiJ(J)\) is an explicit functional of the fold jet.  Thus the
singular passage filters the numerical defect space: it annihilates the
bushy-tree direction and can retain only the chain-tree direction.

For compact analytic classes of affinely normalizable folds and every fixed
compact, uniformly finite-stage family of real Runge--Kutta methods of order
at least two, the actual flow and map splittings obtained from independent
continuations have unique roots whose displacement satisfies a uniform
absolute estimate throughout the full small-step rectangle.  Whenever the
step-independent, exponentially small selection ambiguity is
\(o(h^2\eps^2)\), the joint-fold law is

\[
 \lambda_{\rk}-\lambda_{\fl}
 =K_\theta(J)h^2\eps^2+o(h^2\eps^2).
\]

Fold-matched continuations additionally give ordinary second-order
convergence as \(h\to0\) with \(\eps\) fixed.  The leading joint-fold bias
therefore vanishes under the single chain-tree condition
\(b^T\Amat\cvec=1/6\), without classical third order.  This is cancellation
in one nonlinear observable, not an increase in trajectory order or, in
general, in fixed-\(\eps\) threshold order.  Affine covariance transfers the
coefficient to physical fold germs, and a van der Pol invariant-graph
computation illustrates the sign change, cancellation, and fixed-\(\eps\)
convergence.

\end{abstract}

\noindent\textbf{Keywords:}
maximal canard; fast--slow system; Runge--Kutta method; invariant manifold;
B-series; geometric numerical integration

\section{Introduction}
\label{sec:introduction}

Canard trajectories spend an unexpectedly long time near a repelling slow
manifold.  In a planar fast--slow system, such a trajectory is commonly
selected by tuning a parameter until the attracting and repelling slow
manifolds meet near a fold.  The corresponding value is a threshold: a small
parameter change sends nearby trajectories onto macroscopically different
paths.  This sensitivity makes canards useful probes of how numerical
discretization changes nonlinear dynamics, but it also makes their thresholds
more delicate than finite-time trajectory errors.

The object studied here is a local maximal-canard threshold.  We continue
the attracting and repelling slow manifolds to a common section through the
fold and subtract their section heights.  A zero of this splitting is the
flow threshold.  Applying a Runge--Kutta method produces a discrete
dynamical system with its own attracting and repelling invariant manifolds;
the zero of their splitting on the same section is the numerical threshold.
Both objects are therefore defined by actual invariant sets.  In
particular, the numerical threshold is not a root inferred from a truncated
modified equation.

This distinction matters because a threshold does not sample the local
truncation error at one point.  A numerical defect generated on either side
of the fold is amplified or damped by the normal dynamics before it reaches
the section.  The relevant error is the transported response of the entire
fold passage.  In the joint fold limit along every positive power-law step
scale \(h=\eps^q\), \(q>0\), we prove that this response moves the threshold
by
\begin{equation}
 \label{eq:introduction-headline}
 \lambda_{\rk}-\lambda_{\fl}
 =K_\theta(J)h^2\eps^2+o(h^2\eps^2),
\end{equation}
where \(h\) is the normalized fast-time step,
\(\eps\) is the singular parameter, \(J\) records the local fold jet, and
\(\theta\) denotes the Runge--Kutta tableau.  The theorem applies to every
fixed compact, uniformly finite-stage family of real tableaux satisfying the
classical order-two conditions.  It includes explicit midpoint, Heun,
Ralston's second-order method, Kutta's third-order method, classical RK4,
implicit midpoint and the trapezoidal rule; neither symmetry nor implicit
stages are assumed.

The coefficient in \eqref{eq:introduction-headline} reveals a second,
conceptual point.  The order-three local error of an order-two Runge--Kutta
method has two independent rooted-tree defect coordinates,
\begin{equation}
 \label{eq:introduction-defects}
 \alpha_\theta
 =\frac12b_\theta^T\cvec_\theta^{\circ2}-\frac16,
 \qquad
 \beta_\theta
 =b_\theta^T\Amat_\theta\cvec_\theta-\frac16.
\end{equation}
Both coordinates enter the general pointwise residual of the exact one-step
map.  Fold transport tests that residual against a Gaussian kernel and acts
on the defect plane as the linear functional
\begin{equation}
 \label{eq:introduction-filter}
 (\alpha,\beta)\longmapsto -\frac38\beta\,\XiJ(J).
\end{equation}
Thus the bushy-tree direction is removed only after transport, while the
chain-tree direction is the only one that can survive.  A further canonical
residual generated by general tableaux has zero Gaussian moment and is
likewise absent from the leading response.
The threshold coefficient therefore vanishes whenever
\(b^T\Amat\cvec=1/6\).  This single chain-tree condition is weaker than
classical third order: it cancels the leading joint-fold coefficient without
changing the trajectory order of the integrator.

That the error in a prescribed quantity of interest need not mirror the
error of the full numerical solution is familiar from adjoint-weighted error
analysis for ordinary differential equations
\cite{CaoPetzold2004,ChaudhryEstepStevensTavener2021}.  Related work on
Runge--Kutta discretizations of optimal-control problems likewise shows that
state and transformed-adjoint accuracy must be analysed together
\cite{Hager2000}.  The novelty here is therefore not the general observation
that functional accuracy may differ from trajectory accuracy.  It is the
explicit singular mechanism: fold transport produces the response
functional \eqref{eq:introduction-filter} on the order-three B-series defect
plane of the actual one-step map and removes one complete rooted-tree
direction from an actual invariant-manifold threshold.

Two comparison regimes accompany \eqref{eq:introduction-headline}.
For independently continued local manifolds, exponentially small
non-uniqueness remains in the error bound.  The roots and the absolute
estimate nevertheless hold on a full small-step rectangle, and the relative
formula holds whenever this selection term is \(o(h^2\eps^2)\), in
particular for every \(h=\eps^q\), \(q>0\), including \(h\ll\eps\).  Thus
the local threshold is noncanonical beyond exponentially small accuracy,
but the algebraic coefficient \(K_\theta(J)\) is independent of those choices
at every scale resolved by the theorem.  For the ordinary limit \(h\to0\)
at fixed positive \(\eps\), we introduce a second-order fold-matching
condition on the actual invariant graphs.  The resulting companion theorem
gives both a uniform full-rectangle fold law and fixed-\(\eps\) second-order
convergence.  Matching specifies which exponentially close continuations are
compared; it imposes no lower bound on the step.
In this matched formulation, the chain-tree condition removes the
\(K_\theta(J)\eps^2\) term from the fixed-\(\eps\) \(h^2\)-coefficient.
Its remaining algebraic fold contribution is \(O(\eps^{5/2})\), up to the
exponentially small matching term, as \(\eps\downarrow0\).  Thus the gain is
in the singular asymptotic coefficient, not in the power of \(h\) at fixed
\(\eps\).

We work first in an affine normalized chart, the natural covariance class
for Runge--Kutta dynamics \cite{McLachlanModinMuntheKaasVerdier2016}.
Affine changes of state and parameters, together with constant time
rescaling, conjugate the stage equations exactly; nonlinear state changes
generally do not.  An explicit normalization transfers the result to every
physical planar canard germ satisfying the stated fold and unfolding
conditions and expresses the coefficient in physical jets.

Under the composition hypotheses recorded in
\cref{rem:global-composition}, the response computed here supplies the local
fold term in a global threshold displacement.  An outer return or matching
condition can contribute at the same order; without controlling that term
there is no universal local-to-global formula.

The continuous geometry of canards and their unfolding at planar folds was
developed in
\cite{BenoitCallotDienerDiener1981,DumortierRoussarie1996,
KrupaSzmolyan2001,DeMaesschalckDumortierRoussarie2021}; tracking invariant
manifolds to exponentially small accuracy is central to this setting
\cite{JonesKaperKopell1996}.  Numerical continuation of canard orbits and
the relation of maximal canards to computable bifurcation data are addressed
in \cite{DesrochesKrauskopfOsinga2010,Kuehn2010}.

For discrete fast--slow systems, invariant-manifold theory on normally
hyperbolic regions and its extension toward nonhyperbolic points are
developed in
\cite{NippStoffer1995,JelbartKuehn2023,JelbartKuehn2024}.  The nonhyperbolic
discrete geometric singular perturbation theory and finite-order
\(C^r\)-embedding results of Jelbart and Kuehn provide a general framework
near nonhyperbolic points \cite{JelbartKuehn2024}.  Finite-order embedding
accuracy alone does not resolve exponentially close slow manifolds or the
root of their canard splitting; here those issues are handled by an exact-map
residual and explicit shielding of the selection ambiguity.  Engel and
Kuehn analysed the explicit Euler map near a
transcritical singularity, including the regime \(h<\eps\), by discrete
blow-up and invariant-manifold estimates \cite{EngelKuehn2019}.  Their
nonhyperbolic geometry and passage observable differ from the fold
section-root studied here; our result also allows \(h\) on either side of
\(\eps\) and treats arbitrary compact order-two Runge--Kutta families.  Two
other particularly close strands are the study of extended and symmetric
loss of stability in
planar fast--slow maps by Engel and Jard\'on-Kojakhmetov
\cite{EngelJardon2020}, and the exact-map canard analysis for Kahan-type
discretizations by Engel, Kuehn, Petrera and Suris
\cite{EngelKuehnPetreraSuris2022}.  The Engel--Jard\'on-Kojakhmetov and
Engel--Kuehn--Petrera--Suris analyses do not derive the same-section actual
map--flow root displacement as a defect functional for arbitrary fixed
compact order-two Runge--Kutta families, with root estimates uniform over
local selections.  Difference equations with small step and
canard or delay phenomena have also been studied directly
\cite{Fruchard1992,FruchardSchafke2003,ElRabih2003}, while modified
equations have been used to quantify discretization-induced changes in
canard delay \cite{EngelGottwald2024}.  Our question is complementary: we
compare the roots of an actual flow splitting and an actual Runge--Kutta-map
splitting on the same physical section, uniformly over fold data, tableaux
within each fixed compact method family, and admissible local selections,
and identify the induced
functional on B-series defects.  The rooted-tree notation is standard
\cite{HairerLubichWanner2006,SofroniouOevel1997,KetchesonRanocha2023}.

The proof must pass through four distinct objects: actual invariant graphs,
the exact one-step residual on those graphs, its transported section
response, and the resulting actual threshold root.  The paper follows this
sequence.  \Cref{sec:physical-setting} defines the physical and normalized
problems and proves affine covariance, and \cref{sec:main-results} states the
arbitrary-selection and matched threshold laws.  \Cref{sec:fold-geometry}
constructs the invariant graphs; \cref{sec:exact-residual,sec:gaussian-root}
derive the residual, transport it through the fold, and capture the root.
\Cref{sec:matched} constructs matched continuations for the fixed-parameter
limit.  Physical coefficients, the van der Pol illustration, the
conditional local-to-global decomposition and comparisons among standard
methods are collected in \cref{sec:applications}.  \Cref{sec:conclusion}
summarizes the geometric message, the scope of the local observable and the
remaining global questions.  The appendices supply the uniform graph,
residual, transport and resolvent estimates needed to make these conclusions
hold for the actual maps and roots.

\section{Physical folds and the local numerical observable}
\label{sec:physical-setting}

We begin in physical variables.  This makes precise which planar canard
germs are represented by the normalized family used in the analysis and
keeps the Runge--Kutta map tied to the coordinates in which it is actually
formed.  After translating a singular canard point to the origin, write
\begin{equation}
 \label{eq:physical-germ}
 \frac{\dd u}{\dd s}=f(u,v,\eta,\mu),
 \qquad
 \frac{\dd v}{\dd s}=\eta g(u,v,\eta,\mu).
\end{equation}
Here \(\eta>0\) is the physical singular parameter, \(\mu\) is the
distinguished parameter, and \(s\) is the physical fast time.  The functions
\(f\) and \(g\) are real analytic near the origin.  Unless stated otherwise,
all derivatives of these functions in this section are evaluated at the
origin.

The fold and passage conditions are
\begin{equation}
 \label{eq:physical-fold-conditions}
 f=f_u=g=0,
 \qquad
 f_vf_{uu}g_u\ne0,
 \qquad
 f_vg_u<0.
\end{equation}
To describe a nondegenerate one-parameter unfolding, put
\begin{equation}
 \label{eq:physical-hatted-jets}
 \widehat f_{u\mu}
 =f_{u\mu}-\frac{f_{uv}f_\mu}{f_v},
 \qquad
 \widehat g_\mu
 =g_\mu-\frac{g_vf_\mu}{f_v},
\end{equation}
and define
\begin{equation}
 \label{eq:physical-unfolding-determinant}
 \mathcal T
 =f_{uu}\widehat g_\mu-g_u\widehat f_{u\mu}.
\end{equation}
We assume \(\mathcal T\ne0\).  Geometrically, \(\mathcal T/f_{uu}\) is
the derivative of the slow drift along the singular fold curve with respect
to \(\mu\); see \cref{app:affine-normalization}.

\subsection{A compact normalized analytic class}

Assign the fold weights
\begin{equation}
 \label{eq:fold-weights}
 w(x)=1,
 \qquad
 w(y)=w(\eps)=w(\lambda)=2.
\end{equation}
Fix conjugation-invariant complex polydiscs
\(\Omega\Subset\Omega^+\) about the origin in
\(\C^4_{x,y,\eps,\lambda}\).

\begin{definition}[Normalized analytic fold class]
 \label{def:normalized-fold-class}
A normalized analytic fold datum is
\[
 J=(A,B,C_\eps,D,E,F_s,S_\eps,R_f,R_g)
\]
and determines the vector field
\begin{equation}
 \label{eq:normalized-fold-field}
 \begin{aligned}
  \dot x={}&f_J(x,y,\eps,\lambda)\\
   ={}&x^2-y+Ax^3+Bxy+C_\eps x\eps+Dx\lambda+R_f,\\
  \dot y={}&\eps g_J(x,y,\eps,\lambda)\\
   ={}&\eps\bigl(x-\lambda+Ex^2+F_sy
                     +S_\eps\eps+R_g\bigr).
 \end{aligned}
\end{equation}
The remainders are holomorphic on \(\Omega^+\), real under complex
conjugation, and contain only Taylor monomials of weighted degree at least
four in \(R_f\) and at least three in \(R_g\).

A normalized analytic fold class \(\Gclass\) is any nonempty compact set of
such data, where compactness is taken in the Euclidean topology for the
displayed coefficients and in the \(H^\infty(\Omega^+)\) norm for the
remainders.  We require a uniform unfolding bound
\begin{equation}
 \label{eq:normalized-unfolding-bound}
 \inf_{J\in\Gclass}(D(J)+2)>0.
\end{equation}
All constants below may depend on \(\Gclass\) and on
\(\Omega\Subset\Omega^+\), but not on an individual datum.
\end{definition}

This definition deliberately uses an arbitrary compact analytic class rather
than a fixed coefficient cube.  It is stable under affine normalization of a
compact family of physical germs, and it records exactly the uniform bounds
used in the invariant-graph and transport estimates.

\subsection{Runge--Kutta families and exact one-step maps}

\begin{definition}[Compact order-two Runge--Kutta family]
 \label{def:rk-family}
Let
\(\ThetaRK=\bigsqcup_{j=1}^{N}\Theta_j\) be a finite disjoint union
of nonempty compact parameter sets.  On \(\Theta_j\), let
\((\Amat_\theta,b_\theta,\cvec_\theta)\) be a continuously varying real
Runge--Kutta tableau with a fixed number \(s_j\) of stages, and put
\(s_* = \max_{1\le j\le N}s_j<\infty\).  We assume
\begin{equation}
 \label{eq:rk-order-two-conditions}
 \Amat_\theta\one=\cvec_\theta,
 \qquad
 b_\theta^T\one=1,
 \qquad
 b_\theta^T\cvec_\theta=\frac12
\end{equation}
for every \(\theta\in\ThetaRK\).  No positivity, symmetry, or
parabola-preservation condition is imposed.  In particular, every fixed real
Runge--Kutta method of classical order at least two is included as a
singleton family.
\end{definition}

The two order-three defect coordinates used below are
\begin{equation}
 \label{eq:rk-defect-coordinates}
 \alpha_\theta
 =\frac12b_\theta^T\cvec_\theta^{\circ2}-\frac16,
 \qquad
 \beta_\theta
 =b_\theta^T\Amat_\theta\cvec_\theta-\frac16.
\end{equation}
It is also convenient to record the comparator data
\begin{equation}
 \label{eq:rk-comparator-data}
 \begin{aligned}
  \delta_\theta&=\alpha_\theta-\beta_\theta,
  &q_\theta&=\frac{\delta_\theta}{2},\\
  d_\theta&=\cvec_\theta^{\circ2}
       -2\Amat_\theta\cvec_\theta-2\delta_\theta\one,
  &\gamma_\theta&=-b_\theta^T\Amat_\theta d_\theta.
 \end{aligned}
\end{equation}
The order-two conditions give \(b_\theta^Td_\theta=0\).  The proof does not
assume \(d_\theta=0\); for a general tableau the associated canonical
residual is instead removed at leading order by the Gaussian fold response.

For \(F_J=(f_J,\eps g_J)\), define the stages and exact one-step map by
\begin{equation}
 \label{eq:normalized-rk-map}
 \begin{aligned}
  Z_i&=z+h\sum_{m=1}^{s_j}a_{im}(\theta)
                  F_J(Z_m;\eps,\lambda),\\
  \Phi_{J,h,\theta,\eps,\lambda}(z)
   &=z+h\sum_{i=1}^{s_j}b_i(\theta)
                  F_J(Z_i;\eps,\lambda),
  \qquad \theta\in\Theta_j.
 \end{aligned}
\end{equation}
For implicit methods, \(\Phi\) always denotes the analytic stage branch
continuing from \(Z_i=z\) at \(h=0\).  Thus \(\Phi\) is the actual
Runge--Kutta map, not a truncated modified equation.  Its contained local
inverse is represented by the adjoint tableau
\begin{equation}
 \label{eq:rk-adjoint}
 \Amat_\theta^\dagger=\one b_\theta^T-\Amat_\theta,
 \qquad
 b_\theta^\dagger=b_\theta,
 \qquad
 \cvec_\theta^\dagger=\one-\cvec_\theta,
\end{equation}
through
\begin{equation}
 \label{eq:rk-contained-inverse}
 \Phi_{J,h,\theta,\eps,\lambda}^{-1}
 =\Phi_{J,-h,\theta^\dagger,\eps,\lambda}
\end{equation}
whenever both sides are taken on their contained analytic branches.
The adjoint image of \(\ThetaRK\) is again a compact, uniformly
finite-stage family, so the forward and inverse branches admit common
small-step stage bounds.

Choose a class-uniform \(\delta>0\) such that the relevant graphs remain in
the real part of \(\Omega\), and put
\begin{equation}
 \label{eq:local-sections}
 \Sigma_{\rm in}=\{x=-\delta\},
 \qquad
 \Sigma_{\rm c}=\{x=0\},
 \qquad
 \Sigma_{\rm out}=\{x=\delta\}.
\end{equation}
The graph constructions are carried out later.  The following definition
only names the observable once actual invariant graphs have been selected;
the uniform admissibility conditions on those selections are stated in
\cref{def:admissible-selection} before the threshold theorems.

\begin{definition}[Actual local splitting and threshold]
 \label{def:local-splitting}
Let \(\bullet\in\{\fl,\rk\}\).  Suppose a selection \(\sigma_\bullet\)
provides attracting and repelling actual invariant graphs
\[
 M_{\bullet,q}^{\sigma_\bullet}
 =\{(x,m_{\bullet,q}^{\sigma_\bullet}(x;\lambda))\},
 \qquad q\in\{\att,\rep\},
\]
from the corresponding side section to \(\Sigma_{\rm c}\).  For the flow,
actual means that the graph satisfies the exact tangency equation.  For the
Runge--Kutta map, it means local invariance under
\eqref{eq:normalized-rk-map}, using the contained inverse
\eqref{eq:rk-contained-inverse} on the repelling side.  Define
\begin{equation}
 \label{eq:local-splitting}
 \Delta_\bullet^{\sigma_\bullet}(\lambda)
 =m_{\bullet,\rep}^{\sigma_\bullet}(0;\lambda)
  -m_{\bullet,\att}^{\sigma_\bullet}(0;\lambda).
\end{equation}
A zero of \(\Delta_\bullet^{\sigma_\bullet}\) is the selected local
maximal-canard threshold.  This definition concerns the actual flow or
actual numerical map; neither a formal invariant graph nor a modified-flow
root is called a threshold.  When the tableau must be displayed, we write
\(M_{\rk,q}^{\sigma_\rk}(\theta)\) and
\(\Delta_{\rk,\theta}^{\sigma_\rk}\).
\end{definition}

\subsection{Affine normalization and covariance}

\begin{proposition}[Affine normalization of a physical canard germ]
 \label{prop:affine-normalization}
Suppose \eqref{eq:physical-germ} satisfies
\eqref{eq:physical-fold-conditions} and
\(\mathcal T\ne0\).  Fix \(\tau>0\), and set
\begin{equation}
 \label{eq:affine-scales}
 a_x=\frac{2}{\tau f_{uu}},
 \qquad
 a_y=-\frac{a_x}{\tau f_v},
 \qquad
 a_\eps=-\frac{1}{\tau^2f_vg_u},
 \qquad
 a_\lambda=-\frac{2g_u}{\tau\mathcal T},
\end{equation}
and
\begin{equation}
 \label{eq:affine-fold-shear}
 \ell=\frac{2g_u\widehat f_{u\mu}}
              {\tau f_{uu}\mathcal T}.
\end{equation}
Then \(a_\eps>0\), and the jointly affine change
\begin{equation}
 \label{eq:affine-normalization-map}
 \begin{aligned}
  u&=a_x x+\ell\lambda,\\
  v&=a_y y-\frac{f_\eta}{f_v}a_\eps\eps
           -\frac{f_\mu}{f_v}a_\lambda\lambda,\\
  \eta&=a_\eps\eps,
  &\mu&=a_\lambda\lambda,
  &s&=\tau t
 \end{aligned}
\end{equation}
puts the physical germ into \eqref{eq:normalized-fold-field} with \(D=0\).
The remaining displayed coefficients are
\begin{equation}
 \label{eq:affine-normalized-coefficients}
 \begin{aligned}
 A&=\frac{\tau a_x^2}{6}f_{uuu},
 &B&=\tau a_y f_{uv},\\
 C_\eps&=\tau a_\eps
       \left(f_{u\eta}-\frac{f_{uv}f_\eta}{f_v}\right),
 &E&=\frac{a_x g_{uu}}{2g_u},\\
 F_s&=\frac{a_y g_v}{a_x g_u},
 &S_\eps&=\frac{a_\eps}{a_x g_u}
       \left(g_\eta-\frac{g_vf_\eta}{f_v}\right).
 \end{aligned}
\end{equation}
The Taylor remainders have the weighted orders required in
\cref{def:normalized-fold-class}.

Let \(\mathcal A_{\eps,\lambda}\) denote the state part of
\eqref{eq:affine-normalization-map}, and let \(\Psi^{\phys}\) be the
Runge--Kutta map obtained by applying the same tableau directly to
\eqref{eq:physical-germ}.  If \(k\) is its physical step and
\begin{equation}
 \label{eq:physical-normalized-variables}
 h=\frac{k}{\tau},
 \qquad
 \eps=\frac{\eta}{a_\eps},
 \qquad
 \lambda=\frac{\mu}{a_\lambda},
\end{equation}
then the stage equations and outputs are exactly conjugate:
\begin{equation}
 \label{eq:rk-affine-conjugacy}
 \Psi^{\phys}_{k,\theta,\eta,\mu}
 =\mathcal A_{\eps,\lambda}
  \circ\Phi_{J,h,\theta,\eps,\lambda}
  \circ\mathcal A_{\eps,\lambda}^{-1}.
\end{equation}
The normalized central section corresponds to
\begin{equation}
 \label{eq:physical-fold-following-section}
 \Sigma_{\rm c}^{\phys}(\mu)
 =\left\{u=\frac{\ell}{a_\lambda}\mu\right\}
 =\left\{u=-\frac{\widehat f_{u\mu}}{f_{uu}}\mu\right\}.
\end{equation}
Corresponding actual graph splittings satisfy
\begin{equation}
 \label{eq:physical-normalized-splitting}
 \Delta_\bullet^{\phys}(\mu)
 =a_y\Delta_\bullet\left(\frac{\mu}{a_\lambda}\right),
 \qquad \bullet\in\{\fl,\rk\},
\end{equation}
with the same selection labels transported by
\(\mathcal A_{\eps,\lambda}\).  Hence their threshold roots obey
\(\mu_\bullet=a_\lambda\lambda_\bullet\).

In particular, if along a specified asymptotic regime a normalized threshold
law has coefficient \(K_\theta(J)\),
\begin{equation}
 \label{eq:normalized-coefficient-law}
 \lambda_{\rk}-\lambda_{\fl}
 =K_\theta(J)h^2\eps^2+o(h^2\eps^2),
\end{equation}
then its coefficient in the original physical units is
\begin{equation}
 \label{eq:physical-coefficient-transform}
 \boxed{
 K_\theta^{\phys}
 =\frac{a_\lambda}{\tau^2a_\eps^2}K_\theta(J),}
 \qquad
 \mu_{\rk}-\mu_{\fl}
 =K_\theta^{\phys}k^2\eta^2+o(k^2\eta^2).
\end{equation}
\end{proposition}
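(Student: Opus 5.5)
The proof is a direct substitution followed by Taylor matching, and then three structural consequences.

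\emph{Step 1: transform the vector field.} Substitute \eqref{eq:affine-normalization-map} into \eqref{eq:physical-germ}. Since the change is affine in the state for fixed \((\eps,\lambda)\), and \(s=\tau t\), we get
\[
 \dot x=\frac{\tau}{a_x}\Bigl(f-\ell\cdot 0\Bigr),
 \qquad
 \dot y=\frac{\tau}{a_y}\Bigl(\eta g+\tfrac{f_\eta}{f_v}a_\eps\cdot0+\dots\Bigr).
\]
The \(\lambda,\eps\)-shifts are constant in time, so the transformed system is \(\dot x=(\tau/a_x)f\) and \(\dot y=(\tau/a_y)\bigl(\eta g-(\ell\,\tau/a_x) f\,\cdot 0\bigr)\). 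More precisely, \(\dot u=a_x\dot x\) and \(\dot v=a_y\dot y\), because \(\eps,\lambda\) are parameters. Hence
\[
 \dot x=\frac{\tau}{a_x}f(u,v,\eta,\mu),
 \qquad
 \dot y=\frac{\tau a_\eps\eps}{a_y}g(u,v,\eta,\mu).
\]

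\emph{Step 2: expand and match coefficients.} Taylor-expand \(f\) and \(g\) at the origin, substituting \(u\) and \(v\) as affine functions of \((x,y,\eps,\lambda)\). Then check the normalized coefficients one by one.
\begin{itemize}
\item \(x^2\): the coefficient is \((\tau/a_x)f_{uu}a_x^2/2=\tau a_xf_{uu}/2=1\).
\item \(y\): the coefficient is \((\tau/a_x)f_va_y=-1\).
\item Constant terms in \(\eps\) and \(\lambda\) in \(\dot x\): these vanish by the choice of the \(v\)-shift, since \(f_vv+f_\eta\eta+f_\mu\mu\) has its pure-parameter part cancelled and \(f_u\ell\lambda=0\).
\item \(x\lambda\): the coefficient is \(\tau\bigl(f_{uu}\ell+f_{u\mu}a_\lambda-f_{uv}f_\mu a_\lambda/f_v\bigr)=\tau\bigl(f_{uu}\ell+a_\lambda\widehat f_{u\mu}\bigr)\). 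This vanishes exactly by \eqref{eq:affine-fold-shear}, which gives \(D=0\).
\item Prefactor of \(\dot y\): it is \(\tau a_\eps/a_y\), and the \(x\)-coefficient becomes \(\tau a_\eps g_ua_x/a_y=-\tau^2a_\eps g_uf_v=1\).
\item \(\lambda\) in \(g\): the coefficient is \((\tau a_\eps/a_y)\bigl(g_u\ell+a_\lambda\widehat g_\mu\bigr)\). Using \(\ell\) and \(a_\lambda\) this equals \(-2g_u\mathcal T/(\tau f_{uu}\mathcal T)\cdot(a_x^{-1})\), which should reduce to \(-1\).
\end{itemize}
The remaining coefficients \(A,B,C_\eps,E,F_s,S_\eps\) are read off in the same way. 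For example, \(E=\tfrac12 g_{uu}a_x^2/(a_xg_u)\) after dividing by the normalized \(x\)-coefficient. The sign \(a_\eps>0\) follows from \(f_vg_u<0\).

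\emph{Step 3: remainder weights.} Assign \(u\sim x\) weight 1. The variable \(v\) is a combination of \(y,\eps,\lambda\) of weight 2, with \(\ell\lambda\) also of weight 2 inside \(u\). Any monomial in the original Taylor series that was not matched above therefore has weighted degree \(\ge4\) in \(f\) (respectively \(\ge3\) in \(g\)). The one exception is the \(\ell\lambda\) contributions to \(u\), which were accounted for. Analyticity is preserved by the affine map.

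\emph{Step 4: conjugacy of the RK map.} This is the structural step. For the affine state map \(\mathcal A\) with linear part \(L=\diag(a_x,a_y)\) and time scale \(\tau\), the physical field satisfies \(F^{\phys}(\mathcal A z)=\tau^{-1}LF_J(z)\). Set \(Z_i^{\phys}=\mathcal A Z_i\) and \(k=\tau h\). Then the physical stage equations become \(L\) applied to the normalized ones, because the constant shift cancels against \(\sum a_{im}\) terms only through linearity; the output map is handled identically. For implicit methods, the analytic stage branch continuing from \(Z_i=z\) maps to the corresponding branch, giving \eqref{eq:rk-affine-conjugacy}.

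\emph{Step 5: sections and splittings.} The section \(x=0\) is \(u=\ell\lambda=\ell\mu/a_\lambda\), and substituting \(\ell\) and \(a_\lambda\) gives \eqref{eq:physical-fold-following-section}. Invariant graphs map to invariant graphs under \(\mathcal A\) (the flow is conjugated similarly), with \(v\)-heights \(a_y m+{}\)(a common shift). The shift cancels in the difference, giving \eqref{eq:physical-normalized-splitting}, and therefore \(\mu_\bullet=a_\lambda\lambda_\bullet\). Finally, with \(h^2\eps^2=k^2\eta^2/(\tau^2a_\eps^2)\), we obtain \eqref{eq:physical-coefficient-transform}.

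The main obstacle is Step 2: the bookkeeping of \(\ell\) and the \(v\)-shift so that the \(\lambda\)-coefficient in \(g\) is exactly \(-1\) and \(D=0\). I would first do that algebra carefully, using \(\mathcal T\) to combine the terms.
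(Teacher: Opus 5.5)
Your proposal is correct and is essentially the paper's proof: Taylor matching in the transformed field using $f_{uu}\ell+\widehat f_{u\mu}a_\lambda=0$ and $g_u\ell+\widehat g_\mu a_\lambda=-a_xg_u$ (the latter follows directly from the definition of $\mathcal T$), then applying $\mathcal A_{\eps,\lambda}$ to the normalized stage equations to get the physical stages with $k=\tau h$, and finally reading off the section, the factor $a_y$ in the splitting and the coefficient transform. Two small corrections: since the prefactor is $\tau a_\eps/a_y=1/(a_xg_u)$, the $\lambda$-coefficient is $-\frac{2g_u}{\tau f_{uu}}\cdot\frac{1}{a_xg_u}=-1$ exactly (your intermediate expression dropped the factor $1/g_u$), and in Step~4 no cancellation against $\sum_m a_{im}$ occurs, because $\mathcal A_{\eps,\lambda}Z_i=\mathcal A_{\eps,\lambda}z+\tau h\sum_m a_{im}F^{\phys}(\mathcal A_{\eps,\lambda}Z_m)$ follows directly, with the translation carried by the input point.
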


The section in \eqref{eq:physical-fold-following-section} follows the
linearized singular fold location.  This is the general affine reduction.
When the parameter directly unfolds the slow drift, a version preserving a
fixed physical section is available.

\begin{remark}[Fixed physical section]
 \label{rem:fixed-physical-section}
If \(\widehat g_\mu\ne0\), set \(\ell=0\) and replace the parameter scale
in \eqref{eq:affine-scales} by
\begin{equation}
 \label{eq:fixed-section-parameter-scale}
 a_\lambda=-\frac{2g_u}{\tau f_{uu}\widehat g_\mu}.
\end{equation}
Then \(x=0\) corresponds to the fixed physical section \(u=0\), and the
normalized field still has the form \eqref{eq:normalized-fold-field}, now
with
\begin{equation}
 \label{eq:fixed-section-D}
 D=-\frac{2g_u\widehat f_{u\mu}}
          {f_{uu}\widehat g_\mu},
 \qquad
 D+2=\frac{2\mathcal T}{f_{uu}\widehat g_\mu}.
\end{equation}
This version lies in a class satisfying
\eqref{eq:normalized-unfolding-bound} whenever the last expression has a
uniform positive lower bound.  Substitution of this \(D\) and
\(a_\lambda\) into the covariance formula
\eqref{eq:physical-coefficient-transform} gives the same gauge-independent
physical jet coefficient as \eqref{eq:physical-jet-coefficient} below.
Thus \cref{cor:physical-threshold} also holds on the fixed section \(u=0\)
whenever the hypotheses of this remark are satisfied.
\end{remark}

\begin{corollary}[Uniformity over compact physical families]
 \label{cor:uniform-physical-families}
Let a compact family of physical germs be continuous in a common
\(H^\infty\) neighbourhood and satisfy uniform versions of
\eqref{eq:physical-fold-conditions} and \(\mathcal T\ne0\); in particular,
assume that \(\abs{f_v}\), \(\abs{f_{uu}}\), \(\abs{g_u}\),
\(\abs{\mathcal T}\), and \(-f_vg_u\) have positive uniform lower bounds.
Then one may choose common normalized polydiscs and a single \(\tau>0\) so
that the affine images form a normalized analytic fold class in the sense of
\cref{def:normalized-fold-class}.  The affine scales and their inverses are
uniformly bounded, and every uniform normalized threshold estimate transfers
to the physical family through \eqref{eq:physical-normalized-variables} and
\eqref{eq:physical-coefficient-transform}.
\end{corollary}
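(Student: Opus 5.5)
The plan is to obtain the corollary from \cref{prop:affine-normalization} applied germ by germ, and then to check that every quantity entering \cref{def:normalized-fold-class} depends continuously and boundedly on the germ. Two ingredients do the work: compactness of the physical family and the assumed uniform lower bounds. Most of the argument is bookkeeping. The one genuinely delicate point is choosing common polydiscs and a single \(\tau\).

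\textbf{Step 1: the scales.} The scales \(a_x,a_y,a_\eps,a_\lambda,\ell\) in \eqref{eq:affine-scales}--\eqref{eq:affine-fold-shear} are rational in finitely many jets at the origin. Their denominators are \(f_{uu}\), \(f_v\), \(f_vg_u\) and \(\mathcal T\), all bounded away from zero by hypothesis. The jets depend continuously on the germ in \(H^\infty\) by Cauchy estimates on a common neighbourhood. Hence for any fixed \(\tau>0\):
\begin{itemize}
 \item the scales and their reciprocals are continuous and uniformly bounded;
 \item \(a_\eps>0\) is bounded below, since \(-f_vg_u\) is;
 \item the displayed coefficients \(A,B,C_\eps,E,F_s,S_\eps\) in \eqref{eq:affine-normalized-coefficients} are uniformly bounded and continuous.
\end{itemize}
Since \(D=0\) for every member, \eqref{eq:normalized-unfolding-bound} holds trivially with infimum \(2\).

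\textbf{Step 2: common polydiscs.} Write \(\rho\) for the common physical polyradius on which all germs are holomorphic and uniformly bounded. The normalized data are defined by pulling back the physical germ through the affine map \eqref{eq:affine-normalization-map}. I would choose normalized polydiscs \(\Omega\Subset\Omega^+\) small enough that their images under the affine change lie in the physical polydisc of radius \(\rho/2\), uniformly over the family. This is possible because the affine coefficients are uniformly bounded.

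The normalized field is then holomorphic on \(\Omega^+\), and its remainders \(R_f,R_g\) are the pulled-back functions minus finitely many Taylor monomials. They are therefore holomorphic and uniformly bounded, and real because the germs and scales are real. Their weighted orders are the ones stated in \cref{prop:affine-normalization}.

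The map from germ to remainder is continuous into \(H^\infty(\Omega^+)\), because composition with a continuously varying affine map and subtraction of continuously varying jets are continuous under uniform convergence on slightly larger domains. So the image of the compact family is compact, and it is a normalized analytic fold class.

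\textbf{Step 3: choosing \(\tau\).} The rescaling \(s=\tau t\) multiplies the field by \(\tau\), which affects the coefficient sizes. One fixed \(\tau\), say \(\tau=1\), already makes every bound in Steps 1 and 2 uniform, so no germ-dependent choice is needed. Fixing a single \(\tau\) matters because \(h=k/\tau\) must be a common rescaling of the physical step across the family.

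\textbf{Step 4: transfer of estimates.} A uniform normalized estimate has constants depending only on \(\Gclass\), \(\Omega\) and \(\ThetaRK\). To move it to the physical family:
\begin{itemize}
 \item Use the exact conjugacy \eqref{eq:rk-affine-conjugacy} and the splitting relation \eqref{eq:physical-normalized-splitting}, which give \(\mu_\bullet=a_\lambda\lambda_\bullet\).
 \item Use \eqref{eq:physical-normalized-variables} to convert \(h,\eps,\lambda\) to \(k,\eta,\mu\).
 \item Use \eqref{eq:physical-coefficient-transform} for the coefficient.
\end{itemize}
Each conversion involves only the uniformly bounded scales and their inverses, so small-step rectangles and error terms such as \(o(h^2\eps^2)\) map to comparable physical rectangles and \(o(k^2\eta^2)\). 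The little-\(o\) remains uniform because multiplication by bounded factors preserves uniformity.

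The main obstacle is Step 2: making precise the continuity of the germ-to-remainder map into \(H^\infty(\Omega^+)\) with strictly nested domains. The approach is to shrink \(\Omega^+\) so that its affine image stays a fixed distance inside the physical domain, and then control the finitely many subtracted jets by Cauchy estimates.
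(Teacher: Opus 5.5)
Your proposal is correct and follows essentially the same route as the paper's proof at the end of \cref{app:affine-normalization}. That proof uses continuity of the scales in the finite jet together with the uniform lower bounds, a common normalized polydisc whose affine images lie in the physical holomorphic neighbourhood, continuity of the Taylor projection in \(H^\infty\) on nested polydiscs to obtain a compact image, \(D=0\) for the unfolding bound, and uniform scale conversions for the transfer. One small slip: the lower bound on \(-f_vg_u\) gives positivity and an \emph{upper} bound for \(a_\eps=1/(\tau^2(-f_vg_u))\), while its lower bound comes from the Cauchy upper bound on the jets; your claim that the reciprocals are uniformly bounded already covers this.
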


\begin{remark}[Natural covariance boundary]
 \label{rem:affine-boundary}
The transformation above is affine in state and parameters for every frozen
parameter value, and the clock is changed only by the constant factor
\(\tau\).  These are precisely the operations used in the exact stage
conjugacy \eqref{eq:rk-affine-conjugacy}.  A nonlinear state change or a
state-dependent time rescaling followed by a fresh Runge--Kutta
discretization generally produces a different numerical map.  Accordingly,
\cref{prop:affine-normalization} is an affine-covariance result, not a claim
of nonlinear coordinate invariance.
\end{remark}

Finally introduce the fold variables
\begin{equation}
 \label{eq:fold-scaling}
 r=\sqrt\eps,
 \qquad
 x=rX,
 \qquad
 y=r^2Y,
 \qquad
 \lambda=r^2L,
 \qquad
 H=hr,
 \qquad
 \zeta=rt.
\end{equation}
If \(\mathcal S_r(X,Y)=(rX,r^2Y)\), the inner numerical map is the exact
conjugate
\begin{equation}
 \label{eq:inner-map}
 \widehat\Phi_{J,r,H,\theta,L}
 =\mathcal S_r^{-1}\circ
   \Phi_{J,h,\theta,r^2,r^2L}\circ\mathcal S_r.
\end{equation}
At \(r=0\), the inner flow contains the canonical orbit
\begin{equation}
 \label{eq:canonical-fold-orbit}
 Y=X^2-\frac12,
 \qquad
 \frac{\dd X}{\dd\zeta}=\frac12.
\end{equation}
Its normal variational equation is responsible for the Gaussian transport
functional used below.

\section{Threshold laws and the fold-response functional}
\label{sec:main-results}

We now state the two forms of the threshold law.  The first compares
arbitrary admissible local continuations.  It is uniform over these choices,
but necessarily retains their exponentially small ambiguity.  The second
matches the flow and numerical continuations to second order at the outer
edge of the fold region.  That additional relation removes the ambiguity at
the scale needed for the ordinary limit \(h\to0\) with \(\eps\) fixed.

We use the inner scales in \cref{eq:fold-scaling}.  For a normalized fold
datum \(J\in\Gclass\), write
\begin{equation}
 \label{eq:fold-jet-combinations}
 F_3(J)=A+B,\qquad G_2(J)=E+F_s,
 \qquad \XiJ(J)=F_3(J)-2G_2(J).
\end{equation}
The transverse slope of the flow splitting is governed by
\begin{equation}
 \label{eq:flow-slope-coefficient}
 a(J)=(D+2)\sqrt{\frac{\pi}{2}},
\end{equation}
which is bounded away from zero on the fixed normalized class.  The
threshold-response coefficient is
\begin{equation}
 \label{eq:threshold-coefficient}
 \boxed{
 K_\theta(J)
 =-\frac{3}{4(D+2)}
   \left(b_\theta^T\Amat_\theta\cvec_\theta-\frac16\right)
   \XiJ(J)
 =-\frac{3\beta_\theta}{4(D+2)}\XiJ(J).}
\end{equation}

\paragraph{The result in informal form.}
For any fixed families of folds and methods just described, the admissible
classes of actual flow and map invariant manifolds defined below are
nonempty for all sufficiently small \(r=\sqrt{\eps}\) and \(h\), with no
lower relation between the two parameters.  Independently chosen local
continuations satisfy an absolute threshold law with leading algebraic term
\(K_\theta(J)h^2\eps^2\), controlled higher algebraic remainders, and a
step-independent, exponentially small continuation term.  Hence every
positive power-law step scale resolves the same leading coefficient
\eqref{eq:threshold-coefficient}.  If the flow and map continuations are
matched to second order at the outer collar, the normalized fold law is
uniform on the full small-step rectangle and, for each fixed sufficiently
small \(\eps>0\), the numerical threshold converges to its matched flow
threshold with order two as \(h\to0\).  In both formulations, all dependence
of the leading joint-fold bias on the Runge--Kutta method passes through the
single chain-tree defect \(\beta_\theta\).  The definitions and quantitative
remainders that make these statements precise follow next.

Define also the leading inner flow root and its response window by
\begin{align}
 \label{eq:main-L0-definition}
 L_0(J)&=
 \frac{-3A-B-4C_\eps+2E-2F_s+8S_\eps}{4(D+2)},\\
 \label{eq:main-response-window}
 \mathcal W_r(J)&=\{L:\abs{L-L_0(J)}\le C_Wr\},
\end{align}
where \(C_W>0\) is uniform.

\subsection{Arbitrary local continuations}

The selections appearing in \cref{def:local-splitting} reflect the
exponentially small non-uniqueness of locally continued flow and map slow
manifolds.  We now specify the uniform class over which the first theorem is
quantified.
Choose a compact interval \(\Lambda\) in the inner parameter \(L\) that
contains \(\mathcal W_r(J)\) for every \(J\in\Gclass\) and all sufficiently
small \(r\).  On each
side \(q\in\{\att,\rep\}\), fix an outer collar
\(I_q^{\rm col}\), a bridge \(I_q^{\rm br}\) joining that collar to
\(x=0\), and their nonempty overlap \(I_q^{\rm ov}\); these intervals are
independent of \(J,r,h\), and \(\theta\).  A convenient common choice is
given in \eqref{app-geom:side-intervals}.  Let \(\phi_J\) be the local
critical graph determined by \(f_J(x,\phi_J(x),0,0)=0\), and write
\(a_+=\max\{a,0\}\).

\begin{definition}[Uniformly admissible actual selections]
 \label{def:admissible-selection}
Given \(J,r\), and, in the map case, \(\theta,h\), set
\(\eps=r^2\) and use \(L\in\Lambda\) to parameterize
\(\lambda=r^2L\).  On side
\(q\in\{\att,\rep\}\), an admissible selection for
\(\bullet\in\{\fl,\rk\}\) consists of a collar graph
\(m_{\bullet,q}\) on \(I_q^{\rm col}\times\Lambda\) and a bridge graph
\(u_{\bullet,q}\) on \(I_q^{\rm br}\times\Lambda\).  The two graphs agree
on \(I_q^{\rm ov}\times\Lambda\), lie in one fixed real neighbourhood in
\(\Omega\), and, for constants \(B_{\rm sel},c_d,C_d>0\), satisfy
\begin{equation}
 \label{eq:main-selection-bounds}
 \begin{aligned}
  &\max_{0\le i\le4,\ 0\le j\le2}r^{-2}
    \sup_{I_q^{\rm col}\times\Lambda}
    \abs{\partial_x^i\partial_L^j(m_{\bullet,q}-\phi_J)}
    \le B_{\rm sel},\\
  &\max_{0\le i\le4,\ 0\le j\le2}r^{(i-2)_+}
    \sup_{I_q^{\rm br}\times\Lambda}
    \abs{\partial_x^i\partial_L^j u_{\bullet,q}}
    \le B_{\rm sel}.
 \end{aligned}
\end{equation}
Write \(\widehat m_{\bullet,q}\) for the pasted graph.  In the flow case it
satisfies exact tangency,
\(r^2g_J=\partial_x\widehat m_{\fl,q}f_J\), and directed drift
\(c_dr^2\le f_J\le C_dr^2\), with all functions evaluated on that graph at
\((\eps,\lambda)=(r^2,r^2L)\).  In the map case let
\(\Phi^+=\Phi_{J,h,\theta,r^2,r^2L}\),
\(\Phi^-=\Phi_{J,-h,\theta^\dagger,r^2,r^2L}\), and define
\[
 z_q(x,L)=(x,\widehat m_{\rk,q}(x,L)),\qquad
 T_q^\pm(x,L)=\pi_x\Phi^\pm(z_q(x,L)).
\]
Whenever \(x\) and \(T_q^\pm(x,L)\) belong to the pasted domain, require
\(\Phi^\pm(z_q(x,L))=z_q(T_q^\pm(x,L),L)\).  All stages remain in a fixed
common stage neighbourhood, and
\(c_dhr^2\le T_q^+(x,L)-x\le C_dhr^2\).
An admissible selection is the pair of its attracting and repelling side
selections.  A selection class is uniformly admissible on a parameter
rectangle if the same intervals, \(\Lambda,B_{\rm sel},c_d,C_d\), and stage
neighbourhood work for every member.  Nonemptiness means one flow selection
for every \((J,r)\) and one map selection for every
\((J,\theta,r,h)\).  No continuity in \(J,r,h\), or \(\theta\) is required;
\cref{app-geom:selection-definition} records the exact graph identities in
components.
\end{definition}

\begin{theorem}[Uniform local threshold displacement]
 \label{thm:all-selection}
Fix a compact normalized fold class \(\Gclass\) and a compact order-two
Runge--Kutta family \(\ThetaRK\).  There are constants
\(r_0,h_0,C,c,C_W>0\), an integer \(M\), and a choice of uniform admissibility constants
such that the flow classes \(\mathfrak S_{\fl}(J,r)\) and map classes
\(\mathfrak S_{\rk}(J,\theta,r,h)\) of
\cref{def:admissible-selection} are nonempty throughout the following
parameter rectangle.  For every
\[
 J\in\Gclass,
 \quad \theta\in\ThetaRK,
 \quad 0<r\le r_0,
 \quad 0<h\le h_0,
\]
and every independently chosen
\(\sigma_f\in\mathfrak S_{\fl}(J,r)\) and
\(\sigma_m\in\mathfrak S_{\rk}(J,\theta,r,h)\), the flow
splitting and the exact-map splitting have unique simple zeros in
\(\mathcal W_r(J)\).  Denote the corresponding unscaled normalized
parameter values by \(\lambda_{\fl}^{\sigma_f}\) and
\(\lambda_{\rk,\theta}^{\sigma_m}\).  Then
\begin{equation}
 \label{eq:all-selection-law}
 \boxed{
 \lambda_{\rk,\theta}^{\sigma_m}
 -\lambda_{\fl}^{\sigma_f}
 =K_\theta(J)h^2r^4
 +\ord(h^2r^5+h^3r^5)
 +\ord\!\left(r^{-M}\eexp^{-c/r^2}\right).}
\end{equation}
The estimates, the response neighbourhood, and the simplicity constants are
uniform in all the displayed variables and in both selections.  In
particular, existence and uniqueness hold on the full small-step rectangle;
no lower relation between \(h\) and \(r\) is imposed.
\end{theorem}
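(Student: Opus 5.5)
The plan has four stages, carried out in the inner variables \eqref{eq:fold-scaling} with \(\eps=r^2\), \(\lambda=r^2L\), \(H=hr\).

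\textbf{Nonemptiness of the selection classes.} We first show that \(\mathfrak S_{\fl}(J,r)\) and \(\mathfrak S_{\rk}(J,\theta,r,h)\) are nonempty. On the normally hyperbolic collars \(I_q^{\rm col}\) we apply a Fenichel-type graph transform, in the style of Nipp--Stoffer, to the flow and to the exact map \(\Phi^\pm\). The inverse on the repelling side is obtained from the contained adjoint branch \eqref{eq:rk-contained-inverse}. This yields graphs \(r^2\)-close to \(\phi_J\) in \(C^{4,2}\), with bounds uniform in \(h\le h_0\); the map is an \(h\)-step of a near-identity analytic family, so no relation between \(h\) and \(r\) enters. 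On the bridges we continue the graphs through \(x=0\) in the blown-up chart. There the map \(\widehat\Phi\) is an \(H\)-step of an analytic perturbation of the inner flow, which contains the canonical orbit \eqref{eq:canonical-fold-orbit}. The directed drift \(T_q^+-x\asymp hr^2\) makes the invariance equation a contraction along orbits. The derivative weights \(r^{(i-2)_+}\) in \eqref{eq:main-selection-bounds} are exactly the cost of the rescaling.

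\textbf{Exact one-step residual.} Fix admissible selections. We substitute the pasted flow graph into the map invariance condition; the defect \(\rho=\Phi^+(z_{\fl,q})-z_{\fl,q}(T^+)\) is the local error of the actual RK map restricted to the flow slow manifold. We expand \(\Phi^+\) as a B-series up to order four, using only the order-two conditions \eqref{eq:rk-order-two-conditions}, with analytic remainder estimates from Cauchy bounds on \(\Omega^+\). The \(h^3\) term is \(\alpha_\theta\,F''(F,F)+\beta_\theta\,F'F'F\), evaluated on the slow graph, plus the canonical \(d_\theta\) residual coming from non-uniform stage consistency. In the inner chart the normal component of this residual is \(h^3r^{\cdot}\) times an explicit polynomial in \(X\) along \(Y=X^2-\tfrac12\).

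\textbf{Transport to \(\Sigma_{\rm c}\).} Writing \(\widehat m_{\rk,q}=\widehat m_{\fl,q}+w_q\), the correction \(w_q\) solves a linear inhomogeneous difference equation driven by \(\rho\), up to quadratic terms. We solve it by summing along orbits with the discrete propagator of the normal variational equation. To leading order the discrete sum is a Riemann sum, with error \(O(H)\), of the integral of the residual against \(\exp(\pm(X^2\text{-type}))\) weights. On the two sides these combine into the difference of splittings \(\Delta_{\rk}-\Delta_{\fl}\), tested against the Gaussian kernel \(e^{-X^2}\) on \(\R\). We then compute the Gaussian moments. The bushy coefficient \(\alpha\) and the \(d_\theta\) residual enter through odd or derivative-type functions whose Gaussian moments vanish after integrating by parts along the orbit. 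The chain coefficient \(\beta\) leaves \(-\tfrac38\beta\,\XiJ(J)\sqrt{\pi/2}\), times \(h^2r^{6}\) in unscaled splitting units.

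\textbf{Root capture.} The flow splitting, scaled by \(r^{-4}\) say, equals \(a(J)(L-L_0(J))+O(r)\), with \(L\)-derivative uniformly bounded away from zero by \eqref{eq:normalized-unfolding-bound}. This comes from the \(\lambda\)-Melnikov integral of \((D+2)\) against the Gaussian, which gives \eqref{eq:flow-slope-coefficient}. The map splitting differs from it by the transported response plus \(O(h^2r^{\cdot+1}+h^3r^{\cdot+1})\), and the \(L\)-derivatives are controlled via \(j\le2\). A uniform implicit function or Rouché argument on \(\mathcal W_r\) then gives unique simple zeros for both splittings. Dividing the response by \(a(J)\) gives \(K_\theta(J)h^2r^4\) for the root shift.

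Finally, the independence of \(\sigma_f\) and \(\sigma_m\) enters only through the collar data. Two admissible graphs on a collar differ by a contraction or expansion across the fold region, of size \(\exp(-c/r^2)\) times polynomial factors. This produces the term \(r^{-M}e^{-c/r^2}\), which is independent of \(h\) by the uniform drift bounds.

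The main obstacle is the transport step. It requires uniform, \(h\)-independent control of the discrete normal propagator through the nonhyperbolic region \(|X|\lesssim1\) when \(H=hr\) is not small relative to the inner scale, and we need the Riemann-sum error to be \(O(H)\) relative to the leading term even when \(h\gg r\). We would first attempt a discrete resolvent bound comparing the map propagator with the exponential of the inner flow propagator, using analyticity in \(H\) to control the difference.
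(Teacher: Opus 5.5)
Your overall architecture (collar and bridge graphs, exact residual, Gaussian transport, root transfer, exponential shielding) matches the paper's. There is, however, a genuine gap in the transport step, and it lies in your choice of comparator. You write $\widehat m_{\rk,q}=\widehat m_{\fl,q}+w_q$ and transport the residual of the \emph{unshifted} flow graph. For a general order-two tableau, the map already fails to preserve that graph at order $\widehat H^3$ with no factor $r$. On the canonical datum $\mathcal V_0=(X^2-Y,X)$, $Y_0=X^2-\frac12$, one has $\alpha\mathcal V_0''(\mathcal V_0,\mathcal V_0)+\beta\mathcal V_0'\mathcal V_0'\mathcal V_0=(\frac{\alpha-\beta}2,0)$. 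Contracting with $(-2X,1)$ gives the residual $-(\alpha_\theta-\beta_\theta)X\widehat H^3$; for explicit midpoint this is the $-\frac18H^3X$ of \eqref{app-geom:midpoint-canonical-graph-exact}.

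Transported to the core, this residual displaces each one-sided map graph by about $q_\theta H^2$, where $q_\theta=(\alpha_\theta-\beta_\theta)/2$. That displacement is common to both sides. Its $O(r)$ relative corrections, however, come from the mesh and the multipliers $M_k=1-2Hz_{k+1}+\ord(HrP+H^2P)$, and they have size $rH^2$. That is exactly the size of the response you want, so a leading-order kernel with an $O(H)$ Riemann-sum error cannot resolve them. For the same reason, your claim that $\alpha$ enters only through zero-moment profiles fails for the unshifted residual. Its $r\widehat H^3$ coefficient is $Q_{10}-q_\theta\,\partial_r|_{r=0}[\ell_r\cdot\partial_Y(\mathcal V_0+r\mathcal V_1)]$ with $\ell_r=(-Y_r',1)$. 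The subtracted term has half-Gaussian mean $q_\theta(\frac32A+\frac12B+C_\eps+DL_0-\frac12E+\frac12F_s)$. Your computation would therefore return a coefficient depending on $\alpha$ and on $C_\eps,D,L_0$.

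The missing idea is the common offset: compare with $E+q_\theta H^2$. This offset cancels exactly in $\Delta_{\rk}-\Delta_{\fl}$. The shifted residual is $\widehat H^3[rQ_{10}+\widehat H\gamma_\vartheta\mathscr H_2+\ord(r^2P+r\abs{\widehat H}P+\widehat H^2P)]$, with no $r^0\widehat H^3$ term, so a kernel accurate to relative order $O(r+H)$ then suffices. Only in this form does the bushy profile $\alpha G_2(\frac12-2X^2)$ integrate to zero. Alternatively, you could carry the propagator to first order in $r$ and use the telescoping $\sum_k\Pi_k(1-M_k)=1-\Pi_N$. Note also that the kernel is $\eexp^{-2X^2}$ on each half-line, not $\eexp^{-X^2}$. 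With $\eexp^{-X^2}$ one has $\mathbb E[X^2]=\frac12$, and the bushy profile would survive.

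Your root step also lacks an ingredient. The transport controls $m-f$ only in $C^0$: $F=m-f+nK_\theta(J)H^2$ satisfies $\norm{F}_{C^0}\le A_0=\ord(r^4H^2+r^3H^3+r^{-M}\eexp^{-c/r^2})$, while the flow slope is $n\asymp r^3$. The $j\le2$ admissibility bounds give only $\abs{m'},\abs{m''}=O(1)$, which does not show $m'>0$. Rouch\'e is unavailable, because admissible selections are merely $C^2$ in $L$. Differentiating the transport in $L$ would force you to handle moving chain nodes and an $L$-dependent stopping index. The paper instead interpolates on a buffered window, $\abs{F'}\le2\sqrt{A_0B}$ with $B$ bounding $F''$, which gives $\abs{F'}/n=\ord(h+h^{3/2}+r^{-M}\eexp^{-c/(2r^2)})$. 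A buffered contraction then yields existence and uniqueness, and the location uses only the value bound.

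Two smaller points. First, your intermediate constant $-\frac38\beta\XiJ(J)\sqrt{\pi/2}$, divided by $a(J)$, gives $K_\theta(J)/2$. One of the two factors of two (the effective mesh $H/2$, or the two-sided subtraction) has been lost. The correct scales are $h^2r^5$ for the response and $r^{-3}$ for the flow splitting. Second, your stated obstacle is misplaced. Since $H=hr\le h_0r$, the regime $h\gg r$ is harmless. The real danger is $h\downarrow0$, where $O(H^{-1})$ cells must not accumulate error; this is why every estimate must keep its mesh factor, $H\sum_k\Pi_k(1+z_k)^m\le C_m$.
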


The exponential term in \eqref{eq:all-selection-law} is independent of
\(h\).  The stated uniform remainder is therefore negligible relative to
the leading scale whenever
\begin{equation}
 \label{eq:selection-resolution}
 \Phi(r,h)
 :=h^{-2}r^{-M-4}\eexp^{-c/r^2}\longrightarrow0.
\end{equation}
Every positive power-law step scale \(h=r^p\), with arbitrary \(p>0\), satisfies
\eqref{eq:selection-resolution}.  Along any such path,
\begin{equation}
 \label{eq:all-selection-relative-law}
 \frac{\lambda_{\rk,\theta}^{\sigma_m}
       -\lambda_{\fl}^{\sigma_f}}{h^2\eps^2}
 \longrightarrow K_\theta(J)
\end{equation}
uniformly over the fold data, methods, and selections.  This includes
\(h\ll\eps\).  At a fixed positive \(r\), however, independently selected
flow and map thresholds need not approach one another at order \(h^2\);
\cref{prop:pairing-necessary} shows that such a statement would be false.

\subsection{Second-order matched continuations}

We formulate the fixed-\(\eps\) result through a property of the actual
outer invariant graphs.  Let \(m_{\bullet,q}\) denote their unscaled normalized
\(y\)-graphs, where \(\bullet\in\{\fl,\rk\}\) and
\(q\in\{\att,\rep\}\).  If \(\phi_J\) is the local critical-manifold graph,
set
\[
 V_{\bullet,q}=\frac{m_{\bullet,q}-\phi_J}{r^2}.
\]

\begin{definition}[Enhanced admissible flow selections]
 \label{def:enhanced-selection}
An admissible flow selection is enhanced if, uniformly on both sides, its
collar and bridge graphs satisfy the bounds in
\eqref{eq:main-selection-bounds} also for \(i=5\), with a common constant
\(B_{\rm sel}^+\).  In addition, each one-sided bridge has an exact-tangency
continuation across \(x=0\) over a fixed inner interval
\(-\ell_{\rm ev}\le \varsigma x/r\le0\), where
\(\varsigma=-1\) on the attracting side and \(\varsigma=1\) on the
repelling side; the same weighted \(C_x^5C_L^2\) bridge bound holds on this
extension.  The extension is used only to evaluate the same selected flow
graph across a final cell and does not identify the attracting and repelling
graphs.
\end{definition}

\begin{definition}[Fold-matched actual continuations]
 \label{def:fold-matched}
A uniformly second-order fold-matched rule \(\mathfrak p\) assigns, for
every \(J\in\Gclass\), \(\theta\in\ThetaRK\),
\(0<r\le r_0\), and \(0<h\le h_0\), selections and functions
\[
 \begin{gathered}
  \sigma_f^{\mathfrak p}(J,r)\in\mathfrak S_{\fl}(J,r),\qquad
  \sigma_m^{\mathfrak p}(J,\theta,r,h)
    \in\mathfrak S_{\rk}(J,\theta,r,h),\\
  U_{q,2}(J,\theta,r;\,\cdot\,,\,\cdot\,)
    \in C_x^3C_L^1(I_q^{\rm col}\times\Lambda),
  \qquad q\in\{\att,\rep\}.
 \end{gathered}
\]
Here the flow selection is enhanced and independent of \(h\) and
\(\theta\), while each \(U_{q,2}\) is independent of \(h\).  They satisfy
\begin{equation}
 \label{eq:fold-matched-condition}
 \norm{
  V_{\rk,q}-V_{\fl,q}-H^2U_{q,2}
 }_{C_x^3C_L^1(I_q^{\rm col}\times\Lambda)}
 \le C_{\mathfrak p}hH^2,
 \qquad
 \norm{U_{q,2}}_{C_x^3C_L^1(I_q^{\rm col}\times\Lambda)}
 \le C_{\mathfrak p}.
\end{equation}
The enhanced admissibility constant and the constants in
\eqref{eq:fold-matched-condition} are uniform over \(J\), \(\theta\),
\(r\), \(h\), and both sides.  In the unscaled normalized \(y\)-coordinate,
\eqref{eq:fold-matched-condition}
is equivalent to
\[
 m_{\rk,q}-m_{\fl,q}
 =h^2r^4U_{q,2}+\ord(h^3r^4).
\]
\end{definition}

The definition allows any outer selection rule with the stated second-order
relation between its invariant graphs.  A common completion of the outer
flow and map dynamics supplies one such rule; this construction is proved
in \cref{prop:matched-existence}.

\begin{theorem}[Full-rectangle and fixed-parameter laws]
 \label{thm:matched-threshold}
Under the hypotheses of \cref{thm:all-selection}, the class of uniformly
second-order fold-matched rules is nonempty.  Fix any such rule
\(\mathfrak p\).  After
reducing \(r_0,h_0\) if necessary, for every \(J\in\Gclass\),
\(\theta\in\ThetaRK\), \(0<r\le r_0\), and \(0<h\le h_0\), its actual flow
and map splittings have unique roots
\(\lambda_{\fl}^{\mathfrak p}(J,r)\) and
\(\lambda_{\rk,\theta}^{\mathfrak p}(J,r,h)\).  Uniformly on the complete
small-step rectangle,
\begin{equation}
 \label{eq:matched-uniform-law}
 \left|
 \lambda_{\rk,\theta}^{\mathfrak p}
 -\lambda_{\fl}^{\mathfrak p}
 -K_\theta(J)h^2r^4
 \right|
 \le C_{\mathfrak p}h^2r^4
 \left(r+H+r^{-M}\eexp^{-c/r^2}\right).
\end{equation}
Consequently,
\begin{equation}
 \label{eq:matched-uniform-limit}
 \lim_{r\downarrow0}
 \sup_{\substack{J\in\Gclass,\ \theta\in\ThetaRK\\0<h\le h_0}}
 \left|
 \frac{\lambda_{\rk,\theta}^{\mathfrak p}
       -\lambda_{\fl}^{\mathfrak p}}{h^2r^4}
 -K_\theta(J)
 \right|=0.
\end{equation}

For every fixed \(0<r\le r_0\), there are \(h_*(r)>0\), a coefficient
\(\kappa_\theta^{\mathfrak p}(J,r)\), and a constant
\(C_{r,\mathfrak p}\), uniform in \(J\) and \(\theta\), such that
\begin{equation}
 \label{eq:matched-fixed-r-law}
 \lambda_{\rk,\theta}^{\mathfrak p}
 -\lambda_{\fl}^{\mathfrak p}
 =h^2\kappa_\theta^{\mathfrak p}(J,r)
 +\ord_{r,\mathfrak p}(h^3),
 \qquad 0<h\le h_*(r),
\end{equation}
and
\begin{equation}
 \label{eq:matched-kappa}
 \kappa_\theta^{\mathfrak p}(J,r)
 =K_\theta(J)r^4
 +\ord\!\left(r^5+r^{-M}\eexp^{-c/r^2}\right).
\end{equation}
Thus the coherently selected local threshold converges with order two as
\(h\to0\) for every fixed sufficiently small \(\eps=r^2\).
\end{theorem}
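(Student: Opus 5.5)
Nonemptiness of the class of fold-matched rules comes first, and it is the only place where \cref{prop:matched-existence} (common completion of the outer flow and map dynamics) is invoked. The idea is to embed the flow and the Runge--Kutta map, on each outer collar, in one analytic family. Start from a modified vector field of the actual map, truncated at order \(h^2\), and complete it by exponentially accurate interpolation. The selected flow graph is then the invariant graph of the \(h=0\) member, and the map graph is the actual invariant graph obtained from the same completion. Differentiating the invariance equation in \(h\) twice at \(h=0\) yields \(U_{q,2}\) as the solution of a linear homological equation on the collar. That equation is solvable uniformly because normal hyperbolicity there is of size \(\ord(1)\) in the \(V\)-variables. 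Taylor's theorem with remainder in \(h\) then gives \eqref{eq:fold-matched-condition} with the \(hH^2\) bound. Enhancement of the flow selection (the extra \(i=5\) derivative and the short tangency extension across \(x=0\)) follows from the analytic bridge estimates of the appendices, after shrinking polydiscs.

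Next comes the full-rectangle law \eqref{eq:matched-uniform-law}. Rerun the chain used for \cref{thm:all-selection}: actual graphs, exact one-step residual, Gaussian transport, and root capture. This time feed the map bridge its collar data through \(V_{\rk,q}=V_{\fl,q}+H^2U_{q,2}+\ord(hH^2)\) instead of through an arbitrary admissible collar. The difference of splittings at the central section then has three parts:
\begin{enumerate}[label=(\roman*)]
\item the transported residual, which produces \(K_\theta(J)h^2r^4\) times the slope \(a(J)\);
\item the transported collar mismatch \(H^2U_{q,2}\);
\item exponentially small terms from the bridge continuation.
\end{enumerate}
The key observation concerns (ii). It is carried from the collar into the fold by the normal variational equation, and that equation contracts by \(\eexp^{-c/r^2}\) on the attracting side forward and on the repelling side backward. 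Hence (ii) contributes only \(h^2r^4\cdot r^{-M}\eexp^{-c/r^2}\). The crucial point is that this error is now multiplied by \(h^2\): it is not \(h\)-independent, as in \cref{thm:all-selection}, because the \(\ord(1)\) selection freedom has been removed by matching. The remaining algebraic error terms are \(h^2r^5\) and \(h^3r^5=h^2r^4H\), exactly as before. Dividing by the uniformly positive slope \(a(J)\) and applying the implicit-function root capture in \(\mathcal W_r(J)\) gives \eqref{eq:matched-uniform-law}. Uniqueness of the roots follows from the simplicity estimates already established. The limit \eqref{eq:matched-uniform-limit} is then immediate, since \(H=hr\le h_0r\).

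For fixed \(r\), first establish smoothness in \(h\). With \(r\) frozen, the map selection furnished by the completion depends \(C^3\) on \(h\in[0,h_*(r)]\); here the dependence is allowed to be non-uniform in \(r\). It is continuous in \(J,\theta\) through the compact families, and at \(h=0\) it reduces to the flow selection. The map splitting \(\Delta_{\rk}(\lambda;h)\) is therefore \(C^3\) in \(h\) and equals \(\Delta_{\fl}\) at \(h=0\). Its first derivative in \(h\) vanishes, by order two of the tableau together with the matching, which has no \(h^1\) term. The implicit function theorem then yields \eqref{eq:matched-fixed-r-law} with \(\kappa=\tfrac12\partial_h^2\lambda_{\rk}|_{h=0}\). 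Finally, \eqref{eq:matched-kappa} follows by comparing with \eqref{eq:matched-uniform-law}: divide by \(h^2\) and let \(h\to0\), so that the \(H\)-term disappears and only \(r^5\) and the exponential term remain.

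I expect the main obstacle to be the existence of matched rules, specifically the uniformity of the \(hH^2\) remainder over the full rectangle including \(h\gg r^2\). There the map is not a small perturbation of the flow in inner time, so the completion must be built in outer variables where \(h\) enters analytically. I would try to construct it by a fixed-point argument on collar graphs in the weighted \(C^3_xC^1_L\) norm, using the contracting normal dynamics, with the \(h\)-expansion controlled by Cauchy estimates on a complex \(h\)-disc.
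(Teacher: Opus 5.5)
\paragraph{Where the proposal is sound.}
Your full-rectangle argument follows the paper. With the matched collar datum, the terminal term in the exact common-target Green identity is $H^2$ times a bounded function, and the Gaussian product shields it. Root capture then gives \eqref{eq:matched-uniform-law}, and uniqueness is inherited because matched selections are admissible. Your item (iii) must also carry $H^2$. It does, because the bridge is fixed by the collar through actual invariance and every source in the Green identity carries $H^3$. Deriving \eqref{eq:matched-kappa} from \eqref{eq:matched-uniform-law} by dividing by $h^2$ and letting $h\downarrow0$ is valid once \eqref{eq:matched-fixed-r-law} is known.

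\paragraph{Gap in the fixed-$r$ law.}
The theorem fixes an \emph{arbitrary} rule $\mathfrak p$. \Cref{def:fold-matched} gives only the two-term estimate $\norm{V_{\rk,q}-V_{\fl,q}-H^2U_{q,2}}\le C_{\mathfrak p}hH^2$ and assumes no regularity in $h$; admissible selections need not even be continuous in $h$. So ``the map selection is $C^3$ in $h$, then apply the implicit function theorem in $h$'' covers at most the completion rule, and even there it is unproved, for two reasons.
\begin{itemize}
\item At $h=0$ the map is the identity, so every graph is invariant. The graph is not determined there, and its $h$-regularity at $h=0$ is exactly what needs proof.
\item The section value is produced by roughly $\delta/(hr^2)$ bridge steps, with a stopping index that jumps as $h$ varies. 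Regularity of the collar graph in $h$ therefore does not pass to $\Delta_{\rk}(\lambda;h)$.
\end{itemize}
The paper never differentiates in $h$ (\cref{lem:matched-step-response} and \cref{app-tr:moving-terminal}). It proceeds as follows.
\begin{itemize}
\item Divide the exact Green identity by $H^2$.
\item Compare products, sources and the normalized terminal datum with the continuous propagator and variation-of-constants integral at the \emph{same} moving endpoint $z_N(L)$. This gives an $O_r(h)$ error in $C^1_L$.
\item The matching limit satisfies the continuous compatibility equation, so the terminal phase cancels.
\item Paste across stopping-index changes with the add/delete-one-cell identity.
\end{itemize}
The result is $\Delta_{\rk}^{\mathfrak p}=\Delta_{\fl}^{\mathfrak p}-h^2\mathscr R_\theta^{\mathfrak p}(J,r,\lambda)+\ord_{C^1_\lambda}(h^3)$. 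A root perturbation at each fixed $h$ then yields \eqref{eq:matched-fixed-r-law}. The $C_x^3$ part of the matching norm is what controls the difference quotients after one $L$-derivative.

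\paragraph{Same obstruction in your existence construction.}
Differentiating twice at $h=0$ and using Taylor's theorem in $h$ both presuppose $C^3$ dependence of the actual map graph on $h$. Cauchy estimates cannot supply this. The graph transform has normal gap only $1-\kappa h$, so it is not contracting for $\operatorname{Re}h\le0$ and no complex disc about $0$ exists. Discs that avoid $0$ lose powers of $h$.

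The invariant graph of a completed truncated modified field is also not invariant for the actual Runge--Kutta map. The paper instead completes the flow on a periodic base and extends the actual map-minus-flow difference with the same cutoffs.

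Your homological equation is the right one, $\mathscr L_{q,r}U_{q,2}=R_{q,3}$. The missing idea is the normalized resolvent of \cref{lem:completion-resolvent}:
\begin{itemize}
\item Exact Hadamard division gives the defect $h^3\nu\{R_{q,3}+hR_{q,4}(h)\}$ on the flow graph.
\item The secant operator satisfies $\norm{\mathscr W_{q,h}}\le1-\kappa_0h$, so $h^{-1}(I-\mathscr W_{q,h})$ has inverse bounded by $\kappa_0^{-1}$ uniformly.
\item A strong-to-weak comparison with $\mathscr L_{q,r}$, using $U_{q,2}\in C_x^5C_L^2$, gives $(S_{\rk,q}-S_{\fl,q})/(h^2\nu)-U_{q,2}=\ord(h)$ in $C_x^3C_L^1$.
\end{itemize}
No smoothness in $h$ is needed for this.
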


Different matching rules may give different values of
\(\kappa_\theta^{\mathfrak p}(J,r)\) at a fixed \(r\).  Equation
\eqref{eq:matched-kappa} shows that they have the same leading normalized
fold limit.  Thus the leading fold coefficient is independent of the
matching rule, whereas the fixed-\(r\) coefficient can retain information
from the chosen outer boundary data.

\subsection{The response seen by this observable}

For a classical order-two Runge--Kutta method, the two order-three rooted
trees give the independent defects
\begin{equation}
 \label{eq:order-three-defects-results}
 \alpha_\theta
 =\frac12b_\theta^T\cvec_\theta^{\circ2}-\frac16,
 \qquad
 \beta_\theta
 =b_\theta^T\Amat_\theta\cvec_\theta-\frac16.
\end{equation}
Both defect coordinates enter the general pointwise formula for the exact
one-step residual.  The fold passage nevertheless acts on the defect plane
through the linear response functional
\begin{equation}
 \label{eq:rank-one-response}
 (\alpha,\beta)
 \longmapsto -\frac38\beta\,\XiJ(J).
\end{equation}
This functional is derived in \cref{sec:exact-residual,sec:gaussian-root}.
It is nonzero when \(\XiJ(J)\ne0\), with the \(\alpha\)-axis in its kernel,
and is the zero functional when \(\XiJ(J)=0\).

\begin{corollary}[Observable-specific cancellation]
 \label{cor:chain-cancellation}
For every fold datum in the normalized class, the coefficient of
\(h^2\eps^2\) vanishes whenever
\begin{equation}
 \label{eq:chain-condition}
 b_\theta^T\Amat_\theta\cvec_\theta=\frac16.
\end{equation}
Condition \eqref{eq:chain-condition} is the chain-tree order condition.  It
does not require the bushy-tree condition and is therefore strictly weaker
than classical third order.  For a particular system, the same leading
coefficient also vanishes when \(\XiJ(J)=0\).
\end{corollary}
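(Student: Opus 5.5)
The corollary follows from the explicit form of the coefficient \eqref{eq:threshold-coefficient} in \cref{thm:all-selection}, together with the uniform limit \eqref{eq:matched-uniform-limit} of \cref{thm:matched-threshold}. So the plan is to read off the vanishing claims from the factorization \(K_\theta(J)=-\tfrac{3\beta_\theta}{4(D+2)}\XiJ(J)\). Then I will separately verify the purely algebraic claim that the chain-tree condition is strictly weaker than classical third order.

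\emph{Vanishing.} By \eqref{eq:normalized-unfolding-bound}, \(D+2\) is bounded away from zero on \(\Gclass\), so \(K_\theta(J)\) is well defined. If \eqref{eq:chain-condition} holds, then \(\beta_\theta=0\) and hence \(K_\theta(J)=0\) for every \(J\in\Gclass\). By \eqref{eq:all-selection-relative-law} the relative displacement along every power-law path tends to \(0\), and by \eqref{eq:matched-uniform-law} the matched displacement is \(\ord(h^2r^4(r+H+r^{-M}\eexp^{-c/r^2}))\). Either way the coefficient of \(h^2\eps^2\) is zero. Symmetrically, if \(\XiJ(J)=0\) for a particular datum, the same factorization gives \(K_\theta(J)=0\) for every tableau. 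Affine covariance \eqref{eq:physical-coefficient-transform} then transfers the vanishing to physical units, since \(a_\lambda/(\tau^2a_\eps^2)\) is finite.

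\emph{Strict weakness.} The classical third-order conditions add both \(b^T\cvec^{\circ2}=1/3\) (i.e.\ \(\alpha_\theta=0\)) and \(b^T\Amat\cvec=1/6\) to \eqref{eq:rk-order-two-conditions}. Condition \eqref{eq:chain-condition} imposes only the second, so it suffices to exhibit one real order-two tableau with \(\beta=0\) and \(\alpha\neq0\). I will take a three-stage explicit tableau with \(c=(0,c_2,c_3)\), \(a_{21}=c_2\), \(a_{32}\) free and \(a_{31}=c_3-a_{32}\). The conditions are
\[
b_1+b_2+b_3=1,\qquad b_2c_2+b_3c_3=\tfrac12,\qquad b_3a_{32}c_2=\tfrac16 .
\]
These fix \(b\) and \(a_{32}\) for generic \(c_2,c_3\) while leaving \(b_2c_2^2+b_3c_3^2\) free to differ from \(1/3\). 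For example, \(c_2=1\), \(c_3=1\) gives \(b_2+b_3=1/2\) and \(b^T\cvec^{\circ2}=1/2\neq1/3\). Choosing \(b_3=1/6\), \(a_{32}=1\), \(b_2=1/3\), \(b_1=1/2\) satisfies all three conditions and yields \(\alpha=1/4-1/6\neq0\). This example shows that \(\alpha\) and \(\beta\) are independent coordinates on the order-two tableau set, so the chain-tree hyperplane is a proper superset of the order-three set.

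The only potentially delicate point is whether "coefficient of \(h^2\eps^2\)" could also refer to the fixed-\(\eps\) coefficient \(\kappa_\theta^{\mathfrak p}\). I will state explicitly that the corollary concerns the leading joint-fold coefficient \(K_\theta(J)\). By \eqref{eq:matched-kappa}, \(\kappa_\theta^{\mathfrak p}\) then reduces to \(\ord(r^5+r^{-M}\eexp^{-c/r^2})\) rather than vanishing, consistent with the introduction.
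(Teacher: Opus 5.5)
Your proposal is correct and follows the paper's reasoning: the vanishing is read off from the factorization $K_\theta(J)=-\frac{3\beta_\theta}{4(D+2)}\XiJ(J)$ with $D+2$ bounded away from zero, and strict weakness is witnessed by an order-two tableau with $\beta_\theta=0\neq\alpha_\theta$. The only difference is the witness. The paper uses the implicit two-stage family at $\rho=1/12$ (last row of \cref{tab:standard-method-defects}, $\alpha=-1/96$), whereas your explicit three-stage tableau with $\cvec=(0,1,1)$, $a_{21}=a_{32}=1$, $a_{31}=0$, $b=(1/2,1/3,1/6)$ is equally valid and gives $\alpha=1/12$; note that your three equations leave a one-parameter freedom rather than fixing $b$ and $a_{32}$, and your choice $b_3=1/6$ is what uses that freedom.
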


The corollary concerns cancellation in the leading error of one nonlinear
dynamical observable.  It does not raise the trajectory order of the method
and does not, in general, raise fixed-\(\eps\) threshold convergence above
order two.  Its content is that fold transport discards a defect direction
which classical local error analysis must retain.

\subsection{Proof architecture}

Four steps separate the formal defect coordinates from the actual roots in
\cref{thm:all-selection,thm:matched-threshold}.

First, uniform graph transforms and fold bridges construct actual attracting
and repelling invariant manifolds and show that different admissible
continuations are exponentially close; their flow splitting has a unique
transverse zero (\cref{sec:fold-geometry}).  Second, the exact Runge--Kutta
stage equations are divided by the step size to identify the leading
residual along the canonical fold orbit (\cref{sec:exact-residual}).  Third,
an exact finite Green identity transports this residual to the central
section.  Its products converge to a Gaussian weight, which produces
\eqref{eq:rank-one-response} (\cref{sec:gaussian-root}).  Finally, a buffered
\(C^0\)-to-\(C^1\) argument transfers the section response to a unique root
of the exact numerical splitting.

For \cref{thm:matched-threshold}, the same transport and root arguments are
used with an \(H^2\)-scaled outer datum.  A normalized graph resolvent shows
that a common completion of the normalized collar dynamics produces
precisely this datum.  Thus the
fixed-parameter theorem adds one construction at the outer collar; it does
not introduce a second fold-response mechanism.

\section{Invariant manifolds and the transverse flow threshold}
\label{sec:fold-geometry}

Away from the fold, the attracting and repelling slow manifolds are
normally hyperbolic.  We continue them through two overlapping regions: an
outer collar, where graph transforms are uniformly contracting, and a fold
bridge, where the graph equation is scalar after division by the slow drift.
The same scalar equation shows that different admissible continuations are
exponentially close by the time they reach the central section.

For a selection \(\sigma_\bullet\), let
\(y_{\bullet,q}^{\sigma_\bullet}(\lambda)\) be its value on
\(\Sigma_{\rm c}\), where \(\bullet\in\{\fl,\rk\}\) and
\(q\in\{\att,\rep\}\).  We orient the splitting as repelling minus
attracting:
\begin{equation}
 \label{eq:section-splitting}
 \Delta_\bullet^{\sigma_\bullet}(\lambda)
 =y_{\bullet,\rep}^{\sigma_\bullet}(\lambda)
  -y_{\bullet,\att}^{\sigma_\bullet}(\lambda).
\end{equation}

\begin{proposition}[Fold geometry and flow transversality]
 \label{prop:fold-geometry}
Fix \(\Gclass\) and \(\ThetaRK\) as in
\cref{def:normalized-fold-class,def:rk-family}.  There are
\(r_0,h_0,C,c>0\), an integer \(M\), and non-empty selection classes in the
sense of \cref{def:admissible-selection}, with
the following properties.

For every \(J\in\Gclass\), \(\theta\in\ThetaRK\),
\(0<r\le r_0\), and \(0<h\le h_0\), the selected attracting and repelling
flow and numerical-map graphs reach \(\Sigma_{\rm c}\).  Their section
values are twice continuously differentiable in the inner parameter \(L\)
on a common fixed outer interval.

If \(\sigma_\bullet\) and \(\widetilde\sigma_\bullet\) are two selections
for the same flow, or for the same numerical map with fixed \(\theta\), then
\begin{equation}
 \label{eq:selection-shielding}
 \max_{0\le j\le2}
 \sup_L
 \left|
 \partial_L^j
 \left(
 y_{\bullet,q}^{\sigma_\bullet}(r^2L)
 -y_{\bullet,q}^{\widetilde\sigma_\bullet}(r^2L)
 \right)
 \right|
 \le Cr^{-M}\eexp^{-c/r^2}
\end{equation}
for \(q=\att,\rep\).

There is a continuous function \(L_0(J)\), uniformly bounded on
\(\Gclass\), such that every flow selection satisfies
\begin{equation}
 \label{eq:flow-splitting-profile}
 r^{-3}\Delta_\fl^{\sigma_f}(r^2L)
 =a(J)\{L-L_0(J)\}
 +\ord(r)+\ord\!\left(r^{-M}\eexp^{-c/r^2}\right)
\end{equation}
in \(C_L^2\).  In particular, the flow splitting has one simple zero
\(L_\fl^{\sigma_f}\) in a common response neighbourhood, and its
\(\lambda\)-slope satisfies
\begin{equation}
 \label{eq:flow-physical-slope}
 \partial_\lambda\Delta_\fl^{\sigma_f}
   (\lambda_\fl^{\sigma_f})
 =a(J)r+\ord(r^2)
  +\ord\!\left(r^{-M}\eexp^{-c/r^2}\right).
\end{equation}
All constants and neighbourhoods are uniform over the fold class, the
method family, and the selections.
\end{proposition}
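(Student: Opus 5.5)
We construct the selections in two pieces on each side and then compute the flow splitting in the inner fold chart of \eqref{eq:fold-scaling}.

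\emph{Collar step.} On the outer collars \(I_q^{\rm col}\) the critical graph \(\phi_J\) is uniformly normally hyperbolic: \(\partial_xf_J=2x+\dots\) is bounded away from zero there, with sign fixed by \(q\). We apply a graph transform in the variable \(V=(m-\phi_J)/r^2\). For the flow we use the tangency equation. For the map we use \(\Phi^+\) on the attracting side and the contained inverse \eqref{eq:rk-contained-inverse} on the repelling side. In both cases a contraction argument on a weighted \(C_x^4C_L^2\) ball gives invariant graphs satisfying the first bound in \eqref{eq:main-selection-bounds}. The constants are uniform because \(\Gclass\) is compact in \(H^\infty\), \(\ThetaRK\) has uniformly bounded stages, and the adjoint family is also compact, so the stage branches have common small-\(h\) bounds.

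\emph{Bridge step.} On the bridge towards \(x=0\) the drift \(f_J\) along the graph is of size \(r^2\), with the two-sided bounds \(c_dr^2\le f_J\le C_dr^2\). We divide by it. For the flow this gives a scalar ODE in \(x\) for the graph. For the map we iterate \(T_q^\pm\), whose increment is of size \(hr^2\), and continue the graph cell by cell. The weighted bounds \(r^{(i-2)_+}\) come from the inner scaling \(x=rX\).

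\emph{Exponential shielding \eqref{eq:selection-shielding}.} Let \(w\) be the difference of two selections on one side. It satisfies a linear equation obtained from the scalar graph equation:
\[
 \partial_xw=\frac{\partial_yg_J-\partial_x m\,\partial_yf_J}{f_J}\,w+\dots .
\]
The coefficient is about \(-2x/r^2\) on the attracting side, and the sign reverses on the repelling side, since the integration direction reverses there. Integrating from the collar, where \(w=\ord(1)\), to \(x=0\) gives a factor \(\exp(-\int 2|x|/r^2)\le \eexp^{-c/r^2}\). The polynomial loss \(r^{-M}\) comes from the fold region \(|x|\lesssim r\) and from the \(L\)-derivatives. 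For the map the same estimate is obtained from the discrete linearization: a product of factors \(1-2hx+\dots\) over about \(1/(hr^2)\) steps, controlled uniformly in \(h\). This discrete uniformity for all \(h\le h_0\), with no relation between \(h\) and \(r\), is the step I expect to be the main obstacle. I would handle it by writing each factor as \(\exp(h\,\partial_xf_J+\ord(h^2))\), comparing sums with integrals, and using that the step in \(x\) is \(\ord(hr^2)\), which is below the fold scale \(r\) whenever \(h\) is small.

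\emph{Flow profile \eqref{eq:flow-splitting-profile}.} In inner variables the flow graph equation is \(r\)-close to the canonical Riccati problem at \(r=0\). At \(L=0\) the leading orbit is \eqref{eq:canonical-fold-orbit}. We expand each selected graph to first order in \(r\) and in \(L-L_0\). The section splitting at \(X=0\) is then obtained by the Melnikov integral of the perturbation against the normal variational solution \(\eexp^{-X^2}\).
\begin{itemize}
 \item The \(\lambda\)-term \((D+2)L\) contributes \(\int_{\R}\eexp^{-2X^2}\dd X=\sqrt{\pi/2}\) times \((D+2)\), which gives \(a(J)\).
 \item The cubic and slow-jet terms contribute the Gaussian moments that define \(L_0(J)\) in \eqref{eq:main-L0-definition}.
\end{itemize}
The matching to the collar graphs and the \(C_L^2\) remainders are controlled by analytic dependence on \(r\), which gives \(\ord(r)\), together with \eqref{eq:selection-shielding}.

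\emph{Conclusion.} Uniform transversality \(a(J)\ge c>0\) follows from \eqref{eq:normalized-unfolding-bound}. The implicit function theorem then gives a unique simple zero in \(\mathcal W_r(J)\). Finally, converting \(\partial_L=r^2\partial_\lambda\) and multiplying by \(r^3\) yields \eqref{eq:flow-physical-slope}.
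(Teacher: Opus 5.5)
Your overall architecture matches the paper's. You construct normally hyperbolic collar graphs, using the forward map on the attracting side and the contained adjoint inverse on the repelling side. You continue them through the bridge after dividing by the \(\ord(r^2)\) drift. You shield the selection ambiguity by Gaussian decay of the homogeneous difference equation, with polynomial losses from the \(L\)-derivatives. Finally you evaluate the flow splitting with a Gaussian weight and conclude by monotonicity.

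The one genuinely different ingredient is the flow profile.
\begin{itemize}
 \item You evaluate a two-sided Melnikov integral of \(\mathcal V_1\) along \(Y=X^2-\frac12\).
 \item The paper first subtracts the reduced graph \(\Gamma_J^0\). This turns the graph equation on each side into \(\partial_zU=z\mathcal HU+\varsigma\mathcal G\), with \(\mathcal H=4+\ord(r(1+z))\) and \emph{constant} leading forcing \(\mathcal G=(D+2)(L_0-L)+\ord(r(1+z))\). So \(L_0\) is read off from the fold-point jet of \(\Gamma_J^0\), and no moments are needed.
\end{itemize}
Your computation does give the right coefficients. The weight is \(\eexp^{-2Y}\) restricted to the orbit, which is proportional to \(\eexp^{-2X^2}\) (not \(\eexp^{-X^2}\)). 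The moments \(\int_{\R}X^{2k}\eexp^{-2X^2}\dd X\) then combine to a multiple of \(\sqrt{\pi/2}\,\{4(D+2)L-(-3A-B-4C_\eps+2E-2F_s+8S_\eps)\}\). This reproduces both \(a(J)\) and \(L_0(J)\) in \eqref{eq:main-L0-definition}.

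The gap is in how you control the remainder. ``Analytic dependence on \(r\)'' cannot supply the \(\ord(r)\) error in \(C_L^2\), for two reasons. First, the inner expansion \(Y=X^2-\frac12+rY_1+\ord(r^2P(X))\) is not uniform on \(\abs X\le\delta/r\), which is exactly where the bridge meets the collar. Second, admissible selections are not analytic, or even continuous, in \(r\). So nothing in your argument shows that the actual graph is \(\ord(r)\)-close to the canonical orbit where you start the Melnikov integral. Nor does it show that the truncated integral has only an \(\ord(r)\) error.

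The missing ingredient is a propagator estimate on the whole bridge, and you already use it in your shielding step. The coreward propagator of the linear equation above satisfies \(\abs{G(z,t)}\le C\eexp^{-c(t^2-z^2)}\) for \(0\le z\le t\le\delta/r\). Hence coefficient errors of size \(r(1+t)\) integrate to \(\ord(r)\) against the Gaussian. The collar datum enters only through \(r^{-M}\eexp^{-c/r^2}\). Prove the profile this way for one reference selection, then transfer it to every flow selection by shielding. A Krupa--Szmolyan blow-up with an entry chart would be the alternative.

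Two smaller points also need more than you state.
\begin{itemize}
 \item For the map, the graph transform contracts only by \(1-\kappa h\). So \(h\)-uniform \(C_x^4C_L^2\) collar bounds, and uniform jets over the \(\ord(1/(hr^2))\) bridge cells, require showing that every non-principal term in each jet equation carries a factor \(h\). This is where uniformity in \(h\) really bites. Shielding is easier, since it only needs an upper bound for a homogeneous product.
 \item Shielding is quantified over arbitrary admissible selections. The bridge jet bounds give only \(\abs u\le B_{\rm sel}\), so you first need the \(\ord(r^2)\) slow tube \(\abs{\widehat m-\phi_J}\le Cr^2\), extracted from the drift condition. For the map this goes through the stage equations and \(b^T\one=1\), with no sign condition on the weights. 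In addition, two map graphs must be compared at a common target, because sources with the same abscissa have different images.
\end{itemize}
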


\begin{proof}
We indicate the three estimates that will be used later and defer their
uniform construction to \cref{app:fold-geometry}.

On the attracting side, the positive flow and the positive Runge--Kutta
map contract normal graphs.  On the repelling side we orient time toward the
fold.  For the map this means applying the negative step of the adjoint
tableau
\[
 \Amat_\theta^\dagger=\one b_\theta^T-\Amat_\theta,
 \qquad
 \cvec_\theta^\dagger=\one-\cvec_\theta,
\]
which is the contained inverse of the original positive-step map.
Compactness of \(\ThetaRK\) gives a common small-step contraction margin.
The resulting collar graphs are continued to the fold by the scalar graph
equation and its exact discrete counterpart.

For two continuations of the same dynamics, subtraction gives a homogeneous
normal secant equation.  Its multiplier product has the form
\begin{equation}
 \label{eq:gaussian-shielding-product}
 \prod_{k<j}\{1-2Hz_{k+1}+\ord(HrP(z_{k+1})+H^2P(z_{k+1}))\},
\end{equation}
where \(z\) is the signed fold coordinate and \(P\) is a fixed polynomial
majorant.  The product is bounded by a Gaussian and is exponentially small
when transported from \(z\asymp r^{-1}\) to \(z=0\).  Differentiating the
same recurrence twice in \(L\) proves \eqref{eq:selection-shielding}.

Finally, differentiation of the flow graph equation with respect to \(L\)
gives opposite half-Gaussian responses on the two sides.  Their difference
is \(a(J)r^3\) to leading order.  A second expansion locates the centre
\(L_0(J)\), which proves \eqref{eq:flow-splitting-profile}.  The uniform
positive lower bound on \(a(J)\) gives monotonicity and uniqueness.  Since
\(\partial_\lambda=r^{-2}\partial_L\), the \(\lambda\)-slope is
\eqref{eq:flow-physical-slope}.
\end{proof}

At a zero of \(\Delta_\bullet\), the two selected invariant graphs meet on
\(\Sigma_{\rm c}\).  Uniqueness of the flow, or the forward/inverse identity
for the map, continues the meeting point to a local orbit.  Thus the zeros
used in the paper are local maximal-canard thresholds for the chosen
continuations.

\section{The exact numerical defect along the fold orbit}
\label{sec:exact-residual}

Classical order conditions describe a one-step trajectory error.  The
quantity needed here is different: the exact Runge--Kutta map is evaluated
on a flow-invariant graph, and its failure to preserve a nearby shifted
graph is then transported over \(O(H^{-1})\) cells.  This section identifies
the residual before transport.

Fix \(\theta\in\ThetaRK\), suppress its subscript temporarily, and write
\begin{equation}
 \label{eq:alpha-beta}
 \alpha=\frac12b^T\cvec^{\circ2}-\frac16,
 \qquad
 \beta=b^T\Amat\cvec-\frac16,
 \qquad
 \delta=\alpha-\beta.
\end{equation}
The shifted canonical parabola is chosen with
\begin{equation}
 \label{eq:q-d-gamma}
 q=\frac{\delta}{2},\qquad
 d=\cvec^{\circ2}-2\Amat\cvec-2\delta\one,
 \qquad
 \gamma=-b^T\Amat d.
\end{equation}
The order-two identities imply \(b^Td=0\).  The special case \(d=0\)
means that the shifted canonical parabola is preserved exactly, but this
identity is not assumed in the paper.

The adjoint tableau
\begin{equation}
 \label{eq:adjoint-tableau}
 \Amat^\dagger=\one b^T-\Amat,
 \qquad b^\dagger=b,
 \qquad \cvec^\dagger=\one-\cvec
\end{equation}
satisfies
\begin{equation}
 \label{eq:adjoint-defects}
 \alpha^\dagger=\alpha,\qquad
 \beta^\dagger=\beta,\qquad
 d^\dagger=d,\qquad
 \gamma^\dagger=-\gamma.
\end{equation}
It also represents the exact local inverse,
\begin{equation}
 \label{eq:adjoint-inverse}
 \Phi_{h,\theta}^{-1}=\Phi_{-h,\theta^\dagger},
\end{equation}
on every contained continuation branch.  These algebraic facts allow the
attracting and repelling sides to be treated with a common coreward
orientation even for a non-symmetric or explicit method.

\subsection{Two zero-moment modes}

Let \(X\) denote the canonical inner fold coordinate.  For the shifted
canonical graph, the first residual which is present even in the unperturbed
fold is
\begin{equation}
 \label{eq:canonical-hermite-mode}
 \widehat H^4\gamma\mathscr H_2(X),
 \qquad
 \mathscr H_2(X)=X^2-\frac14.
\end{equation}
It has zero half-Gaussian moment:
\begin{equation}
 \label{eq:hermite-zero-moment}
 \int_0^\infty \eexp^{-2X^2}\mathscr H_2(X)\dd X=0.
\end{equation}

The first coefficient that couples the method defects to the perturbed fold
jet is the polynomial
\begin{equation}
 \label{eq:Q10}
 \boxed{
 Q_{10}(X;\alpha,\beta)
 =-\frac38\beta F_3
  +\left(\frac12\alpha+\frac14\beta\right)G_2
  +2(\beta-\alpha)G_2X^2.}
\end{equation}
Both defect coordinates enter this general pointwise formula.  Introduce
the normalized half-Gaussian expectation
\begin{equation}
 \label{eq:gaussian-expectation}
 \mathbb E[Q]
 =\sqrt{\frac8\pi}
  \int_0^\infty\eexp^{-2X^2}Q(X)\dd X.
\end{equation}
Since \(\mathbb E[X^2]=1/4\),
\begin{equation}
 \label{eq:Q10-projection}
 \boxed{
 \mathbb E[Q_{10}]
 =-\frac38\beta\{F_3-2G_2\}
 =-\frac38\beta\XiJ(J).}
\end{equation}
Thus the bushy coordinate is present in the general local-error formula but
is annihilated by the fold-response functional.
\Cref{fig:fold-response} shows the canonical geometry and the weighted
zero-moment profile behind this cancellation.

\begin{figure}[t]
 \centering
 \includegraphics[width=0.92\textwidth]{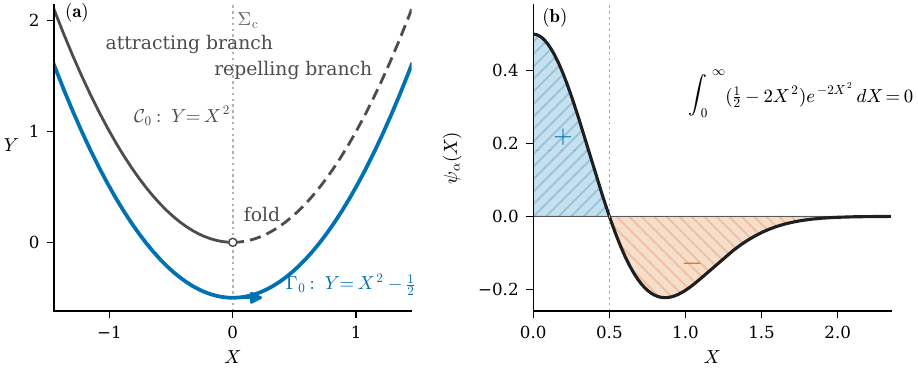}
 \caption{Canonical fold geometry and the defect-response functional.
 Panel (a) is drawn in canonical inner coordinates.  The grey solid
 \((X<0)\) and dashed \((X>0)\) curves are, respectively, the attracting and
 repelling branches of the singular critical manifold
 \(\mathcal C_0=\{Y=X^2\}\), not actual finite-\(r\) invariant manifolds.
 The blue curve is the exact canonical inner orbit
 \(\Gamma_0=\{Y=X^2-1/2\}\), its arrow gives the increasing-\(\zeta\) flow
 orientation, and the dotted line is the central section.  Panel (b) shows
 the exact weighted bushy-defect profile
 \(\psi_\alpha(X)=(1/2-2X^2)e^{-2X^2}\); its positive and negative lobes
 have equal absolute area, so its half-line integral vanishes.  Line style,
 opposing hatches and signs make the distinctions independent of colour.}
 \label{fig:fold-response}
\end{figure}

\subsection{From the stage equations to an exact residual}

Let \(\mathcal D_r\) denote the normalized analytic vector field together
with any admissible actual flow graph satisfying the enhanced
\(C_x^5C_L^2\) bounds in \cref{def:enhanced-selection}; the constructed reference
graphs are special cases.  Apply the exact Runge--Kutta stage equations with
signed inner step \(\widehat H\) to the graph shifted by \(qH^2\), and
subtract the shifted graph at the numerical output abscissa.  Denote the
result by \(\mathfrak E_\vartheta(\widehat H;\mathcal D_r)(X)\), where
\(\vartheta\) is \(\theta\) on the attracting side and \(\theta^\dagger\)
on the repelling side.

\begin{proposition}[Uniform exact residual]
 \label{prop:exact-residual}
On a common real weighted stage tube about the fold chains, there is a fixed
polynomial majorant \(P\) such that
\begin{equation}
 \label{eq:exact-residual}
 \boxed{\begin{aligned}
 \mathfrak E_\vartheta(\widehat H;\mathcal D_r)(X)
 =\widehat H^3\bigl[&rQ_{10}(X)
 +\widehat H\gamma_\vartheta\mathscr H_2(X)\\
 &+\ord\!\left(
 r^2P(X)+r\abs{\widehat H}P(X)+\widehat H^2P(X)
 \right)\bigr].
 \end{aligned}}
\end{equation}
The displayed value estimate is uniform over \(J\in\Gclass\),
\(\theta\in\ThetaRK\), both oriented sides, and every graph with the fixed
enhanced admissibility bound.  The divided quotient also
extends with one \(L\)-derivative; its differentiated remainder may lose one
power of \(r\), as stated precisely in \cref{app-res:datum-jet}.  Moreover,
the map residual is divided exactly: the remainder in
\eqref{eq:exact-residual} is defined by the integral Hadamard quotient of
the actual \(C^5\) stage equations, not by a truncated modified equation.
\end{proposition}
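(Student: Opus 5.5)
We will prove \cref{prop:exact-residual} by writing the exact stage equations in the inner variables of \eqref{eq:fold-scaling}, expanding in the B-series of the actual map, and identifying the coefficients of \(\widehat H^3r\) and \(\widehat H^4\) on the reference graph. The remainder will be controlled by an exact Hadamard (integral Taylor) quotient rather than by truncation.

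\textbf{Step 1: set-up.} Write the inner field as \(\widehat F=\widehat F_0+r\widehat F_1+r^2\widehat F_{\rm rem}\). Here \(\widehat F_0=(X^2-Y,\tfrac12)\) up to the parameter term. The term \(\widehat F_1\) collects \(AX^3+BXY+\dots\) and \(EX^2+F_sY+\dots\), with weighted-degree bookkeeping from \cref{def:normalized-fold-class}. Let the graph be \(Y=\Gamma_r(X)+qH^2\), where \(\Gamma_r=X^2-\tfrac12+r\Gamma_1(X)+\ord(r^2P)\) comes from the enhanced \(C^5_x\) bounds and exact tangency.

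For \(\abs{\widehat H}\) small on a weighted tube \(\abs{X}\lesssim r^{-1}\), the stages \(Z_i=z+\widehat H\sum a_{im}\widehat F(Z_m)\) are solved by contraction. Uniform constants follow from compactness of \(\ThetaRK\) and of its adjoint image. The output gives the residual
\[
 \mathfrak E=\Phi_Y(z)-\Gamma_r(\Phi_X(z))-qH^2 .
\]
Since the stages are \(C^5\) in \(\widehat H\) and \(\mathfrak E\) vanishes to second order in \(\widehat H\), we define \(\mathfrak E/\widehat H^3\) through the integral remainder \(\tfrac1{2}\int_0^1(1-s)^2\partial_{\widehat H}^3\mathfrak E(s\widehat H)\dd s\). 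This is the exact divided quotient, so every coefficient we name is an honest value of this integral expansion.

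\textbf{Step 2: order \(\widehat H^2\) and the \(r^0\) terms.} Flow invariance of the graph together with the order-two conditions \eqref{eq:rk-order-two-conditions} kills the \(\widehat H\) and \(\widehat H^2\) coefficients. The exception is the graph-shift term \(qH^2\). Our reading is that \(\widehat H\) and \(H\) coincide up to sign, so this term is absorbed: it is chosen to cancel the \(r^0\widehat H^3\) part of the elementary differentials.

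At \(r=0\), on the canonical parabola, the \(\widehat H^3\) coefficient is \(\alpha\cdot(\text{bushy differential})+\beta\cdot(\text{chain differential})\). A direct computation with \(\widehat F_0\) should show this equals a constant multiple of \(\delta=\alpha-\beta\), which the shift \(q=\delta/2\) removes. Here we will use \(d\) and \(b^Td=0\) to organize the stage vectors.

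Going to \(\widehat H^4\) at \(r=0\), the surviving fourth-order stage combination is \(b^T\Amat d\). Parity in \(X\) and the quadratic structure of \(\widehat F_0\) give exactly \(\gamma\mathscr H_2(X)\). The adjoint relations \eqref{eq:adjoint-defects} explain the sign flip \(\gamma^\dagger=-\gamma\) on the repelling orientation.

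\textbf{Step 3: the \(r\widehat H^3\) coefficient.} Next we linearize the third-order elementary differentials in \(\widehat F_1\) and in \(\Gamma_1\). The tangency equation eliminates \(\Gamma_1\)'s dependence on \(C_\eps,D,S_\eps\) at this order, which leaves only the combinations \(F_3\) and \(G_2\). Collecting the coefficients of \(\alpha\) and \(\beta\) then yields \(Q_{10}\) in \eqref{eq:Q10}. As a sanity check, the identity \(\mathbb E[Q_{10}]=-\tfrac38\beta\XiJ\) is only a corollary, but it provides a useful consistency test of the coefficients.

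\textbf{Step 4: remainders.} The terms \(r^2P,\ r\widehat HP,\ \widehat H^2P\) are bounded by Cauchy estimates on the polydisc \(\Omega^+\), after rescaling to inner variables. The weighted degrees make every inner monomial a polynomial in \(X\) times a power of \(r\). For the \(L\)-derivative we differentiate the integral quotient; one power of \(r\) may be lost because \(\partial_L\) acts through \(\lambda=r^2L\) against the \(r^{-2}\)-scaled bounds.

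\textbf{Main obstacle.} We expect the hardest step to be the uniformity of the polynomial majorant on the full tube \(\abs{X}\lesssim r^{-1}\), where \(\widehat H X\) need not be small. We will first restrict the tube so that \(\abs{\widehat H}P(X)\) stays small, and we will check that the contraction for implicit stages survives there uniformly over the compact family.
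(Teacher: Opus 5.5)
Your outline follows the paper's algebraic core: the exact Hadamard quotient, removal of the $r^0\widehat H^3$ term by $q=\delta/2$, the Hermite mode from $b^T\Amat d$, and $Q_{10}$ as the $r$-linearization of $\ell_r\{\mathcal M_3+q\,\partial_Y\mathcal V\}$. The genuine gap is that Step~3 leaves out what makes this linearization equal $Q_{10}$: the estimate holds only on the response window $\mathcal W_r(J)$, where $L-L_0(J)=\ord(r)$.

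\begin{itemize}
\item On the window one freezes $L=L_0$ in $\mathcal V_{r,L}$. The frozen error $r(L-L_0)(DX,-1)$ is $\ord(r^2)$ only there.
\item The first-order tangency equation $Y_1'-4XY_1=(\text{polynomial})$ has the polynomial solution $Y_1=F_3X^3+2\vartheta_0X$ precisely because $L_0$ satisfies the solvability condition \eqref{eq:main-L0-definition}.
\item Tangency does not remove $C_\eps,D,S_\eps$ from $\Gamma_1$. The coefficient $\vartheta_0$ contains $C_\eps$ and $DL_0$ through $\mu_0$. These, together with $L_0$ and $S_\eps$ from $\mathcal V_1$, cancel only in the contracted $\beta$-coefficient, after \eqref{eq:main-L0-definition} is used to eliminate $S_\eps$ (see \eqref{app-res:Q10-beta-before-compatibility}).
\item Off the window the expansion with $Q_{10}$ actually fails. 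The coreward correction $Y_1$ becomes error-function type. Already for the canonical field perturbed only by $-rL$ (so $L_0=0$ and $Q_{10}\equiv0$), the $r\widehat H^3$ coefficient at $X=0$ is $\beta L$ rather than $0$.
\item This frozen-$L$ error, whose $L$-derivative is only $\ord(r)$, is also the real reason one $L$-derivative costs a power of $r$.
\end{itemize}

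Step~4 also does not work as stated. Cauchy estimates on $\Omega^+$ control the field, which is analytic in $r$ after fold scaling. The graph, however, is an arbitrary selected $C^5$ object with no regularity in $r$; admissible selections need not even be continuous in $r$. So the $r^2P$ and $r\abs{\widehat H}P$ remainders cannot come from expanding in $r$. You need two further ingredients.

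\begin{itemize}
\item[(i)] The weighted jet $Y_{r,\varsigma,L}=X^2-\tfrac12+rY_1+\ord(r^2P)$, through five $X$-derivatives, for \emph{every} enhanced graph. Bounds plus tangency give this only through the coreward Gaussian damping of the homogeneous secant equation against a reference graph, \eqref{app-res:enhanced-selection-comparison}. The undamped part near $\abs X\sim\delta/r$ is then absorbed into $r^2P(X)$.
\item[(ii)] Datum derivatives of the stage branch. The paper divides both flow-compatible endpoints $\mathcal D_0,\mathcal D_r$ exactly by $\widehat H^3$. It then expands the centred quotient along $\mathcal D_t=\mathcal D_0+t(\mathcal D_r-\mathcal D_0)$, using bounds on $D_{\mathcal D}\partial_{\widehat H}^3\mathfrak E$, $D^2_{\mathcal D}\partial_{\widehat H}^3\mathfrak E$ and $D_{\mathcal D}\partial_{\widehat H}^4\mathfrak E$. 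The last of these is what produces the $r\abs{\widehat H}P$ term.
\end{itemize}

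Your ``main obstacle'' is misdiagnosed. Since $\abs{\widehat H}\langle X\rangle\le H(1+\delta/r)=h(r+\delta)$, this quantity is uniformly small on the whole chain, and that is exactly why the weighted-tube contraction is uniform. Shrinking the tube to where $\abs{\widehat H}P(X)$ is small would discard the outer part of the chain, up to $z_N\asymp r^{-1}$, on which the Green identity still uses the residual.

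Finally, the canonical inner field is $(X^2-Y,X)$, not $(X^2-Y,\tfrac12)$. With the latter the parabola $Y=X^2-\tfrac12$ is not invariant, and your Step~2 computations would not close.
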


\begin{proof}
At \(\widehat H=0\), consistency, flow invariance of the actual graph,
and the two order conditions give a triple zero.  On the canonical datum,
direct elimination of the stages gives
\(\widehat H^4\gamma_\vartheta\mathscr H_2\) as the first remaining term.
Subtracting that canonical residual from the residual for \(\mathcal D_r\)
produces an additional factor \(r\).  Integral Hadamard division in
\(\widehat H\), followed by the weighted Taylor expansion of a reference
fold datum, gives \eqref{eq:exact-residual}.  The homogeneous flow-graph
comparison in \cref{app-res:enhanced-selection-comparison} extends the same
estimate to every enhanced admissible graph.  The coefficient of
\(r\widehat H^3\) is \eqref{eq:Q10}; its algebra is independent of the
number of stages.  Compactness of the tableau family and the analytic fold
class supplies the common stage neighbourhood and polynomial majorant.
Full stage differentiation and the adjoint identities are given in
\cref{app:exact-residual}.
\end{proof}

The two cancellations now have different origins.  The coefficient of
\(\alpha\) in \(Q_{10}\) has zero Gaussian response by
\eqref{eq:Q10-projection}.  The extra mode caused by failure to preserve the
shifted parabola has zero Gaussian response by
\eqref{eq:hermite-zero-moment}.  Neither cancellation is a pointwise
identity of the numerical map.

\section{Gaussian transport and the exact numerical threshold}
\label{sec:gaussian-root}

The local residual in \cref{prop:exact-residual} is accumulated from both
sides of the fold.  We first propagate it to \(\Sigma_{\rm c}\), then use
the transverse flow slope to locate the zero of the exact-map splitting.

\subsection{An exact finite Green identity}

Let \(w\) be a reference numerical graph, let \(E\) be the corresponding
flow graph in divided coordinates, and include the common offset by writing
\(\overline E=E+q_\theta H^2\).  Put \(d=w-\overline E\).  Choose a source
on each graph so that the two sources have the same target abscissa.  Exact
subtraction of their invariance relations yields a scalar recurrence
\begin{equation}
 \label{eq:cell-recurrence}
 d(z_k)=M_kd(z_{k+1})+\mathfrak r_k.
\end{equation}
Iteration over
a finite chain gives
\begin{equation}
 \label{eq:finite-green}
 d(z_0)=P_Nd(z_N)+\sum_{j=0}^{N-1}P_j\mathfrak r_j,
 \qquad
 P_0=1,\qquad P_j=\prod_{k<j}M_k.
\end{equation}
Adding or deleting a complete terminal cell leaves
\eqref{eq:finite-green} unchanged, which is important when the stopping
index varies with parameters.

On either coreward chain,
\begin{equation}
 \label{eq:mesh-and-multiplier}
 z_{k+1}-z_k=\frac H2+\ord(HrP(z_{k+1})),
 \qquad
 M_k=1-2Hz_{k+1}
 +\ord(HrP(z_{k+1})+H^2P(z_{k+1})).
\end{equation}
Hence \(P_j\) converges to \(\eexp^{-2z_j^2}\).  Because every forcing
term retains its mesh factor, the estimates use
\begin{equation}
 \label{eq:weighted-sum}
 H\sum_jP_j(1+z_j)^m\le C_m
\end{equation}
rather than the number \(O(H^{-1})\) of cells.  This is why the transport
estimate remains valid for arbitrarily small \(h\).

Substituting \eqref{eq:exact-residual} into \eqref{eq:finite-green} gives,
on the side with sign \(\varsigma\in\{-1,1\}\),
\begin{equation}
 \label{eq:one-side-response}
 d_\varsigma(0,L)
 =-2\varsigma rH^2
   \int_0^\infty\eexp^{-2X^2}Q_{10}(X)\dd X
 +\ord(r^2H^2+rH^3)+\mathcal E_{\varsigma,\rm end}.
\end{equation}
The transported Hermite term is part of the displayed remainder: its
limiting moment vanishes, and its discrete quadrature error is
\(O(r+H)\).  For arbitrary independent selections,
\(\mathcal E_{\varsigma,\rm end}\) is exponentially small but independent
of \(h\).  For a fold-matched pair, it carries the same factor \(H^2\) as
the response.

The common offset \(q_\theta H^2\) cancels before the two sides are
subtracted.  One factor two in the final coefficient comes from the
effective mesh \(H/2\) in \eqref{eq:mesh-and-multiplier}; a second comes
from subtracting the oppositely oriented sides.  Define
\begin{equation}
 \label{eq:section-response-coefficient}
 S_\theta(J)
 =-\frac34\beta_\theta\sqrt{\frac\pi2}\,\XiJ(J)
 =a(J)K_\theta(J).
\end{equation}

\begin{proposition}[Response on the central section]
 \label{prop:fold-response}
On a common response neighbourhood of the flow root, arbitrary admissible
selections satisfy
\begin{equation}
 \label{eq:all-selection-section-response}
 -\left(
 \Delta_{\rk,\theta}^{\sigma_m}(r^2L)
 -\Delta_\fl^{\sigma_f}(r^2L)
 \right)
 =S_\theta(J)r^3H^2
 +\mathcal R_{\sigma_f,\sigma_m}(L),
\end{equation}
where
\begin{equation}
 \label{eq:all-selection-response-remainder}
 \norm{\mathcal R_{\sigma_f,\sigma_m}}_{C^0}
 \le C\left(
 r^4H^2+r^3H^3+r^{-M}\eexp^{-c/r^2}
 \right).
\end{equation}
For a uniformly second-order fold-matched rule \(\mathfrak p\), the same
identity holds with
\begin{equation}
 \label{eq:matched-response-remainder}
 \norm{\mathcal R_{\mathfrak p}}_{C^0}
 \le C_{\mathfrak p}H^2\left(
 r^4+r^3H+r^{-M}\eexp^{-c/r^2}
 \right).
\end{equation}
At fixed \(r\), the matched response divided by \(h^2\) converges in
\(C_L^1\) with an \(O_r(h)\) remainder.
\end{proposition}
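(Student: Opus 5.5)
The argument assembles the finite Green identity \eqref{eq:finite-green} on each oriented side, substitutes the exact residual of \cref{prop:exact-residual}, evaluates the transported sum by Gaussian quadrature, and subtracts the two sides.

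\textbf{Step 1: the scalar recurrence on each side.} Fix a side \(\varsigma\), a flow selection \(\sigma_f\) and a map selection \(\sigma_m\). Take \(\overline E=E+q_\theta H^2\) with \(E\) the flow graph in divided inner coordinates, and set \(d=w-\overline E\). Evaluating the map invariance relation at a point of the chain and subtracting the flow graph at the same target abscissa yields \eqref{eq:cell-recurrence}. The multiplier \(M_k\) is the secant quotient of the map's normal component. The forcing \(\mathfrak r_k\) equals \(\mathfrak E_\vartheta(\widehat H;\mathcal D_r)\) evaluated at the cell, divided by the inner normal scale. Here \(\vartheta=\theta\) on the attracting side and \(\vartheta=\theta^\dagger\) on the repelling side, using the contained inverse \eqref{eq:adjoint-inverse}.

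The chain starts in the outer collar, at \(z_N\asymp r^{-1}\), and stops at the last cell before \(X=0\). The stopping index may vary with \(L\), but the terminal-cell invariance of \eqref{eq:finite-green} makes this harmless.

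\textbf{Step 2: transport estimates.} From \eqref{eq:mesh-and-multiplier}, compare \(\log P_j\) with \(-2z_j^2\). This gives \(P_j=\eexp^{-2z_j^2}(1+\ord((r+H)P(z_j)))\) on the region \(z_j\le r^{-1/3}\), say. Beyond that region the product is dominated by a Gaussian. Combining this with the weighted-sum bound \eqref{eq:weighted-sum}, the Riemann sum \(\sum_j P_j\,\widehat H^3 rQ_{10}(z_j)\), with mesh \(H/2\), converges to \(2rH^2\int_0^\infty \eexp^{-2X^2}Q_{10}\dd X\) with relative error \(\ord(r+H)\).

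The Hermite term \(\widehat H^4\gamma_\vartheta\mathscr H_2\) produces \(H^3\) times a quadrature of a zero-moment function, by \eqref{eq:hermite-zero-moment}, so it contributes only \(\ord(rH^3+H^4)\). The \(H^4\) piece must be shown to be \(\ord(r^3H^3)\) in unscaled units; I expect the scaling to absorb it, but this needs checking. The remainders in \eqref{eq:exact-residual} are absorbed into \(\ord(r^2H^2+rH^3)\) by \eqref{eq:weighted-sum}.

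The boundary term \(P_Nd(z_N)\) is the delicate piece, and it is where the two cases split.
\begin{itemize}
\item For arbitrary selections, \(P_N\le C\eexp^{-c/r^2}\) while \(d(z_N)\) is polynomially bounded by \eqref{eq:main-selection-bounds}. This gives the \(r^{-M}\eexp^{-c/r^2}\) term.
\item For a matched rule, \eqref{eq:fold-matched-condition} shows that \(d(z_N)=\ord(H^2)\), after cancelling the offset \(q_\theta H^2\) against \(U_{q,2}\) at leading order. This yields the factor \(H^2\) in \eqref{eq:matched-response-remainder}.
\end{itemize}
Together these give \eqref{eq:one-side-response}.

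\textbf{Step 3: subtraction and rescaling.} Since \(d_\varsigma(0,L)\) measures the map height minus the offset flow height on \(\Sigma_{\rm c}\), the offsets \(q_\theta H^2\) are equal on both sides because \(\delta^\dagger=\delta\) by \eqref{eq:adjoint-defects}. They therefore cancel in \(\Delta_\rk-\Delta_\fl\). The two sides carry the opposite signs \(-2\varsigma\), so the difference doubles.

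By \eqref{eq:Q10-projection}, \(\int_0^\infty \eexp^{-2X^2}Q_{10}=\sqrt{\pi/8}\,(-\tfrac38\beta\XiJ)\). Converting inner heights to unscaled ones contributes a factor \(r^2\). The resulting coefficient is \(-\tfrac34\beta\sqrt{\pi/2}\,\XiJ\, r^3H^2\), which is \(S_\theta(J)\) by \eqref{eq:section-response-coefficient}.

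\textbf{Step 4: fixed \(r\).} At fixed \(r\), \(H\to0\) with every constant allowed to depend on \(r\). The \(L\)-derivative version of \eqref{eq:exact-residual}, from \cref{app-res:datum-jet}, differentiates the recurrence once in \(L\). The \(h\)-independence of \(U_{q,2}\) and of the flow selection then gives \(C^1_L\) convergence of the response divided by \(h^2\), with error \(\ord_r(h)\).

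\textbf{Main obstacle.} I expect the hardest part to be the uniform control of \(P_j\) versus the Gaussian near the outer end of the chain, where \(HrP(z)\) is no longer small relative to \(Hz\). It must be handled without any relation between \(h\) and \(r\). My first attempt is to split the chain at \(z\asymp r^{-1/3}\) and use only the contraction \(M_k\le 1-cHz_{k+1}\) on the outer piece.
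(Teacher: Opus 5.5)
Your Steps 1--3 follow the paper's route: common-target Green identity, substitution of the exact residual, mesh-weighted half-Gaussian quadrature with the two zero moments, cancellation of the common offset \(q_\theta H^2\), and two-sided subtraction. The \(H^4\) term is absorbed because \(h_0\le1\) gives \(H=hr\le r\). The obstacle you flag is already settled by \eqref{app-geom:bridge-damping}: since \(rz=\abs x\le\delta\), the coercivity \eqref{app-geom:coercivity} gives \(\log M_k\le-cHz_{k+1}+CH\) along the whole chain.

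There are nevertheless two gaps. First, for arbitrary selections you insert the flow graph of an arbitrary \(\sigma_f\) into \cref{prop:exact-residual}. That proposition (through \cref{app-res:datum-jet}) is proved only for flow graphs with the enhanced \(C_x^5C_L^2\) bounds and the evaluation extension across \(x=0\) of \cref{def:enhanced-selection}. By contrast, \eqref{eq:main-selection-bounds} controls only \(i\le4\). This matters: the factor \(r\) in the \(r\abs{\widehat H}P\) part of the remainder comes from a datum derivative of \(\partial_\eta^4\mathfrak E\), which involves five graph derivatives. Without it you get \(\ord(r^2H^3)\) rather than \(\ord(r^3H^3)\) in \eqref{eq:all-selection-response-remainder}. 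The paper instead does the computation for the constructed reference flow and map graphs, which are enhanced. It then transfers the identity to arbitrary \(\sigma_f,\sigma_m\) by the shielding estimate \eqref{eq:selection-shielding}, applied separately within the flow and within the map. For a matched rule your direct route is legitimate, because \cref{def:fold-matched} requires an enhanced flow selection. Note only that nothing cancels in its terminal datum: \(d(z_N)=H^2(U_{q,2}-q_\theta)+\ord(hH^2)\) is simply \(\ord(H^2)\).

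The more serious gap is Step 4, which asserts the fixed-\(r\) \(C_L^1\) convergence without naming the limit or a mechanism. At fixed \(r\) the terminal term \(\Pi_Nd(z_N)/H^2\) is \(O_r(1)\) and belongs to the limit. Meanwhile the stopping index \(N\) and the terminal abscissa \(z_N(L)\) vary with \((h,L)\), and \(\mathsf D_Lz_N\) is bounded but in general not small. Differentiating the Green identity term by term therefore leaves contributions \(\mathsf D_Lz_N\,\partial_z(\cdots)\), from the terminal value and from the upper end of the sum, that do not tend to zero as \(h\downarrow0\). The one-cell invariance you invoke glues the stopping pieces but does not make them converge, and \(h\)-independence of \(U_{q,2}\) does not remove these terms.

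The missing idea is the moving-terminal cancellation of \cref{app-tr:moving-terminal}. Dividing the matched invariance relations by \(H^2\) on the collar and letting \(h\downarrow0\) yields the continuous compatibility equation \eqref{app-tr:continuous-compatibility}. Hence the continuous stopped expression \eqref{app-tr:continuous-stopped-expression} is independent of the stopping point \(Z\), and its \(Z\)-derivative vanishes identically. That expression is built from the normal propagator \(G_\varsigma\), the speed \(p_\varsigma\) and \(\gamma_{\varsigma,0}=U_{\varsigma,2}-q_\theta\). One then compares the discrete and continuous expressions at the same moving endpoint. The multipliers match the continuous propagator ratios to \(\ord_r(H^2)\) per cell, because the numerical and continuous variational maps agree to first order. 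Together with a product--Duhamel identity this gives \(\ord_r(h)\) in \(C_L^1\). Finally one pastes across stopping transitions using the exact one-cell gluing identity and its \(L\)-derivative.
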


\begin{proof}
Equation \eqref{eq:one-side-response}, the two zero moments in
\cref{eq:Q10-projection,eq:hermite-zero-moment}, and exact cancellation of
the common offset give the reference response.  Exponential shielding from
\cref{eq:selection-shielding} transfers it to arbitrary labels and gives
\eqref{eq:all-selection-response-remainder}.  Under
\eqref{eq:fold-matched-condition}, the outer datum in
\eqref{eq:finite-green} is \(H^2\) times a uniformly bounded function; the
Gaussian terminal product preserves this factor and gives
\eqref{eq:matched-response-remainder}.  For fixed \(r\), divide the entire
Green identity by \(H^2\) and compare the finite sum with its continuous
variation-of-constants formula at the same moving terminal point.  The
terminal phase cancels between the terminal value and the integral, giving
the stated \(C_L^1\) rate.  Uniform product, quadrature, and endpoint
estimates are proved in \cref{app:transport-root}.
\end{proof}

\subsection{From the response to the unique map root}

Write the splittings in inner variables as
\begin{equation}
 \label{eq:root-functions-new}
 f(L)=\Delta_\fl^{\sigma_f}(r^2L),
 \qquad
 m(L)=\Delta_{\rk,\theta}^{\sigma_m}(r^2L),
\end{equation}
and let \(u_f\) be the zero of \(f\).  By
\cref{prop:fold-geometry},
\begin{equation}
 \label{eq:inner-flow-slope}
 n:=f'(u_f)=a(J)r^3+\ord(r^4)+\text{exponentially small terms}.
\end{equation}
The section response determines \(-m(u_f)\), but a value estimate alone
does not imply that \(m\) has a unique nearby zero.  We therefore correct
the difference by its predicted displacement,
\begin{equation}
 \label{eq:corrected-splitting}
 F(L)=m(L)-f(L)+nK_\theta(J)H^2.
\end{equation}
On a slightly larger response interval,
\begin{equation}
 \label{eq:corrected-splitting-bounds}
 \norm F_{C^0}
 \le C\left(r^4H^2+r^3H^3+r^{-M}\eexp^{-c/r^2}\right),
 \qquad
 \norm{F''}_{C^0}\le C.
\end{equation}
An optimized interior interpolation estimate then controls \(F'\) without
differentiating the Gaussian asymptotic.

\begin{lemma}[Transfer of the transverse root]
 \label{lem:root-transfer}
After reducing \(r_0,h_0\), the exact-map splitting is strictly monotone on
the common inner response neighbourhood and has one zero there.  For
arbitrary selections,
\begin{equation}
 \label{eq:inner-root-law}
 L_{\rk,\theta}^{\sigma_m}-L_\fl^{\sigma_f}
 =K_\theta(J)H^2
 +\ord\!\left(rH^2+H^3+r^{-M}\eexp^{-c/r^2}\right).
\end{equation}
For a fold-matched pair, the last term in
\eqref{eq:inner-root-law} is instead
\(\ord(H^2r^{-M}\eexp^{-c/r^2})\).  At fixed \(r\), the
\(C_L^1\) convergence in \cref{prop:fold-response} gives an expansion of
the root with remainder \(O_r(h^3)\) in the unscaled normalized
\(\lambda\)-coordinate.
\end{lemma}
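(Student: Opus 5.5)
The plan is a quantitative implicit-function argument applied to the corrected splitting \(F\) of \eqref{eq:corrected-splitting}. Write
\[
 m(L)=f(L)-nK_\theta(J)H^2+F(L).
\]
Everything needed about \(f\) comes from \cref{prop:fold-geometry}, read in inner variables. Equation \eqref{eq:flow-splitting-profile} holds in \(C_L^2\), which gives
\[
 f'(L)=a(J)r^3+\ord(r^4)+\ord\!\left(r^{-M}\eexp^{-c/r^2}\right)
\]
uniformly on the enlarged response interval, together with \(\abs{f''}\le Cr^3\). Consequently \(f'(L)=n+\ord(r^3\abs{L-u_f})\). All constants are uniform over \(J\), \(\theta\) and the selections, because the constants in \cref{prop:fold-geometry} and \cref{prop:fold-response} are.

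The first step is to control \(F'\) from the bounds \eqref{eq:corrected-splitting-bounds}, which give only \(C^0\) and \(C^2\) control. For this I use the interior interpolation inequality \(\norm{F'}_{C^0(I')}\le C(\norm F_{C^0(I)}/\ell+\ell\norm{F''}_{C^0(I)})\) on a subinterval buffered by \(\ell\). With \(\ell=\norm F_{C^0}^{1/2}\) this gives \(\norm{F'}\le C\norm F_{C^0}^{1/2}\). This is the step I expect to be the main obstacle. The bound must be \(o(r^3)\) uniformly, so that the slope of \(m\) stays comparable to \(a(J)r^3\). We have \(\norm F^{1/2}\lesssim r^2H+r^{3/2}H^{3/2}+r^{-M/2}\eexp^{-c/(2r^2)}\). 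The first two terms are \(o(r^3)\) only when \(H\ll r\), so the crude bound fails for large \(h\) relative to \(r\). I would first sharpen \(\norm{F''}\): \(F''\) is a difference of second \(L\)-derivatives of splittings, and the differentiated residual estimate (\cref{app-res:datum-jet}) should give \(\norm{F''}\le C r^3H^2/r=Cr^2H^2\) rather than \(C\). Combined with \(\ell\) of order one and the \(C^0\) bound \(\norm F\lesssim r^4H^2+r^3H^3\), this gives \(\norm{F'}\lesssim r^2H^2+r^3H^3+(\text{exp. small})\). Since \(H=hr\le h_0r\), this is \(\le C h_0^2r^4=o(r^3)\). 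If only the stated \(\norm{F''}\le C\) were available, I would instead work with the unscaled splittings, whose \(C^2\) norms carry the factor \(r^3\) inherited from \eqref{eq:flow-splitting-profile}.

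Given \(\norm{F'}=o(r^3)\), the remaining steps are routine.

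1. \textbf{Monotonicity and existence.} We have \(m'=f'+F'\ge \tfrac12 a(J)r^3>0\) on the window, using the uniform lower bound on \(a(J)\). So \(m\) is strictly monotone. Existence of a zero follows from the intermediate value theorem: \(\abs{m(u_f)}\le Cr^3H^2\), and \(H^2\le h_0^2r^2\) is small compared with the window width times the slope.

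2. \textbf{Root law for arbitrary selections.} At the zero \(L_\rk\), put \(\Delta L=L_\rk-u_f\). Then
\[
 0=f(L_\rk)-nK_\theta H^2+F(L_\rk)=n\Delta L+\ord(r^3\Delta L^2)-nK_\theta H^2+\ord(\norm F).
\]
First, \(\Delta L=\ord(H^2+\norm F/r^3)\). Substituting back gives
\[
 \Delta L=K_\theta H^2+\ord\!\left(\frac{\norm F}{r^3}+H^4+\frac{\abs{n-a(J)r^3}}{r^3}H^2\right),
\]
which yields \eqref{eq:inner-root-law} after inserting the bound on \(\norm F\). The same computation would give the uniqueness and full-rectangle statements of \cref{thm:all-selection}.

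3. \textbf{Fold-matched pairs.} Here the remainder \eqref{eq:matched-response-remainder} carries an overall factor \(H^2\), and the same computation gives \(\ord(H^2r^{-M}\eexp^{-c/r^2})\) in place of the last term.

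4. \textbf{Fixed \(r\).} The \(C_L^1\) convergence of \(h^{-2}(m-f)\) to a limit \(\rho_r\), with error \(O_r(h)\), gives
\[
 m=f+h^2\rho_r+\ord_r(h^3)\quad\text{in } C^1.
\]
The implicit function theorem then yields \(L_\rk-u_f=-h^2\rho_r(u_f)/f'(u_f)+\ord_r(h^3)\). Converting with \(\lambda=r^2L\) gives the claimed \(O_r(h^3)\) expansion in the unscaled normalized \(\lambda\)-coordinate.
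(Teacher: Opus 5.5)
Your overall architecture matches the paper's. You interpolate to control \(F'\), deduce strict monotonicity of \(m\), locate the root from the \(C^0\) bound on \(F\) alone (your step 2 is the paper's value-only identity), rerun this with the \(H^2\)-scaled matched bound, and finish with the implicit-function step on the \(C^1_L\) response at fixed \(r\). The problem is the step you flag as the main obstacle. Your diagnosis there is mistaken, and the repair you substitute rests on an estimate the paper does not supply.

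\textbf{The crude bound you discarded already works.} With \(H=hr\) one has \(r^2H=hr^3\) and \(r^{3/2}H^{3/2}=h^{3/2}r^3\). Hence, with \(\norm{F''}\le C\),
\[
 \frac{\norm{F'}_{C^0}}{n}
 \le\frac{C\sqrt{\norm F_{C^0}\,\norm{F''}_{C^0}}}{n}
 \le C\bigl(h+h^{3/2}\bigr)+Cr^{-M}\eexp^{-c/(2r^2)},
\]
after enlarging \(M\). Monotonicity does not need \(\norm{F'}=o(r^3)\). It needs only \(\norm{F'}\le\frac14\min_{J\in\Gclass}a(J)\,r^3\), and this holds for every \(0<h\le h_0\) once \(h_0\) is reduced, which the lemma explicitly allows. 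The governing ratio is \(H/r=h\le h_0\) on the whole rectangle, so there is no regime of ``large \(h\) relative to \(r\)'' to exclude. Take the buffer \(t=2\sqrt{A_0/B}\), where \(A_0=C(r^4H^2+r^3H^3+r^{-M}\eexp^{-c/r^2})\) and \(B=C\) are the bounds in \eqref{eq:corrected-splitting-bounds}. Then \(t=O(hr^3+h^{3/2}r^3)+{}\)exponentially small terms \({}=o(r)\), so the buffer fits in the \(O(r)\) interior margin of the response window. This is exactly the paper's argument.

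\textbf{The sharpened bound \(\norm{F''}\le Cr^2H^2\) is not available.}
\begin{itemize}
\item \Cref{app-res:datum-jet} and \cref{prop:exact-residual} control the divided residual with only one \(L\)-derivative, and that derivative may lose a power of \(r\).
\item The transport estimates are only piecewise \(C^1_L\). The stopping index changes with \(L\), and only values and first derivatives glue across a transition.
\item For arbitrary selections, \cref{prop:fold-response} gives only a \(C^0\) remainder.
\item The only second-derivative information on \(m\) is the uniform bridge bound \(\abs{\partial_L^2u_{\rk,q}}\le B_{\rm sel}\). That is why the paper works with \(\norm{F''}\le C\).
\end{itemize}
Taking \(\ell\) of order one is also inconsistent with the response being established only on \(\mathcal W_r(J)\), whose width is \(O(r)\).

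\textbf{Your fallback does not help either.} The \(C^2\) factor \(r^3\) in \eqref{eq:flow-splitting-profile} is known only for the flow splitting \(f\), not for \(m\). Moreover, the inequality \(\norm{F'}\lesssim\sqrt{\norm F\,\norm{F''}}\), and its comparison with \(n\), is unchanged under \(\lambda=r^2L\) and under constant rescaling of the splitting.

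Once you replace the sharpened bound by the crude one, the rest of your proposal goes through as written.
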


\begin{proof}
From \eqref{eq:corrected-splitting-bounds}, the one-dimensional interior
interpolation inequality gives
\[
 \norm{F'}_{C^0}
 \le C\sqrt{\norm F_{C^0}\norm{F''}_{C^0}}.
\]
After division by \(n\asymp r^3\), this is small uniformly on the inner
interval.  The corresponding flow secants differ from \(n\) by
\(O(rn)\), so \(m'>0\).  A buffered fixed-point form of the scalar root
equation gives existence and the location estimate.  At the root, use
\(m=f-nK_\theta H^2+F\) and the flow secant expansion; only the value bound
for \(F\), not its interpolated derivative, enters the leading location.
The matched statement follows from the matched value and \(C^1\) estimates.
The detailed interpolation and containment argument is in
\cref{app:transport-root}.
\end{proof}

Multiplying \eqref{eq:inner-root-law} by \(r^2\) and using
\(r^2H^2=h^2r^4\), \(r^3H^2=h^2r^5\), and
\(r^2H^3=h^3r^5\) proves \cref{thm:all-selection}.  The matched estimates
in \cref{prop:fold-response,lem:root-transfer} reduce
\cref{thm:matched-threshold} to the construction and fixed-parameter
response in \cref{sec:matched-continuations}.

\section{Paired invariant manifolds and the step-first limit}
\label{sec:matched}
\label{sec:matched-continuations}

The threshold in this paper is local: it is defined after attracting and
repelling slow manifolds have been continued from normally hyperbolic
collars to the fold section.  Such continuations are exponentially close,
but they are not identical.  This distinction is immaterial on algebraic
joint scales and becomes decisive if \(r>0\) is fixed while \(h\) tends to
zero.  Indeed, the ambiguity between two flow continuations then remains
fixed while the numerical displacement is divided by \(h^2\).

The next proposition records that this obstruction is genuine.  It does
not rule out a distinguished continuation supplied by global boundary
data; it only rules out a step-first statement quantified over independent
local choices.

\begin{proposition}[Independent local selections obstruct a common step-first law]
\label{prop:pairing-necessary}
The collection of normalized analytic fold data contains a datum \(J_0\)
with the following property.  For every sufficiently small fixed \(r>0\),
there are two
admissible flow continuations whose local threshold roots

\[
 \lambda_{\fl}^{0}(r)\ne \lambda_{\fl}^{1}(r)
\]

are distinct.  Fix a compact order-two Runge--Kutta family, one of its
methods \(\theta\), and any admissible family of actual-map continuations.
Then the two quotients

\begin{equation}
 \frac{\lambda_{\rk,\theta}(r,h)-\lambda_{\fl}^{i}(r)}{h^2},
 \qquad i=0,1,
 \label{eq:unpaired-step-first}
\end{equation}

cannot both have finite limits as \(h\downarrow0\).  Consequently no
fixed-\(r\), relative \(h^2\) law can hold uniformly over independently
chosen flow and map continuations.
\end{proposition}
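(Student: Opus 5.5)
The proof splits into two parts: construct the datum and two distinct flow roots, then run a one-line limit argument. The limit argument is elementary and I would do it first. Fix \(r\), \(\theta\), and the given admissible map continuations. Suppose both quotients in \eqref{eq:unpaired-step-first} have finite limits \(\ell_0,\ell_1\) as \(h\downarrow0\). Then the numerators satisfy \(\lambda_{\rk,\theta}(r,h)-\lambda_\fl^{i}(r)=h^2\ell_i+o(h^2)\to0\) for \(i=0,1\). Subtracting gives \(\lambda_\fl^{1}(r)-\lambda_\fl^{0}(r)=h^2(\ell_0-\ell_1)+o(h^2)\to0\). The left side is independent of \(h\) and nonzero, which is a contradiction. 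Note that this needs neither existence of the limit of \(\lambda_{\rk,\theta}\) nor any property of the map selections beyond the map roots being defined. Those roots exist by \cref{thm:all-selection}, since the classes \(\mathfrak S_{\rk}\) are nonempty and the map roots are unique in \(\mathcal W_r(J_0)\). The final sentence of the statement follows: a fixed-\(r\) relative \(h^2\) law over independent choices would force both quotients to converge.

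The substantive step is producing \(J_0\) and, for each small fixed \(r\), two admissible flow selections with different roots. I would take the simplest datum, \(J_0\) with all displayed coefficients and remainders zero: \(\dot x=x^2-y\), \(\dot y=\eps(x-\lambda)\). It lies in a compact class, for example the singleton, and \(D+2=2>0\). By \cref{prop:fold-geometry} there is one admissible flow selection \(\sigma^0\). From it I build \(\sigma^1\) by perturbing one side only, say the attracting collar graph. Replace the collar graph by the actual exact-tangency solution through a point on the entry section \(\Sigma_{\rm in}\) whose height differs from \(m^0_{\fl,\att}(-\delta;\lambda)\) by a small amount \(\kappa r^2\), with \(\kappa\ne0\) small and fixed. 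Then continue it through the bridge by the flow. In the collar, attraction pulls this trajectory exponentially close to the original graph. So the admissibility bounds \eqref{eq:main-selection-bounds}, with \(r^{-2}\) weighting, tangency, and the drift bounds \(c_dr^2\le f\le C_dr^2\), hold with slightly enlarged constants that can be fixed uniformly. The \(C^4_xC^2_L\) bounds follow from smooth dependence on initial data and the contraction.

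Nondegeneracy is the main obstacle: I must show that \(\sigma^1\) really moves the root. Bounding the root difference above is \eqref{eq:selection-shielding}; here I need a lower bound. The two attracting graphs solve the same scalar graph equation, so their difference obeys the linear homogeneous variational equation. At the central section it equals the initial offset \(\kappa r^2\) times a strictly positive transport factor, the exponential of the integrated normal rate \(\int \partial_y(\ldots)\). For the flow this factor never vanishes, though it is exponentially small, of order \(\eexp^{-c'/r^2}\). Hence \(y^1_{\fl,\att}(\lambda)-y^0_{\fl,\att}(\lambda)\ne0\) on \(\mathcal W_r\), while the repelling side is unchanged. By \eqref{eq:selection-shielding} the difference of the two splittings is exponentially small in \(C^1_L\), and by \eqref{eq:flow-splitting-profile} both are strictly monotone with slope about \(a(J_0)r^3\). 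If \(\Delta^0(\lambda^0)=0\), then \(\Delta^1(\lambda^0)=-(y^1_{\att}-y^0_{\att})(\lambda^0)\ne0\), so \(\lambda^1\ne\lambda^0\). This sign argument only needs the offset to be nonzero at the single point \(\lambda^0\). That is guaranteed by uniqueness of solutions of the graph ODE: two distinct trajectories cannot meet on \(\Sigma_{\rm c}\) for the same \(\lambda\). I would therefore phrase the nondegeneracy step purely through ODE uniqueness. This avoids computing the transport factor and also makes the argument work for every small \(r\).
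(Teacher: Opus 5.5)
Your limit argument and your nondegeneracy step are correct and match the paper, which uses the same canonical datum and the same sign argument. The nondegeneracy step is that two solutions of the scalar graph equation cannot meet on $\Sigma_{\rm c}$ at the same $L$, so $\Delta^1(\lambda^0)=-(y^1_{\att}-y^0_{\att})(\lambda^0)\ne0$.

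The gap is your claim that the perturbed attracting graph is admissible. This is not a formality: the unique root of $\Delta^1$ in $\mathcal W_r$ and the shielding estimate you invoke are only available for admissible selections. The bounds \eqref{eq:main-selection-bounds} require $\abs{\partial_x^i\partial_L^j(m-\phi_J)}\le B_{\rm sel}r^2$ uniformly on the whole collar. That includes the point where you seed the offset, before any attraction has acted. There the difference $d$ of the two graphs solves $\partial_xd=A\,d$ with $A=r^2(x-r^2L)/(q_\zeta q_0)$. The drift condition gives $q_0,q_\zeta\asymp r^2$, so $\abs A\asymp\abs x/r^2$. An offset $d=\kappa r^2$ therefore has $\abs{\partial_xd}\asymp\kappa\abs x$ at the seed, and $\partial_x^id$ is of size $\kappa r^{2-2i}$ in the $O(r^2)$-wide transient layer. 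Hence $r^{-2}\abs{\partial_x(m-\phi_J)}$ is of order $\kappa r^{-2}$, and no fixed $B_{\rm sel}$ works as $r\downarrow0$. ``Smooth dependence plus contraction'' cannot repair a sup bound at the seed.

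Seeding on $\Sigma_{\rm in}=\{x=-\delta\}$ is worse still. The attracting collar is $[-2\delta,-\delta/2]$, so the new graph must also be continued backward over $[-2\delta,-\delta]$. There the same rate amplifies the offset by a factor of order $\eexp^{c\delta^2/r^2}$, and the graph leaves the $O(r^2)$ tube. Keeping the old graph on that piece instead would leave a jump, not an invariant graph.

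The repair, which is exactly the paper's construction, is to make the offset $r$-dependent and superalgebraically small, and to seed it at the outer collar endpoint $x_o=-2\delta$:
\begin{itemize}
\item First choose $B_{\rm sel}$ strictly above the reference bound, so that $m_0$ has a uniform margin. Then prescribe $m_\zeta(x_o,L)=m_0(x_o,L)+\zeta\rho(r)$ with $\rho(r)=\eexp^{-1/r^4}$.
\item The positive part $(A_\zeta)_+$ is supported where $x>r^2L$. This is an $O(r^2)$ core on which $\abs{x-r^2L}=O(r^2)$, so $\int(A_\zeta)_+\le C$. Hence $\abs d\le C\abs\zeta\rho(r)$, which closes the bootstrap for the denominators.
\item Differentiating the same linear equation gives triangular systems with coefficients bounded by $r^{-N_{ij}}$. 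So $\abs{\partial_x^i\partial_L^jd}\le C_{ij}r^{-N_{ij}}\rho(r)$, which fits inside the margin for all small $r$.
\end{itemize}
A sufficiently high fixed power of $r$, seeded at $x_o$, would also suffice; the exponential choice simply absorbs all these losses at once. With this replacement, the rest of your proof goes through unchanged.
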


\begin{proof}
The construction of the two flow continuations and the short contradiction
argument are given in \cref{app:matched-no-go}.
\end{proof}

The fold-matching condition in \cref{def:fold-matched} removes exactly this
ambiguity.  It compares actual invariant graphs on the normally hyperbolic
collars before either graph is transported to the fold.  In divided normal
coordinates their difference has an \(H^2=h^2r^2\) first term and an
\(O(hH^2)\) remainder.  The condition does not select a preferred local
slow manifold: different fixed constructions can give different
fixed-\(r\) coefficients.  Its content is that the flow and map choices in
one pair vary together to second order in the step size.

The full-rectangle \(C^0\) response is already part of
\cref{prop:fold-response}.  The extra conclusion needed for the ordinary
step-first limit is its normalized \(C^1\) form.  We denote the common
response neighbourhood in the inner parameter \(L\) by
\(\mathcal W_r(J)\), as in \eqref{eq:main-response-window}.

\begin{lemma}[Step-first response of a fold-matched pair]
\label{lem:matched-step-response}
Fix a compact normalized fold class \(\Gclass\), a compact order-two
Runge--Kutta family \(\ThetaRK\), and a uniformly second-order fold-matched
rule \(\mathfrak p\).  For every fixed sufficiently small \(r>0\), there is
a function

\[
 \mathscr R_\theta^{\mathfrak p}(J,r,\lambda)
 =-\lim_{h\downarrow0}
   \frac{\Delta_{\rk,\theta}^{\mathfrak p}(\lambda)
         -\Delta_{\fl}^{\mathfrak p}(\lambda)}{h^2}
\]

where the limit exists in \(C_\lambda^1\) on the unscaled normalized response
interval, and

\begin{equation}
 \Delta_{\rk,\theta}^{\mathfrak p}(\lambda)
 =\Delta_{\fl}^{\mathfrak p}(\lambda)
  -h^2\mathscr R_\theta^{\mathfrak p}(J,r,\lambda)
  +\mathcal E_{r,h}(\lambda),
 \qquad
 \norm{\mathcal E_{r,h}}_{C^1_\lambda}\le C_{r,\mathfrak p}h^3 .
\label{eq:matched-fixed-r-response}
\end{equation}

Uniformly for \(L\in\mathcal W_r(J)\) as \(r\downarrow0\),

\begin{equation}
 \mathscr R_\theta^{\mathfrak p}(J,r,r^2L)
 =S_\theta(J)r^5
  +\ord\!\left(r^6+r^{-M}\eexp^{-c/r^2}\right).
\label{eq:matched-response-coefficient}
\end{equation}
\end{lemma}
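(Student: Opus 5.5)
We work directly from the exact finite Green identity \eqref{eq:finite-green}, applied on each side to the difference between the matched map graph and the matched flow graph shifted by the common offset $q_\theta H^2$. At fixed $r>0$ we divide the whole identity by $H^2=h^2r^2$; since $r$ is fixed, this is the same as dividing by $h^2$ up to a constant. By \eqref{eq:fold-matched-condition}, the outer datum $d(z_N)$ has the form $H^2U_{q,2}$ plus an error of order $hH^2$ in $C_x^3C_L^1$. By \cref{prop:exact-residual}, each forcing term $\mathfrak r_j$ equals $\widehat H^3$ times a polynomially bounded function $rQ_{10}+\widehat H\gamma\mathscr H_2+\dots$. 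One factor $\widehat H$ acts as a mesh factor, so after division by $H^2$ the sum $\sum_jP_j\mathfrak r_j/H^2$ is a Riemann sum of a $C^1$ integrand against the discrete propagator.

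The candidate limit $\mathscr R_\theta^{\mathfrak p}$ is the continuous variation-of-constants formula: the flow-graph normal variational equation, started from $U_{q,2}$ at the collar and forced by the $H^3$-coefficient of the residual, transported to $x=0$. The two sides are subtracted, and the $q_\theta$ offset cancels. To show that the divided discrete identity converges to it with an $O_r(h)$ error in $C^1_L$, we compare three pieces.

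\textbf{Products.} We compare $P_j$ with the continuous transfer factor. The expansion \eqref{eq:mesh-and-multiplier} gives $M_k=\exp(\text{flow exponent over the cell})+O_r(h^2)$, hence a relative error $O_r(h)$ after $O(h^{-1})$ cells, with the bounds kept summable by \eqref{eq:weighted-sum}.

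\textbf{Quadrature.} The forcing sum is a first-order quadrature of a $C^1$ integrand. It therefore has an $O_r(h)$ error, provided the residual's own $h$-dependence beyond the leading coefficient is $O(h)$. The $\widehat H^4\gamma\mathscr H_2$ term contributes only at $O(h)$ after division.

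\textbf{Terminal cell.} This step is the main obstacle. The stopping index and terminal abscissa vary with $L$ and $h$, so the terminal phase is discontinuous. We treat it with the invariance of \eqref{eq:finite-green} under adding or deleting a complete terminal cell: we evaluate both the discrete sum and the continuous integral at the same moving terminal point. The terminal value and the partial-cell integral then cancel to $O_r(h)$, as asserted in the proof of \cref{prop:fold-response}.

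For $C^1_\lambda$ control we differentiate the recurrence once in $L$. \Cref{prop:exact-residual} supplies one $L$-derivative, and the matching condition supplies $C^1_L$ bounds, so the same three comparisons apply at fixed $r$, where losing powers of $r$ is harmless. This gives \eqref{eq:matched-fixed-r-response} with $\|\mathcal E_{r,h}\|_{C^1}\le C_{r,\mathfrak p}h^3$, and the limit exists in $C^1_\lambda$.

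For \eqref{eq:matched-response-coefficient}, we note that $\mathscr R_\theta^{\mathfrak p}(J,r,\cdot)$ is the $h\to0$ limit of $-(\Delta_{\rk}-\Delta_{\fl})/h^2$. We then pass to the limit in the full-rectangle identity \eqref{eq:all-selection-section-response} with remainder \eqref{eq:matched-response-remainder}. Dividing by $h^2$ gives $S_\theta r^5$ plus $C_{\mathfrak p}r^2(r^4+r^3H+r^{-M}\eexp^{-c/r^2})$. Letting $h\to0$ at fixed $r$ sends $H\to0$, leaving $O(r^6+r^{-M+2}\eexp^{-c/r^2})$, uniformly on $\mathcal W_r(J)$.
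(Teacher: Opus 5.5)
Your overall route is the paper's. You divide the exact common-target Green identity \eqref{eq:finite-green} by \(H^2\), take the terminal datum from \eqref{eq:fold-matched-condition}, compare products and a Riemann sum at fixed \(r\), and glue across stopping indices. You then obtain \eqref{eq:matched-response-coefficient} by letting \(h\downarrow0\) in \eqref{eq:matched-response-remainder}. That last step is exactly the paper's, and your bookkeeping \(r^2(r^4+r^3H+r^{-M}\eexp^{-c/r^2})\to\ord(r^6+r^{2-M}\eexp^{-c/r^2})\) is correct.

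One fixed-\(r\) ingredient is misattributed. \eqref{eq:mesh-and-multiplier} expands \(M_k\) about the canonical Gaussian with an \(\ord(Hr)\) error. At fixed \(r\) this is \(O_r(h)\) per cell and would accumulate to \(O_r(1)\) over the chain. What you actually need is \(M_k=G_\varsigma(0,z_{k+1})/G_\varsigma(0,z_k)+O_r(H^2)\), with \(G_\varsigma\) the normal propagator of the \(r\)-dependent flow. That comes from first-order consistency of the stage branch at fixed \(r\), not from that display.

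The genuine gap is the terminal step. You correctly single it out, but then settle it only by quoting the paper's one-line claim that the terminal value and the integral cancel. Put \(\gamma_0=U_{\varsigma,2}-q_\theta\) in inner variables, let \(p_\varsigma\) be the continuous coreward speed and \(\mathcal Q_\varsigma\) the divided residual, and set \(\mathcal C(Z,L)=G_\varsigma(0,Z)\gamma_0(Z,L)-\varsigma\int_0^{Z}G_\varsigma(0,z)\mathcal Q_\varsigma(r,0,z,L)/p_\varsigma\,\dd z\).

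Your comparisons show, piecewise in \(L\), that the divided Green expression equals \(\mathcal C(z_N(L),L)+O_r(h)\). But \(z_N(L)\) is a sawtooth: on each fixed-\(N\) patch, \(\mathsf D_Lz_N=O_r(1)\). That bound itself needs a discrete Gronwall argument, otherwise the moving terminal costs a factor \(H^{-1}\). So the \(L\)-derivative contains \(\mathsf D_Lz_N\,\partial_Z\mathcal C(z_N,L)\), and nothing in your argument makes this term vanish. As written you therefore obtain only \(C^0\) convergence, and your candidate limit \emph{started from \(U_{q,2}\) at the collar} depends on the starting point.

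The missing idea is that \(U_{q,2}\) is not an arbitrary datum. Divide the exact matched cell identity \(d(z_k)=M_kd(z_{k+1})+\mathfrak r_k\) by \(H^2\) and let \(h\downarrow0\), using the extra \(x\)-jets in \eqref{eq:fold-matched-condition} to control the difference quotient after one \(L\)-derivative. This yields the continuous compatibility law \(\partial_z(G_\varsigma\gamma_0)=\varsigma G_\varsigma\mathcal Q_\varsigma(r,0,z,L)/p_\varsigma\) on the collar. Hence \(\partial_Z\mathcal C\equiv0\) and the terminal-phase derivative cancels exactly. The continuous expression then glues across stopping transitions just as the discrete one does under add/delete-one-cell. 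This is the content of \cref{app-tr:moving-terminal}.
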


\begin{proof}
On either side of the fold, divide the exact common-target recurrence from
\cref{prop:fold-response} by \(H^2\).  Fold matching changes its terminal
datum from an uncontrolled bounded quantity to

\[
 d_{q,h}(z_N,L)=H^2\gamma_{q,h}(L),
 \qquad
 \norm{\gamma_{q,h}}_{C_L^1}\le C_{\mathfrak p}r^{-M}.
\]

The normalized terminal datum converges in \(C_L^1\) at rate \(O_r(h)\) by
the matching condition.  Target-aligned multipliers and sources have
one-step errors \(O_r(H^2)\) and \(O_r(H)\), respectively.  The finite
product--Duhamel identity and the resulting Riemann sum therefore converge
in \(C_L^1\) at rate \(O_r(h)\).  Restoring the unscaled normalized factor
\(r^2\)
gives \eqref{eq:matched-fixed-r-response}.  Finally the same Gaussian
sum and zero-moment estimates used in \cref{prop:fold-response}, now after
\(h\downarrow0\), give
\eqref{eq:matched-response-coefficient}.
\end{proof}

It remains to show that the class of matching rules is nonempty.  The
construction uses one fixed completion of the normally hyperbolic collars.
The flow graph is the invariant graph of the completed vector field.  The
Runge--Kutta method is first applied in the unscaled normalized affine
\((x,y)\) variables; only the difference between that exact numerical map
and the exact flow map is then extended with the same cutoffs.  Hence the two completed dynamics agree
with the uncompleted normalized local flow and map on the collar buffer.

\begin{proposition}[Existence of fold-matched actual rules]
\label{prop:matched-existence}
For every compact normalized fold class \(\Gclass\) and every compact
order-two Runge--Kutta family \(\ThetaRK\), there are constants
\(r_0,h_0>0\) and a fixed common-completion convention whose restrictions
to the unscaled normalized collars and fold bridges form a uniformly
second-order fold-matched rule \(\mathfrak p\).  Its flow graphs are
invariant under the actual flow, and its map graphs are invariant under the
actual Runge--Kutta map and its contained local inverse.  The collar buffers,
cutoffs, and extension operators are fixed uniformly over \(\Gclass\) and
\(\ThetaRK\) and are independent of \(r,h\) and of the individual tableau.
The completed vector field still depends on \(J\) and \(r\), while the
completed numerical dynamics depends on \(h\) and \(\theta\) through the
actual Runge--Kutta map being extended.
\end{proposition}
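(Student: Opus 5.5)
The construction works in the unscaled normalized affine variables on each side \(q\in\{\att,\rep\}\) and starts from one completion of the outer collar. We fix, once and for all, a smooth cutoff \(\chi_q\) supported in a slightly enlarged collar \(\widetilde I_q^{\rm col}\supset I_q^{\rm col}\) and equal to one on a buffer around \(I_q^{\rm col}\). We also fix an extension operator that continues the normally hyperbolic structure beyond \(\widetilde I_q^{\rm col}\) by a frozen linear fibre contraction (on the repelling side, expansion in forward time). The completed vector field \(\widetilde F_{J,r}\) agrees with \(F_J\) on the buffer and has a globally normally hyperbolic slow graph over the completed \(x\)-interval. Its flow graph \(\widetilde m_{\fl,q}\) is unique by the standard graph transform on a globally hyperbolic strip. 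For the map we do not discretize \(\widetilde F\) afresh. Instead we set
\[
 \widetilde\Phi_{h}=\widetilde\varphi_h+\chi_q\,(\Phi_{J,h,\theta}-\varphi_h),
\]
where \(\varphi_h,\widetilde\varphi_h\) are the time-\(h\) flows of the local and completed fields. On the repelling side we use the contained inverse \eqref{eq:rk-contained-inverse} with \(\theta^\dagger\) and \(-h\). On the buffer, \(\widetilde\Phi_h\) is exactly the actual Runge--Kutta map. Hence its unique global invariant graph \(\widetilde m_{\rk,q}\) restricts to an actual locally invariant graph. We continue both restricted graphs across \(I_q^{\rm br}\) to \(x=0\) by the bridge equations of \cref{prop:fold-geometry} to obtain admissible selections. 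The bridge bounds are inherited from \cref{app:fold-geometry}. The enhanced \(C^5_x\) bound and the extension across \(x=0\) (\cref{def:enhanced-selection}) hold because \(\widetilde m_{\fl,q}\) solves an exact tangency equation with analytic data on the buffer, so the same weighted estimates apply one derivative higher.

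The core step is the matching estimate \eqref{eq:fold-matched-condition}. Since both methods have order two, the B-series of the actual map gives
\[
 \Phi_{J,h,\theta}-\varphi_h=h^3\mathcal B_{3,\theta}[F_J]+\ord(h^4)
\]
in \(C^k\) on the stage tube, uniformly over the compact family. Here \(\mathcal B_{3,\theta}\) is the explicit third-order elementary-differential combination with coefficients \(\alpha_\theta,\beta_\theta\). Consequently \(\widetilde\Phi_h=\widetilde\varphi_h+h^3\chi_q\mathcal B_{3,\theta}+\ord(h^4)\). We then write the invariance equation for \(\widetilde m_{\rk,q}=\widetilde m_{\fl,q}+v\) and linearize at the flow graph. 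This gives a graph-resolvent equation of the form \((\mathrm{Id}-\mathcal T_h)v=h^3\mathcal N_\theta+\text{quadratic}(v)+\ord(h^4)\), where \(\mathcal T_h\) is the linearized graph transform of \(\widetilde\varphi_h\). Its spectral gap on the collar is of size \(h\cdot(\text{normal rate}\asymp 1)\). Inverting therefore loses one power of \(h\) and yields \(v=h^2U+\ord(h^3)\), where \(U\) solves the \(h\)-independent cohomological equation \(\mathcal L U=\mathcal N_\theta\) with \(\mathcal L\) the infinitesimal generator. Converting to \(V=(m-\phi_J)/r^2\) and to the scaling \(H^2=h^2r^2\) requires that \(U\) vanish to order \(r^4\) in the unscaled \(y\)-coordinate. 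We will verify this weight count as in \cref{prop:exact-residual}: on the collar, \(\mathcal N_\theta\) carries the factor \(\eps^2=r^4\) from the slow drift together with the graph slope, since the fast component vanishes on the critical graph to leading order and the surviving trees involve at least two slow factors after normal projection. We then set \(U_{q,2}=r^{-4}U\). The \(C^3_xC^1_L\) bounds follow by differentiating the resolvent equation, since the outer datum is analytic and the cutoff is fixed.

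The hardest step is uniformity of the resolvent inversion as \(h\to0\) simultaneously with \(r\to0\). The contraction of \(\mathcal T_h\) is only \(1-ch\), so every \(\ord(h^4)\) error must be summed over \(\asymp h^{-1}\) steps without losing more than one power of \(h\). In addition, the \(r^4\) weight must survive that summation. I would handle this exactly as in \eqref{eq:weighted-sum}. The Neumann series \(\sum_k\mathcal T_h^k\) is bounded by \(C/h\) in the weighted \(C^3_xC^1_L\) norm, uniformly in \(J,\theta,r\), because the normal rate on \(\widetilde I^{\rm col}_q\) is bounded below independently of \(r\). The \(h^4\) remainder, computed with an explicit weighted Taylor bound from the compact analytic class, is then \(\ord(h^4r^4)\) in \(y\), which gives the \(\ord(hH^2)\) term in \(V\). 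Finally, the flow selection is independent of \(h\) and \(\theta\) by construction, and \(U_{q,2}\) is independent of \(h\). This completes the verification of \cref{def:fold-matched}.
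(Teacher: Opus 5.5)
Your architecture matches the paper's. You fix one collar completion, cut off the Runge--Kutta-minus-flow defect and add it to the completed time-\(h\) flow, invert an \(O(h)\) graph-transform gap at the cost of one power of \(h\), and solve an \(h\)-independent cohomological equation for \(U_{q,2}\).

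The genuine gap is the step you flag as decisive: that the whole defect, not only its \(h^3\) coefficient, is \(O(h^3r^4)\) in \(y\) with the jets you need. Your tree count covers only \(\mathcal B_{3,\theta}\) on an invariant graph. Even there, the chain tree is \(O(\eps^2)\) because \(F'F=\ddot z=O(\eps^2)\) along a slow graph, not because it contains two slow factors. The \(O(h^4)\) Taylor remainder involves derivatives at stage points and intermediate steps, not elementary differentials on the graph. A direct bound gives it only one factor \(\eps\), from \(F=O(\eps)\) on the graph. After the \(h^{-1}\) resolvent loss this leaves an \(O(h^3r^2)\) error in \(y\), i.e.\ \(O(h^3)\) in \(V\). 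But \eqref{eq:fold-matched-condition} requires \(O(hH^2)=O(h^3r^2)\) in \(V\). \Cref{prop:exact-residual} concerns the inner fold chain and gives nothing on the collar.

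The missing idea is the one in \cref{lem:completion-resolvent}. Pass to \(y=\phi_J(x)+\nu v\) with \(\nu=r^2\), keeping the map formed in the unscaled variables; the stage quotient \((\phi_J(X_i)-\phi_J(x))/\nu\) is regular. At \(\nu=0\) the completed flow graph is a curve of equilibria of the layer equation. The time-\(h\) flow fixes it, and every stage branch there equals its input. So the whole target-aligned defect vanishes at \(\nu=0\) for every \(h\), and order two supplies the factor \(h^3\). Two Hadamard divisions then give \(h^3\nu\{R_{q,3}+hR_{q,4}(h)\}\) in \(v\), with \(R_{q,3}\in C^5_xC^2_L\). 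This is \(O(h^3r^4)\) in \(y\), remainder included. The divided chart is also where the nonlinear graph transform has a slow base and a normally contracting strip. With vertical fibres in the raw chart, the normal component \(y+h\eps g\) does not contract at fixed \(x\).

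This mechanism also constrains your construction. The vanishing at \(\nu=0\) survives the cutoff only if \(\widetilde F_{J,r}=F_J\) on a neighbourhood of \(\operatorname{supp}\chi_q\), not merely where \(\chi_q=1\). Otherwise, in the transition zone the completed flow graph does not consist of local equilibria, and \(\chi_q(\Phi_{J,h,\theta}-\varphi_h)\) loses the factor \(\nu\) there. The paper pins the extended normal defect on the completed flow graph for this reason. It also uses a periodic base so that no inflow boundary value is left free.

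Two further steps are asserted rather than proved. First, admissibility of the map graph needs \(C^4_xC^2_L\) collar bounds uniform as \(h\downarrow0\), which the \(C^3_xC^1_L\) matching expansion cannot supply. Since the gap closes like \(h\), you must show each jet forcing also carries a factor \(h\), as in the collar induction of \cref{app-geom:construction}. Second, \(h^{-1}(I-\mathcal T_h)U-\mathcal LU=O(h)\) holds only from \(C^5_xC^2_L\) into \(C^3_xC^1_L\). So you need \(U\) bounded in the stronger space with the \(r^4\) weight, which again comes from the divided representation of \(R_{q,3}\).
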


\begin{proof}
See \cref{app:matched-construction}.  The decisive estimate there is the
normalized graph-resolvent expansion

\[
 S_{\rk,q}-S_{\fl,q}
 =h^2r^2U_{q,2}+\ord_{C_x^3C_L^1}(h^3r^2),
 \qquad q\in\{\att,\rep\}.
\]

Restriction to the unscaled normalized collar buffer gives precisely
\cref{def:fold-matched}.
\end{proof}

\begin{proof}[Proof of \cref{thm:matched-threshold}]
The non-emptiness assertion follows from
\cref{prop:matched-existence}.  For an arbitrary fold-matched rule,
\cref{prop:fold-response} supplies the full-rectangle splitting estimate,
and \cref{lem:matched-step-response} supplies the fixed-\(r\), \(C^1\)
second-order response.  Apply
\cref{lem:root-transfer} to the transverse flow zero.  Since

\[
 \partial_\lambda\Delta_{\fl}^{\mathfrak p}
 =a(J)r+\ord(r^2)+\ord(r^{-M}\eexp^{-c/r^2})
\]

and \(K_\theta(J)=S_\theta(J)/a(J)\), the root displacement has the
coefficient asserted in \cref{thm:matched-threshold}.  The
full-rectangle error follows by dividing
\eqref{eq:matched-response-remainder} by the flow slope.  At fixed \(r\),
\eqref{eq:matched-fixed-r-response} and the ordinary implicit-function
expansion give the \(O_{r,\mathfrak p}(h^3)\) remainder.  Existence and
uniqueness of the actual-map root come from the monotonicity part of
\cref{lem:root-transfer}, not from the \(C^0\) Gaussian estimate alone.
\end{proof}

\section{Physical consequences and interpretation}
\label{sec:applications}

\subsection{The coefficient in physical jets}

The affine covariance in \cref{prop:affine-normalization} turns the
normalized theorem into a statement about the system as it is modelled and
discretized.  The resulting coefficient can be written without retaining
any of the auxiliary normalization scales.  For the physical germ
\eqref{eq:physical-germ}, define
\begin{equation}
 \label{eq:physical-jet-functional}
 \mathcal X(f,g)
 =\frac{f_{uuu}}{3f_{uu}^{\,2}}
  -\frac{f_{uv}}{f_{uu}f_v}
  -\frac{g_{uu}}{f_{uu}g_u}
  +\frac{g_v}{f_vg_u},
\end{equation}
where all derivatives are evaluated at the singular canard point.

\begin{corollary}[Threshold law in physical coordinates]
 \label{cor:physical-threshold}
Suppose that the physical germ \eqref{eq:physical-germ} satisfies
\eqref{eq:physical-fold-conditions} and
\(\mathcal T\ne0\), and form the exact Runge--Kutta map in the physical
variables with step \(k\).  On the fold-following section
\eqref{eq:physical-fold-following-section}, the leading coefficient is
\begin{equation}
 \label{eq:physical-jet-coefficient}
 \boxed{
 K_\theta^{\phys}
 =\frac{3\beta_\theta f_v^{\,2}g_u^{\,3}}{2\mathcal T}
   \mathcal X(f,g).}
\end{equation}
For independently chosen actual continuations obtained as affine images of
admissible normalized selections, there are
constants \(C,c>0\) and an integer \(M\) such that
\begin{equation}
 \label{eq:physical-all-selection-law}
 \mu_{\rk,\theta}^{\sigma_m}-\mu_{\fl}^{\sigma_f}
 =K_\theta^{\phys}k^2\eta^2
  +\ord(k^2\eta^{5/2}+k^3\eta^{5/2})
  +\ord\!\left(\eta^{-M/2}\eexp^{-c/\eta}\right).
\end{equation}
The roots exist and are unique throughout a full small-
\((k,\sqrt\eta)\) rectangle.  After division by \(k^2\eta^2\), the
remainder tends to zero on every positive power-law scale
\(k=\eta^q\), \(q>0\), including \(k\ll\eta\).

If the physical continuations are the affine images of a uniformly
second-order fold-matched rule \(\mathfrak p\), then
\begin{equation}
 \label{eq:physical-matched-law}
 \left|
 \mu_{\rk,\theta}^{\mathfrak p}-\mu_{\fl}^{\mathfrak p}
 -K_\theta^{\phys}k^2\eta^2
 \right|
 \le C_{\mathfrak p}k^2\eta^2
 \left(\sqrt\eta+k\sqrt\eta
       +\eta^{-M/2}\eexp^{-c/\eta}\right),
\end{equation}
after changing the constants to physical units.  For each fixed sufficiently
small \(\eta>0\),
\begin{equation}
 \label{eq:physical-fixed-eta-law}
 \mu_{\rk,\theta}^{\mathfrak p}-\mu_{\fl}^{\mathfrak p}
 =k^2\kappa_{\theta,\phys}^{\mathfrak p}(\eta)
  +\ord_{\eta,\mathfrak p}(k^3),
 \qquad k\downarrow0,
\end{equation}
with
\(
 \kappa_{\theta,\phys}^{\mathfrak p}(\eta)
 =K_\theta^{\phys}\eta^2
  +\ord(\eta^{5/2}+\eta^{-M/2}\eexp^{-c/\eta})
\).
\end{corollary}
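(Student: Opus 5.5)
The plan is to reduce everything to the normalized theorems through \cref{prop:affine-normalization}, using the $D=0$ chart. Three steps are needed: put the physical germ in that chart, transport \cref{thm:all-selection,thm:matched-threshold} back through the exact stage conjugacy \eqref{eq:rk-affine-conjugacy}, and then evaluate the boxed covariance formula \eqref{eq:physical-coefficient-transform} in physical jets.

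\emph{Admissibility of the normalized datum.} Fix any $\tau>0$. The normalized datum $J$ is then a single datum with $D=0$, so $D+2=2$ and \eqref{eq:normalized-unfolding-bound} holds. The singleton $\{J\}$ is a compact normalized class, and a single tableau is a compact order-two family. By \cref{cor:uniform-physical-families}, the same argument works uniformly for compact physical families.

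\emph{Transport of the threshold laws.} \Cref{prop:affine-normalization} maps admissible normalized selections to physical actual continuations. It also identifies the normalized section with the fold-following section \eqref{eq:physical-fold-following-section}. It gives $\mu_\bullet=a_\lambda\lambda_\bullet$ for the roots, together with $h=k/\tau$ and $r^2=\eps=\eta/a_\eps$. Substituting these into \eqref{eq:all-selection-law}, each normalized term becomes a physical term with fixed constant factors:
\[
h^2r^4 \to k^2\eta^2, \qquad h^2r^5 \to k^2\eta^{5/2}, \qquad h^3r^5 \to k^3\eta^{5/2}, \qquad r^{-M}\eexp^{-c/r^2} \to \eta^{-M/2}\eexp^{-c'/\eta}.
\]
This yields \eqref{eq:physical-all-selection-law}.

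\emph{Further consequences of the transport.}
\begin{itemize}
\item Existence and uniqueness on the full small $(k,\sqrt\eta)$ rectangle follow directly from the normalized rectangle.
\item The power-law statement is \eqref{eq:all-selection-relative-law}, since $k=\eta^q$ corresponds to $h=c\,r^{2q}$.
\item Applying the same substitution to \eqref{eq:matched-uniform-law} gives \eqref{eq:physical-matched-law}, using $H=hr\asymp k\sqrt\eta$.
\item Applying it to \eqref{eq:matched-fixed-r-law} and \eqref{eq:matched-kappa} gives \eqref{eq:physical-fixed-eta-law}, with
\[
\kappa_{\theta,\phys}^{\mathfrak p}(\eta)=\frac{a_\lambda}{\tau^2}\,\kappa_\theta^{\mathfrak p}(J,\sqrt{\eta/a_\eps}).
\]
\end{itemize}
Affine images of a fold-matched rule are used as the physical matched continuations. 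Nothing further is required there, because the conjugacy is exact.

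\emph{Computing the coefficient.} With $D=0$, \eqref{eq:threshold-coefficient} gives $K_\theta(J)=-\tfrac38\beta_\theta\XiJ(J)$. Inserting \eqref{eq:affine-scales} into \eqref{eq:affine-normalized-coefficients} gives the four jet coefficients:
\[
A=\frac{2f_{uuu}}{3\tau f_{uu}^2},\qquad
B=-\frac{2f_{uv}}{\tau f_{uu}f_v},\qquad
E=\frac{g_{uu}}{\tau f_{uu}g_u},\qquad
F_s=-\frac{g_v}{\tau f_vg_u}.
\]
Hence
\[
\XiJ=A+B-2E-2F_s=\frac{2}{\tau}\,\mathcal X(f,g).
\]
For the prefactor, $a_\eps^2=\tau^{-4}f_v^{-2}g_u^{-2}$ gives
\[
\frac{a_\lambda}{\tau^2a_\eps^2}=-\frac{2\tau f_v^2g_u^3}{\mathcal T}.
\]
Multiplying the two pieces,
\[
K_\theta^{\phys}=-\frac{2\tau f_v^2g_u^3}{\mathcal T}\cdot\left(-\frac{3\beta_\theta}{8}\right)\cdot\frac{2}{\tau}\,\mathcal X
=\frac{3\beta_\theta f_v^2g_u^3}{2\mathcal T}\,\mathcal X,
\]
which is \eqref{eq:physical-jet-coefficient}. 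The $\tau$-independence of this result is a useful consistency check.

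The main obstacle is bookkeeping rather than analysis. One must check that the $\tau$- and sign-dependent scales cancel exactly, in particular that $a_y<0$ or $a_\lambda<0$ is harmless. One must also check that the exponential term keeps its form after $r^2=\eta/a_\eps$, with $c$ replaced by $c/a_\eps$.
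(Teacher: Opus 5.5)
Your proposal is correct and follows the paper's own proof. Both transport \cref{thm:all-selection,thm:matched-threshold} through the exact conjugacy of \cref{prop:affine-normalization} with \(h=k/\tau\), \(\eps=\eta/a_\eps\), \(\mu=a_\lambda\lambda\), and then evaluate \(a_\lambda K_\theta(J)/(\tau^2a_\eps^2)\) in the \(D=0\) chart; your values of \(A,B,E,F_s\), \(\XiJ=2\mathcal X/\tau\) and \(a_\lambda/(\tau^2a_\eps^2)=-2\tau f_v^{2}g_u^{3}/\mathcal T\) all check out. One small slip: since \(1/r^2=a_\eps/\eta\), the exponential constant becomes \(c\,a_\eps\), not \(c/a_\eps\), which is harmless because only its positivity (guaranteed by \(a_\eps>0\)) matters.
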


\begin{proof}
By \cref{prop:affine-normalization}, the physical and normalized
Runge--Kutta stage equations are exactly conjugate, their splittings differ
by the constant factor \(a_y\), and their roots satisfy
\(\mu_\bullet=a_\lambda\lambda_\bullet\).  Substituting
\eqref{eq:affine-normalized-coefficients} into
\eqref{eq:threshold-coefficient} and then using
\cref{eq:affine-scales,eq:affine-fold-shear} gives
\eqref{eq:physical-jet-coefficient}; the dependence on the arbitrary
constant time gauge \(\tau\) cancels.  Transforming
\cref{thm:all-selection} with
\(h=k/\tau\) and \(\eps=\eta/a_\eps\) yields
\eqref{eq:physical-all-selection-law}.  An affine conjugacy preserves
actual invariance and the matching bounds up to fixed factors, so
\cref{thm:matched-threshold} gives
\cref{eq:physical-matched-law,eq:physical-fixed-eta-law}.
\end{proof}

The corollary is invariant under the admissible choice of affine fold gauge,
but not under an arbitrary nonlinear coordinate change.  It therefore
resolves the coordinate issue at the level at which Runge--Kutta stage
equations genuinely commute with a change of variables.

\subsection{The right fold of van der Pol}

Consider the fast-time van der Pol system
\begin{equation}
 \label{eq:vdp-physical-system}
 \dot x=y-\frac{x^3}{3}+x,
 \qquad
 \dot y=\eps(a-x).
\end{equation}
The affine coordinates
\begin{equation}
 \label{eq:vdp-physical-affine-map}
 u=1-x,
 \qquad v=y+\frac23,
 \qquad \mu=1-a
\end{equation}
preserve fast time and put the right fold into the already normalized form
\begin{equation}
 \label{eq:vdp-normalized-fold}
 \dot u=u^2-v-\frac13u^3,
 \qquad
 \dot v=\eps(u-\mu).
\end{equation}
Here \(\mathcal T=-2\), \(\mathcal X=-1/6\), and hence the physical
coefficient for \(\mu\) is \(\beta_\theta/8\).  Reversing the parameter
through \(a=1-\mu\) gives the following specialization.

\begin{corollary}[Local van der Pol threshold]
 \label{cor:vdp}
For every method in a fixed compact order-two Runge--Kutta family and every
admissible pair of actual local continuations, the selected right-fold
thresholds satisfy
\begin{equation}
 \label{eq:vdp-threshold-law}
 a_{\rk,\theta}^{\loc,\sigma_m}
 -a_{\fl}^{\loc,\sigma_f}
 =-\frac{\beta_\theta}{8}h^2\eps^2
  +\ord(h^2\eps^{5/2}+h^3\eps^{5/2})
  +\ord\!\left(\eps^{-M/2}\eexp^{-c/\eps}\right).
\end{equation}
The relative conclusion of \cref{thm:all-selection} holds on every
positive power-law scale \(h=\eps^q\), \(q>0\).  For a fold-matched rule,
all conclusions of
\cref{thm:matched-threshold} hold as well; in particular, at each fixed
sufficiently small \(\eps>0\), the actual numerical threshold converges to
its paired flow threshold with order two as \(h\to0\).
\end{corollary}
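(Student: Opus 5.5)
The plan is to obtain \cref{cor:vdp} as a specialization of \cref{cor:physical-threshold}, with a reparametrization at the end. The first task is to check that the van der Pol right fold, written in the affine coordinates \eqref{eq:vdp-physical-affine-map}, is a physical germ of the form \eqref{eq:physical-germ}. Here \(s=t\), \(\eta=\eps\), and the functions are
\[
 f(u,v,\eta,\mu)=u^2-v-\tfrac13u^3,\qquad g(u,v,\eta,\mu)=u-\mu .
\]
To verify this, substitute \(x=1-u\) and \(y=v-2/3\). This gives \(\dot u=-\dot x=-(y-x^3/3+x)\). Expanding \(-(v-\tfrac23)+\tfrac13(1-u)^3-(1-u)\) reproduces \(u^2-v-u^3/3\). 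Similarly \(\dot v=\eps(a-x)=\eps(u-\mu)\).

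The next step is to evaluate the jets at the origin. The nonzero ones are \(f_v=-1\), \(f_{uu}=2\), \(f_{uuu}=-2\), \(g_u=1\) and \(g_\mu=-1\). The remaining ones vanish: \(f=f_u=g=0\), \(f_\mu=f_{u\mu}=f_{uv}=f_\eta=0\), and \(g_v=g_{uu}=g_\eta=0\). From these, \eqref{eq:physical-fold-conditions} holds, since \(f_vf_{uu}g_u=-2\ne0\) and \(f_vg_u=-1<0\). The hatted jets \eqref{eq:physical-hatted-jets} are \(\widehat f_{u\mu}=0\) and \(\widehat g_\mu=-1\). Therefore
\[
 \mathcal T=f_{uu}\widehat g_\mu-g_u\widehat f_{u\mu}=-2 .
\]
Two consequences follow. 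The fold-following section \eqref{eq:physical-fold-following-section} is the fixed section \(u=0\), i.e.\ \(x=1\). Also, \(\widehat g_\mu\ne0\), so \cref{rem:fixed-physical-section} applies with \(D+2=2\mathcal T/(f_{uu}\widehat g_\mu)=2>0\).

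The jet functional \eqref{eq:physical-jet-functional} has only its first term nonzero:
\[
 \mathcal X=\frac{-2}{3\cdot4}=-\frac16 .
\]
Substituting into \eqref{eq:physical-jet-coefficient} gives
\[
 K^{\phys}_\theta=\frac{3\beta_\theta\cdot1\cdot1}{2(-2)}\Bigl(-\frac16\Bigr)=\frac{\beta_\theta}{8}.
\]
As a cross-check on the normalized side, take \(\tau=1\), so \(a_x=1\), \(a_y=1\), \(a_\eps=1\), \(a_\lambda=1\) and \(\ell=0\). The germ is then already in the form \eqref{eq:normalized-fold-field} with \(A=-1/3\) and \(B=E=F_s=0\), so \(\XiJ=-1/3\) and \(D=0\). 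Then \eqref{eq:threshold-coefficient} gives \(K_\theta=-\tfrac{3\beta_\theta}{8}\cdot(-\tfrac13)=\beta_\theta/8\), which agrees.

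The single fixed datum forms a compact class, so \cref{cor:physical-threshold} gives
\[
 \mu_{\rk}-\mu_{\fl}=\frac{\beta_\theta}{8}k^2\eta^2+\text{remainders}
\]
with \(k=h\) and \(\eta=\eps\). Since \(a=1-\mu\), the thresholds satisfy \(a_{\rk}-a_{\fl}=-(\mu_{\rk}-\mu_{\fl})\). This yields \eqref{eq:vdp-threshold-law}, with the remainders of \eqref{eq:physical-all-selection-law}. The admissible pairs in van der Pol coordinates are exactly the affine images of normalized selections, because the map \eqref{eq:vdp-physical-affine-map} is affine with unit time scale and so conjugates the Runge--Kutta stages exactly by \eqref{eq:rk-affine-conjugacy}. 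For the same reason, the relative power-law statement and the matched statements (\eqref{eq:physical-matched-law}, \eqref{eq:physical-fixed-eta-law}, and fixed-\(\eps\) second-order convergence) transfer directly from \cref{cor:physical-threshold} and \cref{thm:matched-threshold}, with the sign flip applied.

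The main obstacle is bookkeeping rather than analysis. The sign conventions must be tracked carefully: the reflection \(u=1-x\) reverses the orientation of the fold sections, and the parameter is reversed through \(a=1-\mu\). It also has to be checked that "admissible pair of actual local continuations" in the original van der Pol variables means the affine image of the normalized classes, so that nothing beyond \cref{prop:affine-normalization} is needed.
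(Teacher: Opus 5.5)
Your proposal is correct and follows essentially the paper's own route: the affine change \(u=1-x\), \(v=y+2/3\), \(\mu=1-a\) puts the right fold into the normalized form with \(\mathcal T=-2\) and \(\mathcal X=-1/6\), \cref{cor:physical-threshold} then gives \(K_\theta^{\phys}=\beta_\theta/8\), and the reversal \(a=1-\mu\) supplies the sign change. Your extra normalized-side check (\(\tau=1\), identity scales, \(A=-1/3\), \(D=0\), \(\XiJ=-1/3\)) agrees with the paper's observation that this germ is already in normalized form, so the admissible and fold-matched selections need no further transport.
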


For a numerical approximation to one fold-matched observable, consider the
self-adjoint two-stage order-two family
\begin{equation}
 \label{eq:vdp-two-stage-family}
 \Amat_\rho=
 \begin{pmatrix}
  \frac18+2\rho&\frac18-2\rho\\
  \frac38+2\rho&\frac38-2\rho
 \end{pmatrix},
 \qquad
 b=\frac12\binom11,
 \qquad
 \cvec=\binom{1/4}{3/4}.
\end{equation}
It has \(\beta_\rho=1/12-\rho\), so
\begin{equation}
 \label{eq:vdp-rho-coefficient}
 \frac{a_{\rk,\rho}^{\loc,\sigma_m}
       -a_{\fl}^{\loc,\sigma_f}}
      {h^2\eps^2}
 \longrightarrow \frac{12\rho-1}{96},
 \qquad \eps\downarrow0,\quad h=\eps^q,\quad q>0,
\end{equation}
uniformly over the admissible selection labels \(\sigma_f,\sigma_m\).
Equation \eqref{eq:vdp-rho-coefficient} concerns the actual local thresholds
of \cref{cor:vdp}.  We instantiate the common-completion construction in
\cref{prop:matched-existence} as follows.  On each oriented circle
\(\xi\in\R/(2\mathbb Z)\), put \(u=\eta_q\xi\), where
\(\eta_{\att}=1\) and \(\eta_{\rep}=-1\), and use
\(v=\phi(u)+\eps V\).  The local collar is
\(I=[-1/2,-1/8]\), and the completed oriented field is
\begin{equation}
 \label{eq:vdp-numerical-completion}
 \begin{aligned}
  \dot\xi&=\eps\widetilde A, & \widetilde A&=-V,\\
  \dot V&=\widetilde B_q,
  & \widetilde B_q
   &=\chi(\xi)\{\xi-\eta_q\eps L
        +(2\xi-\eta_q\xi^2)V\}\\
  &&&\quad-\frac{1-\chi(\xi)}2\left(V+\frac35\right).
 \end{aligned}
\end{equation}
Here \(\chi\) is a fixed smooth cutoff equal to one on
\([-0.55,-0.09]\) and supported in \([-0.625,-0.0625]\).
On the strip \(-6/5\le V\le-1/5\), its base drift has one sign,
\(\partial_V\widetilde B_q\le-31/256\), and both normal boundaries point
inward for the parameters used below.  If \(\widetilde F_h\) denotes the
time-\(h\) map of \eqref{eq:vdp-numerical-completion}, we complete the
numerical map by
\[
 \widetilde P_{\rho,h}
 =\widetilde F_h+\zeta(\xi)
       \{P_{\rho,h}^{\loc}-F_h^{\loc}\},
\]
where \(\zeta=1\) on \(I\), its support lies inside \(\{\chi=1\}\), and
\(P_{\rho,h}^{\loc}\) is formed first in the original affine variables.
The repelling branch uses the contained inverse.  Thus the completed map
equals the corresponding signed actual Runge--Kutta branch throughout the
local collar.

A periodic cubic-spline graph transform approximates the four completed
flow and map invariant graphs.  Their restrictions to the collar--bridge
overlap \(0.20\le |u|\le0.25\) are continued to \(u=0\) by the local flow
and the actual signed numerical branches.  Rather than subtracting two
nearby physical roots, we solve
\[
 \Delta_{\fl}^{\mathfrak p_{\rm cc}}(r^2L_{\fl})=0,
 \qquad
 \Delta_{\rk,\rho}^{\mathfrak p_{\rm cc}}
   \bigl(r^2[L_{\fl}+h^2r^2Q]\bigr)=0.
\]
Consequently the plotted coefficient is exactly \(-Q\) within the numerical
solve.  The retained code, tabular data, graph samples and metadata report
the off-grid local-collar and full-completion residuals, root slopes, refinement checks,
stage residuals and contained-inverse round trip.  No interval certification
is claimed.

\begin{figure}[!t]
 \centering
 \includegraphics[width=0.96\textwidth]
   {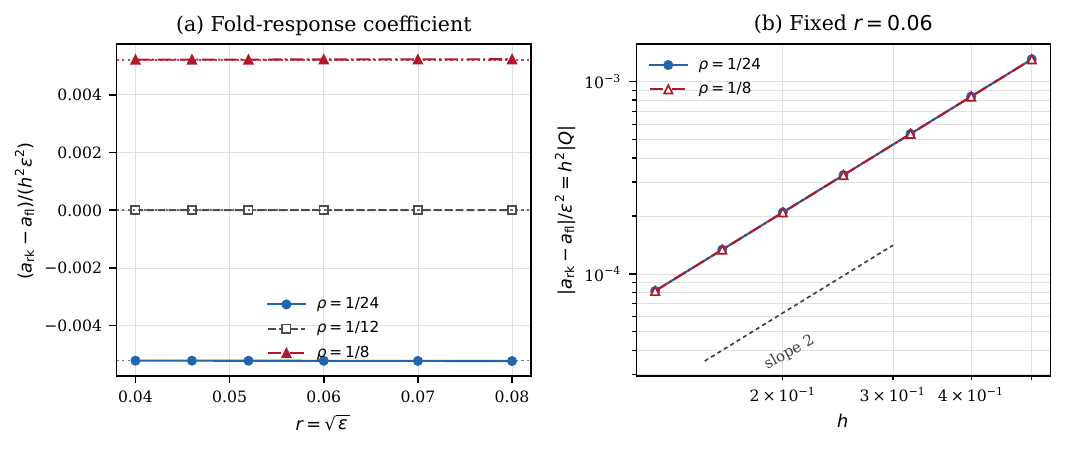}
 \caption{Numerical approximations to the local invariant-graph thresholds
 selected by the fixed common completion described in and following
 \eqref{eq:vdp-numerical-completion},
 computed with \(1024\) periodic graph nodes.  (a) For \(h=0.4\), normalized
 displacements for \(\rho=1/24,1/12,1/8\) (markers) and the predicted limits
 \((12\rho-1)/96\) (dotted lines).  (b) At fixed \(r=0.06\),
 \(\lvert a_{\rk,\rho}-a_{\fl}\rvert/\eps^2\) versus \(h\) for the two
 non-cancelling methods; their absolute-value curves overlap at plotting
 resolution, and the reference segment has slope two.  Connecting segments
 are visual guides.}
 \label{fig:vdp-invariant-graph-thresholds}
\end{figure}

\subsection{A conditional decomposition for a global matching root}

The following conditional decomposition isolates what a global theorem must
add to the local response; its hypotheses must be proved separately for each
global system.

\begin{remark}[Local and outer contributions]
 \label{rem:global-composition}
Let \(G_{\fl}(\lambda;r)\) and
\(G_{\rk}(\lambda;r,h,\theta)\) be scalar matching functions whose zeros
define the global flow and numerical thresholds.  Let \(q_r(\lambda)\)
measure how the local section splitting enters the global matching
condition, write \(\mathscr R_{\loc,\theta}^{\mathfrak p}\) for the local
response in \cref{lem:matched-step-response}, and let
\(\mathscr R_{\out,\theta}^{\mathfrak p}\) collect the order-\(h^2\)
response generated outside the fold neighbourhood.  Fix \(r>0\), suppress
the remaining arguments of \(G_{\fl}\) and \(G_{\rk}\), and suppose
\(G_{\fl}\) has a simple zero
\(\lambda_{\fl}^{\mathrm g}=r^2L_{\mathrm g}\), with \(L_{\mathrm g}\) in
the local response window, and set
\(D_{\mathrm g}=G_{\fl}'(\lambda_{\fl}^{\mathrm g})\ne0\).  If a global
analysis gives the first relation below in \(C^1\) near that zero, with the
local term supplied by the matched theorem and the outer term independent
of \(h\), then the scalar implicit-function argument gives the second:
\begin{equation}
 \label{eq:global-composition}
 \begin{aligned}
 G_{\rk}-G_{\fl}
 &=-h^2\{q_r(\lambda)\mathscr R_{\loc,\theta}^{\mathfrak p}(\lambda;r)
          +\mathscr R_{\out,\theta}^{\mathfrak p}(\lambda;r)\}
   +\ord_r(h^3),\\
 \lambda_{\rk}^{\mathrm g}-\lambda_{\fl}^{\mathrm g}
 &=\frac{h^2}{D_{\mathrm g}}
   \{q_r(\lambda_{\fl}^{\mathrm g})
        \mathscr R_{\loc,\theta}^{\mathfrak p}
             (\lambda_{\fl}^{\mathrm g};r)
     +\mathscr R_{\out,\theta}^{\mathfrak p}
             (\lambda_{\fl}^{\mathrm g};r)\}
   +\ord_r(h^3).
 \end{aligned}
\end{equation}
If, in addition, \(D_{\mathrm g}/r\to d_0\ne0\),
\(q_r(\lambda_{\fl}^{\mathrm g})\to q_0\), and the two responses divided
by \(r^5\) tend to \(S_\theta(J)\) and \(S_{\out,\theta}\), respectively,
then
\(\lim_{r\downarrow0}r^{-4}\lim_{h\downarrow0}
(\lambda_{\rk}^{\mathrm g}-\lambda_{\fl}^{\mathrm g})/h^2
=(q_0S_\theta(J)+S_{\out,\theta})/d_0\).
Thus the fold term alone controls the global coefficient only after an
outer cancellation, such as \(S_{\out,\theta}=0\), has been established.
This remark neither constructs an outer return nor proves a global canard
theorem.
\end{remark}

\subsection{Standard methods under the fold-response filter}

The method class in the theorems is substantially broader than the
two-stage family used in \cref{fig:vdp-invariant-graph-thresholds}.  The following
values illustrate how methods with different pointwise residuals can be
indistinguishable to the leading threshold observable.
\Cref{tab:standard-method-defects} lists the two defects and the method
multiplier \(c_\theta=-3\beta_\theta/4\), defined without division by
\(\XiJ(J)\) through
\(K_\theta(J)=c_\theta\XiJ(J)/(D+2)\).

\begin{table}[!ht]
 \centering
 \small
 \caption{Order-three defects and the method multiplier in the leading
 threshold coefficient.}
 \label{tab:standard-method-defects}
\begin{tabular}{lccc}
 \toprule
 Method&\(\alpha_\theta\)&\(\beta_\theta\)&\(c_\theta\)\\
 \midrule
 Explicit midpoint&\(-1/24\)&\(-1/6\)&\(1/8\)\\
 Heun RK2&\(1/12\)&\(-1/6\)&\(1/8\)\\
 Ralston RK2&\(0\)&\(-1/6\)&\(1/8\)\\
 Kutta RK3&\(0\)&\(0\)&\(0\)\\
 Classical RK4&\(0\)&\(0\)&\(0\)\\
 Implicit midpoint&\(-1/24\)&\(1/12\)&\(-1/16\)\\
 Trapezoidal rule&\(1/12\)&\(1/12\)&\(-1/16\)\\
 Two-stage family, \(\rho=1/12\)&\(-1/96\)&\(0\)&\(0\)\\
 \bottomrule
\end{tabular}
\end{table}

In particular, the last row has strict classical order two but its leading
threshold shift vanishes.  Conversely, the equality of the three explicit
RK2 coefficients does not mean that their local errors agree: their bushy
defects are different, and the response functional annihilates those
differences.

\FloatBarrier

\section{Conclusion and outlook}
\label{sec:conclusion}

The central conclusion is geometric.  The order-three local error of an
order-two Runge--Kutta method has two independent rooted-tree defect
coordinates, and both enter the general pointwise one-step residual.
Transport through a fast--slow fold does not pass those coordinates
unchanged to the maximal-canard threshold.  Its Gaussian response annihilates
the bushy-tree direction, so only the chain-tree direction can contribute,
yielding
\((\alpha,\beta)\mapsto-3\beta\XiJ(J)/8\).  The leading joint-fold bias can
therefore vanish under a condition weaker than classical third order.  This
is a cancellation in one nonlinear observable, not an elevation of the
trajectory order or, in general, of the fixed-\(\eps\) threshold order.

The observable is local, but its algebraic response is robust at precisely
the scale resolved by the theorem.  Independently continued slow manifolds
can change the local threshold by an exponentially small amount; once that
selection term is negligible relative to \(h^2\eps^2\), every admissible
choice has the same leading coefficient.  Fold matching supplies a coherent
comparison for the ordinary limit \(h\to0\) at fixed \(\eps\), although its
complete fixed-parameter coefficient may retain outer boundary data.  The
affine covariance result transfers the local coefficient to physical fold
germs, but not through arbitrary nonlinear changes of state.  Likewise, the
conditional decomposition in \cref{rem:global-composition} identifies how
the fold response enters a global matching equation; a global return can
contribute at the same order and must be analysed for the system at hand.

Output-dependent accuracy is familiar in numerical analysis.  What is
specific here is that a singular dynamical passage gives an explicit
response functional on the Runge--Kutta defect space for a threshold defined
by actual invariant manifolds.  Other nonhyperbolic passages---including
transcritical passage and delayed loss of stability---have different
variational kernels and may therefore retain different combinations of
B-series defects.  Determining those functionals, and coupling them to
natural global return conditions, are the next geometric questions.

\appendix
\section{Affine normalization and exact Runge--Kutta covariance}
\label{app:affine-normalization}

This appendix proves \cref{prop:affine-normalization,cor:uniform-physical-families}.
The calculation is included because it identifies the precise physical
nondegeneracy condition and because exact affine covariance, rather than a
formal coordinate argument, is needed for the numerical threshold.

\subsection{The fold curve and the unfolding determinant}

At \(\eta=0\), the singular fold curve is determined locally by
\[
 f(u_{\rm fold}(\mu),v_{\rm fold}(\mu),0,\mu)=0,
 \qquad
 f_u(u_{\rm fold}(\mu),v_{\rm fold}(\mu),0,\mu)=0.
\]
The Jacobian of these two equations with respect to \((v,u)\) is triangular
at the origin, with diagonal entries \(f_v\) and \(f_{uu}\).  The implicit
function theorem and \eqref{eq:physical-fold-conditions} therefore give a
unique analytic fold curve.  Differentiation at \(\mu=0\) yields
\begin{equation}
 \label{eq:app-fold-curve-derivatives}
 v_{\rm fold}'(0)=-\frac{f_\mu}{f_v},
 \qquad
 u_{\rm fold}'(0)
 =-\frac{1}{f_{uu}}
    \left(f_{u\mu}-\frac{f_{uv}f_\mu}{f_v}\right)
 =-\frac{\widehat f_{u\mu}}{f_{uu}}.
\end{equation}
Consequently,
\begin{equation}
 \label{eq:app-slow-drift-transversality}
 \begin{aligned}
 \left.\frac{\dd}{\dd\mu}
 g(u_{\rm fold}(\mu),v_{\rm fold}(\mu),0,\mu)
 \right|_{\mu=0}
 &=g_\mu+g_u u_{\rm fold}'(0)+g_v v_{\rm fold}'(0)\\
 &=\widehat g_\mu-\frac{g_u}{f_{uu}}\widehat f_{u\mu}
 =\frac{\mathcal T}{f_{uu}}.
 \end{aligned}
\end{equation}
This proves the geometric interpretation following
\eqref{eq:physical-unfolding-determinant}.

\subsection{The normalized Taylor jet}

Let \(P=\diag(a_x,a_y)\), and write the state part of
\eqref{eq:affine-normalization-map} as
\[
 \mathcal A_{\eps,\lambda}(z)=Pz+q_{\eps,\lambda},
 \qquad z=(x,y),
\]
where
\[
 q_{\eps,\lambda}
 =\left(
 \ell\lambda,
 -\frac{f_\eta}{f_v}a_\eps\eps
 -\frac{f_\mu}{f_v}a_\lambda\lambda
 \right).
\]
Since \(s=\tau t\), the transformed vector field is
\begin{equation}
 \label{eq:app-transformed-vector-field}
 F_J(z;\eps,\lambda)
 =\tau P^{-1}
 \begin{pmatrix}
  f(\mathcal A_{\eps,\lambda}(z),a_\eps\eps,
                                      a_\lambda\lambda)\\
  a_\eps\eps\,
  g(\mathcal A_{\eps,\lambda}(z),a_\eps\eps,
                                      a_\lambda\lambda)
 \end{pmatrix}.
\end{equation}

The first three scales in \eqref{eq:affine-scales} satisfy
\begin{equation}
 \label{eq:app-leading-normalizations}
 \frac{\tau a_xf_{uu}}2=1,
 \qquad
 \frac{\tau a_yf_v}{a_x}=-1,
 \qquad
 \frac{\tau a_\eps a_xg_u}{a_y}=1.
\end{equation}
The orientation assumption \(f_vg_u<0\) gives \(a_\eps>0\).  The pure
\(\eps\)- and \(\lambda\)-terms in the fast component vanish because of
the second component of \(q_{\eps,\lambda}\) and \(f_u=0\).

It remains to check the two terms involving the distinguished parameter.
The coefficient of \(x\lambda\) in the fast equation and the coefficient of
\(\lambda\) inside the slow bracket are, respectively,
\begin{equation}
 \label{eq:app-parameter-coefficients}
 \tau\left(f_{uu}\ell+\widehat f_{u\mu}a_\lambda\right),
 \qquad
 \frac{1}{a_xg_u}
 \left(g_u\ell+\widehat g_\mu a_\lambda\right).
\end{equation}
Using \eqref{eq:affine-scales}, \eqref{eq:affine-fold-shear}, and
\(\mathcal T=f_{uu}\widehat g_\mu-g_u\widehat f_{u\mu}\), one obtains
\begin{equation}
 \label{eq:app-parameter-normalizations}
 f_{uu}\ell+\widehat f_{u\mu}a_\lambda=0,
 \qquad
 g_u\ell+\widehat g_\mu a_\lambda=-a_xg_u.
\end{equation}
Thus the two coefficients in \eqref{eq:app-parameter-coefficients} are
\(0\) and \(-1\).  This proves that \(D=0\) and that the slow equation
contains \(x-\lambda\).

The coefficients of \(x^3\), \(xy\), \(x\eps\), \(x^2\), \(y\), and
\(\eps\) are obtained by differentiating
\eqref{eq:app-transformed-vector-field}; they are exactly
\eqref{eq:affine-normalized-coefficients}.  All Taylor monomials of lower
weight have now been listed.  Taylor's formula therefore puts every
remaining fast monomial in weighted degree at least four and every remaining
term inside the slow bracket in weighted degree at least three.  Analyticity
and reality under conjugation are preserved by the real affine map.  This
proves the normal-form part of \cref{prop:affine-normalization}.

For completeness, the fixed-section choice in
\cref{rem:fixed-physical-section} follows by setting \(\ell=0\).  The slow
coefficient in \eqref{eq:app-parameter-coefficients} equals \(-1\) for
\(a_\lambda=-a_xg_u/\widehat g_\mu\), while the fast coefficient becomes
\[
 D=\tau\widehat f_{u\mu}a_\lambda
  =-\frac{2g_u\widehat f_{u\mu}}
          {f_{uu}\widehat g_\mu}.
\]
Adding \(2\) and using the definition of \(\mathcal T\) gives the
second identity in \eqref{eq:fixed-section-D}.

\subsection{Exact stage conjugacy and the section splitting}

We first verify the contained inverse identity
\eqref{eq:rk-contained-inverse}.  Suppose the stages \(Z_i\) take an input
\(z_0\) to
\[
 z_1=z_0+h\sum_j b_jF_J(Z_j)
\]
under the tableau \((\Amat,b,\cvec)\).  Starting at \(z_1\), apply the
adjoint tableau \eqref{eq:rk-adjoint} with step \(-h\).  Substitution of the
same stages gives
\[
 z_1-h\sum_j(b_j-a_{ij})F_J(Z_j)
 =z_0+h\sum_j a_{ij}F_J(Z_j)=Z_i,
\]
and its output is
\(z_1-h\sum_i b_iF_J(Z_i)=z_0\).  Analytic continuation from \(h=0\)
selects these stages on the contained branches and proves the exact inverse
relation.

Let \(\mathcal F^{\phys}=(f,\eta g)\).  Equation
\eqref{eq:app-transformed-vector-field} is equivalently
\begin{equation}
 \label{eq:app-vector-field-conjugacy}
 F_J(z;\eps,\lambda)
 =\tau P^{-1}\mathcal F^{\phys}
  (\mathcal A_{\eps,\lambda}(z);\eta,\mu).
\end{equation}
Let \(Z_i\) solve the normalized stage equations with step \(h\), and put
\(W_i=\mathcal A_{\eps,\lambda}(Z_i)\).  Since the translation
\(q_{\eps,\lambda}\) is fixed during a step, multiplication of the stage
equations by \(P\), followed by \eqref{eq:app-vector-field-conjugacy}, gives
\begin{equation}
 \label{eq:app-physical-stage-conjugacy}
 W_i=\mathcal A_{\eps,\lambda}(z)
 +\tau h\sum_j a_{ij}(\theta)
       \mathcal F^{\phys}(W_j;\eta,\mu).
\end{equation}
Thus the \(W_i\) are exactly the physical stages with step
\(k=\tau h\).  The same calculation for the weighted stage sum gives the
output identity \eqref{eq:rk-affine-conjugacy}.  No expansion in \(h\) has
been used.

The affine conjugacy carries invariant graph germs to invariant graph
germs.  On \(x=0\), the physical fast coordinate is
\(u=\ell\lambda=(\ell/a_\lambda)\mu\), which proves
\eqref{eq:physical-fold-following-section}.  The parameter-dependent
translation in the slow coordinate is common to the attracting and
repelling graphs, whereas their vertical difference is multiplied by
\(a_y\).  Therefore
\[
 \Delta_\bullet^{\phys}(\mu)
 =a_y\Delta_\bullet(\mu/a_\lambda),
\]
and the splitting zeros obey \(\mu_\bullet=a_\lambda\lambda_\bullet\).
Finally,
\begin{align*}
 \mu_{\rk}-\mu_{\fl}
 &=a_\lambda K_\theta(J)
       \left(\frac{k}{\tau}\right)^2
       \left(\frac{\eta}{a_\eps}\right)^2
   +o(k^2\eta^2)\\
 &=\frac{a_\lambda}{\tau^2a_\eps^2}
       K_\theta(J)k^2\eta^2+o(k^2\eta^2),
\end{align*}
which proves \eqref{eq:physical-coefficient-transform} and completes the
proof of \cref{prop:affine-normalization}.

\subsection{Uniformity over physical families}

Consider the compact family in
\cref{cor:uniform-physical-families}.  The formulas
\eqref{eq:affine-scales}--\eqref{eq:affine-fold-shear} depend continuously
on the finite jet of \((f,g)\).  The uniform lower bounds therefore make
\[
 a_x,\quad a_y,\quad a_\eps,\quad a_\lambda,
 \quad\ell
\]
uniformly bounded, and their nonzero scaling factors have uniformly bounded
inverses.  After reducing a common normalized polydisc, all its affine images
are contained in the common physical holomorphic neighbourhood.  Taylor
projection onto the displayed weighted jets is continuous in the
\(H^\infty\) topology on nested polydiscs.  Hence the transformed
coefficients and remainders form a compact set with a common holomorphic
bound.  Since the fold-following normalization has \(D=0\), its unfolding
bound is \(D+2=2\).  The image is therefore a normalized analytic fold class
as in \cref{def:normalized-fold-class}.  The scale conversions in
\eqref{eq:physical-normalized-variables} are uniform, proving
\cref{cor:uniform-physical-families}.

\section{Uniform invariant-graph geometry}
\label{app:fold-geometry}

This appendix proves \cref{prop:fold-geometry}.  The construction is local
in the state variables but uniform over the compact analytic class and the
compact Runge--Kutta family.  We first construct invariant graphs on
normally hyperbolic collars, continue them to the fold by regraphing,
and then prove that the fold transport suppresses the freedom in the outer
continuation at a Gaussian rate.

Recall \(L_0(J)\) from \eqref{eq:main-L0-definition}.  It is continuous and
bounded on \(\Gclass\).  Fix \(L_*>0\) so that
\(\abs{L_0(J)}\le L_*-2\) for every \(J\in\Gclass\), and put
\(\Lambda=[-L_*,L_*]\).  All reductions below preserve this fixed
parameter interval.

\subsection{Admissible invariant graphs}

Choose \(\delta>0\) uniformly small and set
\begin{equation}
 \label{app-geom:side-intervals}
 \begin{array}{c|c|c|c}
 q&I_q^{\rm col}&I_q^{\rm br}&I_q^{\rm ov}\\ \hline
 \att&[-2\delta,-\delta/2]&[-\delta,0]&[-\delta,-\delta/2]\\
 \rep&[\delta/2,2\delta]&[0,\delta]&[\delta/2,\delta].
 \end{array}
\end{equation}
Let \(\phi_J\) be the critical graph determined by
\begin{equation}
 \label{app-geom:critical-graph}
 f_J(x,\phi_J(x),0,0)=0,
 \qquad \abs{x}\le3\delta.
\end{equation}
Compactness and \(\partial_yf_J(0)=-1\) give a common analytic
neighbourhood of these graphs.

\begin{definition}[Component identities for admissible selections]
 \label{app-geom:selection-definition}
Use the admissibility definition and bounds in
\cref{def:admissible-selection,eq:main-selection-bounds}, with the intervals
in \eqref{app-geom:side-intervals}.  Denote the pasted graph by
\(\widehat m_{\bullet,q}\) and its central value by
\begin{equation}
 \label{app-geom:core-value}
 y_{\bullet,q}(r^2L)=u_{\bullet,q}(0;L).
\end{equation}

For the flow, admissibility means tangency and directed drift:
\begin{equation}
 \label{app-geom:flow-graph-equation}
 r^2g_J(x,\widehat m_{\fl,q};r^2,r^2L)
 =\partial_x\widehat m_{\fl,q}
  f_J(x,\widehat m_{\fl,q};r^2,r^2L),
 \qquad
 c_dr^2\le f_J\le C_dr^2.
\end{equation}
For the map, let \(\Phi^+\) be the positive-step branch of
\eqref{eq:normalized-rk-map} and let
\(\Phi^-=\Phi_{-h,\theta^\dagger}\) be its contained inverse.  Whenever
the source and target remain on the pasted graph, both maps satisfy their
graph invariance equations.  More precisely, with
\(z_q(x)=(x,\widehat m_{\rk,q}(x))\) and
\(T_q^\pm(x)=\pi_x\Phi^\pm(z_q(x))\),
\begin{equation}
 \label{app-geom:map-graph-equations}
 \Phi^\pm(z_q(x))=z_q(T_q^\pm(x))
\end{equation}
on every contained source--target pair, and
\begin{equation}
 \label{app-geom:map-inverse-identities}
 \Phi^-\circ\Phi^+=\operatorname{id},
 \qquad
 \Phi^+\circ\Phi^-=\operatorname{id}.
\end{equation}
The common stage neighbourhood and positive-step drift required in
\cref{def:admissible-selection} take the component form
\begin{equation}
 \label{app-geom:map-drift}
 c_dhr^2\le T_q^+(x)-x\le C_dhr^2.
\end{equation}
\end{definition}

The construction below fixes \(B_{\rm sel},r_0,h_0\) for which the classes
in \cref{def:admissible-selection} are nonempty.  Those classes retain more continuations than the
constructed references because the exponentially small freedom in a local
slow manifold is part of the theorem's quantifiers.

\subsection{The divided flow equation}

Put \(\nu=r^2\) temporarily and write
\begin{equation}
 \label{app-geom:divided-coordinate}
 \eps=\nu,
 \qquad \lambda=\nu L,
 \qquad y=\phi_J(x)+\nu v.
\end{equation}
The critical equation in \eqref{app-geom:critical-graph} makes
\(f_J(x,\phi_J(x)+\nu v;\nu,\nu L)\) analytically divisible by \(\nu\).
Consequently the normalized flow has the divided form
\begin{equation}
 \label{app-geom:divided-flow}
 \dot x=\nu\mathcal A_J(x,v,L,\nu),
 \qquad
 \dot v=\mathcal B_J(x,v,L,\nu).
\end{equation}

On the critical graph define
\begin{equation}
 \label{app-geom:critical-coefficients}
 \begin{aligned}
  a_J(x)&=\partial_\eps f_J(x,\phi_J(x);0,0),
  &b_J(x)&=\partial_\lambda f_J(x,\phi_J(x);0,0),\\
  c_J(x)&=\partial_y f_J(x,\phi_J(x);0,0),
  &q_J(x)&=g_J(x,\phi_J(x);0,0).
 \end{aligned}
\end{equation}
All apparent singularities at \(x=0\) in
\begin{equation}
 \label{app-geom:reference-graph}
 Q_J(x)=\frac{q_J(x)}{\phi_J'(x)},
 \qquad
 \Gamma_J^0(x,L)=\frac{Q_J(x)-a_J(x)-Lb_J(x)}{c_J(x)},
 \qquad
 \kappa_J(x)=-c_J(x)\frac{\phi_J'(x)}x
\end{equation}
are removable.  After reducing \(\delta\), compactness gives constants
independent of \(J\) such that
\begin{equation}
 \label{app-geom:coercivity}
 0<Q_-\le Q_J(x)\le Q_+,
 \qquad
 0<\kappa_-\le\kappa_J(x)\le\kappa_+.
\end{equation}

Set \(v=\Gamma_J^0+e\).  Analytic division of the graph equation for
\eqref{app-geom:divided-flow} gives
\begin{equation}
 \label{app-geom:scalar-fold-equation}
 \nu\mathcal A(x,e,L,\nu)\partial_xe
 =x\kappa_J(x)e+\nu\mathcal F_J(x,e,L,\nu),
 \qquad
 0<A_-\le\mathcal A\le A_+.
\end{equation}
Thus the only loss of normal hyperbolicity is the factor \(x/\nu\).  In
the signed coordinate
\begin{equation}
 \label{app-geom:signed-coordinate}
 x=\varsigma rz,
 \qquad
 \varsigma=-1\quad(q=\att),
 \qquad
 \varsigma=1\quad(q=\rep),
\end{equation}
the coreward homogeneous propagator is bounded by
\begin{equation}
 \label{app-geom:gaussian-propagator}
 \abs{G(z,t)}\le C\exp\{-c(t^2-z^2)\},
 \qquad 0\le z\le t\le\delta/r.
\end{equation}

\subsection{Collar graphs and continuation to the fold}

We first record two facts about a general tableau needed in the collar
construction.

\begin{lemma}[Uniform divided stages and completed collar maps]
 \label{app-geom:completed-collar-map}
Let \(q\in\{\att,\rep\}\).  On the attracting collar take the positive
step with tableau \(\theta\); on the repelling collar take the negative
step with tableau \(\theta^\dagger\).  After reducing \(r_0,h_0\), the
corresponding stage branch exists on a fixed divided-coordinate
neighbourhood and has the form
\begin{equation}
 \label{app-geom:local-divided-map}
 P_{q,p}(w,v)=
 \bigl(w+h\nu\mathcal A_{q,p}(w,v),
       v+h\mathcal B_{q,p}(w,v)\bigr),
 \qquad w=(x,L),
\end{equation}
where the \(L\)-component of \(\mathcal A_{q,p}\) is zero and
\(p=(J,\nu,h,\theta)\).  The factors and their state derivatives through
order eight are bounded uniformly.  On a fixed normal strip one can
complete \eqref{app-geom:local-divided-map} to a global diffeomorphism
\(\widetilde P_{q,p}\), equal to the local branch on a smaller fixed
neighbourhood, such that the strip is invariant, every fixed-\(v\) base map is
invertible, and
\begin{equation}
 \label{app-geom:completed-map-derivatives}
 \begin{aligned}
  \mathcal L_{11},\mathcal L_{12}&\le Ch\nu,&
  \mathcal L_{21}&\le Ch,&
  \mathcal L_{22}&\le1-\kappa h,\\
  \norm{D^j(h\nu\widetilde{\mathcal A}_{q,p})}&\le C_jh\nu
       &&(1\le j\le8),&
  \norm{D_w(v+h\widetilde{\mathcal B}_{q,p})}&\le C_1h,\\[-1mm]
  \norm{D^j(v+h\widetilde{\mathcal B}_{q,p})}&\le C_jh
       &&(2\le j\le8).
 \end{aligned}
\end{equation}
Here \(\mathcal L_{ij}\) are the nonnegative Lipschitz constants of the
four base--normal blocks, and the identity in the first normal derivative
is excluded from the last two bounds.
\end{lemma}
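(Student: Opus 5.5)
Two things have to be shown: the local stage branch exists with the divided form \eqref{app-geom:local-divided-map} and uniform bounds, and it admits a completion with the block bounds \eqref{app-geom:completed-map-derivatives}.

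\emph{Local divided branch.} Fix a side \(q\) and write the stage equations \eqref{eq:normalized-rk-map} with step \(\pm h\) and tableau \(\theta\) or \(\theta^\dagger\). Compactness of \(\ThetaRK\), and hence of its adjoint image, gives a uniform bound on \(\norm{\Amat_\theta}\). Analyticity of \(F_J\) on \(\Omega^+\), uniformly over the compact class \(\Gclass\), then makes the stage fixed-point map a contraction for \(h\le h_0\) on a complex polydisc around the diagonal \(Z_i=z\). The contained branch is therefore analytic in \((z,\eps,\lambda,h)\), with Cauchy bounds on derivatives of every order; in particular the bounds through order eight are uniform.

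Next pass to the divided coordinates \eqref{app-geom:divided-coordinate}, \(y=\phi_J(x)+\nu v\), \(\lambda=\nu L\), with \(L\) carried as a passive coordinate. The output has the form \(z+h\sum_i b_iF_J(Z_i)\). Its \(x\)-increment is \(h\sum_i b_i f_J(Z_i)\), and each \(Z_i\) lies within \(O(h)\) of \(z\) on the graph. For both components I will use the identity \(f_J(x,\phi_J+\nu v;\nu,\nu L)=\nu\,\mathcal A_J\), which is analytically divisible by \(\nu\) by \eqref{app-geom:critical-graph}, together with \(\dot y=\nu g_J\).

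For the normal increment I will write \((y_{\rm out}-\phi_J(x_{\rm out}))/\nu-v\) and apply Hadamard division along the stage chain. The stages themselves are expressed in divided coordinates, which is valid because every stage increment is \(h\) times an analytic function of the divided variables; a stage-level divided fixed-point problem should give this directly. The result is \(h\mathcal B_{q,p}\), with \(\mathcal B\) analytic in \((w,v,\nu,h)\).

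The same computation gives the base increment \(h\nu\mathcal A_{q,p}\). It also shows that the \(L\)-component vanishes, since \(\lambda\) is frozen during a step. Uniform bounds on \(\mathcal A,\mathcal B\) and their derivatives through order eight follow from Cauchy estimates on a slightly smaller polydisc.

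\emph{Completion.} Restrict to the collar, where \(\abs{x}\ge\delta/4\). From \eqref{app-geom:scalar-fold-equation}, \(\partial_v\mathcal B=-\kappa_J(x)x\cdot\varsigma\)-type \({}+O(\nu)\). More precisely, the leading linear term is \(\partial_vB=x\kappa_J(x)\) in the forward orientation, and the orientation \(\pm h\) makes it \(\le-\kappa\) on each collar by \eqref{app-geom:coercivity}.

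Choose smooth cutoffs \(\chi(w)\), equal to \(1\) on a smaller fixed collar neighbourhood, and \(\psi(v)\) on a fixed normal strip \(\abs{v-\Gamma^0_J}\le R\). Define
\[
\widetilde{\mathcal A}=\chi\mathcal A,\qquad \widetilde{\mathcal B}=\chi\psi\mathcal B+(1-\chi\psi)(-\kappa(v-v_0))
\]
with \(v_0\) a fixed strip centre. The base map \(w\mapsto w+h\nu\widetilde{\mathcal A}\) is then a small perturbation of the identity and hence invertible for \(h\nu\) small. The normal contraction gives \(\mathcal L_{22}\le 1-\kappa h\) once \(h\) is reduced, and the inward-pointing strip boundaries give invariance of the strip. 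The remaining bounds \(\mathcal L_{11},\mathcal L_{12}\le Ch\nu\), \(\mathcal L_{21}\le Ch\), and the \(C^8\) bounds come from the product rule with the cutoff derivatives, which are fixed.

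The main obstacle is keeping \(\partial_v\widetilde{\mathcal B}\le-\kappa\) across the interpolation region. For this I will pick \(R\) small and use the same leading slope \(x\kappa_J(x)\) in the linear completion, so that the convex combination preserves the sign, with error terms \(O(R)+O(\nu)\) absorbed.
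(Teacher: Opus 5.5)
Your treatment of the stage branch is essentially the paper's, with one minor difference. The paper writes the stage system itself in divided variables and uses $(X_i-x)/\nu=\tau\sum_m a_{im}\widehat f_J(X_m,V_m,L,\nu)$ to show that $(\phi_J(X_i)-\phi_J(x))/\nu$ is analytic at $\nu=0$ and carries the factor $\tau$. You instead divide the output. That is equally valid once you state the reason: $y_{\rm out}-\phi_J(x_{\rm out})-\nu v$, viewed as an analytic function of $(x,v,L,\nu,h)$ on a fixed complex polydisc, vanishes identically at $h=0$ and at $\nu=0$. The latter holds because for $\eps=\lambda=0$ the critical graph consists of equilibria, so every stage equals the input. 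Division by $h\nu$ and Cauchy estimates then give the uniform bounds through order eight.

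The completion is a genuinely different route, and you correctly identified its obstacle. The paper never cuts $\mathcal B_{q,p}$ off in $v$. It blends $\partial_v\mathcal B_{q,p}$ with the constant $-2\kappa$, integrates back from $v=0$, and only then blends in $w$ with $-2\kappa v$. A $w$-cutoff does not enter $\partial_v$, so $\partial_v\widetilde{\mathcal B}\le-2\kappa$ holds by construction, on a strip of any width, using only the local sign of $\partial_v\mathcal B_{q,p}$. The same linear device is reused for the common completion in the matched appendix.

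Your direct blend gives $\partial_v\widetilde{\mathcal B}=\chi\psi\,\partial_v\mathcal B+(1-\chi\psi)\,\partial_v\ell+\chi\psi'(\mathcal B-\ell)$. The last term is not a convex combination. It is small only if $\ell$ matches the leading normal field pointwise in $w$, in slope \emph{and} in zero, i.e.\ $\ell=-\abs{x}\kappa_J(x)\bigl(v-\Gamma_J^0(x,L)\bigr)$, extended outside the collar. With the displayed $-\kappa(v-v_0)$, which has constant slope and a fixed centre, the cross term is generally of the same order as the gap and can reverse its sign, because neither $\abs{x}\kappa_J(x)$ nor $\Gamma_J^0$ is constant on the collar.

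With the correct $\ell$ there is a bonus. At $\nu=0$ the divided normal field $q_J-\phi_J'(c_Jv+a_J+Lb_J)$ is exactly affine in $v$, so $\ell$ coincides with $\mathcal B_{q,p}$ at $\nu=h=0$. The cross term is then $O((\nu+h)/R)$, with an $h$-part you did not budget for, and no smallness of $R$ is needed.

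Your construction is more hands-on: it must track the local linearization of $\mathcal B_{q,p}$. The paper's needs only a sign and is robust. You would still have to add, as the paper does, a $v$-cutoff in $\widetilde{\mathcal A}$. You also need monotonicity of $v\mapsto v+h\widetilde{\mathcal B}$ to get strip invariance, and a contraction argument for global invertibility of the full map, not just of the base maps.
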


\begin{proof}
Let \(s_*\) be the maximal number of stages in the family and set
\begin{equation}
 \label{app-geom:absolute-tableau-bounds}
 B_A=\sup_{\vartheta,i}\sum_m\abs{a_{im}(\vartheta)},
 \qquad
 B_b=\sup_\vartheta\sum_i\abs{b_i(\vartheta)},
\end{equation}
where \(\vartheta\) ranges over \(\ThetaRK\) and its adjoint image.
Both numbers are finite, and all estimates below use these absolute sums.

Write an input and its stages as
\[
 y=\phi_J(x)+\nu v,
 \qquad
 Y_i=\phi_J(X_i)+\nu V_i.
\]
There is an analytic function \(\widehat f_J\), on a fixed neighbourhood
of the divided collar, for which
\[
 f_J(x,\phi_J(x)+\nu v;\nu,\nu L)
 =\nu\widehat f_J(x,v,L,\nu).
\]
For a step \(\tau\), the stage equations are therefore exactly
\begin{equation}
 \label{app-geom:divided-stage-system}
 \begin{aligned}
  X_i&=x+\tau\nu\sum_m a_{im}(\vartheta)
        \widehat f_J(X_m,V_m,L,\nu),\\
  V_i&=v+\tau\sum_m a_{im}(\vartheta)
       g_J(X_m,\phi_J(X_m)+\nu V_m;\nu,\nu L)\\
     &\hspace{17mm}
       -\frac{\phi_J(X_i)-\phi_J(x)}{\nu}.
 \end{aligned}
\end{equation}
The last quotient has the nonsingular representation
\begin{equation}
 \label{app-geom:stage-phi-division}
 \frac{\phi_J(X_i)-\phi_J(x)}{\nu}
 =\frac{X_i-x}{\nu}
   \int_0^1\phi_J'(x+t(X_i-x))\dd t.
\end{equation}
Substitution of the first stage equation gives
\((X_i-x)/\nu=\tau\sum_m a_{im}\widehat f_J(X_m,V_m,L,\nu)\);
thus \eqref{app-geom:stage-phi-division} is analytic also at \(\nu=0\)
and carries the same factor \(\tau\).
Thus the right-hand side of the product stage system has derivative at
most \(C\abs\tau B_A\) on a common product neighbourhood.  If
\(C h_0B_A<1/2\), it is a contraction there and its Jacobian is a
uniformly invertible perturbation of the identity.  The unique branch
continuing from \(X_i=x,V_i=v\) consequently satisfies
\begin{equation}
 \label{app-geom:uniform-stage-increments}
 \max_i\abs{X_i-x}\le Ch\nu,
 \qquad
 \max_i\abs{V_i-v}\le Ch,
\end{equation}
with the same bounds after every state differentiation used here.
Thus all stages remain in one fixed neighbourhood, for explicit and
implicit tableaux alike.

The output equations, divided in the same way and estimated using
\(B_b\), give \eqref{app-geom:local-divided-map}.  On the attracting
collar, at \((\nu,h)=(0,0)\),
\(\partial_v\mathcal B_{\att,p}\) is the normal derivative of the divided
flow and is uniformly negative.  On the repelling collar the reversed
branch has the same property.  Hence there are nested fixed base neighbourhoods
\(K_q\Subset U_q\), a number \(R>1\), and \(\kappa>0\) such that
\begin{equation}
 \label{app-geom:local-normal-gap}
 \partial_v\mathcal B_{q,p}\le-4\kappa
 \quad\hbox{on }U_q\times[-R-2,R+2].
\end{equation}
The choice is uniform because the two tableau families and the analytic
fold class are compact.

Choose fixed smooth functions
\[
 \chi_q=1\ \hbox{on }K_q,
 \quad \operatorname{supp}\chi_q\Subset U_q,
 \qquad
 \zeta=1\ \hbox{on }[-R,R],
 \quad \operatorname{supp}\zeta\Subset(-R-2,R+2).
\]
Extend the normal derivative first:
\begin{equation}
 \label{app-geom:normal-derivative-extension}
 \begin{aligned}
  d_{q,p}(w,v)
  &=-2\kappa+\zeta(v)
     \{\partial_v\mathcal B_{q,p}(w,v)+2\kappa\},\\
  \overline{\mathcal B}_{q,p}(w,v)
  &=\mathcal B_{q,p}(w,0)+\int_0^v d_{q,p}(w,t)\dd t.
 \end{aligned}
\end{equation}
The product containing the local derivative is extended by zero outside
its compact support.  Then define
\begin{equation}
 \label{app-geom:deterministic-map-completion}
 \begin{aligned}
  \widetilde{\mathcal B}_{q,p}(w,v)
  &=\chi_q(w)\overline{\mathcal B}_{q,p}(w,v)
    +(1-\chi_q(w))(-2\kappa v),\\
  \widetilde{\mathcal A}_{q,p}(w,v)
  &=\chi_q(w)\zeta_A(v)\mathcal A_{q,p}(w,v),
 \end{aligned}
\end{equation}
where \(\zeta_A=1\) on \([-R,R]\) and has support in the larger normal
neighbourhood.  Formula \eqref{app-geom:normal-derivative-extension} is the point
of this construction: a direct cutoff of \(\mathcal B\) would introduce a
normal derivative of uncontrolled sign.  Here instead
\begin{equation}
 \label{app-geom:global-normal-gap}
 -C_B\le\partial_v\widetilde{\mathcal B}_{q,p}\le-2\kappa
\end{equation}
globally, while \(\widetilde{\mathcal B}_{q,p}+2\kappa v\) is bounded
with all required derivatives.

Choose \(R\) larger, if necessary, so that the normal component points
strictly inward at \(v=\pm R\).  For
\(h_0C_B<1\), its derivative is positive and at most
\(1-2\kappa h\), so the whole strip is invariant.  Both the full map and
each fixed-\(v\) base map are an identity plus a globally Lipschitz
perturbation of norm at most \(Ch\) and \(Ch\nu\), respectively.  Solving
the inverse equation by contraction proves global invertibility.  Finally,
differentiating \eqref{app-geom:deterministic-map-completion} gives
\eqref{app-geom:completed-map-derivatives}.  This completes the
construction of the stated maps.
\end{proof}

\begin{lemma}[Nonempty uniform selection classes]
 \label{app-geom:construction}
There are \(B_{\rm sel}<\infty\) and \(r_0,h_0>0\) for which the flow and
map selection classes of \cref{app-geom:selection-definition} are
nonempty.  The construction supplies reference graphs with all the
displayed bounds; its flow references have one additional uniform
\(x\)-derivative.
\end{lemma}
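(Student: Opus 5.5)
The construction proceeds in three stages: collar graphs by a graph transform, continuation through the fold bridge, and a final check of the bounds in \eqref{eq:main-selection-bounds} together with the drift conditions.

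\emph{Collar stage.} For the map, I apply the completed collar maps \(\widetilde P_{q,p}\) of \cref{app-geom:completed-collar-map}. By \eqref{app-geom:completed-map-derivatives}, the normal block contracts at rate \(1-\kappa h\), while the base blocks couple only at \(O(h\nu)\) and \(O(h)\). The graph transform on Lipschitz graphs \(v=V(w)\) over the invariant strip is therefore a contraction. Its contraction factor is \(1-\kappa h+O(h\nu)\), and the fibre/base rate gap is uniform. A standard \(C^k\) section theorem (fibre contraction) gives a unique invariant graph on the completed domain, smooth through order \(4\) in \(x\) and \(2\) in \(L\), with bounds independent of \(h\). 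The key point is that the \(h\)-factors cancel in the fixed-point quotient: the forcing is \(O(h)\) and the gap is \(\kappa h\). Restricting to \(K_q\) gives an actual local invariant graph, \(y=\phi_J+r^2V\), so the collar estimate \(r^{-2}\lvert\partial^i_x\partial^j_L(m-\phi_J)\rvert\le B_{\rm sel}\) follows directly. For the flow, I do the same with the completed divided vector field, that is, the time-\(t\) maps, or equivalently \(h\to0\). One more \(x\)-derivative is available there because the rate gap permits \(C^5\).

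\emph{Bridge stage.} On \(I_q^{\rm br}\), I write \(v=\Gamma_J^0+e\) and use the scalar equation \eqref{app-geom:scalar-fold-equation}, integrated coreward from the overlap with the collar datum. For the flow, this is an ODE with bounded \(\mathcal F_J\) and a homogeneous part controlled by the Gaussian propagator \eqref{app-geom:gaussian-propagator}. Variation of constants gives \(e=O(1)\) uniformly up to \(x=0\), hence \(u=\phi_J+r^2(\Gamma^0_J+e)\). For the weighted derivative bounds \(r^{(i-2)_+}\partial_x^i u\), I pass to inner variables \(x=rX\), where the equation is regular with \(O(1)\) coefficients. Each \(x\)-derivative then costs \(r^{-1}\), which is exactly the stated weight, since \(y=r^2Y\) absorbs the first two. \(L\)-derivatives satisfy the linearized equation with the same propagator. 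For the map, I iterate \(\Phi^\pm\) from the collar graph by regraphing: the image of a graph under \(\Phi^+\) is again a graph because the \(x\)-drift satisfies \(c_dhr^2\le T^+-x\le C_dhr^2\). The divided map has the exact form \eqref{app-geom:local-divided-map}, and in bridge coordinates its normal multiplier is \(1-2Hz+\dots\), as in \eqref{eq:mesh-and-multiplier}. The same Gaussian product bounds \eqref{eq:weighted-sum}, with the mesh factor retained, give \(O(1)\) discrete bounds uniform in \(h\le h_0\) and \(r\le r_0\). Differentiated bounds come from differentiating the recurrence. Invariance on contained source--target pairs holds by construction, and agreement on the overlap holds because the bridge is seeded by the collar graph there.

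\emph{Drift and uniformity.} The drift bounds follow from \(f_J=r^2\mathcal A\) with \(0<A_-\le\mathcal A\le A_+\), and the stage neighbourhood from \eqref{app-geom:uniform-stage-increments}. Compactness of \(\Gclass\) and \(\ThetaRK\) makes every constant uniform; I then take \(B_{\rm sel}\) to be the maximum of them.

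\emph{Main obstacle.} I expect the hardest step to be the discrete bridge near \(z\lesssim1\), where hyperbolicity is lost. There, the higher weighted \(x\)-derivatives of a regraphed map graph must be controlled uniformly for arbitrarily small \(h\). My first attempt would be to derive the derivative recurrences in inner coordinates and sum them with \eqref{eq:weighted-sum}, absorbing the \(O(H)\) per-cell Jacobian errors through the mesh factor.
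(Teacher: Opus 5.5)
Your outline has the paper's architecture. The collar jets stay bounded because an $O(h)$ forcing meets an $O(h)$ normal gap. The flow bridge uses the scalar fold equation with its Gaussian propagator, and the map bridge uses regraphing with Gaussian products and mesh-weighted sums. The gap is that you never show the constructed graphs stay in the narrow corridor $\abs{v-\Gamma_J^0}\le\eta_0$ where the divided drift is positive. That drift is $Q_J+c_J(v-\Gamma_J^0)$ up to small errors, with $Q_J\ge Q_->0$ and $c_J(0)=-1$, so the bound $0<A_-\le\mathcal A\le A_+$ you invoke holds only on such a corridor. Neither the invariant-strip bound $\abs{V}\le R$ (with $R>1$), nor ``$e=O(1)$'', nor ``$O(1)$ discrete bounds'' puts the graph there. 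Consequently your drift paragraph, which is one of the admissibility conditions, is unproved. Using the drift to conclude that regraphed images are graphs is also backwards: the drift is an output of the construction, and graph-ness additionally needs $\partial_zT_w>0$.

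The paper closes this with two estimates you omit. On the collar, \eqref{app-geom:collar-location} gives $\norm{S-\Gamma_J^0}_{C^0}\le C(h+\nu)$. In your framework this follows from invariance: your slope bound gives $S\circ T-S=O(h\nu)$, hence $\widetilde{\mathcal B}(w,S(w))=O(\nu)$. Then $\partial_v\widetilde{\mathcal B}\le-2\kappa$, together with consistency, places $S$ within $O(h+\nu)$ of $\Gamma_J^0$.

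On the bridge you state only the multiplier $1-2Hz+\cdots$, but the bound needs the per-cell forcing. The exact factorization \eqref{app-geom:bridge-factorization} splits the normal numerator into $\widehat H r\mathcal F$ and $H^2\mathcal G$, and $H^2\mathcal G$ is not divisible by $Hr$ for a general order-two tableau. Its $W$-part goes into the effective multiplier, leaving a forcing $Hr+H^3\langle z\rangle$ whose Gaussian-weighted sum is $O(r+H^2)$. Only this yields $\abs{w}\le\eta_0$, then the projection control \eqref{app-geom:projection-control}, and then the drift.

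Also, \eqref{eq:mesh-and-multiplier} and \eqref{eq:weighted-sum} cannot be inputs here, because \cref{app-tr:gaussian-quadrature} proves them using this construction's projection bound. The damping must come directly from the stability function, including $R_{\theta^\dagger}(-u)=1/R_\theta(u)$ on the repelling side \eqref{app-geom:adjoint-stability}. Finally, the opposite-branch invariance identities require global injectivity of the completion, not merely ``by construction''.
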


\begin{proof}
On either collar, the normal linearization of
\eqref{app-geom:divided-flow} is bounded away from zero.  We use positive
time on the attracting collar and reverse time on the repelling collar.
After a fixed completion of the vector fields, Nipp's bounded-domain
theorem applies directly: in its notation the base is \((x,L)\), the slow
parameter is \(\nu\), the normal variable is \(v\), and the reduced graph
is \(\Gamma_J^0\).  Its bounded-domain, smoothness, reduced-root, and
spectral-gap assumptions follow respectively from the fixed collar
neighbourhoods, analyticity, \eqref{app-geom:reference-graph}, and
\eqref{app-geom:coercivity}.  The parameter observation in that paper
allows \(L\) to be included as a base coordinate with zero velocity.  The
relevant source statements are assumptions 1--5 and parameter Remark 0,
Theorem 1, and the completion used in its proof
\cite[pp.~1--7, especially Theorem~1 on pp.~2--3]
 {Nipp1992Smooth}.  Uniform Cauchy bounds and the common normal gap make
all constants uniform over the compact class.  Taking the theorem through
total order seven gives the required \(C_x^4C_L^2\) bounds, and one more
\(x\)-derivative gives the stated \(C_x^5C_L^2\) reference bound.

For the numerical collars use the completed maps of
\cref{app-geom:completed-collar-map}.  To apply the Nipp--Stoffer theorem,
set
\begin{equation}
 \label{app-geom:NS-identification}
 X=\mathbb R^2_w,\qquad Y=\mathbb R_v,\qquad
 f_0(w)=w,\qquad
 \widehat f=h\nu\widetilde{\mathcal A}_{q,p},\qquad
 g=v+h\widetilde{\mathcal B}_{q,p}.
\end{equation}
The closed interval \(Y_R=[-R,R]\subset Y\) is the invariant normal strip
in Remark~0; it is not used as the ambient Banach space.  Assumption H1 and
that remark apply, with inverse constant \(\alpha=1\).  Theorem 3 supplies
the invariant Lipschitz graph;
Condition B4 and Theorem 5 give its smoothness.  In the printed report
these are H1 and Remark 0 on p.~3, conditions (9) and Theorem 3 on
pp.~7--12, and B4--B5 and Theorem 5 on pp.~14--15
\cite{NippStoffer1992}.  Condition B5 is immediate from
\(Df_0=I\).  The cited theorems apply for each \(h>0\); the following
calculation verifies their rate conditions and, crucially, makes their
derivative bounds uniform as \(h\downarrow0\).

Let \(d=1-\mathcal L_{11}-\mathcal L_{22}\).  From
\eqref{app-geom:completed-map-derivatives}, after reducing \(r_0,h_0\),
\begin{equation}
 \label{app-geom:strict-graph-gap}
 d\ge\frac34\kappa h,
 \qquad
 2\sqrt{\mathcal L_{12}\mathcal L_{21}}
 \le Ch\sqrt\nu<\frac14\kappa h<d.
\end{equation}
Thus the strict source condition \((9\mathrm a^*)\) holds.  If \(\ell\)
is the smaller root of its graph-slope quadratic, then
\[
 \ell=
 \frac{2\mathcal L_{21}}
 {d+\sqrt{d^2-4\mathcal L_{12}\mathcal L_{21}}}
 \le\ell_0
\]
for a class-uniform \(\ell_0\).  The source base and normal rates satisfy
\begin{equation}
 \label{app-geom:source-rates}
 \begin{aligned}
 \beta=1-\mathcal L_{11}-\mathcal L_{12}\ell
       &\ge1-C_*h\nu,\\
 N=\mathcal L_{22}+\mathcal L_{12}\ell
       &\le1-\frac34\kappa h.
 \end{aligned}
\end{equation}
For \(1\le j\le7\), Bernoulli's inequality gives
\(\beta^j\ge1-jC_*h\nu\).  Requiring
\(7C_*\nu\le\kappa/2\) yields
\begin{equation}
 \label{app-geom:B4-rates}
 N<\min\{1,\beta^j\},
 \qquad 1\le j\le7,
\end{equation}
which is precisely B4 through order seven.  In particular, B4(1)
supplies the remaining conditions (9b)--(9c).

We now prove the uniform jet estimate not supplied directly by the source
theorem.  Let
\[
 F_S(w)=w+h\nu\widetilde{\mathcal A}_{q,p}(w,S(w)),
 \qquad \mathscr H_S=F_S^{-1}.
\]
The target-aligned invariance equation is
\begin{equation}
 \label{app-geom:target-aligned-invariance}
 S(w)=S(\mathscr H_S(w))
  +h\widetilde{\mathcal B}_{q,p}
       (\mathscr H_S(w),S(\mathscr H_S(w))).
\end{equation}
After the terms containing the highest jet have been collected, its
\(j\)th derivative, \(2\le j\le7\), has the triangular form
\begin{equation}
 \label{app-geom:collar-jet-equation}
 D^jS
 =\mathscr W
   (D^jS\circ\mathscr H_S)[D\mathscr H_S]^{\otimes j}
   +\mathscr T_j,
 \qquad
 \norm{\mathscr W}\norm{D\mathscr H_S}^{j}
 \le\frac{N}{\beta^j}=:\chi_j.
\end{equation}
This is equations (29)--(32) in the source
\cite[pp.~21--22]{NippStoffer1992}.  To estimate its numerator here,
differentiate also the inverse-base identity
\(F_S\circ\mathscr H_S=\operatorname{id}\).  Once its term containing
\(D^jS\) has been put into \(\mathscr W\), the remaining terms obey
\begin{equation}
 \label{app-geom:inverse-base-jets}
 D\mathscr H_S=I+O(h\nu),
 \qquad
 D^i\mathscr H_S=O(h\nu)\quad(2\le i<j),
 \qquad
 (D^j\mathscr H_S)_{\rm lower}=O(h\nu),
\end{equation}
provided the lower graph jets are bounded.  Here ``lower'' denotes the
part left after the term linear in \(D^jS\) has been placed in the
principal operator.  Every term in
\(\mathscr T_j\) then contains at least one of the following:
\begin{enumerate}
 \item a derivative of order at least two of the normal output;
 \item a base derivative of its nonidentity part;
 \item a derivative of the base perturbation;
 \item a derivative \(D^i\mathscr H_S\) with \(i\ge2\).
\end{enumerate}
The normal identity contributes only to the first term in
\eqref{app-geom:collar-jet-equation}; it cannot occur in
\(\mathscr T_j\).  Equations
\eqref{app-geom:completed-map-derivatives} and
\eqref{app-geom:inverse-base-jets} therefore give
\begin{equation}
 \label{app-geom:closing-gap-forcing}
 \norm{\mathscr T_j}\le C_jh.
\end{equation}
Moreover, \eqref{app-geom:source-rates}--\eqref{app-geom:B4-rates}
give \(\chi_j\le1-\kappa_jh\).  Taking suprema in
\eqref{app-geom:collar-jet-equation} yields
\begin{equation}
 \label{app-geom:uniform-collar-jets}
 \norm{D^jS}
 \le\frac{C_jh}{1-\chi_j}\le C_j',
 \qquad 2\le j\le7.
\end{equation}
The \(C^0\) strip bound and the slope bound \(\ell\le\ell_0\) start the
induction.  For \(j\le6\), applying one difference quotient to the same
recurrence again differentiates a nonidentity map factor or a higher
inverse-base derivative, so its numerator is still \(O(h)\).  This is the
companion Lipschitz induction needed at the next order.  The eighth-order
map bounds in \eqref{app-geom:completed-map-derivatives} close the argument
through order seven.  Hence the derivative bounds remain uniform as the
normal gap closes like \(h\).

The completed invariant graph satisfies
\begin{equation}
 \label{app-geom:collar-location}
 \norm{S-\Gamma_J^0}_{C^0}\le C(h+\nu).
\end{equation}
Indeed, at \(\nu=0\) the base map is the identity, and the unique invariant
graph is the pointwise root of
\(\widetilde{\mathcal B}_{q,(J,0,h,\theta)}(w,v)=0\).  On the local
neighbourhood, consistency of the stage branch and the normal implicit-function
bound place this root within \(Ch\) of \(\Gamma_J^0\).  The maps at
\(\nu>0\) and \(\nu=0\), made with the same cutoffs, differ by
\(O(h\nu)\) in both components.  Corollary 4 of Nipp--Stoffer gives the
explicit fixed-graph comparison
\[
 \norm{S_\nu-S_0}_{C^0}
 \le\frac{\ell\,O(h\nu)+O(h\nu)}{1-N}
 \le C\nu,
 \qquad 1-N\ge\frac12\kappa h
\]
\cite[Corollary~4, pp.~12--14]{NippStoffer1992}.  This proves
\eqref{app-geom:collar-location} without a constant singular in \(h\).

Restriction to the local neighbourhood gives
\(m_{\rk,q}=\phi_J+\nu S\), with the collar estimates and branch
identities in \cref{app-geom:selection-definition}.  The graph-transform
identity is equality, not merely forward inclusion.  On the branch used to
construct a side it gives every graph target a unique graph predecessor;
global injectivity of the completion and the contained inverse identity
\eqref{eq:rk-contained-inverse} give the opposite branch identity whenever
both endpoints remain in the original normalized collar.  Finally, the base factor on
the graph converges uniformly to \(Q_J\).  Hence
\eqref{app-geom:coercivity} gives \eqref{app-geom:map-drift} after one
last reduction.  This argument uses consistency and absolute tableau
bounds, not positivity of \(b\).

It remains to reach \(x=0\).  Put \(X=x/r\) and \(H=hr\).  In the signed
coordinate \eqref{app-geom:signed-coordinate}, orient both sides toward the
fold by
\begin{equation}
 \label{app-geom:oriented-branches}
 \tau_\varsigma=-\varsigma h,
 \qquad
 \widehat H_\varsigma=-\varsigma H,
 \qquad
 \vartheta_\varsigma=
 \begin{cases}
  \theta,&\varsigma=-1,\\
  \theta^\dagger,&\varsigma=1.
 \end{cases}
\end{equation}
Put
\(W=(y-\phi_J(x))/r^2-\Gamma_J^0(x,L)\).  We next justify the division
used on the whole bridge, including \(h=0\) and \(r=0\).  Apply
\eqref{app-geom:divided-stage-system} after the fold scaling
\(x=rX\).  The product stage Jacobian remains invertible because its
small quantity is
\begin{equation}
 \label{app-geom:bridge-stage-smallness}
 H\langle X\rangle=h(r+\abs x)\le h_0(r_0+\delta).
\end{equation}
Hence the implicit stage branch is analytic on a common weighted tube, with
bounds controlled by \eqref{app-geom:absolute-tableau-bounds}.

Let \(\mathcal N_r\) be the normal output after subtracting the frozen
linear stability term below, and let \(\mathcal N_0\) be its value for the
canonical inner datum \(r=0\).  Two different divisions are needed.  The
difference \(\mathcal N_r-\mathcal N_0\) vanishes both at \(r=0\) and at
\(H=0\), so the exact double Hadamard formula gives a factor \(Hr\).
For \(\mathcal N_0\), consistency makes the value and first step
derivative vanish after the stability term has been removed; hence it has
a factor \(H^2\).  Thus
\begin{equation}
 \label{app-geom:bridge-numerator-division}
 \mathcal N
 =Hr\,\mathcal F_\varsigma+H^2\mathcal G_\varsigma.
\end{equation}
More explicitly, these quotients are defined by integrating
\(\partial_r\partial_H(\mathcal N_r-\mathcal N_0)\) on the parameter
rectangle and \(\partial_H^2\mathcal N_0\) on the step interval.  They are
therefore quotients of the stage branch, with one fixed polynomial
majorant for the state and \(L\)-derivatives used below.

The factor \(H^2\mathcal G_\varsigma\) must be retained for a general
order-two tableau.  It is not, in general, divisible by \(Hr\): already
the canonical vector field away from \(W=0\) produces an order-\(H^2\)
term.  On the canonical graph \(W=0\), the second order conditions give
one additional zero, and integral division in \(H\) and then in \(X\)
gives
\begin{equation}
 \label{app-geom:canonical-graph-quotient}
 \mathcal G_\varsigma(X,W,H,\theta)
 =\overline{\mathcal G}_\varsigma(X,W,H,\theta)W
  +H\{\mathcal C_{0,\varsigma}+X\mathcal C_{1,\varsigma}\}.
\end{equation}
Here the first term is the integral difference quotient between \(W\)
and zero.  The second term is the familiar
\(O(H^3\langle X\rangle)\) unshifted canonical graph residual.  The
full \(H^2\mathcal G_\varsigma\) term is nevertheless regular: the stage
functions depend on \(X\) through the bounded combinations controlled by
\eqref{app-geom:bridge-stage-smallness}, and the prefactor is
\(H^2=h^2r^2\).

As a direct check, take the canonical field and explicit midpoint, for
which \(R(\zeta)=1+\zeta+\zeta^2/2\).  Exact elimination of its two
stages gives
\begin{equation}
 \label{app-geom:midpoint-bridge-check}
 \begin{aligned}
 W^+-R(2HX)W
 ={}&H^2\left(\frac12W-W^2\right)\\
 &+H^3\left(-\frac18X+\frac32XW-\frac52XW^2\right)
   +O(H^4P(X,W)).
 \end{aligned}
\end{equation}
At \(W=0\) the same elimination gives the exact polynomial identity
\begin{equation}
 \label{app-geom:midpoint-canonical-graph-exact}
 W^+=-\frac18H^3X-\frac1{16}H^4-\frac1{256}H^6.
\end{equation}
This both confirms the \(H^3X\) canonical-graph residual at \(W=0\)
and shows why the preceding \(H^2W\)-dependent term cannot be hidden in
an \(Hr\) quotient for a general method.

Substitution gives the exact factorization
\begin{equation}
 \label{app-geom:bridge-factorization}
 \begin{aligned}
  X^+&=X+\widehat H_\varsigma
       \mathcal P_\varsigma(X,W,L,r,h,\theta),\\
  W^+&=R_{\vartheta_\varsigma}
       (\tau_\varsigma x\kappa_J(x))W
       +H^2\mathcal G_\varsigma
       +\widehat H_\varsigma r\mathcal F_\varsigma,
 \end{aligned}
\end{equation}
where
\[
 R_\vartheta(\zeta)
 =1+\zeta b_\vartheta^T
       (I-\zeta\Amat_\vartheta)^{-1}\one.
\]
The base quotient is defined by one integral division in the signed step
and satisfies, uniformly in the bridge tube,
\begin{equation}
 \label{app-geom:bridge-base-quotient}
 \mathcal P_\varsigma
 =Q_J(x)+c_J(x)W
  +\ord\!\left(r^2+h(r+\abs x)\right).
\end{equation}
Equations \eqref{app-geom:bridge-numerator-division}--
\eqref{app-geom:bridge-base-quotient} are the closed-rectangle
factorization.  No division by an unpaired \(h\) or \(r\) occurs.  The
canonical term is kept with its natural factor \(H^2\), while the
fold-dependent difference has the exact factor \(Hr\).  Since
\(H=hr\le h_0r\), both terms have uniformly small accumulated response;
no polynomial lower bound on \(h\) is used.

Consistency gives \(R_\theta(-u)=1-u+O(u^2)\) uniformly over
\(\ThetaRK\).  On the repelling side, exact inversion gives
\begin{equation}
 \label{app-geom:adjoint-stability}
 R_{\theta^\dagger}(-u)=\frac1{R_\theta(u)}=1-u+O(u^2).
\end{equation}
Thus only a sufficiently small stability interval, not global
\(A\)-stability, is used.  Define the effective normal multiplier
\begin{equation}
 \label{app-geom:effective-bridge-multiplier}
 \widehat R_\varsigma
 =R_{\vartheta_\varsigma}(-Hz\kappa_J(\varsigma rz))
   +H^2\overline{\mathcal G}_\varsigma.
\end{equation}
Since \(H^2\le h_0Hr\), one further reduction gives
\begin{equation}
 \label{app-geom:effective-bridge-damping}
 0<\widehat R_\varsigma
 \le\exp\{-cHz+CHr\}.
\end{equation}
From \eqref{app-geom:coercivity} and
\eqref{app-geom:bridge-base-quotient}, first choose the fixed
\(W\)-corridor and then reduce \(\delta,r_0,h_0\) so that
\(0<P_-\le\mathcal P_\varsigma\le P_+\).  Together with
\eqref{app-geom:bridge-factorization}, this gives
\begin{equation}
 \label{app-geom:bridge-damping}
 0<R_{\vartheta_\varsigma}(-Hz\kappa_J(\varsigma rz))
   \le\eexp^{-cHz},
 \qquad
 cH\le z-z^+\le CH.
\end{equation}
In the original normalized variables the same estimate is
\(c_dhr^2\le x^+-x\le C_dhr^2\) for the positive branch.  This is direct
on the attracting side and follows by exact inversion of the coreward
repelling branch on the other side.

To specify the continuation, choose
\(t_0=3\delta/(4r)\) inside the collar--bridge overlap.  If \(w_0\) is the
scaled collar graph and \(t_1=T_{w_0}(t_0)\), the restriction of \(w_0\) to
\([t_1,t_0]\) is a fundamental arc.  Repeatedly apply the same coreward
branch and retain the component over \(z\ge0\).  The drift in
\eqref{app-geom:bridge-damping} produces a finite index \(N\) for which the
next full arc crosses zero; on that final image retain only the component
ending at \(z=0\).  Projection monotonicity gives a graph at every step.  If
\(W=w(z,L)\), each regraphing has the form
\begin{equation}
 \label{app-geom:exact-regraphing}
 \begin{aligned}
  T_w(z,L)&=z-H\mathcal P_\varsigma(z,w(z,L),L),\\
  V_w(z,L)&=\widehat R_\varsigma(z,w(z,L),L)w(z,L)
       +H^3\{\mathcal C_{0,\varsigma}
                   +\varsigma z\mathcal C_{1,\varsigma}\}
       +Hr\mathcal F_\varsigma(z,w(z,L),L),\\
  \bar w(T_w(z,L),L)&=V_w(z,L).
 \end{aligned}
\end{equation}
For a class-uniform constant \(\eta_0>0\), chosen small with the bridge
corridor, the first-exit estimate starts from
\begin{equation}
 \label{app-geom:bridge-first-exit}
 \abs{w_{k+1}}
 \le\eexp(-cHz_k+CHr)\abs{w_k}
       +C\{Hr+H^3\langle z_k\rangle\}.
\end{equation}
The Gaussian product and the mesh relation in
\eqref{app-geom:bridge-damping} imply
\[
 \sum_{k<n}\Pi_{k,n}
   \{Hr+H^3\langle z_k\rangle\}
 \le C(r+H^2)\le Cr.
\]
Thus the canonical graph residual has accumulated size \(O(H^2)\), while
the fold-dependent term has accumulated size \(O(r)\).  Neither can trigger
an exit before the core.  Differentiation of the first line then gives
\begin{equation}
 \label{app-geom:projection-control}
 \abs{w}\le \eta_0,
 \qquad \abs{w_z}\le Cr,
 \qquad \frac12\le\partial_zT_w\le\frac32.
\end{equation}
For \(0\le j\le2\) and \(0\le i\le7-j\), triangular differentiation of
the last line yields
\begin{equation}
 \label{app-geom:bridge-jet-transport}
 \abs{\partial_z^i\partial_L^j\bar w\circ T_w}
 \le\eexp\{-cHz+CHr\}
       \abs{\partial_z^i\partial_L^jw}+C_{ij}H,
\end{equation}
after lower jets have been estimated.  Products of the multipliers are
Gaussian and the corresponding weighted sums are bounded uniformly in the
number of arcs.  In this differentiation every nonprincipal term contains
a derivative of \(T_w-z\), of the effective multiplier, of the
\(H^3\mathcal C\)-term, or of \(Hr\mathcal F_\varsigma\); the
weighted stage bounds make their combined contribution \(C_{ij}H\).
Thus the retained canonical term does not alter
\eqref{app-geom:bridge-jet-transport}.  Conversion back to \(x\) gives the bridge part of
\eqref{eq:main-selection-bounds}.

Exact collar invariance gives equality of the first two graph germs on an
open overlap.  Injectivity transports this equality across every seam.
On the final partial arc, every retained target has a unique predecessor,
so both identities in \eqref{app-geom:map-inverse-identities} hold there as
well.  The pasted bridge is consequently invariant under the positive-step
Runge--Kutta map and its contained inverse, including at every
retained endpoint.

For the flow, variation of constants in
\eqref{app-geom:scalar-fold-equation}, using
\eqref{app-geom:gaussian-propagator}, produces a bridge with the same mixed
derivative bounds.  The bridge and collar solve the same graph
equation and agree on their initialization overlap.  Pasting the two
dynamics on both sides, taking \(B_{\rm sel}\) larger than the resulting
uniform bounds, and reducing \(r_0,h_0\) completes the construction.
\end{proof}

\subsection{A slow-tube estimate independent of weight signs}

The next estimate is used for arbitrary admissible selections, not only the
references constructed above.

\begin{lemma}[Uniform slow tube]
 \label{app-geom:slow-tube}
Every admissible flow or numerical-map graph satisfies
\begin{equation}
 \label{app-geom:common-slow-tube}
 \abs{\widehat m_{\bullet,q}(x;L)-\phi_J(x)}\le Cr^2
\end{equation}
on its full side interval.  The constant is uniform for every compact
finite-stage order-two family, without a sign condition on the Runge--Kutta
weights.
\end{lemma}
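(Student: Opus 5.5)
The estimate splits into the collar and the bridge, and on each piece it is read off from the admissibility bounds \eqref{eq:main-selection-bounds} together with a scalar maximum-principle argument. The argument must not use the reference construction or positivity of \(b\).

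\textbf{Collar.} On \(I_q^{\rm col}\) the first line of \eqref{eq:main-selection-bounds} with \(i=j=0\) already gives \(\abs{m_{\bullet,q}-\phi_J}\le B_{\rm sel}r^2\), so nothing further is needed there. This gives the tube bound on the overlap \(I_q^{\rm ov}\), and hence an initial condition at the outer end of the bridge.

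\textbf{Flow bridge.} Write \(y=\phi_J(x)+r^2(\Gamma_J^0+e)\). The exact tangency equation is then the scalar equation \eqref{app-geom:scalar-fold-equation},
\[
 \nu\mathcal A\,\partial_x e=x\kappa_J(x)e+\nu\mathcal F_J ,
\]
with \(\mathcal A\ge A_->0\). For \(\abs e\) in a fixed corridor, \(\mathcal F_J\) is bounded. Orient the \(x\)-variable coreward, i.e.\ integrate from the overlap toward \(x=0\). In that direction the term \(x\kappa_J e/\nu\) is damping on both sides, by \eqref{app-geom:coercivity} and the sign of \(x\).

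A comparison or bootstrap argument then keeps \(\abs e\le C\) with a constant independent of \(r\). Concretely, \(\abs e\) cannot cross a level \(E\) for which the damping term does not exceed \(-\abs{\mathcal F}\) at the crossing. Near \(x=0\), where the damping degenerates, the remaining range has length \(O(r)\) in the variable \(x\) and is covered in \(O(1)\) inner time, so the growth there is \(O(1)\) as well. Since \(\Gamma_J^0\) is uniformly bounded, the tube bound \(\abs{\widehat m-\phi_J}\le Cr^2\) follows.

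One must also exclude \(e\) leaving the corridor in which \(\mathcal F_J\) is bounded. The directed-drift hypothesis \(f_J\ge c_dr^2\) forces the graph to stay where \(f_J\) has size \(O(r^2)\), and with \(\partial_yf_J\approx-1\) this already gives \(\abs{y-\phi_J(x)}\le Cr^2\) on the graph, provided \(f_J(x,\phi_J(x),r^2,r^2L)=O(r^2)\). The latter holds since \(\lambda,\eps=O(r^2)\). This direct argument in fact suffices by itself: by the implicit function theorem in \(y\), \(C_dr^2\ge f_J\ge c_dr^2\) places the graph within \(Cr^2\) of \(\phi_J\).

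\textbf{Map bridge.} The same drift argument applies. By \eqref{app-geom:map-drift}, \(T^+_q(x)-x\in[c_dhr^2,C_dhr^2]\). Write the output \(x\)-increment as
\[
 h\sum_i b_i f_J(Z_i).
\]
The stage bounds \eqref{app-geom:uniform-stage-increments}, which rely on the absolute sums \(B_A,B_b\) and not on the signs of the weights, give \(Z_i=z+O(h)\cdot(\text{size of }F_J)\). Therefore
\[
 h\sum_i b_i f_J(Z_i)=h f_J(z)\bigl(1+O(h)\bigr)+O\bigl(h^2 r^2\bigr),
\]
using \(\sum_i b_i=1\). The remainder must be estimated in terms of \(\abs{f_J(z)}+r^2\) via Lipschitz bounds on the stage system: \(F_J=(f_J,r^2g_J)\), so \(\abs{Z_i-z}\le Ch(\abs{f_J(z)}+r^2)\). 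Consequently \(\abs{f_J(z)}\le C'r^2\) for \(h\le h_0\), and the implicit function step as in the flow case yields \eqref{app-geom:common-slow-tube}.

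\textbf{Main obstacle.} The main obstacle is this map step. One must show that the stage perturbation is controlled by \(\abs{f_J(z)}+r^2\) rather than by an \(O(1)\) quantity. This requires a contraction estimate for the stage system in which \(\abs{f_J}\) appears on both sides, absorbed using \(hB_A\mathrm{Lip}(f_J)<1/2\). Once that is done, \(B_b\) controls the cancellation and no weight-sign condition enters. The constants depend only on \(s_*\), \(B_A\), \(B_b\) and the class \(\Gclass\).
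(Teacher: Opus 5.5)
Your proposal is correct and is essentially the paper's proof. The collar split and the scalar bootstrap detour are unnecessary, because the directed-drift hypothesis holds on the whole pasted graph, so drift plus uniform monotonicity in \(y\) covers the full side interval at once. Your map step is the paper's estimate \(M\le\abs{f_J(z)}+Ch(M+r^2)\) in slightly different notation: you bound the stage spread by \(Ch(\abs{f_J(z)}+r^2)\), use \(b^T\one=1\) with the absolute weight sum \(B_b\), and absorb the \(h\)-terms against the output drift.
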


\begin{proof}
For the flow, \eqref{app-geom:flow-graph-equation} directly gives
\(\abs{f_J(x,\widehat m_{\fl,q};r^2,r^2L)}\le Cr^2\).  Uniform
monotonicity in \(y\), together with
\(f_J(x,\phi_J(x);r^2,r^2L)=O(r^2)\), proves
\eqref{app-geom:common-slow-tube}.

For a map input \(z=(x,\widehat m_{\rk,q}(x;L))\), let \(Z_i\) be its
stages and put \(M=\max_i\abs{f_J(Z_i)}\).  Compactness and the uniform
stage bound give
\[
 B_A=\sup_{\theta,i}\sum_j\abs{a_{ij}(\theta)}<\infty,
 \qquad
 B_b=\sup_\theta\sum_i\abs{b_i(\theta)}<\infty.
\]
The stage equations and the slow component imply
\begin{equation}
 \label{app-geom:stage-fast-bound}
 M\le\abs{f_J(z)}+Ch(M+r^2).
\end{equation}
Because \(b_\theta^T\one=1\), no positivity is needed to estimate
\begin{equation}
 \label{app-geom:weighted-stage-bound}
 \left|f_J(z)-\sum_i b_i(\theta)f_J(Z_i)\right|
 \le CB_bh(M+r^2).
\end{equation}
The output drift \eqref{app-geom:map-drift} says
\(\sum_i b_i f_J(Z_i)=O(r^2)\).  Reducing \(h_0\) absorbs the \(hM\)
terms in \eqref{app-geom:stage-fast-bound}--
\eqref{app-geom:weighted-stage-bound}, yielding
\(M+\abs{f_J(z)}\le Cr^2\).  Uniform monotonicity in \(y\) then proves
\eqref{app-geom:common-slow-tube} for the map.
\end{proof}

\subsection{Gaussian shielding of the outer selection}

\begin{lemma}[Exponential closeness at the central section]
 \label{app-geom:shielding}
For two admissible selections of the same flow, or of the same map with the
same \(\theta\), and either side \(q\),
\begin{equation}
 \label{app-geom:shielding-estimate}
 \max_{0\le j\le2}\sup_{L\in\Lambda}
 \left|
 \partial_L^j
 \left(y_{\bullet,q}^{\sigma_\bullet}(r^2L)
      -y_{\bullet,q}^{\widetilde\sigma_\bullet}(r^2L)
 \right)
 \right|
 \le Cr^{-M}\eexp^{-c/r^2}
\end{equation}
for fixed \(C,c>0\) and an integer \(M\).
\end{lemma}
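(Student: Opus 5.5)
The plan is to subtract the two selections on a common chain and transport their difference to the central section with the Gaussian homogeneous propagator. The difference starts out \(O(r^2)\) at the outer edge of the bridge and is damped by a factor \(\exp\{-c(t^2-z^2)\}\) with \(t\asymp\delta/r\), which yields \(\eexp^{-c/r^2}\).

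\emph{Flow case.} Put \(e=(\widehat m-\widetilde m)/r^2\) on the bridge. Both pasted graphs satisfy the exact tangency equation \eqref{app-geom:flow-graph-equation}. By \cref{app-geom:slow-tube} both lie in the common tube \(\abs{\widehat m-\phi_J}\le Cr^2\), so their difference is \(O(r^2)\) at \(x=\varsigma\delta\), uniformly in \(C^2_L\) by \eqref{eq:main-selection-bounds}. Subtracting the divided scalar equations \eqref{app-geom:scalar-fold-equation} gives a linear homogeneous equation
\[
 \nu\mathcal A\,\partial_xe=\bigl(x\kappa_J(x)+\nu\mathcal K\bigr)e .
\]
Here \(\mathcal A\) and \(\mathcal K\) are integral secant quotients and are uniformly bounded on the tube. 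The nonlinear dependence is absorbed into the coefficients, so no smallness of \(e\) is needed. In the signed coordinate \(x=\varsigma rz\), variation of constants together with \eqref{app-geom:gaussian-propagator} gives
\[
 \abs{e(0)}\le C\exp\{-c\,\delta^2/r^2+C\delta/r\}\abs{e(\delta/r)} .
\]
This is already of the form \(r^{-M}\eexp^{-c/r^2}\).

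\emph{Map case.} The two selections have different sources, so I first align them at a common target abscissa, exactly as in \eqref{eq:cell-recurrence}. Using the exact regraphing \eqref{app-geom:exact-regraphing} for both graphs and subtracting, I obtain a scalar secant recurrence
\[
 d(z_k)=M_kd(z_{k+1}).
\]
This recurrence has no source, since both graphs are invariant under the same map. Its multiplier has the form \eqref{eq:gaussian-shielding-product}, i.e.
\[
 M_k=1-2Hz_{k+1}+\ord\bigl(HrP(z_{k+1})+H^2P(z_{k+1})\bigr).
\]
The secant of \(T_w\) contributes the factor \(1+O(Hr)\), because \(\partial_w\mathcal P\) enters multiplied by \(H\,w_z=O(Hr)\) via \eqref{app-geom:projection-control}. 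The drift bound \eqref{app-geom:bridge-damping} gives a mesh of order \(H\). Multiplying the bounds \eqref{app-geom:effective-bridge-damping} along the chain from \(z\asymp\delta/r\) to \(0\) gives
\[
 \prod M_k\le\exp\{-c\sum Hz_k+CHrN\}.
\]
Here \(\sum Hz_k\ge c\delta^2/r^2\) and \(HrN\le C\delta\) because \(N\asymp \delta/(rH)\). The terminal partial cell is harmless: adding or deleting one complete cell does not change the Green identity \eqref{eq:finite-green}, and the final-cell graph bounds hold uniformly. The result holds uniformly in \(h\) because only \(H\sum\) and \(HN\) ever appear, and never \(N\) alone.

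\emph{Main obstacle.} The hard part is the \(L\)-derivatives up to order two. I would differentiate the recurrence in \(L\) triangularly. Then \(\partial_Ld\) satisfies the same homogeneous recurrence, forced by \((\partial_LM_k)d\) together with terms from differentiating the moving abscissae \(z_k(L)\). These forcing terms are bounded by a polynomial \(P(z)\), times \(r^{-2}\) from the \(\lambda=r^2L\) conversion, times the already-shielded \(d\). The Gaussian Duhamel sum \(H\sum_jP_j(1+z_j)^m\le C_m\) from \eqref{eq:weighted-sum} then costs only polynomial factors in \(r^{-1}\), and these are absorbed into \(r^{-M}\). The specific difficulty is that the chain indices depend on \(L\), so the stopping index may jump. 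To handle this I would work on a fixed target mesh: define the chains backward from \(z=0\) using the contained inverse, which makes the endpoint independent of \(L\). The seam at the collar, where the chain leaves the bridge, is then treated with the \(C_x^4C_L^2\) selection bounds. Finally, the section value in the original coordinate is \(y=\phi_J(0)+r^2(\Gamma_J^0+w)\), so the difference is \(r^2d(0)\), and the estimate \eqref{app-geom:shielding-estimate} follows.
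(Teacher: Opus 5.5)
Your proposal is correct and follows essentially the paper's route: a homogeneous secant equation with the Gaussian propagator for the flow, and for the map a source-free common-target recurrence $d_k=M_kd_{k+1}$ whose product is Gaussian. In both cases you then take triangular $L$-derivatives whose lower-jet losses are absorbed into $r^{-M}$, and multiply by the scale factor $r^2$ at the end. The one bookkeeping difference is that you differentiate along the $L$-dependent chain anchored at $z_0=0$, where the paper differentiates the cell identity at a fixed Eulerian target. In either form every derivative forcing term carries the mesh factor $H$, and this keeps the sum over $O((rH)^{-1})$ cells uniform in $h$; your stated forcing bound should say so explicitly, and the extra $r^{-2}$ ``from $\lambda=r^2L$'' is unnecessary because the estimate is taken in $L$.
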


\begin{proof}
By \cref{app-geom:slow-tube}, both graphs lie in the common divided tube.
For two flow bridges, subtract their scalar graph equations before taking
absolute values.  In the signed coordinate, their difference \(d\)
satisfies a homogeneous secant equation
\begin{equation}
 \label{app-geom:flow-secant}
 \partial_zd=(z\mathcal H+r\mathcal R)d,
 \qquad \mathcal H\ge c_H>0.
\end{equation}
The coreward propagator is bounded by
\eqref{app-geom:gaussian-propagator}.  One and two \(L\)-derivatives form a
triangular system with the same principal propagator.  The terminal bounds
from \eqref{eq:main-selection-bounds} therefore give a fixed algebraic
power of \(r^{-1}\) times \(\exp(-c/r^2)\) at \(z=0\).

For two map graphs, sources with the same abscissa generally have different
targets.  Projection monotonicity in
\eqref{app-geom:projection-control} supplies a unique adjusted source on
the second graph with the same target.  Secant subtraction at this target
gives
\begin{equation}
 \label{app-geom:common-target-recurrence}
 d_k=M_kd_{k+1},
 \qquad
 \log M_k\le-cH z_{k+1}+CH,
 \qquad
 \frac12\le M_k\le2.
\end{equation}
Summing the logarithmic bound along the mesh gives a negative quadratic in
the travelled \(z\)-distance plus a linear term; completing the square and
reducing the outer corridor if necessary therefore gives
\begin{equation}
 \label{app-geom:gaussian-product}
 0<\prod_{\ell<k}M_\ell\le C\eexp^{-cz_k^2}.
\end{equation}
Differentiation at fixed Eulerian target gives, for \(j=1,2\),
\begin{equation}
 \label{app-geom:L-derivative-recurrence}
 d_{k,j}=M_kd_{k+1,j}
   +\sum_{m<j}C_{k,jm}d_{k+1,m},
 \qquad
 \abs{C_{k,jm}}\le Hr^{-N_{jm}}P(z_{k+1}).
\end{equation}
Iteration with \eqref{app-geom:gaussian-product} and the Gaussian weighted
sum bound absorbs every lower-jet term into a fixed power of \(r^{-1}\).
The seam and terminal identities from the construction give the same
recurrence on the final partial cell.  Multiplying by the scale factor
\(r^2\), and enlarging one common exponent \(M\), proves
\eqref{app-geom:shielding-estimate} for both dynamics.
\end{proof}

\subsection{The transverse flow profile}

\begin{lemma}[Gaussian flow crossing]
 \label{app-geom:flow-profile}
Every admissible flow selection satisfies
\eqref{eq:flow-splitting-profile} in \(C_L^2(\Lambda)\), has one simple
threshold root in a fixed neighbourhood of \(L_0(J)\), and satisfies
\eqref{eq:flow-physical-slope} at that root.
\end{lemma}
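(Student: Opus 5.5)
The plan is to reduce to one reference selection, compute the section values by a Melnikov-type expansion along the canonical inner orbit \eqref{eq:canonical-fold-orbit}, and then read off the root and slope by monotonicity.

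\textbf{Reduction to a reference selection.} By \cref{app-geom:shielding}, any two admissible flow selections have section values within \(Cr^{-M}\eexp^{-c/r^2}\) of each other in \(C^2_L\). It therefore suffices to prove \eqref{eq:flow-splitting-profile} for the reference flow bridges of \cref{app-geom:construction}. The exponential term then absorbs the selection dependence, and the constants remain uniform over all admissible selections.

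\textbf{Inner variational problem.} For the reference bridge on side \(\varsigma\), I work with the divided scalar equation \eqref{app-geom:scalar-fold-equation} in the signed coordinate \(x=\varsigma rz\). The graph is written as the canonical parabola plus a correction: after the fold scaling \eqref{eq:fold-scaling}, set \(Y=X^2-\tfrac12+W\). The graph equation then becomes a linear inhomogeneous equation in \(X\), of the schematic form
\[
 \partial_XW=4XW+\mathcal S_0(X,L)+r\,\mathcal S_1(X,L)+\ord\!\left(r^2P(X)+rW P(X)+W^2P(X)\right),
\]
where \(\mathcal S_0\) and \(\mathcal S_1\) are polynomial sources. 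Here \(\mathcal S_0\) comes from the unfolding term \(x-\lambda\) and the coefficient \(D\); \(\mathcal S_1\) collects the weighted-degree-three and degree-four jets \(A,B,C_\eps,E,F_s,S_\eps\). On each side, the coreward solution is the unique one that does not grow like \(\eexp^{2X^2}\) as \(\abs X\to\infty\). Variation of constants expresses its value at \(X=0\) as a half-line integral \(\int\eexp^{-2s^2}(\cdots)\dd s\) of the sources. Subtracting the repelling value from the attracting one gives a full-line Gaussian moment expansion:
\begin{enumerate}
 \item the \(L\)-linear part produces \((D+2)\sqrt{\pi/2}\,L=a(J)L\);
 \item the jet part produces the constant \(-a(J)L_0(J)\), and \(L_0(J)\) is then given by the weighted moments \(\int\eexp^{-2s^2}s^{2k}\dd s\), which recovers \eqref{eq:main-L0-definition}.
\end{enumerate}
I will track the orders in \(r\) carefully. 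The scale factor \(r^2\) in \(y=r^2Y\), together with one power of \(r\) from the \(r\mathcal S_1\) sources and the \(\lambda=r^2L\) coupling, should give exactly the stated normalization \(r^{-3}\Delta_\fl\). At leading order the \(L\)-independent symmetric parts must cancel between the two sides. I will check this cancellation explicitly from the parity of the canonical orbit.

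\textbf{Remainders and \(C^2_L\) control.} The remainder terms are weighted by polynomials \(P(X)\) and damped by the Gaussian propagator \eqref{app-geom:gaussian-propagator}, so they contribute \(\ord(r)\) after integration. Two pieces need separate treatment. First, the contribution from the outer part \(z\gtrsim r^{-1}\delta\), where the inner expansion is invalid, is controlled by the bridge bounds \eqref{eq:main-selection-bounds} and by the Gaussian factor \(\eexp^{-c(t^2-z^2)}\); this makes it exponentially small. Second, the \(L\)-derivatives up to order two satisfy the same linear equation with triangular lower-order forcing, so the expansion holds in \(C^2_L\) with the same uniform remainders. The main obstacle I expect is this uniform inner--outer matching: showing that the nonlinear \(W^2\) and \(rWP\) terms remain \(\ord(r)\) in weighted norms on the whole interval \(0\le z\le\delta/r\), and not merely on compact \(X\)-sets. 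I would first try a weighted contraction argument in the space of functions bounded by \(\langle X\rangle^m\), using \eqref{eq:weighted-sum}-type Gaussian integrals, and fall back on the a priori bound \(\abs w\le\eta_0\) from \eqref{app-geom:projection-control} if needed.

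\textbf{Root and slope.} Since \(a(J)\) is bounded below uniformly on \(\Gclass\) by \eqref{eq:normalized-unfolding-bound}, the \(C^1_L\) part of the profile makes \(r^{-3}\Delta_\fl(r^2L)\) strictly increasing on a fixed neighbourhood of \(L_0(J)\) once \(r\le r_0\). Combined with the \(C^0\) estimate, the intermediate value theorem then gives exactly one simple zero, which lies within \(\ord(r)\) of \(L_0(J)\). Finally, because \(\partial_\lambda=r^{-2}\partial_L\), the \(L\)-derivative of the profile evaluated at this root gives \eqref{eq:flow-physical-slope}.
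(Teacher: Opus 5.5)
Your architecture matches the paper's. You reduce to the reference bridge by \cref{app-geom:shielding}, solve each side coreward by variation of constants against the Gaussian kernel, subtract at $x=0$, and read off the root and slope from the lower bound on $a(J)$ and $\partial_\lambda=r^{-2}\partial_L$.

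Your Gaussian-moment evaluation of the constant also works, but the bookkeeping needs correcting. Relative to $Y=X^2-\frac12$ there is no order-one source, because the canonical parabola is exactly invariant at $r=0$. The parameter $\lambda=r^2L$ enters the inner field only as $-rL$ in $\dot Y$ and $rDXL$ in $\dot X$, which is the same order as the jets. The linearized graph equation is therefore $\partial_XW=4XW+r\mathcal S_1+\cdots$, with
\[
 \mathcal S_1=-(2+4DX^2)L+2(E+F_s+B-2C_\eps)X^2-4F_3X^4-F_s+2S_\eps .
\]
The non-growing solutions then give $W_{\rep}(0)-W_{\att}(0)=-r\int_{-\infty}^{\infty}\eexp^{-2s^2}\mathcal S_1(s)\,\dd s=r\,a(J)\{L-L_0(J)\}$, using the moments $1,\frac14,\frac3{16}$; this is \eqref{eq:main-L0-definition}. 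If $\mathcal S_0$ carried the $L$-dependence at order one, as you describe it, the $L$-term would sit one power of $r$ too high and the $r^{-3}$ normalization would fail. Nothing cancels by parity either: the side difference is a full-line moment of an even source, and only the common base value $-\frac12$ drops out.

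The genuine gap is the uniform remainder estimate, and it comes from your choice of base graph.
\begin{itemize}
 \item On the bridge, $W$ grows like $rF_3X^3$, reaching order $\delta^3/r^2$ at $z=\delta/r$.
 \item Once $X$ exceeds a negative power of $r$, the Taylor remainders $\ord(W^2P+rWP+r^2P)$ are no longer small relative to the terms you keep. A weighted contraction over all of $0\le z\le\delta/r$ therefore does not close, and the expansion breaks down deep inside the bridge, not at $z\gtrsim\delta/r$.
 \item Your fallback \eqref{app-geom:projection-control} is an $\ord(1)$ bound for the \emph{map} bridge, written in different variables. It cannot supply the $\ord(r)$ smallness the remainder needs.
\end{itemize}

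The paper avoids this by expanding about $y=\phi_J(x)+r^2(\Gamma_J^0+e)$. Then \eqref{app-geom:scalar-fold-equation} holds on the whole bridge, and $U=e/r$ satisfies $\partial_zU=z\mathcal H_{\varsigma,r}U+\varsigma\mathcal G_{\varsigma,r}$. Here $\mathcal H_{\varsigma,r}=4+\ord(r(1+z))$ and $\mathcal G_{\varsigma,r}=(D+2)(L_0-L)+\ord(r(1+z))$. The Gaussian kernel absorbs the linear growth of these errors. $L_0$ is read off by inserting \eqref{app-geom:critical-jet-expansions} into \eqref{app-geom:reference-graph}. In fact, subtracting $rF_3X^3$ (from $\phi_J$) and the linear term of $\Gamma_J^0$ turns your source $\mathcal S_1$ into exactly this constant at leading order.

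If you want to keep your coordinates, run the contraction only on $0\le X\le r^{-\gamma}$ with $\gamma$ small. Then connect to the actual reference bridge through the damped homogeneous secant equation of the shielding argument. This costs only $\ord(\eexp^{-cr^{-2\gamma}})=\ord(r)$.
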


\begin{proof}
Use first the reference flow selection from
\cref{app-geom:construction}.  The normalized finite jet gives
\begin{equation}
 \label{app-geom:critical-jet-expansions}
 \begin{aligned}
  \phi_J(x)&=x^2+(A+B)x^3+O(x^4),
  &q_J(x)&=x+(E+F_s)x^2+O(x^3),\\
  a_J(x)&=C_\eps x+O(x^2),
  &b_J(x)&=Dx+O(x^2),\\
  c_J(x)&=-1+Bx+O(x^2).
 \end{aligned}
\end{equation}
For \(x=\varsigma rz\), let
\(E_\varsigma(z,L)=e(\varsigma rz,L)\) and
\(U_\varsigma=E_\varsigma/r\).  Equation
\eqref{app-geom:scalar-fold-equation} becomes
\begin{equation}
 \label{app-geom:scaled-flow-equation}
 \partial_zU_\varsigma
 =z\mathcal H_{\varsigma,r}(z,L)U_\varsigma
  +\varsigma\mathcal G_{\varsigma,r}(z,L),
\end{equation}
where, through two \(L\)-derivatives,
\begin{equation}
 \label{app-geom:flow-coefficient-limit}
 \mathcal H_{\varsigma,r}=4+O(r(1+z)),
 \qquad
 \mathcal G_{\varsigma,r}
 =(D+2)(L_0(J)-L)+O(r(1+z)).
\end{equation}
Substitution of \eqref{app-geom:critical-jet-expansions} into
\eqref{app-geom:reference-graph} gives both the constant term in
\eqref{app-geom:flow-coefficient-limit} and the formula
\eqref{eq:main-L0-definition}.

The coreward kernel of \eqref{app-geom:scaled-flow-equation} satisfies
\begin{equation}
 \label{app-geom:flow-kernel-limit}
 \mathcal K_{\varsigma,r}(t,L)
 =\eexp^{-2t^2}+rP(t)\eexp^{-ct^2}
\end{equation}
in \(C_L^2\).  Finite variation of constants from \(\delta/r\) to zero
therefore gives
\begin{equation}
 \label{app-geom:one-side-flow-value}
 U_\varsigma(0,L)
 =\varsigma(D+2)(L-L_0(J))
   \int_0^\infty\eexp^{-2t^2}\dd t
  +O_{C_L^2}(r)
  +O_{C_L^2}(r^{-M}\eexp^{-c/r^2}).
\end{equation}
At \(x=0\), the common terms
\(\phi_J(0)+r^2\Gamma_J^0(0,L)\) cancel between the two sides, and
\[
 \Delta_\fl^{\rm ref}(r^2L)
 =r^3\{U_1(0,L)-U_{-1}(0,L)\}.
\]
Since
\(2\int_0^\infty\eexp^{-2t^2}\dd t=\sqrt{\pi/2}\), this is
\eqref{eq:flow-splitting-profile} for the reference selection.  The
shielding estimate \eqref{app-geom:shielding-estimate} transfers the
profile, with its first two \(L\)-derivatives, to every flow selection.

The compact lower bound on
\(a(J)=(D+2)\sqrt{\pi/2}\) makes the splitting strictly increasing on a
fixed neighbourhood of \(L_0(J)\) for small \(r\).  Its endpoint signs are
opposite, so it has exactly one simple zero there.  Finally,
\[
 \partial_\lambda\Delta_\fl
 =r^{-2}\partial_L\Delta_\fl
 =a(J)r+O(r^2)+O(r^{-M}\eexp^{-c/r^2}),
\]
which proves the \(\lambda\)-slope assertion.
\end{proof}

The construction lemma, shielding lemma, and flow-profile lemma prove
\cref{prop:fold-geometry}.

\section{Exact Runge--Kutta residuals on the fold chain}
\label{app:exact-residual}

This appendix proves \cref{prop:exact-residual}.  We first identify the
order-three step coefficient at the canonical fold and then obtain the
residual remainder by Hadamard division of the stage equations.  The
coefficient calculation therefore does not replace the numerical map by a
modified flow.

Compactness and the uniform stage bound in \cref{def:rk-family} imply
\begin{equation}
 \label{app-res:tableau-bound}
 B_\Theta:=\sup_{\theta\in\ThetaRK}
 \left\{\max_i\sum_j\abs{a_{ij}(\theta)}
 +\sum_i\abs{b_i(\theta)}+\norm{\cvec_\theta}_\infty\right\}<\infty .
\end{equation}
After a fixed enlargement, the same bound holds on the adjoint image of
the family.

\subsection{Stage branches and the adjoint inverse}

For an inner vector field \(\mathcal V\) and a tableau \(\vartheta\), let
\(\Psi_\eta^{\mathcal V,\vartheta}\) be defined by the stage system
\begin{equation}
 \label{app-res:actual-stage-system}
 Z_i=Z+\eta\sum_j a_{ij}^{\vartheta}\mathcal V(Z_j),
 \qquad
 \Psi_\eta^{\mathcal V,\vartheta}(Z)
 =Z+\eta\sum_i b_i^{\vartheta}\mathcal V(Z_i).
\end{equation}
For an implicit method this always denotes the analytic branch continuing
from \(Z_1=\cdots=Z_s=Z\) at \(\eta=0\).

\begin{lemma}[Uniform branches and tableau identities]
 \label{app-res:adjoint-lemma}
After reducing \(r_0,h_0\), the branch in
\eqref{app-res:actual-stage-system} exists uniquely on every fold-chain
neighbourhood used below, for
\(\vartheta\in\{\theta,\theta^\dagger\}\) and \(\abs\eta\le H\).
Its stages, output and the derivatives used below have uniform polynomial
bounds in \(1+\abs X\).  On contained input--output pairs,
\begin{equation}
 \label{app-res:actual-adjoint-inverse}
 \bigl(\Psi_\eta^{\mathcal V,\theta}\bigr)^{-1}
 =\Psi_{-\eta}^{\mathcal V,\theta^\dagger}.
\end{equation}
Moreover,
\begin{equation}
 \label{app-res:adjoint-algebra}
 \alpha_{\theta^\dagger}=\alpha_\theta,
 \quad \beta_{\theta^\dagger}=\beta_\theta,
 \quad \delta_{\theta^\dagger}=\delta_\theta,
 \quad q_{\theta^\dagger}=q_\theta,
 \quad d_{\theta^\dagger}=d_\theta,
 \quad \gamma_{\theta^\dagger}=-\gamma_\theta,
 \quad b_\theta^Td_\theta=0.
\end{equation}
\end{lemma}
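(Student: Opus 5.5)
The lemma has three parts, and I would prove them in order of increasing algebraic content: existence of the stage branch, the inverse identity, and the tableau identities.

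\emph{Existence and bounds.} I would treat \eqref{app-res:actual-stage-system} as a fixed-point problem for the stage vector $(Z_1,\dots,Z_s)$ in a product of weighted tubes around the fold chain. The key quantity is the smallness parameter \eqref{app-geom:bridge-stage-smallness}. In inner variables the field $\mathcal V$ has derivative of size $O(\langle X\rangle)$ on the tube, coming from the $X^2$ term in the fast component. Hence the stage map has Lipschitz constant at most $B_\Theta\abs\eta\,C\langle X\rangle\le CB_\Theta H\langle X\rangle\le CB_\Theta h_0(r_0+\delta)$. This uses \eqref{app-res:tableau-bound}, which is available for $\theta$ and, after the fixed enlargement, for $\theta^\dagger$. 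Reducing $r_0,h_0$ makes the Lipschitz constant at most $1/2$, so the branch exists uniquely and is analytic. Because it is the unique fixed point near the diagonal, it coincides with the continuation from $Z_i=Z$ at $\eta=0$. The implicit function theorem applied to the stage Jacobian, which is $I$ plus a perturbation of norm at most $1/2$, then gives polynomial bounds in $1+\abs X$ on the stages, the output and their derivatives. Cauchy estimates on the complex tube convert the $H^\infty$ bounds of $\Gclass$ into these derivative bounds, uniformly over the compact data.

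\emph{Inverse identity.} I would repeat the substitution argument from the proof of \eqref{eq:rk-contained-inverse} in \cref{app:affine-normalization}. Write $Z'=\Psi^{\mathcal V,\theta}_\eta(Z)$ and insert the same stages $Z_i$ into the adjoint system with step $-\eta$. This gives
\[
 Z'-\eta\sum_j(b_j-a_{ij})\mathcal V(Z_j)=Z_i,
\]
and the adjoint output is $Z'-\eta\sum_i b_i\mathcal V(Z_i)=Z$. The stages $Z_i$ lie in the contraction tube, so uniqueness of the branch from the first step identifies them with the adjoint branch from $Z'$. The reverse composition follows by the symmetric argument, since $(\theta^\dagger)^\dagger=\theta$.

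\emph{Tableau identities.} This is direct computation using only $\Amat\one=\cvec$, $b^T\one=1$ and $b^T\cvec=\tfrac12$.
\begin{itemize}
 \item For $\alpha$: $\tfrac12 b^T(\one-\cvec)^{\circ2}=\tfrac12(1-1+b^T\cvec^{\circ2})$, so $\alpha^\dagger=\alpha$.
 \item For $\beta$: $b^T(\one b^T-\Amat)(\one-\cvec)=\tfrac12-\tfrac12+b^T\Amat\cvec$, so $\beta^\dagger=\beta$.
 \item Consequently $\delta^\dagger=\delta$ and $q^\dagger=q$.
 \item For $d$: expanding $(\one-\cvec)^{\circ2}-2(\one b^T-\Amat)(\one-\cvec)-2\delta\one$ gives $\one-2\cvec+\cvec^{\circ2}-\one+2\cvec-2\Amat\cvec-2\delta\one=d$.
 \item For $b^Td$: $b^Td=2(\alpha+\tfrac16)-2(\beta+\tfrac16)-2\delta=0$.
 \item For $\gamma$: $\gamma^\dagger=-b^T(\one b^T-\Amat)d=-b^Td+b^T\Amat d=-\gamma$, using $b^Td=0$.
\end{itemize}

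The only step I expect to need real care is making the contraction uniform on the unbounded inner tube as $\abs X$ grows toward $\delta/r$. The product $H\langle X\rangle$, rather than $H$ alone, is the quantity that must be kept small; once that is controlled, everything else is routine.
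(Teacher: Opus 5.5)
Your proposal is correct and follows the paper's proof essentially step for step: a contraction for the stages on a weighted tube, driven by the smallness of $H\langle X\rangle\le h(r+\delta)$ and using only the absolute tableau bound; the same-stage substitution into the negative-step adjoint system plus branch uniqueness for the inverse; and direct order-two algebra for $\alpha,\beta,\delta,q,d$, $b^Td=0$ and $\gamma^\dagger=-\gamma$. One detail to add: the self-map part of the contraction also needs the weighted bound $\norm{\mathcal V}_X\le C\langle X\rangle$ on the field itself, not only on $D\mathcal V$, and this follows from the same smallness.
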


\begin{proof}
Put \(\langle X\rangle=1+\abs X\).  Around the canonical parabola use a
tube with horizontal and vertical radii proportional to
\(\langle X\rangle\) and \(\langle X\rangle^2\), and measure increments by
\[
 \norm{(u,v)}_X=\frac{\abs u}{\langle X\rangle}
                +\frac{\abs v}{\langle X\rangle^2}.
\]
Weighted analyticity gives
\(\norm{\mathcal V}_X+\norm{D\mathcal V}_{X\to X}
\le C\langle X\rangle\) on a slightly larger tube.  Along the chain,
\begin{equation}
 \label{app-res:stage-small-quantity}
 \abs\eta\langle X\rangle
 \le H(1+\delta/r)=h(r+\delta).
\end{equation}
The right-hand side is uniformly small.  The stage operator is therefore a
contraction on a product tube, and its Jacobian is a uniformly invertible
perturbation of the identity.  Repeated implicit differentiation proves
the stated bounds.  This uses only \eqref{app-res:tableau-bound}; it uses
neither weight signs nor triangularity of \(\Amat\).

If stages \(Z_i\) take \(Z_-\) to
\(Z_+=Z_-+\eta\sum_i b_i\mathcal V(Z_i)\), then
\[
 Z_i=Z_+-\eta\sum_j(b_j-a_{ij})\mathcal V(Z_j).
\]
This is the negative-step adjoint stage system, so branch uniqueness proves
\eqref{app-res:actual-adjoint-inverse}.

The order-two identities give
\[
 b^T(\one-\cvec)=\frac12,
 \qquad \Amat(\one-\cvec)=\cvec-\Amat\cvec.
\]
They imply invariance of \(\alpha\) and \(\beta\), and hence of \(\delta\)
and \(q\).  Direct substitution gives
\[
 (\cvec^\dagger)^{\circ2}-2\Amat^\dagger\cvec^\dagger-2\delta\one
 =\cvec^{\circ2}-2\Amat\cvec-2\delta\one=d.
\]
Also
\[
 b^Td=b^T\cvec^{\circ2}-2b^T\Amat\cvec-2(\alpha-\beta)=0,
\]
and therefore
\(\gamma_{\theta^\dagger}
=-b^T(\one b^T-\Amat)d=b^T\Amat d=-\gamma_\theta\).
\end{proof}

\subsection{Residual reduction}

We state the reduction before proving its two local inputs: the canonical
stage calculation and the first weighted datum jet.

For a smooth vector field \(\mathcal V\), differentiation of the
stage system three times at zero step gives the map-minus-flow coefficient
\begin{equation}
 \label{app-res:third-defect-operator}
 \mathcal M_3(\mathcal V)
 =\alpha\,\mathcal V''(\mathcal V,\mathcal V)
  +\beta\,\mathcal V'\mathcal V'\mathcal V.
\end{equation}
This finite differentiation identifies the coefficient; division of the
map residual follows in the next subsection.

\begin{lemma}[First fold-dependent coefficient]
 \label{app-res:Q10-coefficient}
Let \(\ell_r=(-Y'_{r,\varsigma,L},1)\).  The coefficient of
\(r\eta^3\) in output minus shifted graph is
\begin{equation}
 \label{app-res:Q10-contraction}
 Q_{10}(X)
 =\left.\partial_r\right|_{r=0}
 \left[
  \ell_r\left\{
   \mathcal M_3(\mathcal V_0+r\mathcal V_1)
   +\frac{\alpha-\beta}{2}
       \partial_Y(\mathcal V_0+r\mathcal V_1)
  \right\}
 \right]_{Y=Y_r(X)}.
\end{equation}
It equals the polynomial in \eqref{eq:Q10}.
\end{lemma}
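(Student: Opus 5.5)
The plan is to obtain \eqref{app-res:Q10-contraction} in two stages. The first is a structural identification of the \(\eta^3\) coefficient of ``output minus shifted graph'' as a normal contraction. The second is an explicit polynomial computation in the inner variables.

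\emph{Inner field.} Under \eqref{eq:fold-scaling}, the normalized field \eqref{eq:normalized-fold-field} becomes \(\mathcal V_r=\mathcal V_0+r\mathcal V_1+\ord(r^2)\), with
\[
 \mathcal V_0=(X^2-Y,\;X),\qquad
 \mathcal V_1=\bigl(AX^3+BXY+C_\eps X+DXL,\;EX^2+F_sY+S_\eps-L\bigr).
\]
This follows directly by substitution, using the weighted orders of \(R_f,R_g\) to place the remainders in \(\ord(r^2)\). The reference flow graph is \(Y_r=Y_0+rY_1+\ord(r^2)\), with \(Y_0=X^2-\frac12\). Here \(Y_1\) is the polynomial part fixed by the tangency equation \(Y_r'\,\mathcal V_r^X=\mathcal V_r^Y\) at first order in \(r\). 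Any non-polynomial homogeneous part is exponentially shielded, by \cref{app-geom:shielding}, and does not enter a polynomial coefficient.

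\emph{Structural step.} I would expand the stage branch \eqref{app-res:actual-stage-system} in \(\eta\) at a point \(Z=(X,Y_r(X)+q\eta^2)\). Consistency and the order-two conditions \eqref{eq:rk-order-two-conditions} make the map agree with the exact time-\(\eta\) flow through order \(\eta^2\). By \eqref{app-res:third-defect-operator}, the order-\(\eta^3\) discrepancy is \(\mathcal M_3(\mathcal V_r)\). The flow part of the residual is handled by flow invariance of \(Y_r\). The vertical offset \(q\eta^2\) is transported by the linearized flow, and its first change, \(q\eta^3\,\partial_Y\mathcal V_r\), survives at order \(\eta^3\). It enters with coefficient \(q=\delta/2=(\alpha-\beta)/2\).

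Subtracting the shifted graph at the output abscissa amounts to contracting with the conormal \(\ell_r=(-Y_r',1)\) to first order. The shift itself cancels, because the graph is shifted rigidly. This gives the bracket in \eqref{app-res:Q10-contraction}. Taking \(\partial_r|_{r=0}\) then isolates the \(r\eta^3\) coefficient. The \(r^0\) part is the canonical residual, which by \cref{prop:exact-residual} starts only at \(\eta^4\gamma\mathscr H_2\); consistently with this, the \(r=0\) bracket must vanish identically on \(Y_0\).

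\emph{Computational step.} I would write
\[
 \mathcal M_3(\mathcal V)=\alpha\,\mathcal V''(\mathcal V,\mathcal V)+\beta\,\mathcal V'\mathcal V'\mathcal V
\]
for \(\mathcal V=\mathcal V_0+r\mathcal V_1\) and differentiate in \(r\) at \(r=0\). This produces three kinds of contribution: the terms linear in \(\mathcal V_1\), the terms from \(\ell_r\) through \(-Y_1'\), and the terms from evaluation at \(Y_r\) through \(Y_1\,\partial_Y\).

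First I would verify the \(r=0\) identity. Since \(\mathcal V_0\) is quadratic, \(\mathcal V_0''\) is constant, so the computation is short. That identity explains why the \(Y_1\)- and \(Y_1'\)-contributions should collapse to terms proportional to the bracket at \(r=0\), or to its \(Y\)-derivative. I would then collect coefficients of \(1\) and \(X^2\), check that \(C_\eps\), \(D\), \(S_\eps\) and \(L\) cancel, and compare the result with \eqref{eq:Q10}.

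\emph{Main obstacle.} The hard step is this bookkeeping: showing that the \(L\)- and \(Y_1\)-dependent pieces drop out and leave only \(F_3=A+B\) and \(G_2=E+F_s\) with the stated weights. As an independent check, I would specialize to the canonical datum perturbed by a single monomial, for instance \(A\) alone and then \(E\) alone. For each I would run explicit-midpoint elimination, as in \eqref{app-geom:midpoint-bridge-check}, and compare with \(\alpha=-1/24\), \(\beta=-1/6\). I would repeat the comparison for the trapezoidal rule, whose stage elimination I would carry out the same way, to confirm that the \(\alpha\) and \(\beta\) coefficients are separated correctly. The algebra involves no stage-count dependence, since only \(\mathcal M_3\) enters.
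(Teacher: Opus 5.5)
Your structural step (the third-order defect \(\mathcal M_3\), the transported offset \(q\eta^3\partial_Y\mathcal V\) with \(q=(\alpha-\beta)/2\), and contraction with \(\ell_r\)) is the same as the paper's. Your remark about the \(r=0\) bracket is also right: it vanishes on \(Y_0\) even as a vector. Hence the \(-Y_1'\) contribution drops out, and \(Y_1\) enters only through \(\{4(\alpha-\beta)X+8\beta X^3\}Y_1\).

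The gap is your treatment of \(L\). You keep \(L\) free in \(\mathcal V_1\) and take \(Y_1\) to be ``the polynomial part fixed by the tangency equation''. On \(Y_0\), the first-order tangency equation reads
\[
 \tfrac12Y_1'-2XY_1=\mathcal V_1^Y-2X\mathcal V_1^X
 =-2F_3X^4+(G_2-2\mu)X^2+S_\eps-\tfrac12F_s-L,
 \qquad \mu=C_\eps-\tfrac12B+DL,
\]
and it has a polynomial solution only at \(L=L_0(J)\). Matching the \(X^4\) and \(X^2\) terms forces \(Y_1=F_3X^3+bX\) with \(b=\frac34F_3-\frac12G_2+\mu\). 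The constant term then fails by \(\kappa=-\frac12(D+2)(L-L_0)\), and \(\kappa=0\) is exactly \eqref{eq:main-L0-definition}.

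For \(L\ne L_0\) the true first-order graph contains an extra non-polynomial piece forced by \(\kappa\). This is the half-Gaussian term that produces the flow splitting \(a(J)(L-L_0)\). It is neither homogeneous nor exponentially small, and it is the same for every selection, so \cref{app-geom:shielding} does not remove it.

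If you run your computation with the polynomial \(Y_1\) at general \(L\), the \(\alpha\)-coefficient does come out as \(\frac12G_2-2G_2X^2\). The \(\beta\)-coefficient, however, becomes \(-\frac38F_3+\frac14G_2+2G_2X^2+4\kappa\,\mathscr H_2(X)\). So \(C_\eps\), \(D\), \(S_\eps\) and \(L\) do not cancel pointwise, and the claimed identity with \eqref{eq:Q10} fails.

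The missing step is the one the paper uses. Since \(L-L_0=O(r)\) on \(\mathcal W_r(J)\), freeze \(L\) at \(L_0\) in \(\mathcal V_1\); this only moves terms into the \(O(r^2P)\) remainder, as in \cref{app-res:datum-jet}. The polynomial jet \(Y_1=F_3X^3+2\vartheta_0X\) then exists. The relations \(F_3=A+B\) and \(G_2=E+F_s\) kill the \(X^6\) and \(X^4\) terms. The remaining dependence on \(C_\eps,D,L_0,S_\eps\) in the constant and \(X^2\) coefficients of \(C_\beta\) disappears only after you solve \eqref{eq:main-L0-definition} for \(S_\eps\).

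The stray term \(4\beta\kappa\,\mathscr H_2\) has zero half-Gaussian moment, so it would not change \(K_\theta\). The lemma, however, asserts a pointwise polynomial identity. Your single-monomial midpoint checks need the same normalization: with only \(A\ne0\), for instance, you must take \(L=-3A/8\), not \(L=0\).
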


\begin{proof}
The first term in braces is \eqref{app-res:third-defect-operator}.  Moving
the input graph by \(q\eta^2\) changes the third step derivative by
\(q\partial_Y\mathcal V\), with \(q=(\alpha-\beta)/2\).  Contraction with
\(\ell_r\) converts the state defect into output minus graph.  Its
zeroth-order value vanishes by the choice of \(q\), consistently with
\cref{app-res:canonical-residual}.

Here is the contraction before the compatibility relations are used.  Write
the derivative in \eqref{app-res:Q10-contraction} as
\(\alpha C_\alpha(X)+\beta C_\beta(X)\).  Direct differentiation of the
two components of \(\mathcal V_0+r\mathcal V_1\), followed by contraction
with \((-Y_r',1)\), gives
\begin{align}
 C_\alpha(X)={}&
 4(F_3-A-B)X^4+\{3(F_3-A-B)-2G_2\}X^2
 +\frac{E+F_s}{2},
 \label{app-res:Q10-alpha-before-compatibility}\\
 C_\beta(X)={}&
 8(F_3-A-B)X^6
 +\{-2A-2B+4E+2F_3+4F_s-4G_2\}X^4
 \notag\\
 &+\left\{
 \frac32A+\frac52B-2C_\eps-2DL_0+E-3F_3-F_s
 +2G_2-4L_0+4S_\eps
 \right\}X^2
 \notag\\
 &-\frac B4+\frac{C_\eps}{2}+\frac{DL_0}{2}
   +\frac{F_s}{2}+L_0-S_\eps.
 \label{app-res:Q10-beta-before-compatibility}
\end{align}
The identities \(F_3=A+B\) and \(G_2=E+F_s\) remove the fourth- and
sixth-degree terms.  Solving \eqref{eq:main-L0-definition} for
\(S_\eps\) in the remaining constant and quadratic terms gives
\begin{equation}
 C_\alpha(X)=\frac{G_2}{2}-2G_2X^2,
 \qquad
 C_\beta(X)=-\frac{3F_3}{8}+\frac{G_2}{4}+2G_2X^2.
 \label{app-res:Q10-two-contractions}
\end{equation}
This also displays explicitly where all terms containing
\(C_\eps,D,L_0,S_\eps\), or \(\vartheta_0\) cancel.  Therefore
\[
 Q_{10}(X)
 =-\frac38\beta F_3
  +\left(\frac12\alpha+\frac14\beta\right)G_2
  +2(\beta-\alpha)G_2X^2,
\]
which is \eqref{eq:Q10}.

\end{proof}

\subsubsection{Uniform division}

For a local datum \(\mathcal D=(\mathcal V,Y)\), define the residual
directly from its stages by
\begin{equation}
 \label{app-res:general-residual-definition}
 \begin{aligned}
 \mathfrak E_\vartheta(\eta;\mathcal D)(X)
 ={}&\pi_Y\Psi_\eta^{\mathcal V,\vartheta}
       (X,Y(X)+q\eta^2)\\
 &-Y\!\left(\pi_X\Psi_\eta^{\mathcal V,\vartheta}
       (X,Y(X)+q\eta^2)\right)-q\eta^2.
 \end{aligned}
\end{equation}

\begin{lemma}[Exact divided residual]
 \label{app-res:exact-division}
For either signed side and
\(\vartheta\in\{\theta,\theta^\dagger\}\), every enhanced admissible flow
datum in \cref{app-res:datum-jet} satisfies
\begin{equation}
 \label{app-res:exact-residual-expanded}
 \begin{aligned}
 \mathfrak E_\vartheta(\eta;\mathcal D_{r,\varsigma,L})(X)
 =\eta^3\bigg[&rQ_{10}(X)
 +\eta\gamma_\vartheta\left(X^2-\frac14\right)\\
 &+\ord\!\left(r^2P(X)+r\abs\eta P(X)+\eta^2P(X)\right)\bigg]
 \end{aligned}
\end{equation}
uniformly for \(\abs\eta\le H\).  The quotient at \(\eta=0\) is defined
continuously.  It and its first \(L\)-derivative are uniformly defined;
differentiating the displayed value estimate costs at most the one power
of \(r\) described in \cref{app-res:datum-jet}.
\end{lemma}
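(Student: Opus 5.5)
The plan is to split the residual \eqref{app-res:general-residual-definition} by writing the datum as the canonical one plus an $r$-correction. I would divide each piece exactly by integral Hadamard quotients of the stage branch from \cref{app-res:adjoint-lemma}, never by a truncated expansion. Write $\mathfrak E_\vartheta(\eta;\mathcal D_{r,\varsigma,L})=\mathfrak E_\vartheta(\eta;\mathcal D_0)+\{\mathfrak E_\vartheta(\eta;\mathcal D_{r,\varsigma,L})-\mathfrak E_\vartheta(\eta;\mathcal D_0)\}$, where $\mathcal D_0$ is the canonical inner field with the parabola $Y=X^2-\tfrac12$.

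\textbf{Step 1: triple zero in $\eta$.} The stage branch is analytic in $\eta$ on the weighted tube, with bounds polynomial in $\langle X\rangle$ because of \eqref{app-res:stage-small-quantity}. I first show that both pieces vanish to third order at $\eta=0$.
\begin{itemize}
\item At $\eta=0$ the residual is zero.
\item The first $\eta$-derivative is $\ell\cdot\mathcal V$ on the graph. It vanishes by exact tangency of the admissible flow graph.
\item The second derivative is the order-two B-series term, which agrees with the flow by \eqref{eq:rk-order-two-conditions}. The shift $q\eta^2$ contributes nothing at this order, since it appears in both the output and the subtracted graph value.
\end{itemize}
Three applications of the integral Hadamard formula in $\eta$ then give $\mathfrak E=\eta^3\mathcal Q(\eta,X)$. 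The quotient $\mathcal Q$ is an integral of $\partial_\eta^3\mathfrak E$ and obeys the same polynomial majorant. This makes the quotient continuous at $\eta=0$.

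\textbf{Step 2: the canonical piece.} I would eliminate the stages of the canonical field exactly, using $\Amat\one=\cvec$. Two facts are needed.
\begin{itemize}
\item The $\eta^3$ coefficient vanishes. By \eqref{app-res:Q10-contraction} at $r=0$, the shift $q=(\alpha-\beta)/2$ was chosen precisely for this.
\item The $\eta^4$ coefficient equals $\gamma_\vartheta(X^2-\tfrac14)$, with $\gamma$ built from $d$ and $b^Td=0$. I would confirm the normalization against the explicit midpoint formula \eqref{app-geom:midpoint-canonical-graph-exact}.
\end{itemize}
One more Hadamard division in $\eta$ then leaves an $O(\eta^2P(X))$ remainder. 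The adjoint identities \eqref{app-res:adjoint-algebra} transfer this to $\vartheta=\theta^\dagger$ with $\gamma$ replaced by $-\gamma$.

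\textbf{Step 3: the fold-dependent piece.} For a reference datum, $\mathcal D_r$ is analytic in $r$ in the weighted sense. The difference of the two residuals therefore vanishes at $r=0$, and a double Hadamard quotient in $(r,\eta)$ gives $r\eta^3$ times a quotient. Its value at $(r,\eta)=(0,0)$ is the derivative computed in \cref{app-res:Q10-coefficient}, namely $Q_{10}$. Its deviation from that value is $O(rP+|\eta|P)$ by one further Taylor step with the enhanced $C^5_x$ bounds.

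\textbf{Step 4: arbitrary enhanced admissible graphs.} To pass from the reference graph to an arbitrary enhanced admissible graph, I would use the homogeneous flow-graph comparison of the datum-jet lemma. Two exact-tangency graphs differ by a function controlled in $C^5$ with an $r^2$ factor relative to the fold jet. Substituting this into the difference quotient shows that the change is absorbed in $O(r^2P)$. The $L$-derivative statement follows by differentiating the integral quotients once. Because the selection bound allows $\partial_L$ to cost a power of $r$ in the bridge norm, one power may be lost, as the lemma states.

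\textbf{Main obstacle.} I expect the hardest part to be uniformity of these quotients on the unbounded range $|X|\lesssim\delta/r$ near $\eta=0$. The fixed polynomial majorant $P$ must absorb every Taylor remainder, uniformly over implicit tableaux and over non-reference graphs. I would handle this with the weighted norm $\norm{\cdot}_X$ of \cref{app-res:adjoint-lemma}, applying Cauchy estimates on the slightly larger tube so that each $\eta$- or $r$-derivative costs one fixed factor of $\langle X\rangle$.
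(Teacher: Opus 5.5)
\textbf{Gap in Step~3.} Your overall architecture matches the paper's. You split off the canonical datum, obtain the triple zero from consistency, tangency and the two order conditions, and divide by $\eta^3$ with the integral Hadamard formula. You then read off $\eta\gamma_\vartheta(X^2-\tfrac14)$ from the canonical stage elimination and identify the $r\eta^3$ coefficient with $Q_{10}$. The gap is that Step~3 rests on the assertion that, for a reference datum, $\mathcal D_r$ is analytic in $r$ in the weighted sense. You use this both to form a double Hadamard quotient in $(r,\eta)$ and to bound its deviation by ``one further Taylor step''. Nothing in the paper supplies this, and it should not be expected. The reference graphs come from cutoff completions and graph-transform theorems, which are finitely smooth. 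They are continued through the fold by variation of constants. Local slow manifolds are determined only up to flat terms of size $r^{-M}\eexp^{-c/r^2}$. Even $C^2$ dependence on $r$, with polynomial weights uniform on $0\le\varsigma X\le\delta/r$, would be a separate theorem. The only $r$-information available is the first-order datum jet $\mathcal D_{r,\varsigma,L}=\mathcal D_0+r\mathcal D_1+\ord(r^2P(X))$ in $C^5$, with one $L$-derivative losing a power of $r$. For an arbitrary admissible selection the datum need not even be continuous in $r$. So the quotient in $r$ is not available, and your Step~4 cannot rescue it, because it presupposes Step~3 for the reference.

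\textbf{Fix: expand in the datum, not in $r$.} This is what the paper does. Regard $\mathfrak E_\vartheta$ as a function of $(\eta,\mathcal D)$ with $\mathcal D\in C^5\times C^5$ on the weighted tube. Prove $\norm{D_{\mathcal D}\partial_\eta^4\mathfrak E}+\norm{D_{\mathcal D}\partial_\eta^3\mathfrak E}+\norm{D_{\mathcal D}^2\partial_\eta^3\mathfrak E}\le P(X)$ along the segment $\mathcal D_t=\mathcal D_0+t(\mathcal D_{r,\varsigma,L}-\mathcal D_0)$. The triple zero and the Hadamard division are invoked only at the two flow-compatible endpoints. The intermediate $\mathcal D_t$ are not flow-compatible and need not be; the segment only serves to compare the two endpoint integrands. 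Integrating the derivative bounds first in $t$ and then in $\eta$ gives the $\ord(r\abs\eta P)$ term. A second-order Taylor expansion in the datum at $\eta=0$, combined with the datum jet, gives $\tfrac{r}{3!}D_{\mathcal D}\partial_\eta^3\mathfrak E(0;\mathcal D_0)[\mathcal D_1]+\ord(r^2P)=rQ_{10}+\ord(r^2P)$. The datum-jet lemma already contains the comparison of an arbitrary enhanced graph with the reference, so this one estimate covers every enhanced admissible graph at once, and your Step~4 becomes redundant. Note also that Cauchy estimates apply only to the analytic field. The graph component is merely $C^5$, which is why the paper counts derivatives (at most five of $\mathcal V$ and $Y$) instead of differentiating in $r$.
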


\begin{proof}
Use the weighted tube from \cref{app-res:adjoint-lemma} at each \(X\).
By \cref{app-res:datum-jet}, the segment
\[
 \mathcal D_t=\mathcal D_0
  +t(\mathcal D_{r,\varsigma,L}-\mathcal D_0),
 \qquad 0\le t\le1,
\]
lies in its inner part.  The shifted input, stages and output stay strictly
inside the larger tube by \eqref{app-res:stage-small-quantity}.  In
unscaled normalized variables, all these tubes lie in one fixed set
\(\Omega\Subset\Omega^+\).  Thus the stage margins remain uniform as
\(\abs X\) grows to \(O(r^{-1})\).

Locally regard \(\mathcal D\) as an element of
\(C^5(\mathcal T_X;\R^2)\times C^5(I_X)\), with its weighted norm.  The
implicit-function theorem for the stages and the chain rule for the final
graph evaluation show that \(\mathfrak E\) is five times differentiable in
\(\eta\) and twice in the datum.  Finite differentiation of the stage
system gives
\begin{equation}
 \label{app-res:ambient-derivative-bounds}
 \begin{aligned}
  \norm{D_{\mathcal D}\partial_\eta^4\mathfrak E}&\le P(X),\\
  \norm{D_{\mathcal D}\partial_\eta^3\mathfrak E}
  +\norm{D_{\mathcal D}^2\partial_\eta^3\mathfrak E}&\le P(X).
 \end{aligned}
\end{equation}
Only derivatives of \(\mathcal V\) and \(Y\) through order five occur.

Both endpoint data \(\mathcal D_0\) and
\(\mathcal D_{r,\varsigma,L}\) pair a vector field with one of its
flow-invariant graphs.  At either endpoint, consistency and the two
classical order conditions give
\begin{equation}
 \label{app-res:triple-zero}
 \partial_\eta^j\mathfrak E_\vartheta(0;\mathcal D)(X)=0,
 \qquad j=0,1,2.
\end{equation}
The shift is \(O(\eta^2)\) and first contributes to the third derivative.
Taylor's formula therefore yields the exact Hadamard division
\begin{equation}
 \label{app-res:hadamard-division}
 \frac{\mathfrak E_\vartheta(\eta;\mathcal D)(X)}{\eta^3}
 =\frac12\int_0^1(1-s)^2
   \partial_\eta^3\mathfrak E_\vartheta(s\eta;\mathcal D)(X)\dd s.
\end{equation}
for \(\mathcal D=\mathcal D_0\) and
\(\mathcal D=\mathcal D_{r,\varsigma,L}\).  The interpolating data
\(\mathcal D_t\) are used only to estimate the difference of the two
endpoint integrands; no flow compatibility is asserted for
\(0<t<1\).

Define the centred quotient
\[
 \widetilde{\mathcal Q}(r,\eta,X)
 =\frac{\mathfrak E_\vartheta
       (\eta;\mathcal D_{r,\varsigma,L})(X)
       -\mathfrak E_\vartheta(\eta;\mathcal D_0)(X)}{\eta^3}.
\]
Integrating \eqref{app-res:ambient-derivative-bounds} first in the datum
and then in the step gives
\begin{equation}
 \label{app-res:centered-step-bound}
 \widetilde{\mathcal Q}(r,\eta,X)
 -\widetilde{\mathcal Q}(r,0,X)
 =\ord(r\abs\eta P(X)).
\end{equation}
Taylor expansion in the datum, still at the exactly divided value, gives
\begin{equation}
 \label{app-res:centered-data-bound}
 \widetilde{\mathcal Q}(r,0,X)
 =\frac r{3!}D_{\mathcal D}\partial_\eta^3\mathfrak E_\vartheta
       (0;\mathcal D_0)(X)[\mathcal D_1]+\ord(r^2P(X)).
\end{equation}
The displayed coefficient is \(Q_{10}\) by
\cref{app-res:Q10-coefficient}.  Finally,
\cref{app-res:canonical-residual} gives
\begin{equation}
 \label{app-res:canonical-divided}
 \frac{\mathfrak E_\vartheta(\eta;\mathcal D_0)(X)}{\eta^3}
 =\eta\gamma_\vartheta\left(X^2-\frac14\right)
  +\ord(\eta^2P(X)).
\end{equation}
Combining the last three displays proves
\eqref{app-res:exact-residual-expanded}.  Differentiation in \(L\) uses
the same integral identities.
\end{proof}

For the transport orientation, set
\begin{equation}
 \label{app-res:oriented-method-step}
 \vartheta_\varsigma=
 \begin{cases}\theta,&\varsigma=-1,\\
 \theta^\dagger,&\varsigma=1,\end{cases}
 \qquad \widehat H_\varsigma=-\varsigma H.
\end{equation}
Then \eqref{app-res:adjoint-algebra} gives
\begin{equation}
 \label{app-res:oriented-residual}
 \begin{aligned}
 \mathfrak E_{\vartheta_\varsigma}
 (\widehat H_\varsigma;\mathcal D_{r,\varsigma,L})(X)
 =-\varsigma H^3\bigl\{&rQ_{10}(X)
 +H\gamma_\theta\mathscr H_2(X)\\
 &+\ord(r^2P+rHP+H^2P)\bigr\}.
 \end{aligned}
\end{equation}
Thus the attracting forward branch and repelling adjoint inverse have the
same oriented forcing, as required in \cref{eq:one-side-response}.

An explicit \(\theta\) generally has an implicit adjoint.  This is no
restriction: the adjoint only represents the contained inverse of the
already defined positive-step map on the repelling side.

\subsection{The canonical shifted parabola}

Consider
\begin{equation}
 \label{app-res:canonical-datum}
 \mathcal V_0(X,Y)=(X^2-Y,X),
 \qquad Y_0(X)=X^2-\frac12.
\end{equation}
Abbreviate
\[
 \delta=\alpha-\beta,
 \quad q=\frac\delta2,
 \quad d=\cvec^{\circ2}-2\Amat\cvec-2\delta\one,
 \quad u=\frac d4.
\]

\begin{lemma}[Canonical residual from the stage equations]
 \label{app-res:canonical-residual}
Start \eqref{app-res:actual-stage-system} at
\((X,Y_0(X)+q\eta^2)\), and define
\begin{equation}
 \label{app-res:canonical-residual-definition}
 \begin{aligned}
 \mathfrak E_\vartheta(\eta;\mathcal D_0)(X)
 ={}&\pi_Y\Psi_\eta^{\mathcal V_0,\vartheta}
       (X,Y_0(X)+q\eta^2)\\
 &-Y_0\!\left(\pi_X\Psi_\eta^{\mathcal V_0,\vartheta}
       (X,Y_0(X)+q\eta^2)\right)-q\eta^2.
 \end{aligned}
\end{equation}
Then, for \(\vartheta\in\{\theta,\theta^\dagger\}\),
\begin{equation}
 \label{app-res:canonical-hermite-exact}
 \mathfrak E_\vartheta(\eta;\mathcal D_0)(X)
 =\eta^4\gamma_\vartheta\left(X^2-\frac14\right)
  +\ord\!\left(\abs\eta^5P(X)\right).
\end{equation}
\end{lemma}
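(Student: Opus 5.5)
The proof is a direct elimination of the stages of \eqref{app-res:actual-stage-system} for the quadratic field \(\mathcal V_0\). We carry it out for a general tableau \((\Amat,b,\cvec)\) satisfying \eqref{eq:rk-order-two-conditions}. The adjoint tableau satisfies the same conditions and, by \eqref{app-res:adjoint-algebra}, has the same \(q\), \(\delta\) and \(d\) and the opposite \(\gamma\). So the case \(\vartheta=\theta^\dagger\) is the same computation with \(\gamma_{\theta^\dagger}=-\gamma_\theta\).

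\emph{Step 1: coordinates adapted to the shifted parabola.} Write the stages as
\[
 Z_i=\bigl(X+\xi_i,\;Y_0(X)+q\eta^2+\upsilon_i\bigr),
\]
and measure each stage's deviation from the shifted parabola by
\[
 e_i=\upsilon_i-2X\xi_i-\xi_i^2 .
\]
At a stage, the fast component of \(\mathcal V_0\) is \(\tfrac12-q\eta^2-e_i\) and the slow component is \(X+\xi_i\). The same definitions applied with \(b\) in place of the rows of \(\Amat\) give \(\xi_{\rm out}\), \(\upsilon_{\rm out}\) and \(e_{\rm out}\). Then \eqref{app-res:canonical-residual-definition} is exactly
\[
 \mathfrak E_\vartheta(\eta;\mathcal D_0)(X)=e_{\rm out}.
\]
Substituting into the stage equations, the terms \(\eta Xc_i\) cancel exactly. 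This leaves the closed system
\begin{align*}
 \xi_i&=\eta\sum_j a_{ij}\bigl(\tfrac12-q\eta^2-e_j\bigr),\\
 e_i&=\eta\sum_ja_{ij}\xi_j-\xi_i^2-2X\eta\sum_ja_{ij}(q\eta^2+e_j),
\end{align*}
and the same identities hold for the output with \(b\) in place of the \(i\)th row of \(\Amat\).

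\emph{Step 2: iterate in powers of \(\eta\).} To first order, \(\xi_i=\eta c_i/2+O(\eta^3)\). Using \(\cvec^{\circ2}-2\Amat\cvec=d+2\delta\one\), this gives
\[
 q\eta^2+e_i=-\eta^2u_i+O(\eta^3\langle X\rangle),\qquad u=d/4.
\]
The \(\eta^3X\) part of \(e_i\) is \(2X\eta^3(\Amat u)_i\), up to the \(\xi\) corrections.

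For the output, \(b^T\cvec=1/2\) and \(b^Td=0\) give the following cancellations:
\begin{itemize}
 \item at order \(\eta^2\), the term \(-q\eta^2\) cancels against the subtracted shift;
 \item at order \(\eta^3\), the term \(2X\eta^3b^Tu=0\) vanishes, and the remaining \(\eta^3\) contributions cancel as well.
\end{itemize}
This cancellation is the same triple zero as \eqref{app-res:triple-zero}.

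At order \(\eta^4\) there are two contributions:
\begin{itemize}
 \item The term \(-2X\eta\, b^T(q\eta^2+e)\), evaluated with the \(\eta^3X\) correction of \(e\), gives \(-4X^2\eta^4\,b^T\Amat u=\gamma X^2\eta^4\).
 \item The \(X\)-independent terms come from the \(\eta^3\) corrections \(\xi_i^{(3)}=-\eta^3(\Amat(q\one-u))_i\) inserted into \(\eta b^T\Amat\xi-\xi_{\rm out}^2\). Together with the order-two identities, these should collapse to \(-\gamma/4\).
\end{itemize}
The \(\eta^4X\) terms must also be checked to cancel. They arise from \(\xi\)-corrections multiplying \(2X\), and I expect them to reduce to multiples of \(b^Td=0\) and \(b^T\cvec-\tfrac12=0\).

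\emph{Step 3: the remainder.} By \cref{app-res:adjoint-lemma}, the stage branch is analytic in \(\eta\) on the weighted tube, since \(|\eta|\langle X\rangle\) is uniformly small. Its stages and their \(\eta\)-derivatives have polynomial bounds in \(\langle X\rangle\). The exact residual therefore equals its Taylor polynomial through \(\eta^4\) plus an integral remainder bounded by \(|\eta|^5P(X)\). This requires no modified equation.

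\emph{Main obstacle.} The step I expect to be hardest is the bookkeeping of the \(\eta^4\) constant and linear-in-\(X\) terms in Step 2: showing that the \(X\)-linear part vanishes and that the constant is exactly \(-\gamma/4\) rather than some other combination of \(b^T\Amat\cvec\), \(b^T\cvec^{\circ2}\) and \(b^T\Amat^2\cvec\). I would first do this check on explicit midpoint, where \eqref{app-geom:midpoint-canonical-graph-exact} gives an independent exact answer, and only then organize the general computation through \(d\) and \(b^Td=0\).
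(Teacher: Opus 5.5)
Your route is the paper's own. You eliminate the stages for the quadratic field $\mathcal V_0$ (your $\xi_i$ and $\tfrac12-q\eta^2-e_i$ are exactly the paper's $X_i-X$ and $X_i^2-Y_i$), then bound the remainder on the analytic stage branch. The gap is that the lemma's whole content is the $\eta^4$ bookkeeping, which you leave as ``should collapse'' and ``I expect''. Worse, the intermediate formulas you record would not produce the claimed constant.

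\emph{Two concrete slips.} First, the stage recursion has the wrong sign. Since $2X\xi_i=X\eta c_i-2X\eta\sum_ja_{ij}(q\eta^2+e_j)$, it must read
\[
 e_i=\eta(\Amat\xi)_i-\xi_i^2+2X\eta\sum_ja_{ij}(q\eta^2+e_j),
\]
with a plus sign. So the $\eta^3$ part of $e_i$ is $-2X\eta^3(\Amat u)_i$; equivalently, the fast stage component is $\tfrac12+\eta^2u_i+2X\eta^3(\Amat u)_i$, as in the paper. The same slip enters your output recursion, so it cancels in the $\gamma X^2\eta^4$ term, but only by accident.

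Second, the $\eta^3$ stage correction is wrong. Write $s=q\eta^2\one+e=-\eta^2u+s^{(3)}+\cdots$. The shift is already inside $s$, so
\[
 \xi^{(3)}=-\eta\Amat s^{(2)}=\eta^3\Amat u,
\]
not $-\eta^3\Amat(q\one-u)$. Your spurious $-q\eta^3\cvec$ would also make $\xi_{\mathrm{out}}^{(3)}=-q\eta^3$ and leave $q$-dependent terms in the constant. In addition, the output analogue of $\eta(\Amat\xi)_i$ is $\eta b^T\xi$, not $\eta b^T\Amat\xi$.

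\emph{The corrected $\eta^4$ step.} Once corrected, the step you flagged as the main obstacle takes two lines. Because $s=\ord(\eta^2)$, the vector $\xi=\tfrac\eta2\cvec+\ord(\eta^3)$ has no $\eta^2$ term. Hence $\eta\Amat\xi$ and $\xi^{\circ2}$ have no $\eta^3$ term, and $s^{(3)}=-2X\eta^3\Amat u$ has no $X$-independent part. Also $\xi_{\mathrm{out}}=\tfrac\eta2-\eta b^Ts$ has $\xi_{\mathrm{out}}^{(3)}=\eta^3b^Tu=0$, so $\xi_{\mathrm{out}}^2=\tfrac{\eta^2}4+\ord(\abs\eta^5P(X))$. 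Now expand
\[
 e_{\mathrm{out}}=\eta b^T\xi-\xi_{\mathrm{out}}^2+2X\eta b^Ts .
\]
The $\eta^2$ terms cancel by $b^T\one=1$ and $b^T\cvec=\tfrac12$. The only $\eta^3$ term is $-2X\eta^3b^Tu=0$. The $\eta^4$ terms are $\eta b^T\xi^{(3)}=\eta^4b^T\Amat u$ and $2X\eta b^Ts^{(3)}=-4X^2\eta^4b^T\Amat u$. With $b^T\Amat u=-\gamma/4$ these sum to $\eta^4\gamma(X^2-\tfrac14)$. No $X$-linear term ever arises, and you need no identity beyond $b^T\one=1$, $b^T\cvec=\tfrac12$, $b^Td=0$ and the definition of $\gamma$.

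Your proposed sanity check is also not the right one. \eqref{app-geom:midpoint-canonical-graph-exact} is the \emph{unshifted} ($q=0$) residual, so it cannot test the shifted constant. For explicit midpoint ($q=\tfrac1{16}$, $\gamma=\tfrac18$), a direct two-stage computation with the shifted input gives $\tfrac18\eta^4(X^2-\tfrac14)+\ord(\abs\eta^5P(X))$, as it should.
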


\begin{proof}
For \(\vartheta=\theta\), expansion of the fixed analytic stage branch gives
\begin{align}
 X_i&=X+\frac\eta2c_i+\eta^3(\Amat u)_i
          +\ord(\eta^4P(X)),
 \label{app-res:canonical-X-stages}\\
 X_i^2-Y_i&=\frac12+\eta^2u_i
        +2X\eta^3(\Amat u)_i+\ord(\eta^4P(X)).
 \label{app-res:canonical-fast-stages}
\end{align}
Indeed, the second-order term in the second display is
\[
 \frac14\{c_i^2-2(\Amat\cvec)_i-4q\}=\frac14d_i=u_i.
\]
Since \(b^Tu=0\), put \(V=b^T\Amat u=-\gamma/4\).  The exact output then
satisfies
\begin{align*}
 X_+&=X+\frac\eta2+2X\eta^4V+\ord(\eta^5P(X)),\\
 Y_+&=X^2-\frac12+q\eta^2+\eta X+\frac{\eta^2}{4}
          +\eta^4V+\ord(\eta^5P(X)).
\end{align*}
Subtracting \(X_+^2-1/2+q\eta^2\) gives
\(\eta^4V(1-4X^2)=\eta^4\gamma(X^2-1/4)\).
The adjoint case is identical, with
\(\gamma_{\theta^\dagger}=-\gamma_\theta\).  Uniformity of the remainder
follows from \cref{app-res:adjoint-lemma}.
\end{proof}

Thus failure to preserve the shifted parabola produces a Hermite mode; no
assumption \(d=0\) is made.

\subsection{The first weighted fold jet}

Let \(\mathcal D_{r,\varsigma,L}=(\mathcal V_{r,L},Y_{r,\varsigma,L})\)
be the inner field and any admissible flow graph with the enhanced
weighted \(C_x^5C_L^2\) bounds of \cref{def:enhanced-selection}, on the side
\(\varsigma=-1\) (attracting) or \(\varsigma=1\) (repelling).  This class
includes the constructed references.  Use the response window
\(\mathcal W_r(J)\) from \eqref{eq:main-response-window}, with \(C_W\)
chosen larger than the root-location constant in
\cref{app-geom:flow-profile}.

If a final-cell stage crosses \(X=0\), continue the corresponding one-sided
flow graph across the core over one fixed inner interval by its scalar
flow equation.  This is an evaluation extension determined by that side;
it neither changes the selection nor identifies the two side graphs.

\begin{lemma}[Uniform local datum jet]
 \label{app-res:datum-jet}
On \(0\le\varsigma X\le\delta/r\), including the evaluation extensions,
\begin{align}
 \mathcal V_{r,L}&=\mathcal V_0+r\mathcal V_1+\ord(r^2P(X)),
 \label{app-res:field-first-jet}\\
 \mathcal V_1(X,Y)
 &=\left(AX^3+BXY+C_\eps X+DXL_0,
 -L_0+EX^2+F_sY+S_\eps\right),
 \label{app-res:field-jet-formula}\\
 Y_{r,\varsigma,L}(X)
 &=X^2-\frac12+r\{F_3X^3+2\vartheta_0X\}
   +\ord(r^2P(X)),
 \label{app-res:graph-first-jet}
\end{align}
through the five state derivatives needed below, uniformly on
\(\mathcal W_r(J)\), where
\begin{equation}
 \label{app-res:compatibility-coefficients}
 \mu_0=C_\eps-\frac B2+DL_0,
 \qquad
 \vartheta_0=\frac{\mu_0}{2}+\frac{3F_3-2G_2}{8}.
\end{equation}
After one \(L\)-derivative, the remainder bounds may lose one power of
\(r\); this is the only derivative loss used later.
\end{lemma}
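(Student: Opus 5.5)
The plan is to derive both expansions by direct substitution of the fold scaling \eqref{eq:fold-scaling} into the normalized field \eqref{eq:normalized-fold-field} and into the divided graph coordinates of \cref{app:fold-geometry}. The only genuinely dynamical input is then a Duhamel estimate for the scalar graph equation \eqref{app-geom:scaled-flow-equation}.

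\emph{Field jet.} In inner variables the field is
\[
 \frac{\dd X}{\dd\zeta}=r^{-3}f_J(rX,r^2Y,r^2,r^2L),
 \qquad
 \frac{\dd Y}{\dd\zeta}=r^{-2}g_J(rX,r^2Y,r^2,r^2L),
\]
up to the fixed time normalization. Using weighted homogeneity, the displayed monomials produce \(\mathcal V_0\) at order one and the bracket \((AX^3+BXY+C_\eps X+DXL,\,-L+EX^2+F_sY+S_\eps)\) at order \(r\).

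\begin{itemize}
\item The remainders \(R_f,R_g\) have weighted degree at least four and three. On the tube \(\abs Y\lesssim\langle X\rangle^2\), \(r\abs X\le\delta\), Cauchy estimates on \(\Omega^+\) bound them by \(r^4\langle X\rangle^{4}\) and \(r^3\langle X\rangle^3\) after the prefactors. Each state derivative costs at most a factor \(r\langle X\rangle^{k}\) relative to \(r\). This gives the \(\ord(r^2P(X))\) remainder through five derivatives, uniformly over the compact class \(\Gclass\).
\item On \(\mathcal W_r(J)\) we have \(L=L_0+\ord(r)\), so replacing \(L\) by \(L_0\) in the \(r\)-bracket costs only \(\ord(r^2\langle X\rangle)\). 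This proves \eqref{app-res:field-first-jet}--\eqref{app-res:field-jet-formula}.
\item An \(L\)-derivative of the field is exactly \(r(DX,-1)+\ord(r^2P)\). It is therefore harmless.
\end{itemize}

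\emph{Graph jet.} Write the selected graph as \(y=\phi_J(x)+r^2\{\Gamma^0_J(x,L)+e\}\), so that
\[
 Y_{r,\varsigma,L}(X)=r^{-2}\phi_J(rX)+\Gamma^0_J(rX,L)+e(rX,L).
\]
From \eqref{app-geom:critical-jet-expansions} we get \(r^{-2}\phi_J(rX)=X^2+rF_3X^3+\ord(r^2X^4)\). Expanding \eqref{app-geom:reference-graph} to first order gives
\[
 Q_J(x)=\tfrac12+\bigl(\tfrac{G_2}{2}-\tfrac{3F_3}{4}\bigr)x+\ord(x^2),
\qquad
\Gamma^0_J=-\tfrac12+\bigl(C_\eps+DL-\tfrac B2+\tfrac{3F_3}{4}-\tfrac{G_2}{2}\bigr)x+\ord(x^2).
\]
With \(L\to L_0\), the linear coefficient is exactly \(2\vartheta_0\) from \eqref{app-res:compatibility-coefficients}, which produces the term \(r\,2\vartheta_0X\). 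Derivatives follow from analyticity of \(\phi_J\) and \(\Gamma^0_J\) on a fixed disc.

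It remains to show \(e=\ord(r^2P(X))\) with five \(X\)-derivatives. Put \(e=rU_\varsigma\) and use \eqref{app-geom:scaled-flow-equation}:
\begin{itemize}
\item The forcing \(\mathcal G_{\varsigma,r}=(D+2)(L_0-L)+\ord(r\langle z\rangle)\) is \(\ord(r\langle z\rangle)\) on the window.
\item The coreward kernel is Gaussian by \eqref{app-geom:flow-kernel-limit}.
\item Variation of constants from the collar edge then gives \(U=\ord(rP(z))\) plus a terminal term.
\item The terminal term is exponentially small by \eqref{app-geom:gaussian-propagator}, since the collar admissibility \eqref{eq:main-selection-bounds} bounds \(U\) there by \(\ord(r^{-1})\).
\end{itemize}
Higher \(z\)-derivatives come from differentiating the linear equation triangularly. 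Each derivative of the coefficient brings a factor \(r\) or a polynomial in \(z\), and the enhanced \(C^5_x\) bounds of \cref{def:enhanced-selection} control the terminal jets. The same equation applies on the fixed inner evaluation extension, where \(z\) stays bounded.

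For the \(L\)-derivative, \(\partial_L\mathcal G=-(D+2)+\ord(r)\) is not small. Consequently \(\partial_Le=\ord(rP)\) rather than \(\ord(r^2P)\), which is exactly the one-power loss stated in the lemma.

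\emph{Main obstacle.} The delicate step is making the \(e\)-estimate uniform over \emph{arbitrary} enhanced selections with polynomial rather than exponential growth in \(X\) up to \(\abs X\sim\delta/r\), since the Gaussian damping is weak near the collar. I would first treat the constructed reference selection using the explicit Duhamel formula. I would then transfer to arbitrary enhanced selections by applying \cref{app-geom:shielding} to their difference, using its homogeneous secant equation \eqref{app-geom:flow-secant} with the enhanced derivative bounds in place of the \(C^4\) ones. Near the collar, where the Gaussian has not yet acted, I would instead use the admissibility bound directly: there \(r^2P(X)\gtrsim r^2\cdot r^{-k}\) dominates the \(\ord(1)\) weighted bounds on \(e\).
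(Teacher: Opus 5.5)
Your proposal is correct and follows essentially the paper's route. You expand the field in the fold weights, write the reference graph in the divided coordinate \(\phi_J+r^2(\Gamma_J^0+e)\) with the Gaussian Duhamel bound \(e=\ord(r^2P(X))\) (the \(\partial_L\)-loss coming from \(\partial_L\mathcal G_{\varsigma,r}=-(D+2)+\ord(r)\)), and transfer to enhanced selections by the homogeneous secant comparison, absorbing the non-Gaussian outer part into an enlarged \(r^2P(X)\). Reading \(Y_1\) off the Taylor expansions of \(\phi_J\) and \(\Gamma_J^0\), whose linear coefficient at \(L_0\) is indeed \(2\vartheta_0\), is an equivalent substitute for the paper's formal substitution into the tangency equation.

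One small correction: with \(\zeta=rt\) the inner field is \((r^{-2}f_J,\,r^{-1}g_J)\), not \((r^{-3}f_J,\,r^{-2}g_J)\). It is this scaling that turns \(R_f=\ord(r^4\langle X\rangle^4)\) and \(R_g=\ord(r^3\langle X\rangle^3)\) into the claimed \(\ord(r^2P(X))\) remainders.
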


\begin{proof}
The exact fold conjugacy gives
\[
 \begin{aligned}
  X'&=X^2-Y+r(AX^3+BXY+C_\eps X+DXL)+\ord(r^2P),\\
  Y'&=X+r(-L+EX^2+F_sY+S_\eps)+\ord(r^2P).
 \end{aligned}
\]
Since \(L-L_0=O(r)\) on \(\mathcal W_r(J)\), freezing \(L\) at \(L_0\)
changes only the value remainder in \eqref{app-res:field-first-jet}.
Substitute \(Y=X^2-1/2+rY_1+\cdots\) into the flow tangency equation.  At
\(L=L_0\), the polynomial solution is
\(Y_1=F_3X^3+2\vartheta_0X\); equality of its constant and quadratic
coefficients is \eqref{app-res:compatibility-coefficients}, while the last
compatibility relation is \eqref{eq:main-L0-definition}.

The scalar fold equation and its Gaussian coreward propagator first give
this expansion for a constructed reference on both sides.  Compare an
enhanced selected graph with that reference.  The homogeneous secant
equation and its differentiated triangular systems give, for the required
jets,
\begin{equation}
 \label{app-res:enhanced-selection-comparison}
 \left|
 \partial_X^i\partial_L^j
 (Y_{r,\varsigma,L}^{\sigma}
  -Y_{r,\varsigma,L}^{\rm ref})
 \right|
 \le Cr^{-M}P(X)
 \exp\!\left[-c\left\{(\delta/r)^2-X^2\right\}\right].
\end{equation}
On the effective core this is flat; on the complementary outer part it is
bounded by an enlarged \(r^2P(X)\).  Thus every enhanced selection has the
same first weighted jet with the stated remainder.  Multiplication by the
central Gaussian product also shows that its selection-dependent part
contributes only \(r^{-M}\exp(-c/r^2)\) to the weighted chain sum.
Differentiating tangency five times in \(X\) and once in \(L\) gives the
remaining claimed bounds.  The evaluation extension solves the same
equation.
\end{proof}

\subsection{Gaussian projection and discrete quadrature}

Let
\[
 I_0=\int_0^\infty\eexp^{-2X^2}\dd X=\sqrt{\frac\pi8},
 \qquad
 \mathbb E[p]=I_0^{-1}\int_0^\infty\eexp^{-2X^2}p(X)\dd X.
\]
Integration by parts gives
\begin{equation}
 \label{app-res:gaussian-moments}
 \mathbb E[X^2]=\frac14,
 \qquad \mathbb E[\mathscr H_2]=0.
\end{equation}
Applying this to \eqref{eq:Q10} yields
\begin{equation}
 \label{app-res:Q10-projection}
 \begin{aligned}
 \mathbb E[Q_{10}]
 &=-\frac38\beta F_3
  +\left(\frac12\alpha+\frac14\beta\right)G_2
  +\frac12(\beta-\alpha)G_2\\
 &=-\frac38\beta(F_3-2G_2).
 \end{aligned}
\end{equation}

The following estimate links the local residual to the discrete Green
identity.

\begin{lemma}[Weighted-chain quadrature]
 \label{app-res:quadrature-interface}
Suppose an aligned coreward chain has nodes
\(0=z_0<\cdots<z_N\), sources \(\xi_{k+1}\), and multiplier products
\(P_k\), with \(z_N\asymp r^{-1}\), and suppose
\begin{align*}
 z_{k+1}-z_k&=\frac H2+\ord(HrP_*(z_{k+1})+H^2P_*(z_{k+1})),\\
 \abs{\xi_{k+1}-z_k}&\le CHP_*(z_{k+1}),\\
 \abs{P_k-\eexp^{-2z_k^2}}
 &\le C(r+H)P_*(z_k)\eexp^{-cz_k^2}
   +Cr^{-M}\eexp^{-c/r^2},\\
 H\sum_{k<N}P_k(1+z_k)^m&\le C_m
\end{align*}
for the fixed degrees used below.  Then every fixed polynomial \(p\)
satisfies
\begin{equation}
 \label{app-res:weighted-quadrature}
 H\sum_{k<N}P_kp(\xi_{k+1})
 =2\int_0^\infty\eexp^{-2z^2}p(z)\dd z
  +\ord(r+H)+\ord(r^{-M'}\eexp^{-c'/r^2}).
\end{equation}
In particular,
\begin{equation}
 \label{app-res:discrete-hermite-cancellation}
 H\sum_{k<N}P_k\mathscr H_2(\xi_{k+1})
 =\ord(r+H)+\ord(r^{-M'}\eexp^{-c'/r^2}).
\end{equation}
\end{lemma}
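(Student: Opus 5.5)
The plan is to compare the discrete weighted sum with its continuous Gaussian analogue in three stages: replace \(P_k\) by \(\eexp^{-2z_k^2}\), replace the source points \(\xi_{k+1}\) by the nodes \(z_k\), and then treat the remaining sum as a Riemann sum on the mesh \(z_{k+1}-z_k\approx H/2\). This is where the factor \(2\) comes from, since \(H\sum_k g(z_k)\approx 2\sum_k g(z_k)(z_{k+1}-z_k)\approx 2\int g\). Once \eqref{app-res:weighted-quadrature} is established, \eqref{app-res:discrete-hermite-cancellation} follows at once from \(\mathbb E[\mathscr H_2]=0\) in \eqref{app-res:gaussian-moments}.

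\textbf{Step 1 (sources to nodes).} Write \(p(\xi_{k+1})-p(z_k)\) with the mean value theorem. The hypothesis \(\abs{\xi_{k+1}-z_k}\le CHP_*(z_{k+1})\) bounds this difference by \(CH\,\widetilde P(z_{k+1})\) for a polynomial \(\widetilde P\). Since \(z_{k+1}\le z_k+CH\langle z_k\rangle^m\) and the nodes are bounded by \(\delta/r\), we have \(\widetilde P(z_{k+1})\le C\widetilde P(z_k)\). Multiplying by \(HP_k\) and summing with the weighted-sum hypothesis \(H\sum_kP_k(1+z_k)^m\le C_m\) gives an error \(\ord(H)\). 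The sign of \(P_k\) is not used beyond \(\abs{P_k}\); positivity follows for small \(r,H\) from the product bound, and if needed I would replace \(P_k\) by \(\abs{P_k}\) in the weighted-sum hypothesis.

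\textbf{Step 2 (products to Gaussian).} Insert the product hypothesis into \(H\sum_k(P_k-\eexp^{-2z_k^2})p(z_k)\). The first part is bounded by \((r+H)\,H\sum_kP_*(z_k)\eexp^{-cz_k^2}\abs{p(z_k)}\). Because the mesh is comparable to \(H\) (\(z_{k+1}-z_k\ge H/4\) for small \(r,H\) and polynomially bounded \(z\), once \(HrP_*+H^2P_*\) is controlled on \(z\le\delta/r\)), this Gaussian-weighted sum is \(\ord(1)\). The uniform exponential term is multiplied by \(H\sum_{k<N}\abs{p(z_k)}\le CN H\,r^{-\deg p}\le Cr^{-\deg p-1}\), using \(N\lesssim r^{-1}H^{-1}\) from the lower mesh bound. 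This gives \(\ord(r^{-M'}\eexp^{-c/r^2})\) with \(M'=M+\deg p+1\).

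\textbf{Step 3 (Riemann sum).} Set \(g(z)=\eexp^{-2z^2}p(z)\) and write
\[
 H\sum_{k<N}g(z_k)=2\sum_{k<N}g(z_k)(z_{k+1}-z_k)+\ord\Bigl(\sum_kg(z_k)(HrP_*+H^2P_*)\Bigr).
\]
The error term is \(\ord(r+H)\) by the same Gaussian-moment bound. The left Riemann sum differs from \(\int_0^{z_N}g\) by at most \(\sum_k(z_{k+1}-z_k)^2\sup_{[z_k,z_{k+1}]}\abs{g'}\), and since \(\abs{g'}\) is Gaussian times a polynomial this is \(\ord(H)\). Finally, the tail satisfies \(\int_{z_N}^\infty g=\ord(\eexp^{-c/r^2})\) because \(z_N\asymp r^{-1}\).

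\textbf{Main obstacle.} The delicate point is uniformity near the outer end \(z\asymp r^{-1}\). There the polynomial majorants \(P_*\) grow, so the mesh perturbation \(HrP_*(z)\) may not be small compared with \(H\) itself. I would first split the chain at \(z=r^{-1/2}\). On the inner part, all perturbative bounds hold with small relative error. On the outer part, every term, whether discrete or continuous, is dominated by \(\eexp^{-c/r}\) times a power of \(r^{-1}\). This requires the crude bound \(N\le CH^{-1}r^{-K}\), which follows from the drift hypothesis \(z_{k+1}-z_k\ge cH\), an assumption I would import from \eqref{app-geom:bridge-damping}. The outer contributions are then absorbed into the \(\ord(r+H)\) term.
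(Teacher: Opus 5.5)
Your proposal is correct and follows the paper's own proof: replace the products and sources using the weighted moment bound, read \(H\) as twice the cell length, and bound the Riemann-sum and tail errors by Gaussian integrability. One small adjustment is needed: the split at \(z=r^{-1/2}\) gives small relative mesh error only when \(\deg P_*<2\), and the implicit \(P_*(z_{k+1})\) in the mesh hypothesis does not by itself give your bound \(z_{k+1}\le z_k+CH\langle z_k\rangle^m\); importing the two-sided spacing \(H/C\le z_{k+1}-z_k\le CH\) from \eqref{app-geom:bridge-damping} settles both points, and then the Gaussian weights absorb the outer mesh error without any split.
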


\begin{proof}
The product and source comparisons, followed by the weighted moment bound,
replace \(P_kp(\xi_{k+1})\) by
\(\exp(-2z_k^2)p(z_k)\) with total error \(O(r+H)\).  The mesh relation
turns \(H\) into twice the cell length.  The resulting Riemann sum has
error \(O(r+H)\), because every derivative of a polynomial times a
Gaussian is integrable.  The tail beyond \(z_N\asymp r^{-1}\) is a fixed
power of \(r^{-1}\) times \(\exp(-c/r^2)\).  The Hermite conclusion follows
from \eqref{app-res:gaussian-moments}.
\end{proof}

The hypotheses of \cref{app-res:quadrature-interface} are verified for the
aligned chains in \cref{app:transport-root}.  Retaining the factor
\(H\) is essential: although the chain has \(O(H^{-1})\) cells, the
accumulated estimate contains no inverse step-size factor.  Together,
\cref{app-res:exact-division} and
\cref{app-res:oriented-residual,app-res:Q10-projection} prove
\cref{prop:exact-residual}.

\section{Exact transport and capture of the numerical root}
\label{app:transport-root}

This appendix proves \cref{prop:fold-response,lem:root-transfer}.  The
argument has two logically separate parts.  First, an exact common-target
identity transports the one-step residual without replacing the numerical
map by a modified flow.  Second, an interior interpolation estimate uses
the transported value bound to recover monotonicity, while the location of
the root is obtained from the value bound alone.  We retain the factor
\(H\) in every discrete convolution.  This prevents the number of cells
from introducing a lower restriction on the step size.

Throughout the appendix, reduce \(h_0\) so that \(h_0\le1\).  Then
\begin{equation}
 \label{app-tr:small-rectangle}
 0<r\le r_0,\qquad 0<h\le h_0,\qquad H=hr\le r.
\end{equation}
Write \(\varsigma=-1\) on the attracting side and \(\varsigma=1\) on
the repelling side.  The corresponding coreward tableau is \(\theta\) or
\(\theta^\dagger\), respectively, and its signed inner step is
\(\widehat H=-\varsigma H\).

\subsection{An exact common-target identity}

We first isolate the elementary identity used on every fold cell.  It is
stated for a general map because the adjustment of the two source points is
essential.

\begin{lemma}[Common-target cell identity]
 \label{app-tr:common-target}
Let
\[
 \Phi(z,v)=\bigl(T(z,v),V(z,v)\bigr),
 \qquad T(z,v)=z-H\mathcal P(z,v),
\]
be \(C^1\) on a convex neighbourhood.  Let \(w\) be an invariant graph
and let \(e\) be a comparison graph.  Suppose that the projected graph
maps are strictly monotone and that \(\zeta\) on \(w\) and \(\xi\) on
\(e\) have the same target \(t\):
\begin{equation}
 \label{app-tr:aligned-target}
 T(\zeta,w(\zeta))=T(\xi,e(\xi))=t.
\end{equation}
For \(G\in\{\mathcal P,V\}\), denote by \(\overline G_z\) and
\(\overline G_v\) the secant averages of its two partial derivatives on
the segment joining \((\xi,e(\xi))\) to \((\zeta,w(\zeta))\), and put
\begin{equation}
 \label{app-tr:exact-multiplier}
 a=\frac{\overline{\mathcal P}_v}
         {1-H\overline{\mathcal P}_z},
 \qquad
 \overline e'=
 \int_0^1e'\bigl(\xi+s(\zeta-\xi)\bigr)\,\dd s,
 \qquad
 M=\frac{\overline V_v+Ha\overline V_z}
         {1-Ha\overline e'}.
\end{equation}
If the output-minus-graph residual of \(e\) is
\begin{equation}
 \label{app-tr:cell-residual-definition}
 \rho=V(\xi,e(\xi))-e(t),
\end{equation}
then, for \(d=w-e\),
\begin{equation}
 \label{app-tr:exact-cell-recurrence}
 d(t)=M d(\zeta)+\rho.
\end{equation}
Consequently, a contained chain with coreward targets
\(0=z_0<z_1<\cdots<z_N\) satisfies the finite identity
\begin{equation}
 \label{app-tr:exact-green}
 d(z_0)=\Pi_Nd(z_N)+\sum_{k=0}^{N-1}\Pi_k\rho_k,
 \qquad
 \Pi_0=1,\qquad \Pi_j=\prod_{k<j}M_k.
\end{equation}
\end{lemma}

\begin{proof}
Let \(\Delta z=\zeta-\xi\) and
\(\Delta v=w(\zeta)-e(\xi)\).  Secant subtraction in
\eqref{app-tr:aligned-target} gives
\[
 \Delta z=H\{\overline{\mathcal P}_z\Delta z
                  +\overline{\mathcal P}_v\Delta v\}
            =Ha\Delta v.
\]
On the other hand,
\[
 \Delta v=d(\zeta)+e(\zeta)-e(\xi)
          =d(\zeta)+\overline e'\Delta z,
 \qquad
 \Delta v=\frac{d(\zeta)}{1-Ha\overline e'}.
\]
Invariance of \(w\), the definition of \(\rho\), and another secant
subtraction give
\[
 d(t)=\overline V_z\Delta z+\overline V_v\Delta v+\rho,
\]
which is \eqref{app-tr:exact-cell-recurrence}.  Iteration proves
\eqref{app-tr:exact-green}.
\end{proof}

For the fold comparison, \(w\) is a numerical invariant graph and
\begin{equation}
 \label{app-tr:shifted-comparator}
 e=E+q_\theta H^2
\end{equation}
is the flow graph in the inner \(Y\)-coordinate with the common
comparator offset.  The graph construction in
\cref{app-geom:exact-regraphing} provides the contained chain.  Reversing
its stopped coreward images gives
\begin{equation}
 \label{app-tr:stopped-chain}
 0=z_0<z_1<\cdots<z_N,
 \qquad \frac{c_*}{r}\le z_N\le\frac{C_*}{r}.
\end{equation}
The source of the numerical graph in cell \(k\) is \(z_{k+1}\); the
aligned source on \(e\) is denoted by \(\xi_{k+1}\).  No shortened
Runge--Kutta step is inserted at either end.

The stopping count \(N\) can change with \((J,\theta,r,h,L)\).  On an
open piece on which it is constant, all cell data are as differentiable in
\(L\) as the graphs.  On the overlap of two adjacent pieces, adding the
last contained cell replaces
\[
 \Pi_Nd(z_N)
 \quad\hbox{by}\quad
 \Pi_N\{M_Nd(z_{N+1})+\rho_N\}
 =\Pi_{N+1}d(z_{N+1})+\Pi_N\rho_N.
\]
Thus the two Green expressions agree exactly, together with their
\(L\)-derivatives.  This add/delete-one-cell compatibility will be used
again for the moving-terminal limit.

\subsection{Gaussian products and mesh-weighted quadrature}

The following estimates contain the only summation over the fold chain.
Here and below \(P\) denotes a fixed positive polynomial majorant whose
degree may increase from line to line.  Fixed algebraic powers of
\(r^{-1}\) are absorbed by increasing the integer \(M\).

\begin{lemma}[Products and half-Gaussian quadrature]
 \label{app-tr:gaussian-quadrature}
Uniformly on both signed chains, the cell mesh and multiplier obey,
as long as \(z_{k+1}\le\eta/r\) for a fixed small \(\eta>0\),
\begin{align}
 z_{k+1}-z_k
 &=\frac H2+\ord\bigl(HrP(z_{k+1})\bigr),
 \label{app-tr:mesh}\\
 M_k
 &=1-2Hz_{k+1}
   +\ord\bigl(HrP(z_{k+1})+H^2P(z_{k+1})\bigr),
 \label{app-tr:multiplier-jet}\\
 \abs{\xi_{k+1}-z_k}&\le CHP(z_{k+1}).
 \label{app-tr:aligned-source}
\end{align}
On a fixed-stopping-index parameter piece, let \(\mathsf D_L\) denote the
total \(L\)-derivative of cell data, including the motion of their sources
and targets.  After increasing \(P\),
\begin{align}
 \abs{\mathsf D_Lz_k}+\abs{\mathsf D_L\xi_{k+1}}
 &\le CrP(z_{k+1}),
 \label{app-tr:mesh-L-derivative}\\
 \abs{\mathsf D_LM_k}&\le CHrP(z_{k+1}).
 \label{app-tr:multiplier-L-derivative}
\end{align}
On every contained cell one has positive spacing comparable to \(H\),
\(M_k\ge1/2\), and
\begin{equation}
 \label{app-tr:coarse-products}
 \begin{aligned}
 0<\Pi_j&\le C\eexp^{-cz_j^2},
 &\abs{\mathsf D_L\Pi_j}&\le CrP(z_j)\eexp^{-cz_j^2},\\
 H\sum_{j<N}
 \left(\Pi_j+r^{-1}\abs{\mathsf D_L\Pi_j}\right)
 (1+z_j)^m&\le C_m.
 \end{aligned}
\end{equation}
for every degree needed in the proof.  More precisely,
\begin{equation}
 \label{app-tr:product-comparison}
 \norm{\Pi_j-\eexp^{-2z_j^2}}_{C_L^1,\mathrm{pw}}
 \le C(r+H)P(z_j)\eexp^{-cz_j^2}
      +Cr^{-M}\eexp^{-c/r^2}.
\end{equation}
Consequently,
\begin{align}
 \left\|H\sum_{k<N}\Pi_kQ_{10}(\xi_{k+1})
 -2\int_0^\infty\eexp^{-2X^2}Q_{10}(X)\,\dd X
 \right\|_{C_L^1,\mathrm{pw}}
 &\le C\bigl(r+H+r^{-M}\eexp^{-c/r^2}\bigr),
 \label{app-tr:Q-quadrature}\\
 \left\|H\sum_{k<N}\Pi_k\mathscr H_2(\xi_{k+1})
 \right\|_{C_L^1,\mathrm{pw}}
 &\le C\bigl(r+H+r^{-M}\eexp^{-c/r^2}\bigr).
 \label{app-tr:H-quadrature}
\end{align}
\end{lemma}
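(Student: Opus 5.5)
We work cell by cell from the exact multiplier formula \eqref{app-tr:exact-multiplier} of \cref{app-tr:common-target}, then sum.

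\emph{Step 1: mesh, source and multiplier jets.} By \eqref{app-geom:bridge-factorization}--\eqref{app-geom:bridge-base-quotient}, the target map is \(T=z-H\mathcal P_\varsigma\), with \(\mathcal P_\varsigma=Q_J(x)+c_J(x)W+\ord(r^2+h(r+\abs x))\). In inner variables, \(Q_J(x)/r\) and \(c_J\) contribute to \(z\)-speed \(\tfrac12+\ord(rP(z))\) along the canonical orbit \eqref{eq:canonical-fold-orbit}. This gives \eqref{app-tr:mesh}. Because \(e\) and \(w\) differ by \(\ord(H^2+r)\) in the divided coordinate (by the bridge construction and \cref{app-geom:slow-tube}), the secant relation \(\Delta z=Ha\Delta v\) yields \eqref{app-tr:aligned-source}.

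For \(M_k\), expand the secant averages. We have \(\overline V_v=R_{\vartheta}(-Hz\kappa_J)+H^2\overline{\mathcal G}+\ord(Hr)\), where \(R(-u)=1-u+\ord(u^2)\) and \(\kappa_J(0)=4\cdot\tfrac12\) after the inner rescaling, giving the leading factor \(1-2Hz\). The terms \(Ha\overline V_z\) and \(Ha\overline e'\) are \(\ord(H\cdot H)\) times a polynomial, because \(\overline V_z=\ord(H)\) and \(e'=\ord(P(z))\). This gives \eqref{app-tr:multiplier-jet} for \(z\le\eta/r\). The \(\mathsf D_L\) bounds \eqref{app-tr:mesh-L-derivative}--\eqref{app-tr:multiplier-L-derivative} follow by differentiating the fixed-index cell identities. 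The \(L\)-dependence of the field enters only at order \(r\) in inner variables, since \(\lambda=r^2L\). The implicit targets are handled by the monotonicity \(\tfrac12\le\partial_zT_w\le\tfrac32\) from \eqref{app-geom:projection-control}.

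\emph{Step 2: products.} We take logarithms, using \(M_k\ge\tfrac12\) on contained cells (which follows from \eqref{app-geom:common-target-recurrence}). This gives
\[
\log\Pi_j=\sum_{k<j}\{-2Hz_{k+1}+\ord(HrP+H^2P)\}.
\]
Since \(H=2(z_{k+1}-z_k)+\ord(HrP)\), the sum equals \(-4\sum z_{k+1}(z_{k+1}-z_k)\), which is a Riemann sum for \(-2z_j^2\) with error \(\ord(HzP)\subset\ord(H\,P(z_j))\). The accumulated perturbation is \(\ord((r+H)P(z_j))\). On the region \((r+H)P(z_j)\le1\), exponentiating gives \eqref{app-tr:product-comparison} with a loss \(\eexp^{-cz^2}\) instead of \(\eexp^{-2z^2}\). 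Beyond that region, and beyond \(\eta/r\), the coarse Gaussian bound \eqref{app-geom:gaussian-product} makes both terms \(\ord(r^{-M}\eexp^{-c/r^2})\). Summing \(\mathsf D_L\log M_k\) with \eqref{app-tr:multiplier-L-derivative} gives \(\abs{\mathsf D_L\Pi_j}\le CrP\Pi_j\). The weighted sums in \eqref{app-tr:coarse-products} then follow because the spacing is \(\asymp H\): \(H\sum_j\) is comparable to \(\int\) of a Gaussian times a polynomial.

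\emph{Step 3: quadrature.} With these inputs verified, \cref{app-res:quadrature-interface} gives the \(C^0\) versions of \eqref{app-tr:Q-quadrature}--\eqref{app-tr:H-quadrature}. For \(L\)-derivatives on a fixed-index piece, we differentiate the sum term by term. The terms \(\mathsf D_L\Pi_k\) and \(\mathsf D_L\xi_{k+1}\) carry a factor \(r\) and are controlled by the weighted-sum bound. \(Q_{10}\) does not depend on \(L\) because it is frozen at \(L_0\). The limit integral is also \(L\)-independent, so the piecewise \(C^1_L\) error is \(\ord(r+H)\).

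The main obstacle is Step 2. The product comparison must hold uniformly with error \((r+H)P(z)\eexp^{-cz^2}\) up to \(z\asymp r^{-1}\), where \(Hz\) is not small relative to \(H\) and the relative error \((r+H)P(z)\) eventually exceeds one. I would handle this by splitting at \(z=r^{-1/4}\). Below the split, use the logarithmic expansion. Above it, use the crude Gaussian bound, so that both \(\Pi_j\) and \(\eexp^{-2z_j^2}\) are bounded by \(\eexp^{-cr^{-1/2}}\), which is absorbed into \((r+H)P\eexp^{-cz^2}\) after adjusting \(c\). The terminal piece near \(z_N\) is then exponentially small.
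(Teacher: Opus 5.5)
Your route for the products and the quadrature differs from the paper's but is workable, and once the derivative bounds are in hand your $C^1_L$ conclusions come more cheaply than the paper's. The paper compares $\Pi_j$ with the exactly telescoping ideal product $\prod_{k<j}\eexp^{-2(z_{k+1}^2-z_k^2)}=\eexp^{-2z_j^2}$ through a product--Duhamel identity and Gaussian segment bounds. You instead expand $\log\Pi_j$ on a core and use a crude Gaussian bound beyond it. Your $C^1_L$ conclusions then follow from $\abs{\mathsf D_L\Pi_j}\le CrP(z_j)\Pi_j$, from $\abs{\mathsf D_L\eexp^{-2z_j^2}}\le CrP(z_j)\eexp^{-2z_j^2}$, and from the $L$-independence of the limiting integrals.

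The genuine gap is in the derivative bounds themselves, \eqref{app-tr:mesh-L-derivative}--\eqref{app-tr:multiplier-L-derivative}, on which every $C^1_L$ claim of the lemma rests. The nodes are defined recursively by $T_w(z_{k+1},L)=z_k(L)$. Differentiating a cell identity therefore gives
\[
 \ell_{k+1}=\frac{\ell_k-\partial_LT_w}{\partial_zT_w},
 \qquad \ell_k=\mathsf D_Lz_k,\quad \ell_0=0,
\]
and the per-cell input accumulates over as many as $N\asymp(rH)^{-1}$ cells. The bound $\frac12\le\partial_zT_w\le\frac32$ that you cite only guarantees solvability. It allows an amplification factor near $2$ per cell, hence growth like $2^N$.

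What is needed, and what the paper's parameter recurrence supplies, is the sharp factor $\partial_zT_w=1+\ord(Hr)$. This holds because the total $z$-derivative of $\mathcal P_\varsigma$ along the graph is $\ord(r)$: its coefficients depend on $x=\varsigma rz$, and $\abs{w_z}\le Cr$ by \eqref{app-geom:projection-control}. You also need forcing that keeps its mesh factor, $\abs{\partial_LT_w}\le CHrP(z_{k+1})$. A discrete Gronwall argument then gives
\[
 \abs{\ell_k}\le Cr\,\eexp^{CHrk}H\sum_{j<k}P(z_{j+1})\le CrP_1(z_k),
\]
because $Hrk\lesssim rz_k\lesssim1$ and the remaining sum is a Riemann sum rather than a count of cells. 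Only after this can you obtain $\mathsf D_L\xi_{k+1}$ from the aligned-target equation and $\mathsf D_LM_k=\ord(HrP)$ from the secant formula. The motion of the source enters $\mathsf D_LM_k$ multiplied by $\partial_zM=\ord(HP)$, so this step genuinely needs the accumulated bound on $\ell_{k+1}$. The fact that $L$ enters the field at order $r$ is not enough.

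Three smaller repairs:
\begin{enumerate}
 \item The split cannot be fixed at $z=r^{-1/4}$. Since $P$ is a majorant of unspecified degree, $(r+H)P(z)\le1$ on the core requires a split at $z=r^{-\epsilon}$ with $\epsilon\deg P<1$. Above such a split, your absorption of $\eexp^{-cz^2}$ into $(r+H)P(z)\eexp^{-c'z^2}$ works as you describe. Drop your earlier claim that everything beyond the region $(r+H)P\le1$ is $\ord(r^{-M}\eexp^{-c/r^2})$; it is false.
 \item \eqref{app-geom:gaussian-product} and the bound $M_k\ge\frac12$ you cite concern the secant multiplier between two invariant graphs of the same dynamics. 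For the map-versus-flow-comparator multiplier you need the one-sided bound $\log M_k\le-c_0Hz_{k+1}+C_0H$ on the whole chain. Obtain it from \eqref{app-geom:bridge-damping} and the secant formula, then complete the square, as the paper does.
 \item $Ha\overline e'$ is $\ord(Hr)$ in the divided coordinate, not $\ord(H^2)$. This is harmless for \eqref{app-tr:multiplier-jet}.
\end{enumerate}
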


\begin{proof}
The slow-tube estimate in \cref{app-geom:slow-tube}, the projection bound
\eqref{app-geom:projection-control}, and the stage bounds in
\cref{app-res:adjoint-lemma} keep both aligned sources and their joining
segment in a common graph neighbourhood.  Substitution of the fold jets
into the secant formula \eqref{app-tr:exact-multiplier} gives
\eqref{app-tr:mesh}--\eqref{app-tr:aligned-source}.  The argument uses
only compact tableau bounds.  In particular, it does not use signs of the
Runge--Kutta weights.

We record the parameter recurrence before estimating products.  A naive
sum of cellwise parameter bounds would lose a factor \(H^{-1}\).  On a
fixed-combinatorics piece set \(\ell_k=\mathsf D_Lz_k\); since \(z_0=0\),
\(\ell_0=0\).  Differentiate the target equation and solve for the
next source.  The inner fold jet and the common slow-tube bounds give
\begin{equation}
 \ell_{k+1}
 =\{1+HrA_k\}\ell_k+HrB_k,
 \qquad
 \abs{A_k}\le C,\qquad \abs{B_k}\le P(z_{k+1}).
 \label{app-tr:mesh-sensitivity-recurrence}
\end{equation}
The factor \(H\) is the cell length; the additional \(r\) is the
parameter weight in the fold jet.  Iteration retains this mesh factor:
\begin{equation}
 \abs{\ell_k}
 \le CrH\sum_{j<k}P(z_{j+1})
       \prod_{j<\ell<k}\{1+CHr\}
 \le CrP_1(z_k).
 \label{app-tr:mesh-sensitivity-bound}
\end{equation}
Since \(Hr k\le Crz_k\le C\delta\), the product in
\eqref{app-tr:mesh-sensitivity-bound} is uniformly bounded; the remaining
mesh sum is bounded by a polynomial integral.  Differentiating the
aligned-target equation gives the
same bound for \(\mathsf D_L\xi_{k+1}\).  Substitution into the
secant formula \eqref{app-tr:exact-multiplier} gives
\eqref{app-tr:multiplier-L-derivative}.  Thus every differentiated cell
factor still contains \(H\); none is merely \(O(r)\).

The transverse factor in \eqref{app-geom:bridge-damping} and the
secant formula give, on every contained segment,
\begin{equation}
 \prod_{a\le k<b}M_k
 \le C\exp\{-c(z_b^2-z_a^2)\},
 \qquad
 \frac H C\le z_{k+1}-z_k\le CH.
 \label{app-tr:segment-product}
\end{equation}
For the finitely many cells nearest the core, the first factor is absorbed
in \(C\); farther out it follows by summing
\(\log M_k\le-c_0Hz_{k+1}+C_0H\) and using the spacing bounds.  Completing
the square absorbs the accumulated linear term.  Taking
\(a=0\) proves the undifferentiated product estimate.  Moreover,
\begin{equation}
 H\sum_{k<N}\eexp^{-cz_k^2}(1+z_k)^m
 \le C_m
 \label{app-tr:weighted-polynomial-sum}
\end{equation}
by comparison, cell by cell, with the corresponding Gaussian integral.
This proves the undifferentiated parts of
\eqref{app-tr:coarse-products} without replacing the sum by the number
\(O(H^{-1})\) of cells.

For the derivative, differentiate the finite recurrence
\(\Pi_{j+1}=M_j\Pi_j\):
\begin{equation}
 \mathsf D_L\Pi_j
 =\sum_{\ell<j}
   \left(\prod_{k<\ell}M_k\right)
   (\mathsf D_LM_\ell)
   \left(\prod_{\ell<k<j}M_k\right).
 \label{app-tr:differentiated-product}
\end{equation}
Each summand contains \(HrP(z_{\ell+1})\) by
\eqref{app-tr:multiplier-L-derivative}.  The two segment products in
\eqref{app-tr:segment-product} combine to
\(CP(z_j)\eexp^{-cz_j^2}\).  Summing the remaining factor \(H\) gives
\[
 \abs{\mathsf D_L\Pi_j}
 \le CrP(z_j)\eexp^{-cz_j^2}.
\]
Applying \eqref{app-tr:weighted-polynomial-sum} once more proves the last
bound in \eqref{app-tr:coarse-products}.  This calculation exhibits why
one \(L\)-derivative costs a factor \(r\), but never a factor \(H^{-1}\).

Put
\[
 \overline M_k
 =\exp\{-2(z_{k+1}^2-z_k^2)\}.
\]
Equations \eqref{app-tr:mesh} and
\eqref{app-tr:multiplier-jet} give
\(\abs{M_k-\overline M_k}\le CH(r+H)P(z_{k+1})\) on effective
cells.  The differentiated cell formulas also give
\begin{equation}
 \abs{\mathsf D_L(M_k-\overline M_k)}
 \le CH(r+H)P(z_{k+1}).
 \label{app-tr:differentiated-cell-comparison}
\end{equation}
The exact product--Duhamel identity
\begin{equation}
 \label{app-tr:product-duhamel}
 \Pi_j-\prod_{k<j}\overline M_k
 =\sum_{\ell<j}
   \left(\prod_{k<\ell}M_k\right)
   (M_\ell-\overline M_\ell)
   \left(\prod_{\ell<k<j}\overline M_k\right)
\end{equation}
and the segment estimates prove the value part of
\eqref{app-tr:product-comparison}; the ideal product telescopes exactly to
\(\eexp^{-2z_j^2}\).  Differentiating
\eqref{app-tr:product-duhamel} distributes one \(L\)-derivative among a
prefix product, the cell difference, and an ideal suffix product.
Equations \eqref{app-tr:differentiated-product} and
\eqref{app-tr:differentiated-cell-comparison} show that each resulting sum
again contains a mesh factor \(H\).  The same Gaussian convolution proves
the \(C_L^1\) part of \eqref{app-tr:product-comparison}.

For clarity, we separate the fold core from the outer tail.  Let
\(k_\eta\) be the first index for which \(z_{k_\eta}\ge\eta/r\).  At that
index, \eqref{app-tr:segment-product} already gives
\(\Pi_{k_\eta}\le C\eexp^{-c\eta^2/r^2}\).  Applying the segment estimate
again after \(k_\eta\), and using
\eqref{app-tr:differentiated-product} for the derivative, gives
\begin{equation}
 \label{app-tr:outer-tail}
 H\sum_{k\ge k_\eta}
 \left(\Pi_k+r^{-1}\abs{\mathsf D_L\Pi_k}\right)(1+z_k)^m
 +\int_{\eta/(2r)}^{\infty}\eexp^{-cX^2}(1+X)^m\,\dd X
 \le Cr^{-M}\eexp^{-c'/r^2}.
\end{equation}
All algebraic losses incurred beyond the effective region are absorbed by
the displayed power \(r^{-M}\).

It remains to calculate the core sum.  For either
\(F=Q_{10}\) or \(F=\mathscr H_2\), write it as
\begin{equation}
 \begin{aligned}
 H\sum_{k<k_\eta}\Pi_kF(\xi_{k+1})
 ={}&2\sum_{k<k_\eta}(z_{k+1}-z_k)
       \eexp^{-2z_k^2}F(z_k)+E_1+E_2+E_3,\\
 E_1&=H\sum_{k<k_\eta}
       (\Pi_k-\eexp^{-2z_k^2})F(\xi_{k+1}),\\
 E_2&=H\sum_{k<k_\eta}\eexp^{-2z_k^2}
       \{F(\xi_{k+1})-F(z_k)\},\\
 E_3&=\sum_{k<k_\eta}
       \{H-2(z_{k+1}-z_k)\}\eexp^{-2z_k^2}F(z_k).
 \end{aligned}
 \label{app-tr:quadrature-error-decomposition}
\end{equation}
The product comparison and
\eqref{app-tr:weighted-polynomial-sum} give
\(\norm{E_1}_{C_L^1,\mathrm{pw}}\le C(r+H)\).  The aligned-source bound
gives \(\norm{E_2}_{C_L^1,\mathrm{pw}}\le C(r+H)\), and the mesh expansion
gives \(\norm{E_3}_{C_L^1,\mathrm{pw}}\le Cr\).  In each differentiated
sum, \eqref{app-tr:mesh-L-derivative}--
\eqref{app-tr:multiplier-L-derivative} supplies the factor that replaces
the derivative; the weighted polynomial sum then gives the same bound.
Finally, the first line of
\eqref{app-tr:quadrature-error-decomposition} differs from
\(2\int_0^{z_{k_\eta}}\eexp^{-2X^2}F(X)\,\dd X\) by \(O(H)\), including
one total \(L\)-derivative.  Adding the outer estimate
\eqref{app-tr:outer-tail} proves \eqref{app-tr:Q-quadrature}.  For
\(F=\mathscr H_2\), the limiting integral is zero by
\eqref{eq:hermite-zero-moment}, which proves
\eqref{app-tr:H-quadrature}.
\end{proof}

The mesh-weighted form of \eqref{app-tr:coarse-products} prevents a loss
proportional to the number of cells.  Every subsequent absolute bound is uniform as
\(H\downarrow0\), including when \(h\ll r^2=\eps\); only the
\(h\)-independent ambiguity of arbitrary selections remains.

\subsection{The two terminal-data regimes}

The exact residual of the shifted flow comparator can be written, on the
side \(\varsigma\), as
\begin{equation}
 \label{app-tr:oriented-residual}
 \rho_{\varsigma,k}
 =-\varsigma H^3\mathcal Q_\varsigma
       (r,H,\xi_{k+1},L),
\end{equation}
where \cref{prop:exact-residual} gives
\begin{equation}
 \label{app-tr:full-residual-quotient}
 \mathcal Q_\varsigma(r,H,X,L)
 =rQ_{10}(X)+H\gamma_\theta\mathscr H_2(X)
  +\ord\bigl(r^2P(X)+rHP(X)+H^2P(X)\bigr).
\end{equation}
The adjoint identities in \eqref{eq:adjoint-defects} are what put both
sides into the single oriented formula \eqref{app-tr:oriented-residual}.

For arbitrary independently selected graphs, take first the fixed
reference flow and map graphs from \cref{app-geom:construction}.  Their
outer divided-coordinate separation is at most \(Cr^{-M}\).  Therefore
\begin{equation}
 \label{app-tr:arbitrary-terminal}
 \abs{\Pi_Nd_\varsigma(z_N,L)}
 \le Cr^{-M}\eexp^{-c/r^2}.
\end{equation}
Substituting \eqref{app-tr:oriented-residual} into the Green identity,
using \eqref{app-tr:coarse-products} for the remainder and
\cref{app-tr:Q-quadrature,app-tr:H-quadrature} for the two displayed
modes, gives
\begin{equation}
 \label{app-tr:arbitrary-one-side}
 \begin{aligned}
 d_\varsigma(0,L)
 &=-2\varsigma rH^2 I_Q
   +\ord(r^2H^2+rH^3)
   +\ord(r^{-M}\eexp^{-c/r^2}),\\
 I_Q&=\int_0^\infty\eexp^{-2X^2}Q_{10}(X)\,\dd X.
 \end{aligned}
\end{equation}
Indeed, the residual remainder contributes
\[
 H^2\,H\sum_{k<N}\Pi_k
 \ord(r^2P+rHP+H^2P)
 =\ord(r^2H^2+rH^3),
\]
and the Hermite contribution is
\(H^3\{H\sum\Pi_k\mathscr H_2\}=O(rH^3+H^4)\).
Both estimates use \(H\le r\).

For a fold-matched pair \(\mathfrak p\), the terminal datum itself carries
the response scale.  In the inner coordinate, condition
\eqref{eq:fold-matched-condition} and the comparator shift define on the
original normalized collar
\[
 \gamma_{\varsigma,h}(z,L)
 =H^{-2}d_{\varsigma,h}(z,L).
\]
At the moving terminal this gives
\begin{equation}
 \label{app-tr:matched-terminal}
 d_{\varsigma,h}(z_N,L)
 =H^2\gamma_{\varsigma,h}(z_N(L),L),
 \qquad
 \norm{\gamma_{\varsigma,h}(z_N(\cdot),\cdot)}_{C_L^1,\mathrm{pw}}
 \le C_{\mathfrak p}r^{-M}.
\end{equation}
Here \(\mathrm{pw}\) refers to the fixed-stopping-index pieces described
after \eqref{app-tr:stopped-chain}.  The Gaussian endpoint product now
gives
\begin{equation}
 \label{app-tr:matched-terminal-shielding}
 \norm{\Pi_N\gamma_{\varsigma,h}(z_N(\cdot),\cdot)}_{C^0}
 \le C_{\mathfrak p}r^{-M}\eexp^{-c/r^2}.
\end{equation}
After division of \eqref{app-tr:exact-green} by \(H^2\), the same
mesh-weighted calculation therefore yields, on the full rectangle,
\begin{equation}
 \label{app-tr:matched-one-side}
 \left\|
 \frac{d_{\varsigma,h}(0,\cdot)}{H^2}
 +2\varsigma rI_Q
 \right\|_{C^0}
 \le C_{\mathfrak p}
 \left(r^2+rH+r^{-M}\eexp^{-c/r^2}\right).
\end{equation}
No division of an \(h\)-independent terminal floor by \(H^2\) occurs in
this estimate.

We next record the stronger fixed-\(r\) consequence of matching.  It is
not obtained by differentiating the Gaussian approximation.

\begin{lemma}[Moving-terminal cancellation]
 \label{app-tr:moving-terminal}
For each fixed \(0<r\le r_0\) and every fold-matched rule
\(\mathfrak p\), there is a function
\(\chi_\varsigma^{\mathfrak p}(J,r,L,\theta)\) such that
\begin{equation}
 \label{app-tr:fixed-r-one-side-limit}
 \left\|
 \frac{d_{\varsigma,h}(0,\cdot)}{H^2}
 -\chi_\varsigma^{\mathfrak p}(J,r,\cdot,\theta)
 \right\|_{C_L^1}
 \le C_{r,\mathfrak p}h.
\end{equation}
Moreover,
\begin{equation}
 \label{app-tr:chi-fold-asymptotic}
 \chi_\varsigma^{\mathfrak p}
 =-2\varsigma rI_Q
  +\ord\bigl(r^2+r^{-M}\eexp^{-c/r^2}\bigr)
\end{equation}
in \(C_L^0\), uniformly over the fold and method classes as
\(r\downarrow0\).
\end{lemma}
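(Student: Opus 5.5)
Fix \(r\) and work on one fixed-stopping-index piece at a time; the add/delete-one-cell compatibility after \eqref{app-tr:stopped-chain} will glue the pieces. Divide the exact Green identity \eqref{app-tr:exact-green} by \(H^2\):
\[
 \frac{d_{\varsigma,h}(0,L)}{H^2}
 =\Pi_N\gamma_{\varsigma,h}(z_N,L)
 -\varsigma H\sum_{k<N}\Pi_k\mathcal Q_\varsigma(r,H,\xi_{k+1},L).
\]
The plan is to compare this sum with its continuous variation-of-constants analogue. That analogue is the solution of the linear scalar equation obtained from the flow graph linearization, with forcing \(-\varsigma\,\partial_H^3\)-residual density at \(H=0\), namely \(rQ_{10}\) plus the \(O(r^2P)\) part of the quotient at \(H=0\). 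Its terminal datum is \(U_{q,2}\) from \eqref{eq:fold-matched-condition}, expressed in inner variables. Accordingly, I define \(\chi_\varsigma^{\mathfrak p}\) as
\[
 \chi_\varsigma^{\mathfrak p}(L)=G(0,t_*)\,\Gamma(t_*,L)-\varsigma\int_0^{t_*}2G(0,s)\,\mathcal Q_\varsigma(r,0,s,L)\,ds ,
\]
where \(G\) is the continuous coreward propagator of the flow graph equation, \(\Gamma\) is the \(H\)-independent limit of \(\gamma_{\varsigma,h}\) given by \(U_{q,2}\), and \(t_*\) is any point in the collar overlap.

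The key step is to show that this expression does not depend on \(t_*\). Differentiating in \(t_*\) gives zero, precisely because \(\Gamma\) solves the linearized collar equation forced by the same density; this follows from invariance of both graphs on the collar. That independence is what makes the \emph{moving} discrete terminal \(z_N(L)\) harmless: I compare the discrete expression with the continuous one evaluated at \(t_*=z_N(L)\), so the terminal phase cancels between the boundary term and the integral.

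With that choice, the error splits into three pieces.

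\emph{Terminal error.} This is \(\Pi_N\gamma_{\varsigma,h}(z_N)-G(0,z_N)\Gamma(z_N)\). I bound it by \((\Pi_N-G)\gamma\) plus \(G(\gamma_{\varsigma,h}-\Gamma)\). The first part is controlled by a fixed-\(r\) product-Duhamel comparison, with one-step multiplier errors \(O_r(H^2)\) against the exact propagator. The second part is \(O_r(h)\) by \eqref{eq:fold-matched-condition}.

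\emph{Source error.} This is a Riemann sum against an integral of a smooth density on the interval \([0,z_N]\). The one-step source errors are \(O_r(H)\) and the mesh is \(H/2\), so the error is \(O_r(H)\).

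\emph{Residual \(H\)-dependence.} The quotient satisfies \(\mathcal Q(r,H)-\mathcal Q(r,0)=O(rH+H^2)\) by \eqref{app-tr:full-residual-quotient}, so this contributes \(O_r(H)\).

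For the \(C_L^1\) version I differentiate each comparison, using \eqref{app-tr:mesh-L-derivative}--\eqref{app-tr:multiplier-L-derivative} together with the \(C_x^3C_L^1\) regularity of \(U_{q,2}\). At fixed \(r\), the loss of powers of \(r\) is irrelevant. This gives \eqref{app-tr:fixed-r-one-side-limit} with \(H=hr\).

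For \eqref{app-tr:chi-fold-asymptotic}, I let \(h\downarrow0\) in the full-rectangle estimate \eqref{app-tr:matched-one-side}. That estimate is uniform in \(h\), and the \(rH\) term vanishes in the limit, leaving \(-2\varsigma rI_Q+O(r^2+r^{-M}e^{-c/r^2})\) in \(C^0\).

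The main obstacle I expect is proving the \(t_*\)-independence rigorously. That requires identifying the \(H^{-2}\)-limit of the collar comparison with a solution of the linearized forced graph equation, which amounts to reading \(U_{q,2}\) as the second-order modified-graph correction. If this identification is unavailable for an arbitrary matching rule, the fallback is weaker but still sufficient: establish only that \(\gamma_{\varsigma,h}\) converges in \(C^1\) along the chain, and absorb the terminal phase through the exact one-cell compatibility identity.
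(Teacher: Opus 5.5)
Your architecture matches the paper's. You use a continuous stopped expression compared with the $H^{-2}$-normalized Green identity at the same moving endpoint $z_N(L)$, the terminal split $(\Pi_N-G)\gamma_{\varsigma,h}+G(\gamma_{\varsigma,h}-\gamma_{\varsigma,0})$, a fixed-$r$ product--Duhamel comparison, one-cell gluing of stopping pieces, and \eqref{app-tr:chi-fold-asymptotic} by letting $h\downarrow0$ in \eqref{app-tr:matched-one-side}.

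\textbf{The gap: the constant weight $2$.} The error is the weight $2$ in your $\chi_\varsigma^{\mathfrak p}$, which presumes the mesh is exactly $H/2$. At fixed $r>0$ the mesh is $z_{k+1}-z_k=Hp_\varsigma(z_{k+1},r,L)+\ord_r(H^2)$. The continuous coreward speed $p_\varsigma=\tfrac12+\ord(rP(z))$ is not identically $\tfrac12$: from \eqref{app-res:field-jet-formula}--\eqref{app-res:graph-first-jet}, $\dd X/\dd\zeta=\tfrac12-\tfrac14(3F_3-2G_2)rX+\ord(r^2P(X))$ along the graph. This has three consequences.
\begin{enumerate}
 \item $H\sum_{k<N}\Pi_k\mathcal Q_\varsigma(r,H,\xi_{k+1},L)$ is within $\ord_r(h)$ of $\int_0^{z_N}G\,\mathcal Q_\varsigma(r,0,z,L)\,p_\varsigma^{-1}\,\dd z$, not of your integral.
 \item Your ``source error'' therefore contains the $h$-independent term $\int_0^{z_N}G\,\mathcal Q_\varsigma(p_\varsigma^{-1}-2)\,\dd z$. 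It is carried by the Gaussian core and is generically of size $r^2$.
 \item The $t_*$-derivative of your expression is $\varsigma G\mathcal Q_\varsigma(p_\varsigma^{-1}-2)$, which does not vanish in general.
\end{enumerate}
So \eqref{app-tr:fixed-r-one-side-limit} fails for your $\chi$. The discrepancy is hidden only inside the $\ord(r^2)$ error of \eqref{app-tr:chi-fold-asymptotic}. The paper's stopped expression instead uses the weight $1/p_\varsigma(z,r,L)$. There $G=G_\varsigma(0,z;r,L)$ is the normal propagator in the same $z$-coordinate, so that $M_k=G(0,z_{k+1})/G(0,z_k)+\ord_r(H^2)$, and the terminal coefficient is $\gamma_{\varsigma,0}(z,L)=U_{\varsigma,2}(\varsigma rz,L)-q_\theta$.

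\textbf{Your ``main obstacle''.} The paper settles this step without any modified-graph reading of $U_{q,2}$, so it holds for an arbitrary matching rule; it is essentially your fallback carried out cell by cell.
\begin{enumerate}
 \item Both graphs are invariant on the collar. So with $\Gamma_h=d_{\varsigma,h}/H^2$, the exact identity $\Gamma_h(z_k,L)=M_k\Gamma_h(z_{k+1},L)-\varsigma H\mathcal Q_\varsigma(r,H,\xi_{k+1},L)$ holds on every contained collar cell.
 \item Multiply it by $G_k=G_\varsigma(0,z_k;r,L)$ and use $M_k=G_{k+1}/G_k+\ord_r(H^2)$.
 \item Divide by $z_{k+1}-z_k=Hp_\varsigma+\ord_r(H^2)$.
 \item Let $h\downarrow0$ using $\Gamma_h=U_{\varsigma,2}-q_\theta+\ord(h)$ in $C_x^3C_L^1$. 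The two extra $x$-jets in the matching hypothesis control the difference quotient after one $L$-derivative.
\end{enumerate}
This gives $\partial_z\{G\gamma_{\varsigma,0}\}=\varsigma G\mathcal Q_\varsigma(r,0,z,L)/p_\varsigma$ on the collar. That equation is exactly the endpoint independence, and it is what dictates the weight $1/p_\varsigma$.

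A smaller point: because of the Hermite term $H\gamma_\theta\mathscr H_2$, the difference $\mathcal Q_\varsigma(r,H,\cdot)-\mathcal Q_\varsigma(r,0,\cdot)$ is $\ord(H)$, not $\ord(rH+H^2)$. This is harmless at fixed $r$, where only $\ord_r(H)$ is needed.
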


\begin{proof}
Fix \(r>0\).  The response window is covered by open sets
\(\mathcal I_N\) on which the same \(N\) full cells are retained before
the terminal collar arc.  Projection monotonicity and the strict stage
margins imply that
\[
 z_k=z_k(L),\quad \xi_{k+1}=\xi_{k+1}(L),\quad M_k=M_k(L)
\]
are \(C^1\) on \(\mathcal I_N\).  Differentiating the fixed-
\(r\) target recurrence gives
\begin{equation}
 \mathsf D_Lz_{k+1}
 =\{1+O_r(H)\}\mathsf D_Lz_k+O_r(H),
 \qquad \mathsf D_Lz_0=0,
 \label{app-tr:fixed-r-terminal-motion}
\end{equation}
and discrete Gronwall over \(O_r(H^{-1})\) cells gives
\begin{equation}
 \max_{k\le N}
 \left(\abs{\mathsf D_Lz_k}+\abs{\mathsf D_L\xi_{k+1}}\right)
 \le C_{r,\mathfrak p}.
 \label{app-tr:fixed-r-terminal-motion-bound}
\end{equation}
Thus differentiating a quantity evaluated at the moving terminal does not
introduce an \(H^{-1}\) factor.

On the collar, let \(U_{\varsigma,2}\) be the coefficient in
\eqref{eq:fold-matched-condition} and set
\begin{equation}
 \label{app-tr:continuous-terminal-coefficient}
 \gamma_{\varsigma,0}(z,L)
 =U_{\varsigma,2}(\varsigma rz,L)-q_\theta.
\end{equation}
The matching estimate gives, at the same moving terminal source,
\begin{equation}
 \label{app-tr:terminal-rate}
 \norm{
 \gamma_{\varsigma,h}(z_N(\cdot),\cdot)
 -\gamma_{\varsigma,0}(z_N(\cdot),\cdot)
 }_{C_L^1,\mathrm{pw}}
 \le C_{r,\mathfrak p}h.
\end{equation}
Indeed, the total derivative here is
\(\partial_L+(\mathsf D_Lz_N)\partial_z\); the bound follows from
\eqref{eq:fold-matched-condition} and
\eqref{app-tr:fixed-r-terminal-motion-bound}.

Let \(p_\varsigma(z,r,L)>0\) be the continuous coreward speed in the
signed fold coordinate, and let
\(G_\varsigma(0,z;r,L)\) be its normal propagator from \(z\) to zero,
normalized by \(G_\varsigma(0,0)=1\).  Divide the graph-invariance
equations for the matched map and flow graphs by \(H^2\).  On every
contained collar cell the resulting normalized secant
\(\Gamma_h=d_{\varsigma,h}/H^2\) satisfies the exact identity
\begin{equation}
 \label{app-tr:normalized-cell-compatibility}
 \Gamma_h(z_k,L)
 =M_k\Gamma_h(z_{k+1},L)
  -\varsigma H\mathcal Q_\varsigma
      (r,H,\xi_{k+1},L).
\end{equation}
By fold matching,
\(\Gamma_h=U_{\varsigma,2}-q_\theta+O_{C_x^3C_L^1,r}(h)\) on the
original normalized collar.  Let
\(G_k=G_\varsigma(0,z_k;r,L)\).  The cell expansions proved below give
\[
 z_{k+1}-z_k=Hp_\varsigma(z_{k+1},r,L)+O_r(H^2),
 \qquad
 M_k=G_{k+1}/G_k+O_r(H^2).
\]
Multiplying \eqref{app-tr:normalized-cell-compatibility} by \(G_k\),
moving its two graph values to opposite sides, and dividing by
\(z_{k+1}-z_k\) therefore gives
\begin{equation}
 \frac{G_{k+1}\Gamma_h(z_{k+1},L)-G_k\Gamma_h(z_k,L)}
      {z_{k+1}-z_k}
 =\varsigma G_k
   \frac{\mathcal Q_\varsigma(r,H,\xi_{k+1},L)}
        {p_\varsigma(z_{k+1},r,L)}+O_{C_L^1,r}(H).
 \label{app-tr:cell-to-continuous-compatibility}
\end{equation}
The two additional \(x\)-jets in the matching hypothesis control the
difference quotient after one \(L\)-derivative.  Letting \(h\downarrow0\)
in \eqref{app-tr:cell-to-continuous-compatibility} gives on the collar
overlap
\begin{equation}
 \label{app-tr:continuous-compatibility}
 \partial_z\!\left\{
 G_\varsigma(0,z;r,L)\gamma_{\varsigma,0}(z,L)
 \right\}
 =\varsigma G_\varsigma(0,z;r,L)
   \frac{\mathcal Q_\varsigma(r,0,z,L)}
        {p_\varsigma(z,r,L)}.
\end{equation}
This first-order equation uniquely continues the collar coefficient
coreward through the original normalized bridge.
It follows that the complete stopped expression
\begin{equation}
 \label{app-tr:continuous-stopped-expression}
 \begin{aligned}
 \chi_\varsigma^{\mathfrak p}
 ={}&G_\varsigma(0,Z;r,L)\gamma_{\varsigma,0}(Z,L)\\
 &-\varsigma\int_0^Z
 G_\varsigma(0,z;r,L)
 \frac{\mathcal Q_\varsigma(r,0,z,L)}
      {p_\varsigma(z,r,L)}\,\dd z
 \end{aligned}
\end{equation}
is independent of every contained stopping point \(Z\).  Indeed, denoting
the right-hand side by \(\mathcal C_\varsigma(Z,L)\),
\begin{equation}
 \partial_Z\mathcal C_\varsigma(Z,L)
 =\partial_Z\{G_\varsigma(0,Z)\gamma_{\varsigma,0}(Z,L)\}
  -\varsigma G_\varsigma(0,Z)
    \frac{\mathcal Q_\varsigma(r,0,Z,L)}
         {p_\varsigma(Z,r,L)}=0
 \label{app-tr:continuous-endpoint-cancellation}
\end{equation}
by \eqref{app-tr:continuous-compatibility}.  Consequently, for every
moving endpoint \(Z=Z(L)\),
\begin{equation}
 \frac{\dd}{\dd L}\mathcal C_\varsigma(Z(L),L)
 =\partial_L\mathcal C_\varsigma(Z(L),L)
  +Z'(L)\partial_Z\mathcal C_\varsigma(Z(L),L)
 =\partial_L\mathcal C_\varsigma(Z(L),L).
 \label{app-tr:continuous-phase-derivative-cancellation}
\end{equation}
Thus the derivative of the terminal phase cancels before any estimate is
taken.

For fixed \(r\), all coefficients on \([0,C_*/r]\) have uniform
\(C_L^1\) bounds.  Stage differentiation and the common-target
formula give, cell by cell,
\begin{align}
 z_{k+1}-z_k
 &=Hp_\varsigma(z_{k+1},r,L)+\ord_{C_L^1,r}(H^2),
 \label{app-tr:fixed-r-mesh}\\
 M_k
 &=\frac{G_\varsigma(0,z_{k+1};r,L)}
         {G_\varsigma(0,z_k;r,L)}
   +\ord_{C_L^1,r}(H^2),
 \label{app-tr:fixed-r-multiplier}\\
 \xi_{k+1}&=z_k+\ord_{C_L^1,r}(H),
 \label{app-tr:fixed-r-source}\\
 \mathcal Q_\varsigma(r,H,\xi_{k+1},L)
 &=\mathcal Q_\varsigma(r,0,z_k,L)
   +\ord_{C_L^1,r}(H).
 \label{app-tr:fixed-r-residual}
\end{align}
These expansions follow directly from the stage implicit equations: the
inverse stage Jacobian is uniform at fixed \(r\), and differentiating once
in \(L\) preserves the indicated powers of \(H\).  In particular, the
local numerical and continuous variational maps agree to first order,
which is why the multiplier error has two powers of \(H\).

Set
\[
 \overline M_k
 =\frac{G_\varsigma(0,z_{k+1};r,L)}
        {G_\varsigma(0,z_k;r,L)},
 \qquad
 \overline\Pi_j=\prod_{k<j}\overline M_k
 =G_\varsigma(0,z_j;r,L).
\]
There are \(O_r(H^{-1})\) cells on this fixed interval, but every cell
difference \(M_k-\overline M_k\), including its total \(L\)-derivative,
is \(O_r(H^2)\).  The product--Duhamel identity therefore gives
\[
 \abs{\Pi_j-\overline\Pi_j}
 \le C_r\sum_{k<j}H^2\le C_rH.
\]
After one \(L\)-derivative, a derivative falling on a prefix or suffix
product produces a sum of \(O_r(H)\) differentiated factors and is hence
\(O_r(1)\); the distinguished cell difference remains \(O_r(H^2)\).
Alternatively, this follows by differentiating the finite Duhamel identity
before estimating it.  In both cases the result is
\begin{equation}
 \label{app-tr:fixed-r-product}
 \max_{k\le N}
 \norm{\Pi_k-G_\varsigma(0,z_k;r,\cdot)}_{C_L^1}
 \le C_{r,\mathfrak p}H.
\end{equation}

To obtain the Riemann sum, use the same moving endpoint in both formulas.
Cellwise, \eqref{app-tr:fixed-r-mesh} gives
\begin{equation}
 H=\frac{z_{k+1}-z_k}
         {p_\varsigma(z_{k+1},r,L)}+O_{C_L^1,r}(H^2).
 \label{app-tr:fixed-r-cell-measure}
\end{equation}
Replace successively \(\Pi_k\), \(\xi_{k+1}\), and \(H\) by their
continuous counterparts using
\cref{app-tr:fixed-r-product,app-tr:fixed-r-source,app-tr:fixed-r-cell-measure}.
The local \(C_L^1\) error is \(O_r(H^2)\),
so summing the \(O_r(H^{-1})\) cells gives
\begin{equation}
 \label{app-tr:fixed-r-riemann}
 \begin{aligned}
 \bigg\|H\sum_{k<N}\Pi_k
 \mathcal Q_\varsigma(r,H,\xi_{k+1},\cdot)
 -\int_0^{z_N}G_\varsigma(0,z;r,\cdot)
 \frac{\mathcal Q_\varsigma(r,0,z,\cdot)}
      {p_\varsigma(z,r,\cdot)}\,\dd z
 \bigg\|_{C_L^1,\mathrm{pw}}
 \le C_{r,\mathfrak p}h.
 \end{aligned}
\end{equation}
Here \(O_r(H)=O_r(h)\).  No terminal interval has been estimated
separately: the upper endpoint of the integral is exactly the discrete
endpoint \(z_N(L)\).

The terminal term has the equally explicit comparison
\begin{equation}
 \begin{aligned}
 &\Pi_N\gamma_{\varsigma,h}(z_N,L)
 -G_\varsigma(0,z_N;r,L)\gamma_{\varsigma,0}(z_N,L)\\
 &\quad=(\Pi_N-G_\varsigma(0,z_N;r,L))
        \gamma_{\varsigma,h}(z_N,L)\\
 &\qquad\quad+G_\varsigma(0,z_N;r,L)
   \{\gamma_{\varsigma,h}(z_N,L)-
      \gamma_{\varsigma,0}(z_N,L)\}.
 \end{aligned}
 \label{app-tr:fixed-r-terminal-decomposition}
\end{equation}
Equations \eqref{app-tr:terminal-rate},
\eqref{app-tr:fixed-r-product}, and
\eqref{app-tr:fixed-r-terminal-motion-bound} bound this by
\(C_{r,\mathfrak p}h\) in \(C_L^1\) on the patch.  The normalized
Green identity therefore differs from
\eqref{app-tr:continuous-stopped-expression}, evaluated at the same point
\(Z=z_N(L)\), by \(O_{C_L^1,r}(h)\).

It remains to paste the fixed-combinatorics estimates.  On a patch with
terminal index \(N\), write the normalized discrete expression as
\begin{equation}
 \mathcal D_N(L)
 =\Pi_N\gamma_{\varsigma,h}(z_N,L)
  -\varsigma H\sum_{k<N}\Pi_k
       \mathcal Q_\varsigma(r,H,\xi_{k+1},L).
 \label{app-tr:discrete-stopped-expression}
\end{equation}
Where the adjacent \(N\)- and \(N+1\)-cell descriptions are both valid,
the normalized cell identity
\[
 \gamma_{\varsigma,h}(z_N,L)
 =M_N\gamma_{\varsigma,h}(z_{N+1},L)
  -\varsigma H\mathcal Q_\varsigma(r,H,\xi_{N+1},L)
\]
gives, without an estimate,
\begin{equation}
 \mathcal D_{N+1}-\mathcal D_N
 =\Pi_N\bigl[
 M_N\gamma_{\varsigma,h}(z_{N+1},L)
 -\varsigma H\mathcal Q_\varsigma(r,H,\xi_{N+1},L)
 -\gamma_{\varsigma,h}(z_N,L)
 \bigr]=0.
 \label{app-tr:discrete-one-cell-gluing}
\end{equation}
The equality holds on an open overlap supplied by the fundamental collar
arc.  Differentiating the bracketed recurrence gives
\begin{equation}
 \mathsf D_L(\mathcal D_{N+1}-\mathcal D_N)=0,
 \label{app-tr:discrete-one-cell-derivative-gluing}
\end{equation}
so values and first \(L\)-derivatives have identical one-sided limits at a
stopping transition.  On the continuous side,
 \cref{app-tr:continuous-endpoint-cancellation,app-tr:continuous-phase-derivative-cancellation}
 give the corresponding
value and derivative identities for the two endpoints.  The piecewise
estimates therefore paste, with the same constant, to the global
\(C_L^1\) estimate \eqref{app-tr:fixed-r-one-side-limit}.

Finally, \eqref{app-tr:matched-one-side} is uniform in \(h\).  Passing to
the fixed-\(r\) limit and then using
\cref{app-tr:Q-quadrature,app-tr:H-quadrature} proves
\eqref{app-tr:chi-fold-asymptotic}.
\end{proof}

\subsection{The two-sided section response}

The offset \(q_\theta H^2\) in \eqref{app-tr:shifted-comparator} is the
same on both sides.  It therefore cancels algebraically before any
estimate is taken.  Since the unscaled normalized coordinate \(y\) equals
\(r^2Y\),
\begin{equation}
 \label{app-tr:two-side-conversion}
 \Delta_{\rk,\theta}(r^2L)-\Delta_\fl(r^2L)
 =r^2\{d_1(0,L)-d_{-1}(0,L)\}.
\end{equation}
Furthermore, \eqref{eq:Q10-projection} gives
\begin{equation}
 \label{app-tr:S-from-integral}
 4I_Q
 =-\frac34\beta_\theta\sqrt{\frac\pi2}\,\XiJ(J)
 =S_\theta(J).
\end{equation}

For the reference graphs, subtracting
\eqref{app-tr:arbitrary-one-side} with the two orientations and using
\cref{app-tr:two-side-conversion,app-tr:S-from-integral} proves
\begin{equation}
 \label{app-tr:reference-section-response}
 -\{\Delta_{\rk,\theta}^{\rm ref}(r^2L)
     -\Delta_\fl^{\rm ref}(r^2L)\}
 =S_\theta(J)r^3H^2
  +\ord(r^4H^2+r^3H^3+r^{-M}\eexp^{-c/r^2}).
\end{equation}
Exponential shielding \eqref{eq:selection-shielding}, applied within the
flow and within the map, transfers this identity to every pair of
independent selections.  This proves
\cref{eq:all-selection-section-response,eq:all-selection-response-remainder}.

For a fold-matched pair, apply \eqref{app-tr:matched-one-side} directly to
its two invariant graphs.  Then
\cref{app-tr:two-side-conversion,app-tr:S-from-integral} give
\eqref{eq:matched-response-remainder}.  At fixed \(r\), define
\begin{equation}
 \label{app-tr:matched-physical-response}
 \mathscr R_{\mathfrak p}(J,r,L,\theta)
 =-r^4\left(
 \chi_1^{\mathfrak p}(J,r,L,\theta)
 -\chi_{-1}^{\mathfrak p}(J,r,L,\theta)
 \right).
\end{equation}
Then \cref{app-tr:moving-terminal} yields
\begin{equation}
 \label{app-tr:matched-C1-response}
 \left\|
 -\frac{\Delta_{\rk,\theta}^{\mathfrak p}(r^2\cdot)
       -\Delta_\fl^{\mathfrak p}(r^2\cdot)}{h^2}
 -\mathscr R_{\mathfrak p}(J,r,\cdot,\theta)
 \right\|_{C_L^1}
 \le C_{r,\mathfrak p}h,
\end{equation}
and
\begin{equation}
 \label{app-tr:matched-response-asymptotic}
 \mathscr R_{\mathfrak p}(J,r,L,\theta)
 =S_\theta(J)r^5
  +\ord\bigl(r^6+r^{-M}\eexp^{-c/r^2}\bigr).
\end{equation}
This completes the proof of \cref{prop:fold-response}.

\subsection{Interior interpolation and root capture}

For each sufficiently small \(r\), choose nested response windows
\[
 I_r\Subset I_r^+\Subset\mathcal W_r(J),
 \qquad
 \operatorname{dist}(I_r,\partial I_r^+)\ge c_I r,
\]
which contain all flow roots from \cref{prop:fold-geometry} with the
uniform interior margin: every such root lies at distance at least
\(c_Ir\) from \(\partial I_r\).
Such a choice is possible by
\cref{app-geom:flow-profile,eq:main-response-window}, after reducing
\(c_I\) if necessary.  For one
pair of selections put
\begin{equation}
 \label{app-tr:root-notation}
 f(L)=\Delta_\fl(r^2L),
 \qquad m(L)=\Delta_{\rk,\theta}(r^2L),
 \qquad f(u_f)=0,
 \qquad n=f'(u_f).
\end{equation}
Uniformly over the choices,
\begin{equation}
 \label{app-tr:flow-secant-control}
 n=a(J)r^3+\ord(r^4+r^{-M}\eexp^{-c/r^2}),
 \qquad
 \sup_{L\in I_r}\abs{f'(L)/n-1}\le Cr.
\end{equation}
After reducing \(r_0\), \(n\asymp r^3>0\).

We use two elementary one-dimensional facts.  If
\(G\in C^2(I_r^+)\), \(\norm G_{C^0}\le A\),
\(\norm{G''}_{C^0}\le B\), and the distance from \(u\) to
\(\partial I_r^+\) is at least \(t\), Taylor's formula gives
\begin{equation}
 \label{app-tr:interpolation}
 \abs{G'(u)}\le\frac{2A}{t}+\frac B2t.
\end{equation}
Also, suppose \(F=m-f+n\kappa S\),
\(\norm F_{C^0}\le A\), and \(m'>0\) on a tube around \(u_f\).
If that tube contains the zero of \(m\), then, writing it as
\(u_m=u_f+q\),
\begin{equation}
 \label{app-tr:value-only-location}
 f(u_f+q)=n\kappa S-F(u_f+q).
\end{equation}
Thus a secant estimate for \(f\) gives the location from \(A\); no bound
on \(F'\) enters \eqref{app-tr:value-only-location}.

For completeness, existence of the required tube follows by a buffered
contraction.  If
\begin{equation}
 \label{app-tr:abstract-derivative-smallness}
 \sup_{I_r}\abs{f'/n-1}
 +\frac1n\left(\frac{2A}{t}+\frac B2t\right)=:\eta<1,
\end{equation}
then \(m/n\) has secants in \([1-\eta,1+\eta]\).  On a closed tube of
radius \(R\{S+A/n\}\), consider
\[
 \mathcal T(q)=q-\frac{m(u_f+q)}n.
\]
Its Lipschitz constant is at most \(\eta\), and
\(\abs{\mathcal T(0)}\le\abs\kappa S+A/n\).  Choosing \(R\) uniformly
larger than \(1+\sup\abs\kappa\) and then reducing \(r_0,h_0\) makes
\(\mathcal T\) a strict self-map.  It has one fixed point, and strict
monotonicity makes this the unique zero of \(m\) on the whole interval
\(I_r\).

Apply this argument to the corrected difference
\begin{equation}
 \label{app-tr:corrected-difference}
F(L)=m(L)-f(L)+nK_\theta(J)H^2.
\end{equation}
Thus the abstract quantities above are
\(\kappa=K_\theta(J)\) and \(S=H^2\).
The two-sided response, the relation
\(S_\theta=aK_\theta\), and \eqref{app-tr:flow-secant-control} give, for
arbitrary selections,
\begin{align}
 \norm F_{C^0(I_r^+)}
 &\le A_0
 :=C\{r^4H^2+r^3H^3+r^{-M}\eexp^{-c/r^2}\},
 \label{app-tr:arbitrary-corrected-C0}\\
 \norm{F''}_{C^0(I_r^+)}&\le B,
 \label{app-tr:corrected-C2}
\end{align}
where \(B\) is uniform by \cref{prop:fold-geometry}.  Choose
\(t=2\sqrt{A_0/B}\), with the zero cases interpreted by continuity.
Since \(H\le r\), this choice satisfies
\(t=O(hr^3+h^{3/2}r^3)+O(r^{-M}\eexp^{-c/(2r^2)})=o(r)\).
Thus, for small parameters, the interpolation segment lies inside the
interval of radius \(c_I r\), and
\begin{equation}
 \label{app-tr:arbitrary-interpolated-rate}
 \frac{2\sqrt{A_0B}}n
 \le C\left(
 h+h^{3/2}+r^{-M}\eexp^{-c/(2r^2)}
 \right).
\end{equation}
Together with \eqref{app-tr:flow-secant-control}, this is smaller than
one uniformly after reducing \(r_0,h_0\).  The buffered contraction gives
existence, strict monotonicity, and uniqueness.

The location is sharper.  The value-only identity
\eqref{app-tr:value-only-location}, the flow secant estimate, and
\begin{equation}
 \label{app-tr:A0-over-n}
 \frac{A_0}{n}
 \le C\{rH^2+H^3+r^{-M}\eexp^{-c/r^2}\}
\end{equation}
give
\begin{equation}
 \label{app-tr:arbitrary-inner-root}
 L_{\rk,\theta}^{\sigma_m}-L_\fl^{\sigma_f}
 =K_\theta(J)H^2
 +\ord\{rH^2+H^3+r^{-M}\eexp^{-c/r^2}\}.
\end{equation}
Since \(H^2\le r^2\) and the exponential floor is superalgebraically
small,
\begin{equation}
 \label{app-tr:root-tube-buffer}
 \abs{K_\theta(J)}H^2+A_0/n=o(r)
\end{equation}
uniformly on the compact fold and method classes.  Hence the contraction
tube is contained in \(I_r\) after reducing \(r_0,h_0\).
This proves the arbitrary-selection part of \cref{lem:root-transfer}.

For a fold-matched rule the same argument uses instead
\begin{equation}
 \label{app-tr:matched-corrected-C0}
 \norm F_{C^0(I_r^+)}
 \le A_{\mathfrak p}
 :=C_{\mathfrak p}H^2
 \{r^4+r^3H+r^{-M}\eexp^{-c/r^2}\}.
\end{equation}
It gives the stronger full-rectangle estimate
\begin{equation}
 \label{app-tr:matched-inner-root}
 L_{\rk,\theta}^{\mathfrak p}-L_\fl^{\mathfrak p}
 =K_\theta(J)H^2
 +\ord_{\mathfrak p}\!\left[
 H^2\{r+H+r^{-M}\eexp^{-c/r^2}\}
 \right].
\end{equation}

For fixed \(r\), convert \eqref{app-tr:matched-C1-response} from \(L\)
to \(\lambda=r^2L\); this costs only an \(r\)-dependent constant.  In a
fixed neighbourhood of the flow root,
\begin{equation}
 \label{app-tr:physical-C1-response}
 \Delta_{\rk,\theta}^{\mathfrak p}(\lambda)
 =\Delta_\fl^{\mathfrak p}(\lambda)
  -h^2\mathscr R_{\mathfrak p}(J,r,\lambda/r^2,\theta)
  +\ord_{C_\lambda^1,r,\mathfrak p}(h^3).
\end{equation}
Let
\(N_\lambda=\partial_\lambda\Delta_\fl^{\mathfrak p}
(\lambda_\fl^{\mathfrak p})\).  Transversality and the ordinary implicit
root estimate applied to \eqref{app-tr:physical-C1-response} yield
\begin{equation}
 \label{app-tr:fixed-r-root}
 \lambda_{\rk,\theta}^{\mathfrak p}
 -\lambda_\fl^{\mathfrak p}
 =h^2\kappa_\theta^{\mathfrak p}(J,r)
  +\ord_{r,\mathfrak p}(h^3),
 \qquad
 \kappa_\theta^{\mathfrak p}
 =\frac{
 \mathscr R_{\mathfrak p}
 (J,r,L_\fl^{\mathfrak p},\theta)}{N_\lambda}.
\end{equation}
Since \(N_\lambda=a(J)r+O(r^2)+O(r^{-M}\eexp^{-c/r^2})\), equations
\eqref{app-tr:matched-response-asymptotic} and \(S_\theta=aK_\theta\)
give
\begin{equation}
 \label{app-tr:kappa-asymptotic}
 \kappa_\theta^{\mathfrak p}(J,r)
 =K_\theta(J)r^4
  +\ord\bigl(r^5+r^{-M}\eexp^{-c/r^2}\bigr).
\end{equation}
This proves the matched and fixed-\(r\) parts of
\cref{lem:root-transfer}.

Finally, \(\lambda=r^2L\) and \(H=hr\) convert
\eqref{app-tr:arbitrary-inner-root} into
\begin{equation}
 \label{app-tr:normalized-physical-scale}
 \lambda_{\rk}-\lambda_\fl
 =K_\theta(J)h^2r^4
  +\ord(h^2r^5+h^3r^5)
  +\ord(r^{-M}\eexp^{-c/r^2}),
\end{equation}
with the exponential term carrying an additional factor \(H^2\) in the
matched case as in \eqref{app-tr:matched-inner-root}.  Under the affine
normalization of \cref{prop:affine-normalization},
\(h=k/\tau\), \(r^2=\eta/a_\eps\), and
\(\mu=a_\lambda\lambda\).  Hence the leading term in the original units is
\begin{equation}
 \label{app-tr:physical-unit-conversion}
 \mu_{\rk}-\mu_\fl
 =\frac{a_\lambda}{\tau^2a_\eps^2}
   K_\theta(J)k^2\eta^2+\text{the transformed remainder},
\end{equation}
which is the coefficient transformation
\eqref{eq:physical-coefficient-transform}.

\section{Construction of fold-matched invariant continuations}
\label{app:matched}

This appendix first proves the obstruction in
\cref{prop:pairing-necessary}.  It then constructs the fold-matched family
used in \cref{prop:matched-existence}.  The construction is confined to
normally hyperbolic collars.  Its restrictions are invariant manifolds of
the normalized flow and numerical map; the completion fixes a common local
choice for the two dynamics.

\subsection{Independent continuations cannot have a common step-first law}
\label{app:matched-no-go}

\begin{proof}[Proof of \cref{prop:pairing-necessary}]
Consider the canonical datum

\begin{equation}
 \dot x=x^2-y,
 \qquad
 \dot y=r^2(x-r^2L),
\label{eq:no-go-canonical-datum}
\end{equation}

which is a normalized analytic fold datum.  Fix the attracting side and
write \(x_o=-2\delta\) for the outer endpoint of its collar.  In the
construction underlying \cref{prop:fold-geometry}, choose the selection
bound strictly above the uniform graph bound and reduce the collar once.
This gives an admissible attracting flow graph \(m_0(x,L)\), defined on the
whole pasted interval \([x_o,0]\), for which the corridor, drift, and jet
inequalities have a uniform margin.  In particular, with
\[
 q_0(x,L)=x^2-m_0(x,L),
\]
one has \(c_dr^2\le q_0\le C_dr^2\).  Prescribe at \(x_o\) the perturbed
value
\(m_\zeta(x_o,L)=m_0(x_o,L)+\zeta\rho(r)\), where
\(\zeta\ne0\) is fixed and sufficiently small and

\[
 \rho(r)=\exp(-1/r^4).
\]

Solve the scalar graph equation
\begin{equation}
 \partial_xm_\zeta
 =\frac{r^2(x-r^2L)}{x^2-m_\zeta},
 \qquad m_\zeta(x_o,L)=m_0(x_o,L)+\zeta\rho(r),
 \label{eq:no-go-perturbed-ivp}
\end{equation}
towards \(x=0\), and leave the repelling graph unchanged.  This single
solution supplies both the collar restriction and the bridge restriction,
so there is no new pasting seam.

We verify that the solution reaches the fold and remains admissible.  Put
\(d=m_\zeta-m_0\) and \(q_\zeta=x^2-m_\zeta=q_0-d\).  On the bootstrap
set \(\abs d\le c_dr^2/2\), subtraction gives

\begin{equation}
 \partial_xd=A_\zeta d,
 \qquad
 A_\zeta(x,L)=
 \frac{r^2(x-r^2L)}{q_\zeta(x,L)q_0(x,L)}.
 \label{eq:no-go-homogeneous-difference}
\end{equation}
Both denominator factors are at least \(c_dr^2/2\).  The positive part of
\(A_\zeta\) is confined to \(x>r^2L\), an interval of length \(O(r^2)\)
inside the fold core; on that interval \(\abs{x-r^2L}=O(r^2)\), and hence
\begin{equation}
 \int_{x_o}^{x}(A_\zeta(s,L))_+\,\dd s\le C
 \quad (x_o\le x\le0).
 \label{eq:no-go-positive-part}
\end{equation}
Away from this core, \(A_\zeta\le0\).  The formula
\begin{equation}
 d(x,L)=\zeta\rho(r)
  \exp\left\{\int_{x_o}^{x}A_\zeta(s,L)\,\dd s\right\}
 \label{eq:no-go-difference-formula}
\end{equation}
therefore gives \(\abs d\le C\abs\zeta\rho(r)\).  Since
\(\rho(r)=o(r^N)\) for every \(N\), the bootstrap closes for all small
\(r\), and \eqref{eq:no-go-perturbed-ivp} reaches \(x=0\) with
\(q_\zeta\ge c_dr^2/2\).

The parameter and jet bounds follow from the same equation, rather than
from an absolute-value Gronwall estimate that would discard the damping.
For example,
\begin{align*}
 \partial_xd_L&=A_\zeta d_L+(\partial_LA_\zeta)d,\\
 \partial_xd_{LL}&=A_\zeta d_{LL}
   +2(\partial_LA_\zeta)d_L+(\partial_L^2A_\zeta)d.
\end{align*}
After further \(x\)-differentiation, the highest derivative always has
the same homogeneous coefficient \(A_\zeta\); all other terms contain a
strictly lower derivative of \(d\).  On the bootstrap tube, differentiation
of the two denominators gives, for the finite jet down-set used in
\eqref{eq:main-selection-bounds},
\[
 \abs{\partial_x^i\partial_L^jA_\zeta}
 \le C_{ij}r^{-N_{ij}}.
\]
Variation of constants with the signed propagator controlled by
\eqref{eq:no-go-positive-part}, followed by induction over \((i,j)\), now
gives
\begin{equation}
 \sup_{[x_o,0]\times\Lambda}
 \abs{\partial_x^i\partial_L^j(m_\zeta-m_0)}
 \le C_{ij}r^{-N_{ij}}\rho(r),
 \qquad 0\le i\le4,\quad0\le j\le2.
 \label{eq:no-go-selection-perturbation}
\end{equation}
Thus the collar bounds, the bridge weights, the stage neighbourhood, and
the directed-drift inequalities all retain their strict margins after
reducing \(r_0\).  The perturbed graph is consequently an admissible
flow continuation on the full collar--bridge domain.

Formula \eqref{eq:no-go-difference-formula} also shows that \(d\) never
vanishes.  Let
\(\Delta_0(L)=m_{\rep}(0,L)-m_0(0,L)\) and
\(\Delta_\zeta(L)=\Delta_0(L)-d(0,L)\).  By
\cref{app-geom:flow-profile}, \(\Delta_0\) has a unique simple zero
\(L_0(r)\) in the response window and
\(\partial_L\Delta_0\ge c r^3\) there.  The \(C_L^2\) estimate
\eqref{eq:no-go-selection-perturbation} is superalgebraically smaller than
this slope and than the endpoint margin.  Hence \(\Delta_\zeta\) has a
unique simple zero \(L_\zeta(r)\) in the same window.  Since
\[
 \Delta_\zeta(L_0(r))=-d(0,L_0(r))\ne0,
\]
the two roots, and therefore the two normalized parameters
\(r^2L_0(r)\) and \(r^2L_\zeta(r)\), are distinct.

Now fix any numerical-map continuation.  If both quotients in
\eqref{eq:unpaired-step-first} had finite limits, subtracting them would
make

\[
 h^{-2}\{\lambda_{\fl}^{1}(r)-\lambda_{\fl}^{0}(r)\}
\]

bounded as \(h\downarrow0\), contrary to the strict inequality of the two
fixed flow roots.  This proves the proposition.
\end{proof}

\subsection{A common completion on the normally hyperbolic collars}
\label{app:matched-construction}

Put \(\nu=r^2\), write \(w=(x,L)\), and use the divided normal coordinate

\begin{equation}
 y=\phi_J(x)+\nu v.
\label{eq:matched-divided-coordinate}
\end{equation}

On the attracting side we use the forward field and the tableau \(\theta\).
On the repelling side we reverse the field and use the adjoint tableau
\(\theta^\dagger\); its negative-step branch is the contained local inverse
of the positive-step numerical map.  Let \(q\in\{\att,\rep\}\) denote the
two oriented sides.

\begin{definition}[Common collar completion]
\label{def:common-completion}
A common collar completion consists of nested neighbourhoods
\(\mathcal U_0\Subset\mathcal U_1\) in \((w,v)\), a fixed normal strip,
and fixed cutoff and normal-extension operators with the following
properties.

The completed base is
\(\mathcal B_q=\mathbb T_q\times\overline{\mathcal I_L^+}\), where the
original normalized \(x\)-collar is embedded as a proper arc of the circle
\(\mathbb T_q\), and all coefficients extend to an open neighbourhood of
the compact parameter interval \(\overline{\mathcal I_L^+}\).  The
oriented base velocity has one sign outside the collar arc, and every
completed base map is a diffeomorphism of \(\mathbb T_q\).  Completed
graphs are periodic in this circle coordinate.  This fixed periodic
completion, rather than a free boundary value at the end of a finite
collar, is part of the convention.

The completed oriented vector fields agree with the divided normalized fields
on \(\mathcal U_1\), have uniformly bounded derivatives through order
seven, preserve the normal strip, and satisfy on the full completed graph
neighbourhood the strict normal inequality

\begin{equation}
 \partial_v\widetilde B_q(w,v)\le-2\kappa<0
\label{eq:completion-normal-gap}
\end{equation}

(in particular, on its outer part).  The local difference between the
divided normalized Runge--Kutta map and the corresponding time-\(h\) flow
map is extended linearly
with the same cutoffs, after the Runge--Kutta map has been formed in the
original normalized affine variables.  The extension agrees with that difference on
\(\mathcal U_0\).  The normal integration constant is fixed on the
completed flow graph, so the extension vanishes there whenever the local
defect does.  All neighbourhoods and extension operators are independent of
\(J,r,h\) and of the individual \(\theta\in\ThetaRK\).
\end{definition}

Embed the original normalized base interval in an arc of a circle and
choose fixed cutoffs
\(\chi_0,\chi_1\) with \(\chi_i=1\) on \(\mathcal U_i\).  A bounded linear
extension operator \(\mathcal E\), fixed on these neighbourhoods, first extends a
function in the base variables and then multiplies it by the appropriate
cutoff.  Choose a smooth exterior base speed of the same oriented sign as
the local speed.  If \(A_q^{\rm loc},B_q^{\rm loc}\) are the divided
normalized coefficients, set on the smaller neighbourhood
\[
 \widetilde A_q=A_q^{\rm loc},\qquad
 \partial_v\widetilde B_q=\partial_vB_q^{\rm loc},
\]
and outside the larger neighbourhood set
\[
 \widetilde A_q=A_q^{\rm out},\qquad
 \partial_v\widetilde B_q=-3\kappa.
\]
On the intervening annulus these functions are joined by
\(\mathcal E\) and the fixed cutoffs.  Reducing the local neighbourhood first
ensures \(\partial_vB_q^{\rm loc}\le-3\kappa\) there, so the interpolation
retains \(\partial_v\widetilde B_q\le-2\kappa\).  Fix a reference level
\(v_*\) in the normal strip and define
\begin{equation}
 \widetilde B_q(w,v)=b_q(w)
  +\int_{v_*}^{v}\partial_v\widetilde B_q(w,s)\,\dd s.
 \label{eq:completion-normal-extension}
\end{equation}
The periodic function \(b_q\) is chosen equal to the local value on
\(\mathcal U_1\), and on the exterior arc so that the two strip boundaries
point inward.  This specifies a periodic completed field; no boundary value
at an endpoint of the original normalized collar remains free.

The extension of the map defect uses the same linear data.  Let
\(D_{q,h,\theta}^{\rm loc}=P_{q,h,\theta}^{\rm orig}-F_{q,h}^{\rm loc}\),
where the superscript indicates the map already formed in the original
normalized \((x,y)\)-coordinates and then conjugated to \((w,v)\).  Extend its base
component by \(\mathcal E\).  Extend \(\partial_vD_v^{\rm loc}\) by
\(\mathcal E\), and recover the normal component by
\begin{equation}
 \widetilde D_v(w,v)
 =\mathcal E\!\left[D_v^{\rm loc}(\,\cdot\,,S_{\fl,q})\right](w)
  +\int_{S_{\fl,q}(w)}^{v}
     \mathcal E[\partial_vD_v^{\rm loc}](w,s)\,\dd s.
 \label{eq:completion-defect-extension}
\end{equation}
Here the extension of the first term is taken along the completed flow
graph and agrees with the local value on \(\mathcal U_0\).  Both operations
are bounded and linear on the strong and weak jet spaces.  In particular,
they commute with division by the scalar factors \(h\) and \(\nu\).

These choices produce the completions in
\cref{def:common-completion}.  The strip boundary points inward, while the
base map is the identity plus a uniformly small perturbation.  The
completed flow and numerical maps therefore preserve the same strip,
have invertible base maps, and retain a strict graph-transform gap.

More explicitly, write the four derivative blocks of either completed map
as \(\mathcal L_{ij}\), with the identity part removed from the base block.
Uniformly on the closed parameter rectangle,
\begin{equation}
 \mathcal L_{11}+\mathcal L_{12}\le Ch\nu,\qquad
 \mathcal L_{21}\le Ch,\qquad
 \mathcal L_{22}\le1-\kappa h .
\label{eq:completion-graph-blocks}
\end{equation}
After reducing \(r_0\), the cross coupling
\(2\sqrt{\mathcal L_{12}\mathcal L_{21}}\) is smaller than the remaining
normal gap.  To use the Banach-space theorem literally, lift the circle
coordinate periodically to \(\mathbb R\), extend the coefficients in \(L\)
from \(\mathcal I_L^+\) to \(\mathbb R\) by one fixed bounded extension, and
take \(X=\mathbb R^2_{(x,L)}\), \(Y=\mathbb R_v\).  The normal strip is the
invariant subset supplied by Remark~0, while \(L\) is a zero-velocity base
parameter.  The identifications and hypotheses H1, \((9\mathrm a^*)\), B4
and B5 are exactly those checked in
\cref{app-geom:NS-identification,app-geom:strict-graph-gap} and the jet
recurrence following them.  The invariant-graph theorem therefore gives a
unique lifted graph with derivatives through order seven
\cite{NippStoffer1992}.  Translation by one period gives another such graph,
so uniqueness makes it periodic; it consequently descends to
\(\mathbb T_q\times\mathcal I_L^+\).  Differentiating the graph
equation uses the same \(O(h)\) normal gap at each jet; the forcing at that
jet also contains an \(O(h)\) factor, so the derivative bounds are uniform
as \(h\downarrow0\).

Fix one such completion.  On side \(q\), its vector field has the form

\begin{equation}
 \dot w=\nu\widetilde A_q(w,v),
 \qquad
 \dot v=\widetilde B_q(w,v).
\label{eq:completed-divided-field}
\end{equation}

Let \(S_{\fl,q}\) be its invariant graph.  Denote the time-\(h\) map of
\eqref{eq:completed-divided-field} by \(\widetilde F_{q,h}\).  Conjugate the
Runge--Kutta branch in the original normalized variables to the divided
coordinates, subtract its local time-\(h\) flow branch, and extend that
difference according to
\cref{def:common-completion}, and add it to \(\widetilde F_{q,h}\).  The
resulting completed numerical map is denoted \(\widetilde P_{q,h,\theta}\).
By construction,

\begin{equation}
 \widetilde P_{q,h,\theta}=P_{q,h,\theta}^{\rm orig}
 \quad\hbox{on }\mathcal U_0,
\label{eq:completion-original-agreement}
\end{equation}

where the right-hand side is the conjugate of the already discretized map
in the original normalized variables.  Let \(S_{\rk,q}(h,\theta)\) be the
invariant graph of
\(\widetilde P_{q,h,\theta}\), and set
\(S_{\rk,q}(0,\theta)=S_{\fl,q}\).

The next lemma isolates the only singular small-step issue.  The contraction
of the numerical graph transform degenerates like \(h\), while the
order-two local defect on the slow graph has the factor \(h^3\nu\).  Their
quotient is the matching scale \(h^2\nu=H^2\).

Let \(\mathcal X_q=C_x^3C_L^1\) be the completed graph space equipped with
the downward-triangular weighted norm from the collar derivative induction,
and let \(\mathcal X_q^+=C_x^5C_L^2\).  These norms are uniformly equivalent
to the corresponding rectangular norms on the fixed neighbourhoods.

\begin{lemma}[Normalized graph resolvent]
\label{lem:completion-resolvent}
There are \(r_0,h_0,C,\kappa_0>0\), uniform in
\(J\in\Gclass\), \(\theta\in\ThetaRK\), the two sides, and
\(0\le\nu\le r_0^2\), with the following properties.

Let \(\mathfrak G_{q,h,\theta}\) be the target-aligned graph transform of
\(\widetilde P_{q,h,\theta}\).  On the completed flow graph,

\begin{equation}
 \mathfrak G_{q,h,\theta}(S_{\fl,q})-S_{\fl,q}
 =h^3\nu\{R_{q,3}+hR_{q,4}(h)\},
\qquad
 \norm{R_{q,3}}_{\mathcal X_q^+}
 +\norm{R_{q,4}(h)}_{\mathcal X_q}\le C .
\label{eq:completion-divided-defect}
\end{equation}

If \(\mathscr W_{q,h}\) is the secant graph operator between
\(S_{\rk,q}\) and \(S_{\fl,q}\), then

\begin{equation}
 \norm{\mathscr W_{q,h}}_{\mathcal X_q\to\mathcal X_q}
 \le1-\kappa_0h .
\label{eq:completion-secant-gap}
\end{equation}

Set

\[
 \mathscr L_{q,r,h}=h^{-1}(I-\mathscr W_{q,h}).
\]

There is a continuous normal graph operator
\(\mathscr L_{q,r}:\mathcal X_q^+\to\mathcal X_q\) such that

\begin{equation}
 \mathscr L_{q,r}U_{q,2}=R_{q,3}
\label{eq:completion-continuous-resolvent}
\end{equation}

has a unique bounded solution \(U_{q,2}\in\mathcal X_q^+\), and

\begin{equation}
 \norm{\mathscr L_{q,r,h}^{-1}}_{\mathcal X_q\to\mathcal X_q}
 +\norm{U_{q,2}}_{\mathcal X_q^+}\le C,
 \qquad
 \norm{(\mathscr L_{q,r,h}-\mathscr L_{q,r})U_{q,2}}_{\mathcal X_q}
 \le Ch .
\label{eq:completion-resolvent-bounds}
\end{equation}
\end{lemma}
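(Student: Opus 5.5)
The plan has three parts, taken in the order of the statement: the defect bound \eqref{eq:completion-divided-defect}, the secant gap \eqref{eq:completion-secant-gap}, and the resolvent with its limit operator.

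\emph{Defect.} I would write the target-aligned graph transform of $\widetilde P_{q,h,\theta}$ applied to $S_{\fl,q}$ as the transform of $\widetilde F_{q,h}$ plus a perturbation. The flow graph is exactly invariant under $\widetilde F_{q,h}$, so only the extended map defect $\widetilde D=\widetilde P_{q,h,\theta}-\widetilde F_{q,h}$, evaluated on $S_{\fl,q}$, contributes.

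On $\mathcal U_0$ this defect is the exact Runge--Kutta-minus-flow difference. By consistency and the two order conditions it has a Hadamard quotient $h^3\{\mathcal M_3+h(\cdots)\}$, with $\mathcal M_3$ the operator of \eqref{app-res:third-defect-operator}. The factor $\nu$ comes from the divided coordinates. In the base component it is explicit, since $\dot w=\nu\widetilde A$. In the normal component, $\mathcal M_3$ evaluated on the slow graph is $O(\nu)$ after the normal projection, because every elementary differential there contains at least one factor of the vector field, and on the flow graph that factor is $O(\nu)$ in the normal-versus-tangent decomposition. I expect this to be the delicate bookkeeping point.

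Outside $\mathcal U_0$, the linear extension \eqref{eq:completion-defect-extension} commutes with division by $h^3$ and $\nu$. Its normal integration constant is anchored on $S_{\fl,q}$, so the divided quotient passes through unchanged. The target-aligned regraphing then contributes only composition by the base inverse, which is the identity plus $O(h\nu)$. Seven-order jet bounds on the stage branch, from \cref{app-geom:completed-collar-map}, give $R_{q,3}\in\mathcal X_q^+$ and $R_{q,4}\in\mathcal X_q$ with uniform bounds.

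\emph{Secant gap.} The secant operator $\mathscr W_{q,h}$ has the triangular jet structure used in \eqref{app-geom:collar-jet-equation}. Its principal part is multiplication by the averaged normal derivative $1+h\,\overline{\partial_v\widetilde B}$, which is at most $1-2\kappa h$, composed with the inverse base map. The off-diagonal terms carry either $O(h)$ from the normal output or $O(h\nu)$ from the base. With the downward-triangular weighted norm, where lower jets get large weights, these couplings are absorbed and the estimate $\norm{\mathscr W}\le1-\kappa_0 h$ follows. The relevant rate conditions are $N/\beta^j\le1-\kappa_jh$, as in \eqref{app-geom:B4-rates}.

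\emph{Resolvent and limit.} Invertibility of $\mathscr L_{q,r,h}=h^{-1}(I-\mathscr W_{q,h})$ with norm at most $1/\kappa_0$ is then immediate from the Neumann series. I would define the continuous operator as the linearization of the flow graph equation,
\[
 \mathscr L_{q,r}U=-\partial_v\widetilde B_q(w,S_{\fl,q})\,U+\nu\,\widetilde A_q\cdot\partial_wU-\nu\,(\partial_v\widetilde A_q\cdot\partial_wS_{\fl,q})\,U,
\]
which is the $h\to0$ limit of $h^{-1}(I-\mathscr W)$. Its principal normal coefficient is at least $2\kappa$, and the base transport is a flow on the periodic circle, so there is no boundary datum to prescribe. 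Unique bounded solvability of $\mathscr L_{q,r}U=R_{q,3}$ in $\mathcal X_q^+$ then follows by integrating along characteristics with exponential weight $e^{-2\kappa t}$. Equivalently, it is a graph-transform fixed point for the time-one linear flow, with jet bounds obtained by the same triangular induction; this is where the loss from $\mathcal X^+$ to $\mathcal X$ enters.

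The consistency bound $\norm{(\mathscr L_{q,r,h}-\mathscr L_{q,r})U}_{\mathcal X_q}\le Ch$ comes from Taylor expanding the secant operator in $h$. Composition with a base map that is $h\nu$-close to the identity costs one $x$-derivative and one more for the error term. The secant average between $S_{\rk,q}$ and $S_{\fl,q}$ differs from evaluation on $S_{\fl,q}$ by $\norm{S_{\rk,q}-S_{\fl,q}}=O(h^2\nu)$, and this follows a priori from the first two parts together with the Neumann bound. Both costs are covered by the two extra $x$-jets in $\mathcal X^+$.

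\emph{Main obstacle.} I expect the hardest step to be keeping every constant uniform as $\nu\to0$ and $h\to0$ simultaneously. In particular, the triangular weighted norm must absorb the $O(h)$ normal-to-base coupling $\mathcal L_{21}$ without losing the $h$-sized gap. I would first try weights $\omega_j=K^{-j}$ with $K$ large, and check that the coupling term $C h\ell_0$ stays below $\kappa h/2$.
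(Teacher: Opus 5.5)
\textbf{Verdict: genuine gap.} Your three-part plan follows the paper's architecture: an exact defect factor, a secant gap in a downward-triangular weighted norm, and a characteristic inverse with a strong-to-weak consistency estimate. The second and third parts are essentially the paper's argument. The gap is in the first part, at the point you flagged yourself: the factor $\nu$ in the normal component of $\mathfrak G_{q,h,\theta}(S_{\fl,q})-S_{\fl,q}$.

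\textbf{Why the leaf count fails.} The Runge--Kutta map is formed in the original normalized $(x,y)$ variables, so $\mathcal M_3$ is an original-coordinate object. On the slow graph, a single leaf $\mathcal V=O(\nu)$ gives only an $O(\nu)$ normal displacement $\delta y-\phi_J'\delta x$. The divided coordinate $v=(y-\phi_J(x))/\nu$ then divides this by $\nu$. Your count therefore yields $O(h^3)$ in the graph coordinate of the lemma, not $O(h^3\nu)$. In the normal direction the divided coordinates cost a power of $\nu$; they do not supply one.

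You cannot instead count leaves of the divided field $(\nu\widetilde A_q,\widetilde B_q)$. The stage equations do not commute with the nonlinear, $\nu$-singular change $y=\phi_J(x)+\nu v$; this is the boundary described in \cref{rem:affine-boundary}. A coefficient-wise count also says nothing about a factor $\nu$ in the $h^4R_{q,4}(h)$ remainder. The missing factor matters: without it the resolvent bound gives only $S_{\rk,q}-S_{\fl,q}=O(h^2)$, and the matching scale $H^2=h^2\nu$ is lost.

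\textbf{How to close it.} A formal repair is possible. Invariance makes $\mathcal V'\mathcal V$, the time derivative of $\mathcal V$ along the slow flow, of size $O(\nu^2)$ on the graph, so both third-order trees are $O(\nu^2)$ there. The paper's route is exact and also covers the remainder:
\begin{itemize}
\item By \cref{app-geom:completed-collar-map}, the divided stage system extends smoothly to $\nu=0$.
\item At $\nu=0$ the base is frozen. The invariance equation $\nu\widetilde A_q\cdot D_wS_{\fl,q}=\widetilde B_q$ then forces $\widetilde B_q(w,S_{\fl,q}(w))=0$, so every point of the flow graph is an equilibrium.
\item Hence all stages equal the input and the time-$h$ flow fixes the point. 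The whole defect $h^3\mathcal A_q(\nu,h)$ vanishes at $\nu=0$ for every $h$.
\item The linear extension is anchored on $S_{\fl,q}$, as you noted, so it inherits this vanishing.
\item Hadamard division in $\nu$ and then in $h$ gives $h^3\nu\{R_{q,3}+hR_{q,4}(h)\}$.
\end{itemize}

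\textbf{Regraphing.} Regraphing is not only composition with the base inverse. The base defect moves the aligned source and enters the normal output through the term $B_tA_t^{-1}\mathcal D^b$ in \eqref{eq:completion-target-aligned-factor}, where $B_t=O(1)$ because it contains $D_wS_{\fl,q}$. This term is harmless only because $\mathcal D^b$ carries the same factor $h^3\nu$.

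\textbf{Sign slip in $\mathscr L_{q,r}$.} The zeroth-order coefficient of $\mathscr L_{q,r}$ is $+\nu D_wS_{\fl,q}\cdot\partial_v\widetilde A_q-\partial_v\widetilde B_q$. With your minus sign, $\mathscr L_{q,r}$ is not the $h\to0$ limit of $\mathscr L_{q,r,h}$. The consistency estimate $\norm{(\mathscr L_{q,r,h}-\mathscr L_{q,r})U_{q,2}}_{\mathcal X_q}\le Ch$ would then fail by a term of size $\nu$.
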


\begin{proof}
We first justify the two scalar factors in
\eqref{eq:completion-divided-defect}.  The critical equation gives the
analytic factorisation

\[
 f_J(x,\phi_J(x)+\nu v;\nu,\nu L)
 =\nu\widehat f_J(x,v;\nu,L).
\]

Thus every stage satisfies \(x_i-x=O(h\nu)\).  The quotient created by
\eqref{eq:matched-divided-coordinate} is regular because

\[
 \frac{\phi_J(x_i)-\phi_J(x)}{\nu}
 =\frac{x_i-x}{\nu}\int_0^1
   \phi_J'(x+t(x_i-x))\dd t .
\]

The stage equations and their required rectangular derivatives therefore
extend smoothly to \(\nu=0\).  The same holds for the time-\(h\) flow map and,
by linearity, for the extended map defect.

To check that target alignment does not destroy the scalar factors, write a
completed map as \(P=(T,V)\).  For a graph \(S\), let
\(\sigma_P^S(w)\) be the unique source determined by
\[
 T\bigl(\sigma_P^S(w),S(\sigma_P^S(w))\bigr)=w,
\]
so that
\[
 \mathfrak G_P(S)(w)
 =V\bigl(\sigma_P^S(w),S(\sigma_P^S(w))\bigr).
\]
The inverse-base implicit-function theorem is uniform because the base
Jacobian is the identity plus \(O(h\nu)\).  Interpolate between the time-\(h\)
flow map and the numerical map by
\(P_t=F+t\mathcal D=(T_t,V_t)\), and write
\(\mathcal D=(\mathcal D^b,\mathcal D^n)\) for its base and normal
components.  At the source
\(\sigma_t=\sigma_{P_t}^{S_{\fl,q}}(w)\), put
\[
 A_t=\partial_wT_t+\partial_vT_tD_wS_{\fl,q},\qquad
 B_t=\partial_wV_t+\partial_vV_tD_wS_{\fl,q}.
\]
Differentiating the target equation gives
\(\partial_t\sigma_t=-A_t^{-1}\mathcal D^b\).  Differentiating the normal
output gives the exact identity
\begin{equation}
 \mathfrak G_{P}(S_{\fl,q})-
 \mathfrak G_{F}(S_{\fl,q})
 =\int_0^1
 \left\{\mathcal D^n-B_tA_t^{-1}\mathcal D^b\right\}
 (\sigma_t,S_{\fl,q}(\sigma_t))\,\dd t .
 \label{eq:completion-target-aligned-factor}
\end{equation}
No division by \(h\) or \(\nu\) occurs.  The bounded linear
extension \eqref{eq:completion-defect-extension} and target alignment
therefore preserve every common scalar factor of \(\mathcal D\).

Order two makes the time-\(h\) flow and Runge--Kutta branches agree through
their
second \(h\)-derivatives, so their difference on \(S_{\fl,q}\) is
\(h^3\mathcal A_q(\nu,h)\), in both the map and target-aligned graph
coordinates.  At \(\nu=0\) the base is fixed and the flow
graph consists of equilibria of the normal equation.  The local time-\(h\)
flow fixes this graph, and all Runge--Kutta stages there equal their input;
the exterior extension was
normalized to vanish on the same graph.  Hence
\(\mathcal A_q(0,h)=0\).  Hadamard division first in \(\nu\) and then in
\(h\) gives the explicit representation
\begin{equation}
 \begin{aligned}
 \mathcal A_q(\nu,h)
 &=\nu\int_0^1\partial_\nu\mathcal A_q(t\nu,h)\,\dd t,\\
 R_{q,3}
 &=\int_0^1\partial_\nu\mathcal A_q(t\nu,0)\,\dd t,\\
 R_{q,4}(h)
 &=\int_0^1\!\int_0^1
   \partial_h\partial_\nu\mathcal A_q(t\nu,sh)\,\dd s\,\dd t.
 \end{aligned}
 \label{eq:completion-two-hadamard-divisions}
\end{equation}
Thus the defect is exactly \(h^3\nu\{R_{q,3}+hR_{q,4}(h)\}\), with
\(\nu=r^2\).  Taking the third \(h\)-coefficient before estimating the
remainder gives the \(\mathcal X_q^+\) bound for \(R_{q,3}\); the integral
remainder has the stated \(\mathcal X_q\) bound.  This proves
\eqref{eq:completion-divided-defect}, including its two scalar factors.

The differentiated normal gap in
\eqref{eq:completion-normal-gap}, with downward-triangular weights absorbing
the lower-jet couplings, gives \eqref{eq:completion-secant-gap}.  More
explicitly, if \(D^a\) is a derivative in the finite down-set defining
\(\mathcal X_q\), target alignment and secant subtraction have the
triangular form
\begin{equation}
 D^a(\mathscr W_{q,h}U)
 =\omega_{a,h}\,(D^aU)\circ\sigma_h
  +h\sum_{b<a}C_{ab,h}\,(D^bU)\circ\sigma_h,
 \qquad
 \abs{\omega_{a,h}}\le1-2\kappa_ah,
 \label{eq:completion-secant-jet}
\end{equation}
where \(\sigma_h\) is the inverse base map and the finitely many
\(C_{ab,h}\) are uniformly bounded.  Choose the weight of each lower jet
successively so that the sum in \eqref{eq:completion-secant-jet} consumes
at most half of \(\kappa_ah\).  The resulting norm is equivalent to the
rectangular norm and yields
\(\norm{\mathscr W_{q,h}}\le1-\kappa_0h\) with one \(\kappa_0\) for the
whole compact class.

Secant subtraction of the graph equations, followed by
\cref{eq:completion-divided-defect,eq:completion-secant-gap}, first yields
\[
 \norm{S_{\rk,q}-S_{\fl,q}}_{\mathcal X_q}\le Ch^2\nu.
\]
The first variation of the continuous
graph equation defines

\begin{equation}
\begin{split}
 \mathscr L_{q,r}U={}&
 \nu\widetilde A_q(w,S_{\fl,q})\mathbin{\cdot}D_wU\\
 &+\{\nu D_wS_{\fl,q}\mathbin{\cdot}
       \partial_v\widetilde A_q(w,S_{\fl,q})
       -\partial_v\widetilde B_q(w,S_{\fl,q})\}U .
\end{split}
\label{eq:completion-continuous-operator}
\end{equation}

Put
\begin{equation}
 b_q(w)=\nu\widetilde A_q(w,S_{\fl,q}(w)),\qquad
 c_q(w)=\nu D_wS_{\fl,q}\mathbin{\cdot}
           \partial_v\widetilde A_q-\partial_v\widetilde B_q.
 \label{eq:completion-characteristic-coefficients}
\end{equation}
After reducing \(r_0\), \(c_q\ge\kappa\).  If \(\varphi_t\) is the
completed base flow generated by \(b_q\), the bounded periodic solution of
\(\mathscr L_{q,r}U=R\) is
\begin{equation}
 U(w)=\int_0^\infty
 \exp\left\{-\int_0^t c_q(\varphi_{-s}w)\,\dd s\right\}
 R(\varphi_{-t}w)\,\dd t.
 \label{eq:completion-continuous-inverse}
\end{equation}
The integral converges at rate \(\eexp^{-\kappa t}\), including when
\(\nu=0\), where it reduces to pointwise division by
\(-\partial_v\widetilde B_q\).  It is the only bounded periodic solution:
the difference of two solutions has normal multiplier at most
\(\eexp^{-\kappa t}\) around successive periods.  Differentiating
\eqref{eq:completion-continuous-inverse} in the down-set defining
\(\mathcal X_q^+\) gives bounds of the form
\(P(t)\eexp^{C\nu t}\).  Reducing \(r_0\) so that
\(C\nu\le\kappa/2\) makes them integrable against the normal factor
\(\eexp^{-\kappa t}\).  Hence
\begin{equation}
 \norm{\mathscr L_{q,r}^{-1}R}_{\mathcal X_q^+}
 \le C\norm R_{\mathcal X_q^+},
 \label{eq:completion-continuous-inverse-bound}
\end{equation}
uniformly down to \(\nu=0\).  This proves existence, uniqueness, and the
strong bound for \(U_{q,2}\) without imposing a boundary value at an
endpoint of the original normalized collar.

It remains to compare the discrete difference quotient with
\eqref{eq:completion-continuous-operator}.  First use the time-\(h\) map of
the completed flow and linearize its target-aligned graph transform at
\(S_{\fl,q}\).  It has the scalar transport form
\begin{equation}
 \mathscr W^f_{q,h}U(w)=\omega_h(w)U(\sigma_h(w)),
 \quad
 \begin{cases}
  \sigma_h(w)=w-hb_q(w)+h^2a_h(w),\\
  \omega_h(w)=1-hc_q(w)+h^2c_h(w),
 \end{cases}
 \label{eq:completion-flow-linearization}
\end{equation}
where \(a_h,c_h\), with the derivatives required in the weak norm, are
uniformly bounded.  Taylor's formula with integral remainder gives
\begin{equation}
 \left\|
 h^{-1}(I-\mathscr W^f_{q,h})U-\mathscr L_{q,r}U
 \right\|_{\mathcal X_q}
 \le Ch\norm U_{\mathcal X_q^+}.
 \label{eq:completion-flow-strong-weak}
\end{equation}
Indeed, the translation remainder after three \(x\)-derivatives uses at
most two further \(x\)-derivatives of \(U\); differentiating it once in
\(L\) uses one further \(L\)-derivative.  These are precisely the jets in
\(\mathcal X_q^+\).

The order-two numerical map and the time-\(h\) flow map differ by \(O(h^3)\)
with the same weak rectangular derivatives on the fixed strip.  Their
target-aligned linear graph operators therefore differ by \(O(h^3)\), and
after division by \(h\) this is \(O(h^2)\).  Finally, replacing the
linearization at \(S_{\fl,q}\) by the secant between
\(S_{\rk,q}\) and \(S_{\fl,q}\) costs
\begin{equation}
 h^{-1}O(h)\norm{S_{\rk,q}-S_{\fl,q}}_{\mathcal X_q}
 =O(h^2\nu).
 \label{eq:completion-secant-linear-cost}
\end{equation}
Combining these two terms with
\eqref{eq:completion-flow-strong-weak} proves, for
\(U\in\mathcal X_q^+\),

\begin{equation}
 \norm{(\mathscr L_{q,r,h}-\mathscr L_{q,r})U}_{\mathcal X_q}
 \le Ch\norm U_{\mathcal X_q^+}.
\label{eq:completion-strong-weak}
\end{equation}

This is a strong-to-weak estimate; operator-norm convergence on
\(C_x^3C_L^1\) is neither used nor asserted.  Finally,
\eqref{eq:completion-secant-gap} gives

\[
 \norm{\mathscr L_{q,r,h}^{-1}}
 =h\norm{(I-\mathscr W_{q,h})^{-1}}
 \le\kappa_0^{-1}.
\]

Applying \eqref{eq:completion-strong-weak} to \(U_{q,2}\) completes
\eqref{eq:completion-resolvent-bounds}.
\end{proof}

\begin{proposition}[The common completion is fold matched]
\label{prop:completion-produces-match}
For each side \(q\in\{\att,\rep\}\),

\begin{equation}
 \norm{S_{\rk,q}-S_{\fl,q}-h^2\nu U_{q,2}}_{C_x^3C_L^1}
 \le Ch^3\nu,
 \qquad
 \norm{U_{q,2}}_{C_x^3C_L^1}\le C .
\label{eq:completion-matching-expansion}
\end{equation}

After restriction to the original normalized collars and continuation to
the fold section, these graphs give the fold-matched invariant family in
\cref{prop:matched-existence}.
\end{proposition}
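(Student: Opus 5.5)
The plan is to derive the expansion \eqref{eq:completion-matching-expansion} from the invariance equations of the two completed graphs together with \cref{lem:completion-resolvent}. Since \(S_{\rk,q}=\mathfrak G_{q,h,\theta}(S_{\rk,q})\) and \(S_{\fl,q}\) is invariant under the time-\(h\) flow map, subtracting at a common target and writing the resulting secant as \(\mathscr W_{q,h}\) should give the exact identity
\[
 (I-\mathscr W_{q,h})(S_{\rk,q}-S_{\fl,q})
 =\mathfrak G_{q,h,\theta}(S_{\fl,q})-S_{\fl,q}
 =h^3\nu\{R_{q,3}+hR_{q,4}(h)\},
\]
where the last equality is \eqref{eq:completion-divided-defect}. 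Dividing by \(h\) rewrites this as \(\mathscr L_{q,r,h}(S_{\rk,q}-S_{\fl,q})=h^2\nu\{R_{q,3}+hR_{q,4}\}\).

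Next, put \(E=S_{\rk,q}-S_{\fl,q}-h^2\nu U_{q,2}\). Applying \(\mathscr L_{q,r,h}\) and inserting \(\mathscr L_{q,r}U_{q,2}=R_{q,3}\) from \eqref{eq:completion-continuous-resolvent} yields
\[
 \mathscr L_{q,r,h}E
 =h^2\nu\bigl\{(\mathscr L_{q,r}-\mathscr L_{q,r,h})U_{q,2}+hR_{q,4}(h)\bigr\}.
\]
The strong-to-weak estimate in \eqref{eq:completion-resolvent-bounds} bounds the first bracketed term by \(Ch\) in \(\mathcal X_q\), and \(R_{q,4}\) is bounded in \(\mathcal X_q\). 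The uniform bound \(\norm{\mathscr L_{q,r,h}^{-1}}\le C\) then gives \(\norm E_{\mathcal X_q}\le Ch^3\nu\). Because the weighted norm on \(\mathcal X_q\) is equivalent to the rectangular \(C_x^3C_L^1\) norm, this is the first estimate. The second, \(\norm{U_{q,2}}_{C_x^3C_L^1}\le C\), follows from the \(\mathcal X_q^+\) bound.

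The main obstacle is justifying the secant identity. The operator \(\mathscr W_{q,h}\) must be exactly the secant operator for which \eqref{eq:completion-secant-gap} was proved, while the right-hand side must be the graph-transform defect measured at the flow graph, not at the numerical one. I would write \(\mathfrak G(S_{\rk})-\mathfrak G(S_{\fl})=\mathscr W_{q,h}(S_{\rk}-S_{\fl})\) by integrating the target-aligned derivative of \(\mathfrak G\) along the segment between the two graphs. Then \(S_{\rk}-S_{\fl}=\mathscr W(S_{\rk}-S_{\fl})+\mathfrak G(S_{\fl})-S_{\fl}\) holds because \(\mathfrak G(S_{\rk})=S_{\rk}\). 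The completion of the base on a circle removes any boundary terms, and periodicity keeps everything inside \(\mathcal X_q\).

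Finally I would convert to the notation of \cref{def:fold-matched}. On \(\mathcal U_0\) the completed map coincides with the actual Runge--Kutta branch by \eqref{eq:completion-original-agreement}, so the restricted graphs are actual invariant graphs. In the divided coordinates \(V_{\bullet,q}=\nu S_{\bullet,q}/r^2=S_{\bullet,q}\) up to the common \(\Gamma^0_J\) term, which cancels in the difference. With \(H^2=h^2\nu\), the expansion is exactly \eqref{eq:fold-matched-condition}. The remaining ingredients are the continuation to the fold by the bridges of \cref{app-geom:construction}, the admissibility bounds from the uniform collar jets, and the enhanced \(C^5_x\) flow bounds from \(\mathcal X_q^+\).
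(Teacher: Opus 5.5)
Your proposal is correct and follows the paper's proof essentially line for line. You use the secant identity $(I-\mathscr W_{q,h})(S_{\rk,q}-S_{\fl,q})=h^3\nu\{R_{q,3}+hR_{q,4}(h)\}$, divide by $h$, compare with $\mathscr L_{q,r}U_{q,2}=R_{q,3}$ through the strong-to-weak bound and the uniform inverse, and then restrict to $\mathcal U_0$ using $h^2\nu=H^2$; the only cosmetic difference is that you estimate the unnormalized error $S_{\rk,q}-S_{\fl,q}-h^2\nu U_{q,2}$ directly, whereas the paper first records the crude bound $Ch^2\nu$ and then compares the quotient $(S_{\rk,q}-S_{\fl,q})/(h^2\nu)$ with $U_{q,2}$.
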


\begin{proof}
Secant subtraction of the two graph fixed-point equations gives

\[
 (I-\mathscr W_{q,h})(S_{\rk,q}-S_{\fl,q})
 =h^3\nu\{R_{q,3}+hR_{q,4}(h)\}.
\]

The inverse estimate in \cref{lem:completion-resolvent} first yields

\[
 \norm{S_{\rk,q}-S_{\fl,q}}_{\mathcal X_q}\le Ch^2\nu.
\]

For \(h>0\), set
\(U_{q,h}=(S_{\rk,q}-S_{\fl,q})/(h^2\nu)\).  Division of the secant
equation by \(h^3\nu\) gives

\[
 \mathscr L_{q,r,h}U_{q,h}=R_{q,3}+hR_{q,4}(h).
\]

Subtract \eqref{eq:completion-continuous-resolvent} and use
\eqref{eq:completion-resolvent-bounds}; then

\[
 \norm{U_{q,h}-U_{q,2}}_{\mathcal X_q}\le Ch.
\]

This proves \eqref{eq:completion-matching-expansion}.  The completed graphs
agree with the normalized flow and numerical map on \(\mathcal U_0\) by
\eqref{eq:completion-original-agreement}.  Their restrictions are therefore
invariant collar graphs.  On the repelling side the adjoint identity
identifies the oriented branch with the contained inverse of the
positive-step Runge--Kutta map.  Regraphing across the collar--bridge overlap preserves
the local invariant germs and the matching estimate.
Since \(h^2\nu=H^2\), all clauses of \cref{def:fold-matched} follow.
\end{proof}

\section*{Acknowledgements}
OpenAI Codex (GPT--5 service version, accessed 23 August 2026) assisted with
drafting, language editing, and code development; the author verified all
retained content and assumes full responsibility.

\paragraph{Funding.}
The author received no specific funding for this work.

\paragraph{Data and code availability.}
Source code and data for both figures are included as ancillary files with
this preprint.  No external experimental data were used.

\bibliographystyle{unsrtnat}
\bibliography{references}

\end{document}